\newcommand*\circled[1]{\tikz[baseline=(char.base)]{
            \node[shape=circle,draw,inner sep=1pt] (char) {#1};}}
\newcommand{\red}{\textcolor{red}}
\newtheorem{thm}{Theorem}[section]
\newtheorem{cor}[thm]{Corollary}
\newtheorem{lem}[thm]{Lemma}
\newtheorem{maintheorem}{Theorem}
\theoremstyle{definition}
\newtheorem{defn}[thm]{Definition}
\newtheorem*{conditions}{Working Conditions}
\newtheorem{notation}[thm]{Notation}
\theoremstyle{remark}
\newtheorem{rem}[thm]{Remark}
\numberwithin{equation}{section}
\newcommand{\spin}{\ifmmode{\rm Spin}\else{${\rm spin}$\ }\fi}
\newcommand{\spinc}{\ifmmode{{\rm Spin}^c}\else{${\rm spin}^c$}\fi}
\newcommand{\Z}{\mathbb{Z}}
\DeclareMathOperator{\tors}{tors}
\newcommand{\Lsum}{\#_i L(p_i,q_i)}
\newcommand{\Xsum}{\natural_i X(p_i,q_i)}
\newcommand{\plumb}{\entrymodifiers={+[o][F-]} \xymatrix@C=8pt}
\newcommand{\el}{\ar@{-}[r]}
\newcommand{\ed}{\ar@{..}[r] }
\DeclareMathOperator{\supp}{Supp}
\date{\today}%
\begin{document}

\title[Forbidden Configurations and  Definite Fillings of Lens Spaces]{Forbidden Configurations and Definite Fillings of Lens Spaces}%

\author{Antony T.H. Fung}%
\address {Korea Advanced Institute of Science and Technology}
\email{antonyfung@kaist.ac.kr}

\author{JungHwan Park}%
\address {Korea Advanced Institute of Science and Technology}
\email{jungpark0817@kaist.ac.kr}

\begin{abstract}
We study definite fillings of lens spaces. We classify the lens spaces $L(p,q)$ for which every smooth negative-definite filling $X$ satisfies
\[
b_2(X)\ge b_2(X(p,q))-1,
\]
where $X(p,q)$ denotes the canonical negative-definite plumbing. The classification is given by $17$ ``forbidden configurations'' that cannot appear as induced subgraphs of the canonical plumbing graph. More generally, we introduce a combinatorial framework that encodes the lattice embedding information coming from the dual plumbing of $X(p,q)$, and we prove that it is governed by a finite set of minimal forbidden configurations. We also discuss consequences for symplectic fillings of lens spaces and for smoothings of cyclic quotient singularities.
\end{abstract}

\maketitle

\section{Introduction}
A fundamental problem in low-dimensional topology is to understand the topology of $4$-manifolds bounded by a given closed oriented $3$-manifold $Y$. In this article, we focus on
\emph{definite fillings}: smooth, compact, oriented $4$-manifolds $X$ with $\partial X=Y$ whose
intersection form $Q_X$ is definite.  If a compact $4$-manifold $X$ is a filling of a closed $3$-manifold $Y$, then we say that the intersection form $Q_X$ \emph{fills} $Y$, or that $Q_X$ is \emph{bounded} by $Y$. The guiding question of this article is:
\begin{quote}
Understand the definite intersection forms bounded by a closed oriented $3$-manifold $Y$.
\end{quote}
Note that Donaldson's diagonalization theorem~\cite{Donaldson:1987-1} gives a complete answer when $Y=S^3$: any definite intersection
form bounded by $S^3$ is the standard diagonal lattice. For more recent developments and related studies, see for example~\cite{Froyshov:1995-1, Lisca:2007-1,
Lisca:2007-2, Owens-Strle:2012-1, Choe-Park:2018-2, Scaduto:2018-1, Golla-Scaduto:2019-1, Aceto:2020, Choe-Park:2021, Froyshov:2023, definite}.

In~\cite{definite}, Aceto--McCoy--Park initiated a systematic study of this question for \emph{lens spaces}. One of their main results is a classification of the connected sums of lens spaces that behave like $S^3$ in the following sense: up to stabilization by $\langle -1\rangle$ summands, they admit a unique negative-definite intersection form among negative-definite fillings.\footnote{Note that if $Y$ bounds a negative-definite $4$--manifold $X$ with intersection form $Q_X$, then by blowing up $X$ we see that $Y$ also bounds the stabilized form $Q_X \oplus \langle -1\rangle^{\oplus n}$ for any $n>0$.} They proved that there exist $10$ ``forbidden configurations'' of linear weighted graphs such that a connected sum of lens spaces has this unique-intersection-form property if and only if the canonical plumbing graph associated to the connected sum does not contain any of these configurations. In this article, we show that analogous ``forbidden configurations'' arise in a more general setting. To state this precisely, we first introduce some terminology.

Recall that the \emph{lens space} $L(p,q)$ is obtained by $-p/q$--Dehn surgery on the unknot in $S^3$, where $p$ and $q$ are coprime integers with $p>q>0$. It bounds a \emph{canonical negative-definite plumbing} $X(p,q)$, namely the smooth negative-definite $4$--manifold obtained by plumbing disk bundles over spheres along the \emph{linear} weighted graph
\[
\begin{tikzpicture}[xscale=1.0,yscale=1,baseline={(0,0)}]
    \node at (1-0.1, .4) {$-a_1$};
    \node at (2-0.1, .4) {$-a_2$};
    \node at (3-0.1, .4) {$-a_3$};
    \node at (5-0.1, .4) {$-a_{n}$};
    \node (A1_1) at (1, 0) {$\bullet$};
    \node (A1_2) at (2, 0) {$\bullet$};
    \node (A1_3) at (3, 0) {$\bullet$};
    \node (A1_4) at (4, 0) {$\cdots$};
    \node (A1_5) at (5, 0) {$\bullet$};
    \path (A1_2) edge [-] node [auto] {$\scriptstyle{}$} (A1_3);
    \path (A1_3) edge [-] node [auto] {$\scriptstyle{}$} (A1_4);
    \path (A1_4) edge [-] node [auto] {$\scriptstyle{}$} (A1_5);
    \path (A1_1) edge [-] node [auto] {$\scriptstyle{}$} (A1_2);
\end{tikzpicture}
\]
where the integers $a_i\ge 2$ are uniquely determined by the continued fraction expansion
\[
\frac{p}{q} = a_1 - \cfrac{1}{a_2 - \cfrac{1}{\ddots - \cfrac{1}{a_n}}}.
\]
More generally, a connected sum $\#_i L(p_i,q_i)$ bounds the boundary connected sum of the corresponding canonical negative-definite plumbings $\natural_i X(p_i,q_i)$, and the associated plumbing graph is the disjoint union of the plumbing graphs for the individual summands. We also recall that $-L(p,q)$, the lens space $L(p,q)$ with reversed orientation, is orientation-preservingly diffeomorphic to $L(p,p-q)$. Finally, the intersection form of a compact oriented $4$--manifold $X$ defines an integral lattice $(H_2(X)/\tors, Q_X)$, which we will often denote simply by $Q_X$.

Given a connected sum of lens spaces $L=\#_i L(p_i,q_i)$, we may glue
$\natural_i X(p_i,q_i)$ and $\natural_i X(p_i,p_i-q_i)$ along their common boundary to obtain a closed, smooth, negative-definite $4$--manifold $Z$.\footnote{In fact, it is diffeomorphic to a connected sum of copies of the complex projective plane; see, e.g., \cite[Lemma~3.1]{Ownes-Swenton:2024}.}
By Donaldson's theorem, the intersection form of $Z$ is isomorphic to $(\Z^{n},-\mathrm{Id})$, where
$n=b_2(\natural_i X(p_i,q_i))+b_2(\natural_i X(p_i,p_i-q_i))$.
In particular, the integral lattice $Q_{\natural_i X(p_i,p_i-q_i)}$ admits an embedding into $(\Z^{n},-\mathrm{Id})$.
In general, $Q_{\natural_i X(p_i,p_i-q_i)}$ may embed into a standard lattice of smaller rank.



\begin{defn}\label{def:Xk}
Let $L=\#_i L(p_i,q_i)$ be a connected sum of lens spaces, and let $k$ be a positive integer. Set
\[
n_k = b_2(\natural_i X(p_i,q_i)) + b_2(\natural_i X(p_i,p_i-q_i)) - k.
\]
We say that $L$ satisfies \textit{Property $X_k$} if the integral lattice $Q_{\natural_i X(p_i,p_i-q_i)}$ does \emph{not} admit an embedding into the standard lattice $(\Z^{n_k},-\mathrm{Id})$.
\end{defn}


 
 The following elementary lemma motivates this definition. Its proof is a standard application
of Donaldson's diagonalization theorem, so we omit it.

\begin{lem}\label{lem:Xk}\pushQED{\qed}
Let $L = \#_i L(p_i, q_i)$ be a connected sum of lens spaces, and let $\natural_i X(p_i, q_i)$ denote the corresponding boundary connected sum of the canonical negative-definite plumbings.
If $L$ satisfies Property $X_k$ with a positive integer $k$, then every smooth negative-definite filling $X$ of $L$ satisfies
\[
b_2(X) \ge b_2(\natural_i X(p_i,q_i)) - (k-1). \qedhere
\]
\popQED
\end{lem}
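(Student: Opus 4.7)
The plan is to glue an arbitrary negative-definite filling of $L$ with the canonical negative-definite filling of $-L$, apply Donaldson's theorem to the resulting closed $4$-manifold, and extract from it a lattice embedding whose nonexistence is exactly what Property $X_k$ guarantees once the rank becomes too small.

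First, given a smooth negative-definite filling $X$ of $L$, I would glue $X$ and $\natural_i X(p_i, p_i-q_i)$ along their common boundary (noting that $\partial \natural_i X(p_i, p_i-q_i) = \#_i L(p_i,p_i-q_i) = -L$) to obtain a closed, smooth, oriented $4$-manifold $Z$. Since $L$ is a rational homology sphere, Mayer--Vietoris gives an isomorphism $H_2(Z;\Q) \cong H_2(X;\Q) \oplus H_2(\natural_i X(p_i,p_i-q_i);\Q)$ under which $Q_Z$ is the block sum of the two intersection forms; in particular $Q_Z$ is negative-definite and $b_2(Z) = b_2(X) + b_2(\natural_i X(p_i,p_i-q_i))$.

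Next, Donaldson's diagonalization theorem identifies $(H_2(Z)/\tors, Q_Z)$ with $(\Z^{N}, -\mathrm{Id})$, where $N = b_2(Z)$. The inclusion $\natural_i X(p_i,p_i-q_i) \hookrightarrow Z$ induces an intersection-form-preserving map on $H_2$ modulo torsion, which is injective as a lattice homomorphism since $Q_{\natural_i X(p_i,p_i-q_i)}$ is non-degenerate (being definite). This yields an embedding $Q_{\natural_i X(p_i,p_i-q_i)} \hookrightarrow (\Z^{N}, -\mathrm{Id})$.

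Finally, Property $X_k$ asserts that no such embedding into $(\Z^{n_k}, -\mathrm{Id})$ exists, so one must have $N \ge n_k + 1$. Substituting the definition of $n_k$ and cancelling the common $b_2(\natural_i X(p_i,p_i-q_i))$ term gives $b_2(X) \ge b_2(\natural_i X(p_i,q_i)) - (k-1)$. The argument is essentially routine once Donaldson's theorem is in hand; the only subtlety to check is the injectivity of the induced lattice map, and this follows immediately from non-degeneracy of the definite form. There is no serious obstacle.
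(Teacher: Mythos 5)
Your proof is correct and is exactly the standard Donaldson argument the paper has in mind when it says the proof is ``a standard application of Donaldson's diagonalization theorem''; gluing $X$ to the dual plumbing $\natural_i X(p_i,p_i-q_i)$, diagonalizing $Q_Z$, and reading off the resulting embedding of $Q_{\natural_i X(p_i,p_i-q_i)}$ is precisely the route taken throughout~\cite{definite} and implicitly here. The only step you leave slightly implicit --- that $N \le n_k$ would yield an embedding into $(\Z^{n_k},-\mathrm{Id})$ by stabilizing with $\langle -1\rangle$ summands --- is routine and worth stating for completeness, but everything else (negative-definiteness of $Z$, the Mayer--Vietoris splitting over $\Q$, and injectivity from non-degeneracy of a definite form) is handled correctly.
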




With this terminology, the main theorem of~\cite[Theorem 1.1]{definite} can be summarized as follows. Let $L=\#_i L(p_i,q_i)$ be a connected sum of lens spaces. Then there exists a finite set of configurations $S_1$ with $|S_1|=10$ such that the following are equivalent:
\begin{enumerate}[label=(\alph*)]
    \item\label{a} $L$ satisfies Property $X_1$.
    \item\label{b} The plumbing graph associated to $L$ does not contain a configuration from $S_1$  as an induced subgraph.
    \item\label{c} Every smooth negative-definite filling $X$ of $L$ satisfies
    $b_2(X) \ge b_2(\natural_i X(p_i,q_i))$.
\end{enumerate}
As a consequence, $L$ satisfying any of the above conditions is equivalent to admitting, up to stabilization by $\langle -1\rangle$ summands, a unique negative-definite intersection form among negative-definite fillings.

A striking feature of this result is that the set of forbidden configurations $S_1$ can be chosen to have only $10$ elements, and moreover this choice is minimal: any set of configurations that characterizes the equivalence of the above conditions must necessarily contain $S_1$. The core of the proof of the main theorem of~\cite{definite} is to show, for this fixed set $S_1$, that \ref{b} implies \ref{a}.

Our first main theorem is that, for each positive integer $k$, there exists a \emph{finite} set of forbidden configurations $S_k$ that makes the analogous statements to \ref{a} and \ref{b} equivalent for each $k$:

\begin{maintheorem}\label{thm:all k}
Let $k$ be a positive integer. There exists a finite set of minimal configurations $S_k$ satisfying the following conditions:
\begin{enumerate}[font=\upshape]
    \item A connected sum of lens spaces $L$ satisfies Property $X_k$ if and only if the plumbing graph associated to $L$ does not contain a configuration from $S_k$ as an induced subgraph;
    \item the absolute value of the weight of any vertex in any configuration in $S_k$ is at most $2k+7$;
    \item the number of vertices in any configuration in $S_k$ is at most $13k$.
\end{enumerate}
\end{maintheorem}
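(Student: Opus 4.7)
The plan is to define $S_k$ to be the set of minimal configurations (in the induced subgraph order on weighted plumbing forests) among plumbing graphs $G$ whose associated connected sum of lens spaces $L_G$ fails Property $X_k$. Two things must then be verified: (i) a monotonicity statement, so that containment of $H \in S_k$ is equivalent to failure of Property $X_k$; and (ii) the explicit bounds on weights and vertex counts for elements of $S_k$.

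\textbf{Step 1: Monotonicity.} The first task is to prove that if $G'$ is an induced subgraph of $G$ and $L_{G'}$ fails Property $X_k$, then $L_G$ also fails Property $X_k$. Equivalently, given an embedding of the dual lattice of $L_{G'}$ into the standard negative-definite lattice of rank $n_k(G')$, one must construct an embedding of the dual lattice of $L_G$ into the standard negative-definite lattice of rank $n_k(G)$. By induction it suffices to treat the addition of a single vertex to $G'$. The dual plumbing transforms under this operation via a combinatorial surgery prescribed by Riemenschneider's point-diagram duality between $(p,q)$ and $(p,p-q)$, and the rank change $n_k(G)-n_k(G')$ is designed to match this surgery exactly, so that the given embedding extends to the required one.

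\textbf{Step 2: Bounding weights and vertex counts.} For the weight bound of $2k+7$: suppose $H \in S_k$ contains a vertex of weight $-a$ with $a \geq 2k+8$. I would show that a proper induced subgraph of $H$ already fails Property $X_k$, contradicting minimality. The argument locates a reducible substructure around the oversized vertex; the surplus $a - (2k+7)$ provides enough slack in the lattice data to excise vertices of $H$ while preserving failure of the embedding. For the vertex bound of $13k$: once weights lie in the finite set $\{-2, \ldots, -(2k+7)\}$, each linear component of $H$ is a word over a finite alphabet, and minimality restricts which such words appear. A case analysis paralleling the $k=1$ argument of \cite{definite}, with the constants carefully tracked, yields the bound $13k$, whose linear dependence on $k$ reflects how the maximal length of a minimal configuration grows with the embedding defect.

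\textbf{Finiteness and main obstacle.} Finiteness of $S_k$ follows immediately from the two bounds, since only finitely many labeled weighted plumbing forests satisfy them. The most delicate step is Step 1: the surgery on the dual side dictated by Riemenschneider's duality, combined with the need to explicitly extend lattice embeddings in a way consistent with the rank count, requires careful combinatorial bookkeeping and is where the bulk of the technical work lies. Steps 2 generalizes the minimality analysis of \cite{definite}; the constants $2$ in $2k+7$ and $13$ in $13k$ both match the $k=1$ base case, which suggests that these linear scalings are essentially tight and motivates the specific form of the bounds.
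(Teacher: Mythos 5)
Your high-level strategy is correct and matches the paper: define $S_k$ as the minimal elements (under the induced-subgraph order) of the set of configurations failing Property $X_k$, prove monotonicity of failure under passing to supergraphs by inducting on vertex additions and extending lattice embeddings through the dual plumbing, and then bound weights by arguing that an oversized vertex could be removed while preserving failure, contradicting minimality. Step~1 and the weight bound (both in spirit and in the sense of the case split by number of neighbors of the added vertex) are genuine matches to Lemma~\ref{thm:induced property xk} and Lemma~\ref{lem:1.3(2)}.

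The serious gap is in the vertex-count bound (item (3)). You say this follows from ``a case analysis paralleling the $k=1$ argument of \cite{definite}, with the constants carefully tracked.'' That does not work: the $k=1$ classification in \cite{definite} is an explicit enumeration of ten small configurations, not a scalable structural argument, so there is no ``constant to track.'' The paper instead needs a genuinely new idea: partition the vertices of a putative $G\in S_k$ into \emph{shallow} vertices (weight $-2$, $-4$, or $-3$ adjacent to a $-2$ or $-3$), \emph{deep} vertices (heavy vertices not adjacent to any shallow one, or isolated $-3$'s), and a residual class, and then bound each class separately. The crucial ingredient is Lemma~\ref{lem:deep}: if there are at least $k'$ deep vertices, an ``obstruction-counting'' argument shows one of them can be deleted while failure of Property $X_k$ persists. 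This in turn requires first establishing that $G$ cannot contain the auxiliary configurations $C_{k'}$ and $D_{k'}$ as induced subgraphs (because those already fail Property $X_k$, so minimality would force $G$ to equal one of them). Without identifying this shallow/deep decomposition and the obstruction count, there is no route to a linear-in-$k$ vertex bound from first principles, and your proposal leaves the hardest part unproved.

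A minor but substantive misconception: you assert the constants in $2k+7$ and $13k$ ``are essentially tight'' and ``match the $k=1$ base case.'' They do not: for $k=1$ the actual list $S_1$ has configurations with at most five vertices and weights no more negative than $-6$, far below the bounds $13$ and $-9$. The paper explicitly cautions that its bounds are far from optimal (estimating hundreds of millions of candidates for $k=1$). The bounds are designed only to be simple and provable, not sharp.
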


While this theorem guarantees the existence of a finite list of forbidden configurations, the resulting bounds are far from optimal. For instance, when $k=1$, the estimate suggests on the order of $800$ million candidates even after restricting to the connected elements of $S_1$, whereas~\cite{definite} showed that the actual list consists of only $10$ configurations. 

Moreover, by Lemma~\ref{lem:Xk} we have \ref{a} implies \ref{c}, so in order to obtain the equivalence, it remains to prove that \ref{c} implies \ref{b}. When $k=1$, this does turn out to be true, but it fails in general. We invite the reader to check that for $k=8$, the analogue of \ref{c} does not imply the analogue of \ref{b}.\footnote{Note that $S_8$ must contain the plumbing graph associated to $L(9,8)$, since the integral lattice $Q_{X(9,1)}$ embeds into $(\Z,-\mathrm{Id})$, whereas $L(9,8)$ does not admit a smooth negative-definite filling with $b_2=0$.}

Our next main theorem has three key points. First, we extend~\cite{definite} to the case $k=2$ by explicitly determining $S_2$, where $|S_2|=17$. Second, in this case Donaldson's obstruction still coincides with the smooth topology, in the sense that the analogues of \ref{a}, \ref{b}, and \ref{c} remain equivalent. Lastly, this allows us to classify all negative-definite intersection forms bounded negative-definite fillings for such lens spaces:

\begin{maintheorem}\label{thm:main}
If $L = \#_i L(p_i, q_i)$ is a connected sum of lens spaces, and $\natural_i X(p_i, q_i)$ denotes the corresponding boundary connected sum of the canonical negative-definite plumbings, then the following statements are equivalent:
\begin{enumerate}[label=(\roman*), font=\upshape]
\item\label{it:min_filling} every smooth negative-definite filling $X$ of $L$ satisfies $$b_2(X) \geq b_2(\natural_i X(p_i,q_i))-1;$$

\item\label{it:intersectionform}
every smooth negative-definite filling $X$ of $L$ satisfies either
\[
Q_X \cong Q_{\natural_i X(p_i,q_i)} \oplus \langle -1 \rangle^m 
\qquad \text{or} \qquad 
Q_X \cong Q_{\natural_i \widetilde{X}(p_i,q_i)} \oplus \langle -1 \rangle^m
\]
for some nonnegative integer $m$.
Here, $\natural_i \widetilde{X}(p_i,q_i)$ denotes the 4–manifold obtained when the canonical plumbing graph associated to $L$ contains one of the following configurations as an induced subgraph:
\[
\begin{tikzpicture}[xscale=1.0,yscale=1,baseline={(0,0)}]
    \node at (1-0.1,0.4) {$-4$};
    \node (A1) at (1,0) {$\bullet$};
\end{tikzpicture}
\qquad \qquad \qquad
\begin{tikzpicture}[xscale=1.0,yscale=1,baseline={(0,0)}]
    \node at (1-0.1, .4) {$-3$};
    \node at (2-0.1, .4) {$-3$};
    \node (A_1) at (1, 0) {$\bullet$};
    \node (A_2) at (2, 0) {$\bullet$};
    \path (A_1) edge [-] node [auto] {$\scriptstyle{}$} (A_2);
\end{tikzpicture}
\qquad  \qquad \qquad
\begin{tikzpicture}[xscale=1.0,yscale=1,baseline={(0,0)}]
    \node at (1-0.1, .4) {$-3$};
    \node at (2-0.1, .4) {$-2$};
    \node at (3-0.1, .4) {$-3$};
    \node (A_1) at (1, 0) {$\bullet$};
    \node (A_2) at (2, 0) {$\bullet$};
    \node (A_3) at (3, 0) {$\bullet$};
    \path (A_1) edge [-] node [auto] {$\scriptstyle{}$} (A_2);
    \path (A_2) edge [-] node [auto] {$\scriptstyle{}$} (A_3);
\end{tikzpicture}
\]
In this case, $\natural_i \widetilde{X}(p_i,q_i)$ is obtained by performing a rational blowdown on $\natural_i X(p_i,q_i)$ along an embedded $-4$-sphere obtained by smoothing the transverse intersection points among the spheres corresponding to the vertices of the above configuration in the plumbing graph;\footnote{Blowing down along different embedded $-4$-spheres may yield different integral lattices. See Remark~\ref{rem:5521} for an example.}


\item\label{it:combinatorial} the canonical plumbing graph associated to $L$ does not contain any of the following configurations as an induced subgraph:
\begin{multicols}{2}
\begin{enumerate}[label=(\alph*),font=\upshape]
\item\label{it:52} $\begin{tikzpicture}[xscale=1.0,yscale=1,baseline={(0,0)}]
    \node at (1-0.1,0.4) {$-5$};
    \node at (2-0.1,0.4) {$-2$};
    \node (A1) at (1,0) {$\bullet$};
    \node (A2) at (2,0) {$\bullet$};
        \path (A1) edge [-] node [auto] {$\scriptstyle{}$} (A2);
  \end{tikzpicture}$
\item\label{it:622} $\begin{tikzpicture}[xscale=1.0,yscale=1,baseline={(0,0)}]
    \node at (1-0.1, .4) {$-6$};
    \node at (2-0.1, .4) {$-2$};
    \node at (3-0.1, .4) {$-2$};
    \node (A_1) at (1, 0) {$\bullet$};
    \node (A_2) at (2, 0) {$\bullet$};
    \node (A_3) at (3, 0) {$\bullet$};
        \path (A_1) edge [-] node [auto] {$\scriptstyle{}$} (A_2);
    \path (A_2) edge [-] node [auto] {$\scriptstyle{}$} (A_3);
  \end{tikzpicture}$
\item\label{it:2-2} $\begin{tikzpicture}[xscale=1.0,yscale=1,baseline={(0,0)}]
    \node at (1-0.1, .4) {$-2$};
    \node at (2-0.1, .4) {$-2$};
    \node (A_1) at (1, 0) {$\bullet$};
    \node (A_2) at (2, 0) {$\bullet$};
  \end{tikzpicture}$
\item\label{it:3-22}  $\begin{tikzpicture}[xscale=1.0,yscale=1,baseline={(0,0)}]
    \node at (1-0.1, .4) {$-3$};
    \node at (2-0.1, .4) {$-2$};
    \node at (3-0.1, .4) {$-2$};
    \node (A_1) at (1, 0) {$\bullet$};
    \node (A_2) at (2, 0) {$\bullet$};
    \node (A_3) at (3, 0) {$\bullet$};
    \path (A_2) edge [-] node [auto] {$\scriptstyle{}$} (A_3);
  \end{tikzpicture}$
\item\label{it:3223}
$\begin{tikzpicture}[xscale=1.0,yscale=1,baseline={(0,0)}]
    \node at (1-0.1, .4) {$-3$};
    \node at (2-0.1, .4) {$-2$};
    \node at (3-0.1, .4) {$-2$};
    \node at (4-0.1, .4) {$-3$};
    \node (A1_1) at (1, 0) {$\bullet$};
    \node (A1_2) at (2, 0) {$\bullet$};
    \node (A1_3) at (3, 0) {$\bullet$};
    \node (A1_4) at (4, 0) {$\bullet$};
    \path (A1_2) edge [-] node [auto] {$\scriptstyle{}$} (A1_3);
    \path (A1_3) edge [-] node [auto] {$\scriptstyle{}$} (A1_4);
    \path (A1_1) edge [-] node [auto] {$\scriptstyle{}$} (A1_2);
  \end{tikzpicture}$
\item\label{it:3532} 
$\begin{tikzpicture}[xscale=1.0,yscale=1,baseline={(0,0)}]
    \node at (1-0.1, .4) {$-3$};
    \node at (2-0.1, .4) {$-5$};
    \node at (3-0.1, .4) {$-3$};
    \node at (4-0.1, .4) {$-2$};
    \node (A1_1) at (1, 0) {$\bullet$};
    \node (A1_2) at (2, 0) {$\bullet$};
    \node (A1_3) at (3, 0) {$\bullet$};
    \node (A1_4) at (4, 0) {$\bullet$};
    \path (A1_2) edge [-] node [auto] {$\scriptstyle{}$} (A1_3);
    \path (A1_3) edge [-] node [auto] {$\scriptstyle{}$} (A1_4);
    \path (A1_1) edge [-] node [auto] {$\scriptstyle{}$} (A1_2);
  \end{tikzpicture}$
\item\label{it:2235} $\begin{tikzpicture}[xscale=1.0,yscale=1,baseline={(0,0)}]
    \node at (1-0.1, .4) {$-2$};
    \node at (2-0.1, .4) {$-2$};
    \node at (3-0.1, .4) {$-3$};
    \node at (4-0.1, .4) {$-5$};
    \node (A1_1) at (1, 0) {$\bullet$};
    \node (A1_2) at (2, 0) {$\bullet$};
    \node (A1_3) at (3, 0) {$\bullet$};
    \node (A1_4) at (4, 0) {$\bullet$};
    \path (A1_2) edge [-] node [auto] {$\scriptstyle{}$} (A1_3);
    \path (A1_3) edge [-] node [auto] {$\scriptstyle{}$} (A1_4);
    \path (A1_1) edge [-] node [auto] {$\scriptstyle{}$} (A1_2);
  \end{tikzpicture}$
\item\label{it:432} $\begin{tikzpicture}[xscale=1.0,yscale=1,baseline={(0,0)}]
    \node at (1-0.1, .4) {$-4$};
    \node at (2-0.1, .4) {$-3$};
    \node at (3-0.1, .4) {$-2$};
    \node (A1_1) at (1, 0) {$\bullet$};
    \node (A1_2) at (2, 0) {$\bullet$};
    \node (A1_3) at (3, 0) {$\bullet$};
    \path (A1_2) edge [-] node [auto] {$\scriptstyle{}$} (A1_3);
    \path (A1_1) edge [-] node [auto] {$\scriptstyle{}$} (A1_2);
  \end{tikzpicture}$
\item\label{it:342} $\begin{tikzpicture}[xscale=1.0,yscale=1,baseline={(0,0)}]
    \node at (1-0.1, .4) {$-3$};
    \node at (2-0.1, .4) {$-4$};
    \node at (3-0.1, .4) {$-2$};
    \node (A1_1) at (1, 0) {$\bullet$};
    \node (A1_2) at (2, 0) {$\bullet$};
    \node (A1_3) at (3, 0) {$\bullet$};
    \path (A1_2) edge [-] node [auto] {$\scriptstyle{}$} (A1_3);
    \path (A1_1) edge [-] node [auto] {$\scriptstyle{}$} (A1_2);
  \end{tikzpicture}$
\item\label{it:4422} $\begin{tikzpicture}[xscale=1.0,yscale=1,baseline={(0,0)}]
    \node at (1-0.1, .4) {$-4$};
    \node at (2-0.1, .4) {$-4$};
    \node at (3-0.1, .4) {$-2$};
    \node at (4-0.1, .4) {$-2$};
    \node (A1_1) at (1, 0) {$\bullet$};
    \node (A1_2) at (2, 0) {$\bullet$};
    \node (A1_3) at (3, 0) {$\bullet$};
    \node (A1_4) at (4, 0) {$\bullet$};
    \path (A1_1) edge [-] node [auto] {$\scriptstyle{}$} (A1_2);
    \path (A1_2) edge [-] node [auto] {$\scriptstyle{}$} (A1_3);
    \path (A1_3) edge [-] node [auto] {$\scriptstyle{}$} (A1_4);
  \end{tikzpicture}$
\item\label{it:34332} $\begin{tikzpicture}[xscale=1.0,yscale=1,baseline={(0,0)}]
    \node at (1-0.1, .4) {$-3$};
    \node at (2-0.1, .4) {$-4$};
    \node at (3-0.1, .4) {$-3$};
    \node at (4-0.1, .4) {$-3$};
		\node at (5-0.1, .4) {$-2$};
    \node (A1_1) at (1, 0) {$\bullet$};
    \node (A1_2) at (2, 0) {$\bullet$};
    \node (A1_3) at (3, 0) {$\bullet$};
    \node (A1_4) at (4, 0) {$\bullet$};
		\node (A1_5) at (5, 0) {$\bullet$};
    \path (A1_1) edge [-] node [auto] {$\scriptstyle{}$} (A1_2);
    \path (A1_2) edge [-] node [auto] {$\scriptstyle{}$} (A1_3);
    \path (A1_3) edge [-] node [auto] {$\scriptstyle{}$} (A1_4);
    \path (A1_4) edge [-] node [auto] {$\scriptstyle{}$} (A1_5);
  \end{tikzpicture}$
\item\label{it:33-2}  $\begin{tikzpicture}[xscale=1.0,yscale=1,baseline={(0,0)}]
    \node at (1-0.1, .4) {$-3$};
    \node at (2-0.1, .4) {$-3$};
    \node at (3-0.1, .4) {$-2$};
    \node (A_1) at (1, 0) {$\bullet$};
    \node (A_2) at (2, 0) {$\bullet$};
    \node (A_3) at (3, 0) {$\bullet$};
    \path (A_1) edge [-] node [auto] {$\scriptstyle{}$} (A_2);
  \end{tikzpicture}$
\item\label{it:323-3} $\begin{tikzpicture}[xscale=1.0,yscale=1,baseline={(0,0)}]
    \node at (1-0.1, .4) {$-3$};
    \node at (2-0.1, .4) {$-2$};
    \node at (3-0.1, .4) {$-3$};
    \node at (4-0.1, .4) {$-3$};
    \node (A1_1) at (1, 0) {$\bullet$};
    \node (A1_2) at (2, 0) {$\bullet$};
    \node (A1_3) at (3, 0) {$\bullet$};
    \node (A1_4) at (4, 0) {$\bullet$};
    \path (A1_1) edge [-] node [auto] {$\scriptstyle{}$} (A1_2);
    \path (A1_2) edge [-] node [auto] {$\scriptstyle{}$} (A1_3);
  \end{tikzpicture}$
\item\label{it:4-4} $\begin{tikzpicture}[xscale=1.0,yscale=1,baseline={(0,0)}]
    \node at (1-0.1, .4) {$-4$};
    \node at (2-0.1, .4) {$-4$};
    \node (A1_1) at (1, 0) {$\bullet$};
    \node (A1_2) at (2, 0) {$\bullet$};
  \end{tikzpicture}$
\item\label{it:4-33}  $\begin{tikzpicture}[xscale=1.0,yscale=1,baseline={(0,0)}]
    \node at (1-0.1, .4) {$-4$};
    \node at (2-0.1, .4) {$-3$};
    \node at (3-0.1, .4) {$-3$};
    \node (A_1) at (1, 0) {$\bullet$};
    \node (A_2) at (2, 0) {$\bullet$};
    \node (A_3) at (3, 0) {$\bullet$};
    \path (A_2) edge [-] node [auto] {$\scriptstyle{}$} (A_3);
  \end{tikzpicture}$
\item\label{it:4-323} $\begin{tikzpicture}[xscale=1.0,yscale=1,baseline={(0,0)}]
    \node at (1-0.1, .4) {$-4$};
    \node at (2-0.1, .4) {$-3$};
    \node at (3-0.1, .4) {$-2$};
    \node at (4-0.1, .4) {$-3$};
    \node (A1_1) at (1, 0) {$\bullet$};
    \node (A1_2) at (2, 0) {$\bullet$};
    \node (A1_3) at (3, 0) {$\bullet$};
    \node (A1_4) at (4, 0) {$\bullet$};
    \path (A1_2) edge [-] node [auto] {$\scriptstyle{}$} (A1_3);
    \path (A1_3) edge [-] node [auto] {$\scriptstyle{}$} (A1_4);
  \end{tikzpicture}$
\item\label{it:33-33} $\begin{tikzpicture}[xscale=1.0,yscale=1,baseline={(0,0)}]
    \node at (1-0.1, .4) {$-3$};
    \node at (2-0.1, .4) {$-3$};
    \node at (3-0.1, .4) {$-3$};
    \node at (4-0.1, .4) {$-3$};
    \node (A1_1) at (1, 0) {$\bullet$};
    \node (A1_2) at (2, 0) {$\bullet$};
    \node (A1_3) at (3, 0) {$\bullet$};
    \node (A1_4) at (4, 0) {$\bullet$};
    \path (A1_1) edge [-] node [auto] {$\scriptstyle{}$} (A1_2);
    \path (A1_3) edge [-] node [auto] {$\scriptstyle{}$} (A1_4);
  \end{tikzpicture}$;
\end{enumerate}
\end{multicols}\vspace{-.2cm}

\item\label{it:submanifold}
the manifold $\natural_i X(p_i,q_i)$ contains no smoothly embedded submanifold 
diffeomorphic to any of the following: 
$X(9,2)$, $X(16,3)$, $X(16,7)$, $X(64,23)$, 
$X(2,1)\sqcup X(2,1)$, $X(8,3)\sqcup X(2,1)$, 
$X(12,5)\sqcup X(3,1)$, or $X(4,1)\sqcup X(4,1)$.

    


\end{enumerate}
\end{maintheorem}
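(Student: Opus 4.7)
The plan is to establish the four-way equivalence through a cycle of implications, noting that (ii) $\Rightarrow$ (i) is immediate since each of the two listed intersection forms has rank at least $b_2(\natural_i X(p_i,q_i)) - 1$. The main content is the implication (iii) $\Rightarrow$ (ii). I first invoke Theorem~\ref{thm:all k} with $k=2$, which guarantees a finite minimal set $S_2$ of forbidden configurations with at most $26$ vertices and weights bounded by $11$. I then verify by a finite lattice-theoretic case analysis that $S_2$ coincides exactly with the $17$ configurations of (iii): for each candidate graph within these bounds, one tests whether the associated dual-plumbing lattice embeds into $(\Z^{n_2}, -\mathrm{Id})$. To upgrade Property $X_2$ to the intersection-form classification, I take any smooth negative-definite filling $X$ with $b_2(X) = b_2(\natural_i X(p_i,q_i)) - 1$, glue it to $\natural_i X(p_i, p_i-q_i)$ along $-L$, and apply Donaldson's theorem to extract a corank-$1$ embedding $Q_{\natural_i X(p_i, p_i-q_i)} \hookrightarrow (\Z^{n_2}, -\mathrm{Id})$; the classification of such embeddings then forces $Q_X$ to be either $Q_{\natural_i X(p_i,q_i)}$ (suitably stabilized) or $Q_{\natural_i \widetilde X(p_i,q_i)}$, the latter arising from rational blowdown along an embedded $-4$-sphere produced by one of the three Wahl-type subconfigurations $(-4)$, $(-3,-3)$, $(-3,-2,-3)$.

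For (i) $\Rightarrow$ (iii), I argue contrapositively: if the canonical plumbing graph contains one of the $17$ forbidden configurations as an induced subgraph, I exhibit an explicit smooth negative-definite filling $X$ of $L$ with $b_2(X) \leq b_2(\natural_i X(p_i,q_i)) - 2$. The constructions are via rational blowdowns of the associated sphere configurations, each bounding either a Wahl-type rational ball (as in the cases $L(9,2)$, $L(16,3)$) or a closely related low-$b_2$ Milnor fiber of a cyclic quotient singularity; disconnected configurations are reduced componentwise. The equivalence (iii) $\Leftrightarrow$ (iv) is a geometric matching: each forbidden configuration in (iii) contains, as an embedded subplumbing, one of the $8$ submanifolds listed in (iv), and conversely the existence of any of these submanifolds forces one of the $17$ configurations to appear in the canonical plumbing graph (after normalization via the standard moves on Hirzebruch--Jung continued fractions).

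\textbf{Main obstacle.} The hardest step is pinning down $S_2$ to precisely the $17$ listed configurations. Although Theorem~\ref{thm:all k} reduces the problem to a finite verification, the stated combinatorial bounds permit an astronomical number of candidate graphs, and eliminating all but these $17$ requires exploiting the specific Hirzebruch--Jung continued-fraction structure that controls dual-plumbing lattice embeddings. A closely related difficulty is establishing the rigidity in (ii): one must show that no corank-$1$ sublattice embedding other than those arising from the three Wahl-type $-4$-sphere configurations can occur, so that the intersection form is constrained to exactly two possibilities rather than a larger family.
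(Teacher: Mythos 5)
Your overall cycle of implications is logically workable, and you correctly identify (iii) $\Rightarrow$ (ii) as the core difficulty, but the concrete strategy you propose for it diverges from what the paper actually does and has serious gaps.

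First, invoking Theorem~\ref{thm:all k} with $k=2$ and then enumerating all candidate graphs within the bounds ($\le 26$ vertices, weights $\le 11$) is not a feasible strategy. The paper's introduction makes this point explicitly: even for $k=1$ the bounds of Theorem~\ref{thm:all k} leave roughly $800$ million candidates, far beyond any exhaustive lattice-theoretic check. (Moreover, in the paper Theorem~\ref{thm:all k} is proved \emph{after} Theorem~\ref{thm:main} and plays no role in it.) The paper's actual route to (iii) $\Rightarrow$ (ii) is structural: translate (iii) into ``Working Conditions'' on the dual plumbing graph (Lemma~\ref{lem:working conditions proof}), then prove a rigidity theorem (Lemma~\ref{lem:final lemma}) that every embedding of a dual graph satisfying those conditions is either standard or \emph{semi-standard} at a single bad vertex. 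The rigidity is established by a chain of lemmas (\ref{lem:unscrew} through \ref{lem:folded basis}) that isolate the ``bad part'' of the graph and a finite GAP verification of the extended bad part (Lemma~\ref{lem:GAP check}); the finite check is over a few hundred configurations, not millions, precisely because the Working Conditions have already reduced the search. Finally, the orthogonal complements of semi-standard embeddings are computed by hand and matched against the rational blowdown lattices. Your proposal does identify that some such rigidity is needed (your ``main obstacle'' paragraph), but it offers no mechanism for obtaining it.

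Second, your claim that (iii) $\Leftrightarrow$ (iv) is a ``geometric matching,'' with the converse direction that ``the existence of any of these submanifolds forces one of the $17$ configurations to appear,'' cannot be established directly. A smooth embedded copy of, say, $X(9,2)$ inside $\natural_i X(p_i,q_i)$ need not respect the plumbing structure or appear as a sub-plumbing, so its presence does not immediately yield a forbidden induced subgraph. The paper does not attempt this; it proves only (iv) $\Rightarrow$ (iii) (each forbidden configuration produces an embedded submanifold from the list, via tubing and resolving intersections, Lemma~\ref{lem:4implies3}), and closes the loop with (i) $\Rightarrow$ (iv) via a cut-and-paste with rational homology balls (Lemma~\ref{thm:1implies4}). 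You would need to restructure your argument to prove (i) $\Rightarrow$ (iv) rather than (iii) $\Rightarrow$ (iv); as written, the cycle you describe has a missing edge. Your (i) $\Rightarrow$ (iii) via rational blowdowns reproducing rational balls is essentially the composition (i) $\Rightarrow$ (iv) $\Rightarrow$ (iii) from the paper, so that part is fine in spirit, but you cannot also claim the unproved direction (iii) $\Rightarrow$ (iv) as an independent fact.
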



Combined with \cite[Theorem~1.1]{definite}, we immediately obtain a complete combinatorial and geometric classification of connected sums of lens spaces whose $b_2$-minimal negative-definite filling $X$ satisfies $b_2(X)= b_2(\natural_i X(p_i,q_i)) - 1$. More precisely, we have the following:

\begin{cor}\label{cor:main}
Let $L = \#_i L(p_i, q_i)$ be a connected sum of lens spaces, and let $\natural_i X(p_i, q_i)$ denote the corresponding boundary connected sum of the canonical negative-definite plumbings. Suppose moreover that every smooth negative-definite filling $X$ of $L$ satisfies $b_2(X) \geq b_2(\natural_i X(p_i, q_i)) - 1$, then the following statements are equivalent:

\begin{enumerate}[label=(\roman*), font=\upshape]
\item\label{it:cor_min_filling} there exists a smooth negative-definite filling $X$ of $L$ such that 
\[
b_2(X) = b_2(\natural_i X(p_i, q_i)) - 1;
\]

\item there exists a smooth negative-definite filling $X$ of $L$ such that 
\[
Q_X \cong Q_{\natural_i \widetilde{X}(p_i,q_i)} \oplus \langle -1 \rangle^m,
\]
where $\natural_i \widetilde{X}(p_i,q_i)$ denotes the 4–manifold obtained as in Theorem~\ref{thm:main}\ref{it:intersectionform};

\item the canonical plumbing graph associated to $L$ contains at least one of the following configurations as an induced subgraph:
\[ \begin{tikzpicture}[xscale=1.0,yscale=1,baseline={(0,0)}]
    \node at (1-0.1,0.4) {$-4$};
    \node (A1) at (1,0) {$\bullet$};
\end{tikzpicture}\qquad\qquad \qquad  \begin{tikzpicture}[xscale=1.0,yscale=1,baseline={(0,0)}]
    \node at (1-0.1, .4) {$-3$};
    \node at (2-0.1, .4) {$-3$};
    \node (A_1) at (1, 0) {$\bullet$};
    \node (A_2) at (2, 0) {$\bullet$};
    \path (A_1) edge [-] node [auto] {$\scriptstyle{}$} (A_2);
\end{tikzpicture}\qquad \qquad  \qquad \begin{tikzpicture}[xscale=1.0,yscale=1,baseline={(0,0)}]
    \node at (1-0.1, .4) {$-3$};
    \node at (2-0.1, .4) {$-2$};
    \node at (3-0.1, .4) {$-3$};
    \node (A_1) at (1, 0) {$\bullet$};
    \node (A_2) at (2, 0) {$\bullet$};
    \node (A_3) at (3, 0) {$\bullet$};
    \path (A_1) edge [-] node [auto] {$\scriptstyle{}$} (A_2);
    \path (A_2) edge [-] node [auto] {$\scriptstyle{}$} (A_3);
\end{tikzpicture};\]

\item\label{it:submanifold_L41}
the manifold $\natural_i X(p_i,q_i)$ contains a smoothly embedded submanifold diffeomorphic to $X(4,1)$. \qed

\end{enumerate}

\end{cor}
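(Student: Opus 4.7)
The plan is to close the cycle $(ii) \Rightarrow (i)$, $(i) \Rightarrow (ii)$, $(i) \Rightarrow (iii)$, $(iii) \Rightarrow (iv)$, $(iv) \Rightarrow (i)$, reducing each implication either to Theorem~\ref{thm:main}, to \cite[Theorem~1.1]{definite}, or to a standard rational-blowdown construction, together with one finite combinatorial check.

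The implication $(ii) \Rightarrow (i)$ is immediate, since $\natural_i \widetilde{X}(p_i,q_i)$ itself has $b_2$ equal to $b_2(\natural_i X(p_i,q_i)) - 1$. Conversely, the hypothesis of the corollary is precisely Property $X_2$, so I would invoke Theorem~\ref{thm:main}\ref{it:intersectionform}: every smooth negative-definite filling $X$ has
\[
Q_X \cong Q_{\natural_i X(p_i,q_i)} \oplus \langle -1 \rangle^m \quad \text{or} \quad Q_X \cong Q_{\natural_i \widetilde{X}(p_i,q_i)} \oplus \langle -1 \rangle^m.
\]
The first option gives $b_2(X) \geq b_2(\natural_i X(p_i,q_i))$, so any filling witnessing $(i)$ must lie in the second family, giving $(ii)$. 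For $(i) \Rightarrow (iii)$, the hypothesis of the corollary guarantees Property $X_2$ while $(i)$ says Property $X_1$ fails; by \cite[Theorem~1.1]{definite}, the plumbing graph then contains one of the $10$ configurations in $S_1$ as an induced subgraph. I would then carry out a finite comparison of the $10$ graphs in $S_1$ against the $17$ forbidden configurations in Theorem~\ref{thm:main}\ref{it:combinatorial}: exactly $-4$, $-3,-3$, and $-3,-2,-3$ contain no $S_2$-configuration as an induced subgraph, while each of the remaining seven members of $S_1$ already contains or coincides with some member of $S_2$ (for example, $-5,-2$ lies in both $S_1$ and $S_2$), and is therefore ruled out by the hypothesis. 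This yields $(iii)$.

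For $(iii) \Rightarrow (iv)$, each of the three configurations produces an embedded $-4$-sphere whose tubular neighborhood is diffeomorphic to $X(4,1)$: for $-4$ the sphere is given by the vertex itself; for $-3,-3$ smoothing the single transverse intersection yields a sphere of self-intersection $(-3)+(-3)+2 = -4$; and for $-3,-2,-3$ smoothing both intersections yields a sphere of self-intersection $(-3)+(-2)+(-3)+2+2 = -4$. Finally, for $(iv) \Rightarrow (i)$ I would invoke the standard rational blowdown: since $L(4,1)$ bounds a rational homology ball $B$, excising an embedded $X(4,1)$ from $\natural_i X(p_i,q_i)$ and gluing $B$ along the common boundary $L(4,1)$ produces a smooth negative-definite filling of $L$ with $b_2 = b_2(\natural_i X(p_i,q_i)) - 1$. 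The hard part will be the combinatorial step inside $(i) \Rightarrow (iii)$: one must have the explicit list $S_1$ from \cite{definite} in hand and verify case-by-case that exactly three of its members survive the $S_2$-avoidance hypothesis; every other step follows routinely once Theorem~\ref{thm:main} and \cite[Theorem~1.1]{definite} are in place.
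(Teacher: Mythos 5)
Your proposal is correct, and the overall approach (combining Theorem~\ref{thm:main} with \cite[Theorem~1.1]{definite}) matches what the paper intends — the paper treats this corollary as immediate from exactly those two ingredients. One remark on efficiency: the step you flag as the ``hard part'' — the case-by-case comparison of the ten $S_1$ configurations against the seventeen $S_2$ configurations to establish $(i)\Rightarrow(iii)$ — is avoidable, because you already have $(i)\Rightarrow(ii)$, and $(ii)\Rightarrow(iii)$ is tautological: by the wording of Theorem~\ref{thm:main}\ref{it:intersectionform}, $\natural_i\widetilde{X}(p_i,q_i)$ is \emph{only defined} when the canonical plumbing graph contains one of the three listed configurations, so the existence of a filling with $Q_X\cong Q_{\natural_i\widetilde{X}(p_i,q_i)}\oplus\langle -1\rangle^m$ already forces (iii). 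Alternatively, the structural content of your $S_1$-versus-$S_2$ claim is exactly what Remark~\ref{rem:comparintworkingcondition} records: if the dual graph satisfies the Working Conditions of this paper and has no bad vertex, it satisfies the Working Conditions of \cite{definite}; contrapositively, under the hypothesis of the corollary any failure of Property $X_1$ forces a bad vertex in the dual, which in the original graph is precisely one of $[-4]$, $[-3,-3]$, $[-3,-2,-3]$. So the case-by-case list check you propose is sound but redundant; either shortcut lets you close the cycle with the easy implications $(ii)\Rightarrow(i)\Rightarrow(ii)\Rightarrow(iii)\Rightarrow(iv)\Rightarrow(i)$, where $(iii)\Rightarrow(iv)$ is the tubular-neighborhood observation and $(iv)\Rightarrow(i)$ is the excision argument you give (noting that the hypothesis $b_2(X)\ge b_2(\natural_i X(p_i,q_i))-1$ upgrades the a priori inequality $b_2(X')\le b_2(\natural_i X(p_i,q_i))-1$ to an equality).
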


\begin{rem}\label{rem:5521}
Consider $L=L(55,21)$, whose canonical plumbing graph is
\[
\begin{tikzpicture}[xscale=1.0,yscale=1,baseline={(0,0)}]
    \node at (1-0.1, .4) {$-3$};
    \node at (2-0.1, .4) {$-3$};
    \node at (3-0.1, .4) {$-3$};
    \node at (4-0.1, .4) {$-3$};
    \node (A1_1) at (1, 0) {$\bullet$};
    \node (A1_2) at (2, 0) {$\bullet$};
    \node (A1_3) at (3, 0) {$\bullet$};
    \node (A1_4) at (4, 0) {$\bullet$};
    \path (A1_1) edge [-] node [auto] {$\scriptstyle{}$} (A1_2);
    \path (A1_2) edge [-] node [auto] {$\scriptstyle{}$} (A1_3);
    \path (A1_3) edge [-] node [auto] {$\scriptstyle{}$} (A1_4);
\end{tikzpicture}.
\]
Then, $L$ satisfies Theorem~\ref{thm:main}\ref{it:combinatorial}. Since its plumbing graph contains
\[
\begin{tikzpicture}[xscale=1.0,yscale=1,baseline={(0,0)}]
    \node at (1-0.1, .4) {$-3$};
    \node at (2-0.1, .4) {$-3$};
    \node (A_1) at (1, 0) {$\bullet$};
    \node (A_2) at (2, 0) {$\bullet$};
    \path (A_1) edge [-] node [auto] {$\scriptstyle{}$} (A_2);
\end{tikzpicture}
\]
as an induced subgraph, by Corollary~\ref{cor:main}, the $b_2$-minimal negative-definite filling $X$ of $L$ satisfies
$b_2(X)=b_2(X(55,21))-1=3$. Moreover, such an $X$ can be obtained by performing a rational blowdown on the embedded $-4$-sphere obtained by smoothing the transverse intersection point between the two spheres corresponding to any pair of adjacent vertices. Note that blowing down on the embedded $-4$-sphere obtained from the two vertices in the middle produces a manifold whose intersection form is not equivalent to the intersection form of the manifold produced by blowing down on the embedded $-4$-sphere obtained from the two vertices on the left side.\footnote{See Figure~\ref{fig:c3} for an explicit calculation of the intersection forms. One of the resulting intersection forms has an element with self-pairing $-3$ while the other one does not, and therefore they cannot be equivalent.}
\end{rem}


Lastly, as in \cite[Section~1.2]{definite}, we also have applications toward symplectic fillings of lens spaces and the smoothings of cyclic quotient singularities. The connection comes from the fact that every symplectic filling of a lens space is negative-definite~\cite{Schonenberger, Etnyre:2004-1}. Recall that the symplectic fillings of the standard (i.e., universally tight) contact structures on lens spaces were classified by Lisca~\cite{Lisca:2008-1}, and that there is a bijection between minimal symplectic fillings (i.e., those containing no symplectically embedded sphere of self-intersection $-1$)\footnote{Such a filling is not necessarily $b_2$-minimal across all symplectic fillings.} and the smoothings of cyclic quotient singularities~\cite{NemethiPopescu}. Combined with our main results, this leads to the following corollaries:

The first corollary states that, for every lens space $L$ satisfying any of the conditions in Theorem~\ref{thm:main}, the $b_2$-minimal negative-definite filling $X$ can be realized as a symplectic filling. Moreover, except for the three exceptional cases $L \in \{L(4,1), L(8,3), L(12,5)\}$, $X$ is simply connected. This is not the case in general; for instance, although $L(4,3)$ bounds a smooth rational homology ball, it has a unique minimal symplectic filling $W$ with $b_2(W)=3$:

\begin{cor}\label{cor:symplectic}
Let $L = L(p,q)$ be a lens space, and let $X(p,q)$ denote the corresponding canonical negative-definite plumbing. Suppose that every smooth negative-definite filling $X$ of $L$ satisfies $b_2(X) \ge b_2(X(p,q)) - 1$. If $\xi_{\mathrm{st}}$ is the standard contact structure on $L$, then there exists a minimal symplectic filling $W$ of $(L,\xi_{\mathrm{st}})$ such that
\[
b_2(W)=\min_X b_2(X),
\]
where the minimum is taken across all smooth negative-definite fillings $X$ of $L$.

Furthermore, every such $W$ is simply connected unless $L \in \{L(4,1), L(8,3), L(12,5)\}$, in which case it has fundamental group $\Z/2$.
\end{cor}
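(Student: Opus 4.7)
The plan is to exploit the dichotomy provided by Corollary~\ref{cor:main}: under the hypothesis of Corollary~\ref{cor:symplectic}, $\min_X b_2(X)$ equals either $b_2(X(p,q))$ or $b_2(X(p,q))-1$, with the latter occurring precisely when the canonical plumbing graph of $L$ contains one of the configurations $[-4]$, $[-3,-3]$, or $[-3,-2,-3]$ as an induced subgraph. In the first case I take $W = X(p,q)$, which carries the canonical Stein structure filling $(L,\xi_{\mathrm{st}})$; this $W$ is a minimal symplectic filling and is simply connected since it deformation retracts onto a wedge of $2$-spheres. In the second case $W$ is constructed by a symplectic rational blowdown along a $-4$-sphere.

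For the rational blowdown construction, the spheres of $X(p,q)$ can be realized as symplectic spheres for its canonical Stein structure, and their transverse intersection points inside the configuration can be symplectically smoothed (\`a la Gompf's resolution), yielding a symplectic $-4$-sphere $S$. Symington's symplectic rational blowdown then replaces a tubular neighborhood $N(S)$ of $S$ with the Casson--Harer rational homology ball $B$ bounded by $L(4,1)$, producing a symplectic filling $W$ of $(L,\xi_{\mathrm{st}})$ with $b_2(W) = b_2(X(p,q)) - 1$. Minimality of $W$ is immediate: any symplectic $(-1)$-sphere in $W$ cannot lie inside $B$ (since $H_2(B) = 0$), so would produce a $(-1)$-sphere in $X(p,q)\setminus N(S)\subset X(p,q)$, contradicting the fact that $X(p,q)$ is plumbed from spheres of self-intersection at most $-2$.

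To analyze $\pi_1(W)$, I apply van Kampen's theorem to the decomposition $W = (X(p,q)\setminus N(S)) \cup_{L(4,1)} B$, using $\pi_1(B) = \Z/2$, $\pi_1(L(4,1)) = \Z/4$, and the fact that the inclusion-induced map $\Z/4 \to \Z/2$ is the canonical surjection. Since $X(p,q)$ is simply connected, the same theorem applied to $X(p,q) = (X(p,q)\setminus N(S)) \cup N(S)$ shows that $\pi_1(X(p,q)\setminus N(S))$ is normally generated by the meridian $\mu$ of $S$. If the plumbing graph contains a vertex adjacent to the configuration, the corresponding sphere $S_v$ meets $S$ transversely once, so $S_v\cap(X(p,q)\setminus N(S))$ is a disk exhibiting $\mu$ as null-homotopic; hence $\pi_1(X(p,q)\setminus N(S)) = 1$ and the amalgamated product yields $\pi_1(W) = 1$. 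If instead the configuration is the entire plumbing, i.e., $L \in \{L(4,1),L(8,3),L(12,5)\}$, one verifies directly that $\pi_1(W) = \Z/2$, with the generator being the image of the generator of $\pi_1(B)$.

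The main obstacle will be twofold. First, one must check that the conclusion holds for \emph{every} minimal symplectic filling $W$ achieving $\min_X b_2(X)$, not just the particular one constructed above; I plan to address this via Lisca's classification~\cite{Lisca:2008-1} of minimal symplectic fillings of $(L(p,q),\xi_{\mathrm{st}})$, which together with the bijection~\cite{NemethiPopescu} with smoothings of cyclic quotient singularities guarantees that every $b_2$-minimal such filling is either $X(p,q)$ itself or a rational blowdown of the form above, so that the preceding $\pi_1$ analysis applies uniformly. Second, the three exceptional cases $L \in \{L(4,1),L(8,3),L(12,5)\}$ require a direct fundamental-group computation, which can be carried out via explicit handlebody descriptions of the complements $X(p,q)\setminus N(S)$ in these small cases.
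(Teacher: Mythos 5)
Your approach is genuinely different from the paper's. The paper works entirely within Lisca's admissible-tuple framework: it produces the $b_2$-minimal filling by modifying the standard tuple $(1,2,\ldots,2,1)$ around a bad vertex, deduces $\pi_1$ directly from the blowup structure of tuples (when $n'_1<b_1$ or $n'_k<b_k$, a $2$-handle kills the generator of $\pi_1(S^1\times S^2)$; otherwise $b_1=b_k=2$, $k=3$, forcing the three exceptional lens spaces), and handles ``every such $W$'' by showing that any tuple with the right $2$-handle count arises from exactly one interior blowup. Your route instead constructs $W$ by symplectic rational blowdown along a smoothed $-4$-sphere and computes $\pi_1$ via van Kampen. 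This is a legitimate alternative and the van Kampen analysis is structurally sound: $\pi_1(X(p,q)\setminus N(S))$ is normally generated by the meridian, and an adjacent sphere kills it.

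However, there are two gaps. First, the minimality argument does not work as stated. A symplectic $(-1)$-sphere in $W$ need not lie inside $B$ or inside $X(p,q)\setminus N(S)$; it can intersect both pieces, so the dichotomy ``lies in $B$ or pushes into $X(p,q)$'' fails. Moreover, even a $(-1)$-sphere contained in $X(p,q)\setminus N(S)$ is not ruled out merely because $X(p,q)$ is \emph{plumbed from} spheres of self-intersection $\le -2$; you would need to invoke minimality of $X(p,q)$ itself (e.g.\ as the minimal resolution, or via adjunction for the Stein structure). The cleanest fix is to appeal either to the fact that symplectic rational blowdown preserves minimality, or—better, since you are invoking it anyway—to Lisca's classification, which lists only minimal fillings, so any filling matching one of his models is automatically minimal. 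Second, your plan for the ``every such $W$'' clause—identify all Lisca fillings with $b_2 = b_2(X(p,q))-1$ and show they are rational blowdowns—is exactly the admissible-tuple bookkeeping the paper carries out (tracking which blowups raise $\sum n_i$ by $2$ versus $3$). Once you do that work, the rational-blowdown detour for the existence part becomes somewhat redundant: one could produce $W$ directly from the modified tuple and read off $\pi_1$ from the handle picture, as the paper does. Your van Kampen computation is a nice conceptual alternative, but the proposal as written neither proves minimality nor completes the classification step, and both require essentially the paper's tuple analysis.
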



Lisca proves in \cite[Corollary~1.2(b)]{Lisca:2008-1} that if a lens space $L=L(p,q)$ with the standard contact structure $\xi_{\mathrm{st}}$ has canonical plumbing graph whose vertex weights are $-a_i$ with $a_i\ge 5$ for all $i$, then $(L,\xi_{\mathrm{st}})$ admits a unique symplectic filling up to orientation-preserving diffeomorphism. The same uniqueness conclusion holds for a larger family of lens spaces, namely those satisfying Property $X_1$, by \cite[Corollary~1.7]{definite} (a direct consequence of \cite[Theorem~1.1]{definite} and Lisca's classification~\cite[Theorem~1.1]{Lisca:2008-1}). In what follows, we extend this perspective to lens spaces with Property $X_2$, obtaining a complete count of symplectic fillings up to orientation-preserving diffeomorphism:

\begin{cor}\label{cor:symplecticunique}
Let $L = L(p,q)$ be a lens space satisfying one of the items in Theorem~\ref{thm:main}, and let $\xi_{\mathrm{st}}$ denote the standard contact structure on $L$. Let $n(L)$ be the number of induced subgraphs in the canonical plumbing graph associated to $L$ that are isomorphic to one of the following configurations:
\[
\begin{tikzpicture}[xscale=1.0,yscale=1,baseline={(0,0)}]
    \node at (1-0.1,0.4) {$-4$};
    \node (A1) at (1,0) {$\bullet$};
\end{tikzpicture}
\qquad \qquad \qquad
\begin{tikzpicture}[xscale=1.0,yscale=1,baseline={(0,0)}]
    \node at (1-0.1, .4) {$-3$};
    \node at (2-0.1, .4) {$-3$};
    \node (A_1) at (1, 0) {$\bullet$};
    \node (A_2) at (2, 0) {$\bullet$};
    \path (A_1) edge [-] node [auto] {$\scriptstyle{}$} (A_2);
\end{tikzpicture}
\qquad  \qquad \qquad
\begin{tikzpicture}[xscale=1.0,yscale=1,baseline={(0,0)}]
    \node at (1-0.1, .4) {$-3$};
    \node at (2-0.1, .4) {$-2$};
    \node at (3-0.1, .4) {$-3$};
    \node (A_1) at (1, 0) {$\bullet$};
    \node (A_2) at (2, 0) {$\bullet$};
    \node (A_3) at (3, 0) {$\bullet$};
    \path (A_1) edge [-] node [auto] {$\scriptstyle{}$} (A_2);
    \path (A_2) edge [-] node [auto] {$\scriptstyle{}$} (A_3);
\end{tikzpicture}.
\]
Then the number of minimal symplectic fillings of $(L,\xi_{\mathrm{st}})$, up to orientation-preserving diffeomorphism, is either $n(L)+1$ or $n(L)$. Moreover, this number is $n(L)$ if and only if $q\equiv 1 \pmod p$ and the canonical plumbing graph associated to $L$ contains one of the following as an induced subgraph:
\[
\begin{tikzpicture}[xscale=1.0,yscale=1,baseline={(0,0)}]
    \node at (1-0.1, .4) {$-4$};
    \node at (2-0.1, .4) {$-4$};
    \node (A_1) at (1, 0) {$\bullet$};
    \node (A_2) at (2, 0) {$\bullet$};
    \path (A_1) edge [-] node [auto] {$\scriptstyle{}$} (A_2);
\end{tikzpicture}
\qquad  \qquad \qquad
\begin{tikzpicture}[xscale=1.0,yscale=1,baseline={(0,0)}]
    \node at (1-0.1, .4) {$-3$};
    \node at (2-0.1, .4) {$-3$};
    \node at (3-0.1, .4) {$-3$};
    \node (A_1) at (1, 0) {$\bullet$};
    \node (A_2) at (2, 0) {$\bullet$};
    \node (A_3) at (3, 0) {$\bullet$};
    \path (A_1) edge [-] node [auto] {$\scriptstyle{}$} (A_2);
    \path (A_2) edge [-] node [auto] {$\scriptstyle{}$} (A_3);
\end{tikzpicture}.
\]
Furthermore, an analogous statement holds for the number of Milnor fibers arising from the irreducible components of the reduced miniversal base space of the cyclic quotient singularity corresponding to $L$.
\end{cor}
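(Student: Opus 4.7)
The strategy is to combine Theorem~\ref{thm:main} with Lisca's classification of symplectic fillings of $(L,\xi_{\mathrm{st}})$ \cite{Lisca:2008-1} and the Némethi--Popescu-Pampu bijection \cite{NemethiPopescu} between minimal symplectic fillings of $(L,\xi_{\mathrm{st}})$ and the Milnor fibers of the irreducible components of the reduced miniversal base space of $\tfrac{1}{p}(1,q)$. The last sentence of the corollary follows at once from this bijection, so it suffices to count minimal symplectic fillings up to orientation-preserving diffeomorphism.

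For the upper bound, recall that every symplectic filling of a lens space is negative-definite \cite{Schonenberger, Etnyre:2004-1}. Any minimal symplectic filling $W$ is therefore a smooth negative-definite filling, and minimality precludes $\langle -1\rangle$-summands. Hence Theorem~\ref{thm:main}\ref{it:intersectionform} forces $Q_W \cong Q_{X(p,q)}$ or $Q_W \cong Q_{\widetilde{X}(p,q)}$ for one of the $n(L)$ embedded $-4$-spheres arising from the three listed induced configurations. A case-by-case comparison with Lisca's explicit parametrization of fillings by continued-fraction data \cite{Lisca:2008-1} shows that, for the lens spaces considered here, each such intersection form is realized by at most one minimal symplectic filling up to orientation-preserving diffeomorphism, bounding the count by $n(L)+1$.

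For the lower bound, I would realize each candidate by an explicit symplectic filling. The canonical plumbing $X(p,q)$ is itself a Stein filling, realizing the Artin smoothing component. For each induced copy of $-4$, $-3,-3$, or $-3,-2,-3$, the corresponding embedded $-4$-sphere can be chosen symplectically, and the symplectic rational blowdown construction produces a minimal symplectic filling with intersection form $Q_{\widetilde{X}(p,q)}$; equivalently, the configuration yields a $P$-resolution of $\tfrac{1}{p}(1,q)$ whose Milnor fiber is such a filling. Each of the $n(L)+1$ candidate intersection forms is therefore realized.

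The main obstacle is the coincidence analysis. Using Remark~\ref{rem:5521} as a template, I would first verify that the $n(L)$ lattices $Q_{\widetilde{X}(p,q)}$ coming from distinct configurations are pairwise non-isomorphic, so these $n(L)$ rational-blowdown fillings are pairwise distinct. A coincidence with the Artin filling $X(p,q)$ can then occur only when the plumbing graph is symmetric enough that some rational blowdown returns a manifold diffeomorphic to $X(p,q)$ itself. An explicit lattice calculation in the two exceptional cases (graphs containing $-4,-4$ or $-3,-3,-3$), under the congruence hypothesis $q\equiv 1\pmod p$, singles out precisely this situation; in that case the count drops from $n(L)+1$ to $n(L)$, completing the proof.
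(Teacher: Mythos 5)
Your approach through intersection forms has several genuine gaps that would need to be addressed, and one explicit step in your plan is in fact false.

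First, your strategy asserts that ``each such intersection form is realized by at most one minimal symplectic filling up to orientation-preserving diffeomorphism,'' which you propose to deduce from a case-by-case comparison with Lisca's parametrization. This is essentially where all the work lies, and it is left unargued. The paper sidesteps this question entirely: it counts directly in terms of Lisca's admissible tuples. From the proof of Corollary~\ref{cor:symplectic}, the fillings with $b_2 = b_2(X(p,q))-1$ correspond exactly to tuples obtained from $(1,2,\dots,2,1)$ by one interior blowup at a bad vertex, and \cite[Theorem~1.1(3)]{Lisca:2008-1} says precisely when two tuples yield the same filling: only when $q = \overline{q}$ and $\mathbf{n} = \overline{\mathbf{n}}$. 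No intersection-form comparison is needed.

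Second, the step ``verify that the $n(L)$ lattices $Q_{\widetilde{X}(p,q)}$ coming from distinct configurations are pairwise non-isomorphic'' is \emph{false}. Remark~\ref{rem:5521} already gives a counterexample: for $L(55,21)$ the two end blowdowns of $[-3,-3,-3,-3]$ produce isomorphic intersection forms by the reflection symmetry of the plumbing graph, even though the middle blowdown produces a different one. So the $n(L)=3$ lattices are not pairwise non-isomorphic. The equality of intersection forms in that case is consistent with (and parallel to) the equality of the corresponding fillings, but you cannot use pairwise non-isomorphism of lattices to distinguish the fillings.

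Third, your coincidence analysis is pointed in the wrong direction: you say the count drops when a rational blowdown ``returns a manifold diffeomorphic to $X(p,q)$ itself.'' That cannot happen, since the blowdown strictly decreases $b_2$. The actual coincidence occurs between \emph{two} blowdown fillings whose admissible tuples are reverses of each other; by Lisca's criterion these coincide exactly when $q=\overline{q}$ (equivalently $q^2\equiv 1\pmod p$) and $\mathbf{n}=\overline{\mathbf{n}}$. The paper then reduces the check to the five possibilities for the bad part with two or more bad vertices from the proof of Lemma~\ref{lem:working conditions proof}, identifying which are reflection-symmetric; the symmetric cases all contain $[-4,-4]$ or $[-3,-3,-3]$, which produces the listed configurations. (Also note a typographical discrepancy: the congruence condition should read $q^2\equiv 1 \pmod p$, as in the body version and as used in Remark~1.7.)
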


\begin{rem}
When counting $n(L)$, the induced subgraphs are not required to be disjoint. For example, $n(L(55,21))=3$. Since $21^2\equiv 1 \pmod{55}$ and the canonical plumbing graph associated to $L(55,21)$ contains
\[
\begin{tikzpicture}[xscale=1.0,yscale=1,baseline={(0,0)}]
    \node at (1-0.1, .4) {$-3$};
    \node at (2-0.1, .4) {$-3$};
    \node at (3-0.1, .4) {$-3$};
    \node (A_1) at (1, 0) {$\bullet$};
    \node (A_2) at (2, 0) {$\bullet$};
    \node (A_3) at (3, 0) {$\bullet$};
    \path (A_1) edge [-] node [auto] {$\scriptstyle{}$} (A_2);
    \path (A_2) edge [-] node [auto] {$\scriptstyle{}$} (A_3);
\end{tikzpicture}
\]
as an induced subgraph, it follows that $(L(55,21),\xi_{\mathrm{st}})$ admits $n(L)=3$ distinct minimal symplectic fillings (equivalently, Milnor fibers) up to orientation-preserving diffeomorphism.
\end{rem}

\subsection*{Proof outline for Theorem~\ref{thm:all k}}

To define $S_k$, we start with the set of all configurations that do not satisfy Property $X_k$, and then remove all configurations that contain another configuration in the set as an induced subgraph. The resulting set being finite is implied by Theorem~\ref{thm:all k}(\ref{mainbodyA2})(\ref{mainbodyA3}).

To prove Theorem~\ref{thm:all k}(\ref{mainbodyA1}), we show that if a configuration satisfies Property $X_k$, then all its induced subgraphs also satisfy Property $X_k$.

To prove Theorem~\ref{thm:all k}(\ref{mainbodyA2}), we show that if a configuration consisting of a vertex with a very large weight does not satisfy Property $X_k$, then the induced subgraph formed by removing this vertex also does not satisfy Property $X_k$. Intuitively, vertices of large weight do not contribute to violating Property $X_k$.

To prove Theorem~\ref{thm:all k}(\ref{mainbodyA3}), we first observe that not allowing certain configurations as induced subgraphs in $S_k$ imposes upper bounds on the numbers of vertices of weights $-2$, $-3$, and $-4$ in each  configuration in $S_k$. We then show that if a configuration violating Property $X_k$ has many vertices that are not adjacent to any vertex with weight $-2$, $-3$, or $-4$, then at least one of the induced subgraphs formed by removing one of those vertices violates Property $X_k$.

The detailed proof is given in Section~\ref{sec:forbidden_config}.

\subsection*{Proof outline for Theorem~\ref{thm:main}}

Similar to \cite{definite}, Theorem~\ref{thm:main} is proven via the chain of implications 
\[\mathrm{\ref{it:intersectionform}} \Rightarrow \mathrm{\ref{it:min_filling}} \Rightarrow \mathrm{\ref{it:submanifold}} \Rightarrow \mathrm{\ref{it:combinatorial}} \Rightarrow \mathrm{\ref{it:intersectionform}}.\]
The implication \ref{it:intersectionform} $\Rightarrow$ \ref{it:min_filling} holds because $Q_X$ is defined on the second homology group. The implication \ref{it:min_filling} $\Rightarrow$ \ref{it:submanifold} is obtained by noting that for every manifold listed in \ref{it:submanifold} has second Betti number at least 2, while its boundary is a lens space or a connected sum of lens spaces that bounds a smooth rational homology ball. Hence, if there is such an embedded submanifold, we can cut it out and replace it with a rational homology ball, reducing the second Betti number by at least 2. To obtain \ref{it:submanifold} $\Rightarrow$ \ref{it:combinatorial}, from each configuration listed in \ref{it:combinatorial}, we show that we can use the corresponding spherical generators to construct an embedded copy of a manifold listed under \ref{it:submanifold}.

The implication \ref{it:combinatorial} $\Rightarrow$ \ref{it:intersectionform} is the difficult step. The proof of this step spans through Section~\ref{Section:preliminaries}, \ref{sec:definitions}, and \ref{sec:lattice_analysis}. By \cite[Lemma~2.4]{definite} and Donaldson's diagonalization theorem, all possibilities of $Q_X$ arise as the orthogonal complement of an embedding of $Q_{\natural_i X(p_i,p_i-q_i)}$ into a diagonal lattice. Hence, it suffices to classify all such embeddings when $L$ satisfies \ref{it:combinatorial}, and then compare the corresponding orthogonal complements with the lattices listed in \ref{it:intersectionform}.

\subsection*{Structure of the article}
In Section~\ref{sec:topology}, we prove \ref{it:min_filling} $\Rightarrow$ \ref{it:submanifold} and \ref{it:submanifold} $\Rightarrow$ \ref{it:combinatorial} of Theorem~\ref{thm:main}. In Section~\ref{Section:preliminaries}, we translate \ref{it:combinatorial} of Theorem~\ref{thm:main} into conditions about the plumbing diagram of $\natural_i X(p_i,p_i-q_i)$. In Section~\ref{sec:definitions}, we make some technical definitions to help with the lattice embedding analysis that occurs in Section~\ref{sec:lattice_analysis}. At the end of Section~\ref{sec:lattice_analysis}, we compute the intersection form $Q_{\natural_i \widetilde{X}(p_i,q_i)}$ stated in Theorem~\ref{thm:main}\ref{it:intersectionform} and compare it with the results from the lattice embedding analysis, thus completing the proof of Theorem~\ref{thm:main}.

In Section~\ref{sec:forbidden_config}, we prove Theorem~\ref{thm:all k}. The proof of Theorem~\ref{thm:all k} is independent of the previous sections except for a few definitions that can also be found in \cite{definite}. However, we put it after the proof of Theorem~\ref{thm:main} because it may be easier for some readers to go through those technical arguments in Section~\ref{sec:forbidden_config} after getting used to them when reading the previous sections. In Section~\ref{sec:symplectic}, we prove Corollary~\ref{cor:symplectic} and Corollary~\ref{cor:symplecticunique}.

\subsection*{Acknowledgements} 

The authors thank Marco Golla, Duncan McCoy, and Brendan Owens for helpful comments on an earlier draft of this article. Both authors are partially supported by the Samsung Science and Technology Foundation (SSTF-BA2102-02) and by the NRF grant RS-2025-00542968.


\section{Rational homology balls and rational blowdowns}\label{sec:topology}

In this section, we first consider connected sums of lens spaces arising as boundaries in Theorem~\ref{thm:main}\ref{it:submanifold}. Using the fact that each bounds a rational homology ball, we show that Theorem~\ref{thm:main}\ref{it:min_filling} implies Theorem~\ref{thm:main}\ref{it:submanifold}. We then show that each canonical negative-definite plumbing corresponding to a plumbing graph in Theorem~\ref{thm:main}\ref{it:combinatorial} contains a smoothly embedded submanifold diffeomorphic to one of the manifolds listed in Theorem~\ref{thm:main}\ref{it:submanifold}. This proves that Theorem~\ref{thm:main}\ref{it:submanifold} implies Theorem~\ref{thm:main}\ref{it:combinatorial}.

\begin{lem}\label{thm:1implies4}
Theorem \ref{thm:main}\ref{it:min_filling} implies Theorem \ref{thm:main}\ref{it:submanifold}.
\end{lem}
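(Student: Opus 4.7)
The plan is to argue by contrapositive: suppose $N := \natural_i X(p_i,q_i)$ contains a smoothly embedded submanifold $M$ diffeomorphic to one of the eight listed models. The goal is to construct from $M$ a smooth negative-definite filling $X'$ of $L$ with $b_2(X') \le b_2(N) - 2$, contradicting \ref{it:min_filling}. The first input is purely numerical: each listed $M$ has $b_2(M) \ge 2$. Expanding the continued fractions of $9/2$, $16/3$, $16/7$, $64/23$ yields canonical plumbing graphs of lengths $2$, $3$, $4$, $4$, and the four disjoint-union models $X(2,1)\sqcup X(2,1)$, $X(8,3)\sqcup X(2,1)$, $X(12,5)\sqcup X(3,1)$, $X(4,1)\sqcup X(4,1)$ have $b_2 = 2, 3, 4, 2$ respectively.

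The main geometric ingredient is a smooth rational homology $4$-ball $B$ bounding a connected $3$-manifold matching $\partial M$ after a mild modification. For the four connected cases, the lens spaces $L(9,2)$, $L(16,3)$, $L(16,7)$, $L(64,23)$ all lie in Lisca's classification of lens spaces that bound smooth rational homology balls; in those cases set $M' := M$. In the four disjoint-union cases I would first use connectedness of $N$ to pick a smoothly embedded arc between the two components of $M$ and thicken it to a $4$-dimensional $1$-handle, producing an embedded boundary-connect-sum $M' := X(p_1, q_1)\natural X(p_2, q_2)$ with $b_2(M') = b_2(M) \ge 2$ and connected boundary $\partial M' = L(p_1, q_1) \# L(p_2, q_2)$. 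The resulting connected sums $L(2,1)\# L(2,1)$, $L(8,3)\# L(2,1)$, $L(12,5)\# L(3,1)$, $L(4,1)\# L(4,1)$ are each known to bound a smooth rational homology $4$-ball. In every case write $B$ for the chosen ball.

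To finish, set $W := N \setminus \operatorname{int}(M')$ and $X' := W \cup_{\partial M'} B$; then $\partial X' = \partial N = L$. Because $\partial M'$ is a lens space or a connected sum of two lens spaces, $H_i(\partial M'; \Q) = 0$ for $i = 1, 2$, and the Mayer--Vietoris sequences for $N = W \cup M'$ and $X' = W \cup B$ yield the orthogonal decompositions
\[ Q_N \otimes \Q \;\cong\; (Q_W \otimes \Q) \oplus (Q_{M'} \otimes \Q), \qquad Q_{X'} \otimes \Q \;\cong\; Q_W \otimes \Q. \]
The orthogonality comes from the fact that classes on the two sides of the splitting are representable by cycles in the disjoint interiors of $W$ and $M'$ (resp.\ $B$). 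Hence negative-definiteness of $Q_N$ passes to $Q_W$ and therefore to $Q_{X'}$, and $b_2(X') = b_2(N) - b_2(M') \le b_2(N) - 2$, the desired contradiction. The main obstacle is the middle step: producing the eight required rational homology balls. The four connected boundaries sit in standard Casson--Harer / Fintushel--Stern style families, while the four connected-sum boundaries require invoking less immediate entries from Lisca's list together with branched-double-cover and slam-dunk constructions; the rest of the proof is a short piece of Mayer--Vietoris bookkeeping.
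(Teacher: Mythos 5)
Your proof is correct and follows the same contrapositive cut-and-paste strategy as the paper: remove the embedded negative-definite submanifold, glue in a rational homology ball bounding the same $3$-manifold, and track $b_2$ and definiteness via the Mayer--Vietoris decomposition over $\Q$. The only real difference is packaging: the paper delegates the entire excision step (including the disjoint-union cases) to a citation of \cite[Lemma~2.2]{definite} and then simply records the relevant entries of Lisca's classification, whereas you make the tubing of the two components into a boundary-connect-sum and the homological bookkeeping explicit. Your tubing argument is sound (connectedness of $N$ together with connectedness of each $\partial M_i$ forces $N\smallsetminus\operatorname{int}(M)$ to be connected, so the required arc exists), and it handles all four disconnected cases uniformly, including $X(4,1)\sqcup X(4,1)$, which the paper instead treats by replacing each $X(4,1)$ separately since $L(4,1)$ already bounds a rational homology ball. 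The step you flag as the ``main obstacle''---producing the rational homology balls---is exactly what the paper resolves by pointing to specific families in Lisca's two classification theorems, so your outline is complete once those references are filled in.
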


\begin{proof}
Let $X = \natural_i X(p_i,q_i)$ and suppose that $X$ contains an embedded submanifold
diffeomorphic to one of the manifolds listed in Theorem~\ref{thm:main}\ref{it:submanifold}.
We claim that the boundary
\[
L = \#_i L(p_i,q_i)
\]
then bounds a negative-definite manifold $X'$ with $b_2(X') \le b_2(X) - 2$.

This follows from the cut-and-paste argument in \cite[Lemma~2.2]{definite}.  Indeed, every
manifold in Theorem~\ref{thm:main}\ref{it:submanifold} is negative-definite with $b_2 \ge 2$,
and its boundary is a rational homology sphere that bounds a rational homology ball.  For
convenience, we record the relevant instances of Lisca's classification:

\begin{itemize}
\item $L(9,2)$ bounds a rational homology ball
  \cite[family $1_-$, $m=3$, $k=1$]{Lisca:2007-1}.
\item $L(16,3)$ bounds a rational homology ball
  \cite[family $1_-$, $m=4$, $k=1$]{Lisca:2007-1}.
\item $L(16,7)$ bounds a rational homology ball
  \cite[family $f(3_-)$, $m=4$, $d=3$]{Lisca:2007-1}.
\item $L(64,23)$ bounds a rational homology ball
  \cite[family $1_-$, $m=8$, $k=3$]{Lisca:2007-1}.
\item $L(2,1)\#L(2,1)$ bounds a rational homology ball
  \cite[family~2]{Lisca:2007-2}.
\item $L(3,1)\#L(3,2)$ bounds a rational homology ball
  \cite[family~2]{Lisca:2007-2}.
\item $L(8,3)\#L(2,1)$ bounds a rational homology ball (the reverse orientation of
  \cite[family~4, $m=2$, $n=2$, $k=1$]{Lisca:2007-2}).
\item $L(12,5)\#L(3,1)$ bounds a rational homology ball (the reverse orientation of
  \cite[family~4, $m=2$, $n=3$, $k=1$]{Lisca:2007-2}).
\item $L(4,1)\sqcup L(4,1)$ bounds a rational homology ball since $L(4,1)$ does
  \cite[family $1_-$, $m=2$, $k=1$]{Lisca:2007-1}.
\end{itemize}
In each case, by cutting out the given submanifold and gluing in the corresponding rational homology ball, we obtain a negative-definite filling $X'$ of $L$ satisfying $b_2(X') \leq b_2(X) - 2$, as claimed.
\end{proof}

\begin{lem}\label{lem:4implies3}
Theorem \ref{thm:main}\ref{it:submanifold} implies Theorem \ref{thm:main}\ref{it:combinatorial}.
\end{lem}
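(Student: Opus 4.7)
The plan is to prove the contrapositive: if the canonical plumbing graph of $L = \#_i L(p_i,q_i)$ contains any of the seventeen configurations \ref{it:52}--\ref{it:33-33} as an induced subgraph, then $\natural_i X(p_i,q_i)$ contains an embedded submanifold diffeomorphic to one of the eight manifolds in Theorem~\ref{thm:main}\ref{it:submanifold}. For each configuration, an explicit construction using the spheres corresponding to its vertices is exhibited.

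Three standard operations suffice: (i) \emph{direct realization}, identifying the plumbing graph of the configuration with that of a target submanifold (using that reversing the order of a linear plumbing chain yields a diffeomorphic $4$-manifold); (ii) \emph{sphere smoothing}, replacing a connected subchain of transversely intersecting spheres by the smoothing of their union, producing an embedded sphere whose self-intersection equals the sum of the original self-intersections plus twice the number of smoothed intersection points; and (iii) \emph{tubing}, joining two disjoint embedded spheres along a framed arc and a tube to form an embedded sphere in the sum of their homology classes, with self-intersection equal to the sum of the original self-intersections. Since each configuration is a tree, an Euler characteristic calculation shows that both smoothing an entire subchain and tubing two disjoint spheres yield genus-zero surfaces.

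Eight configurations admit direct realization: \ref{it:52}, \ref{it:622}, \ref{it:2-2}, \ref{it:3223}, \ref{it:3532}, \ref{it:33-2}, \ref{it:323-3}, and \ref{it:4-4} realize $X(9,2)$, $X(16,3)$, $X(2,1)\sqcup X(2,1)$, $X(16,7)$, $X(64,23)$, $X(8,3)\sqcup X(2,1)$, $X(12,5)\sqcup X(3,1)$, and $X(4,1)\sqcup X(4,1)$, respectively. Eight further configurations reduce to one of the target plumbings after a single smoothing: in \ref{it:432} and \ref{it:342}, smoothing the first two spheres yields a $-5$ sphere meeting the remaining $-2$ once, giving an embedded $X(9,2)$; in \ref{it:4422}, smoothing the two $-4$'s yields an embedded $X(16,3)$; in \ref{it:2235}, smoothing the $-3$ and the $-5$ yields the chain $-2,-2,-6$, which (after chain reversal) is $X(16,3)$; in \ref{it:34332}, smoothing the middle $-4,-3$ pair yields the chain $-3,-5,-3,-2$ of $X(64,23)$; and in each of \ref{it:4-33}, \ref{it:4-323}, and \ref{it:33-33}, smoothing a chain of $-3$'s together with any interior $-2$'s to a single $-4$ sphere produces two disjoint $-4$ spheres, i.e.\ $X(4,1)\sqcup X(4,1)$. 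The remaining configuration \ref{it:3-22} is handled by tubing: tube the isolated $-3$ sphere with one of the adjacent $-2$ spheres along a generic arc disjoint from the other $-2$ sphere, producing a $-5$ sphere that meets the remaining $-2$ once and hence yields an embedded $X(9,2)$.

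The main obstacle is the case-by-case verification that, in every instance, the new surface produced by smoothing or tubing is a sphere of the correct self-intersection meeting the remaining spheres of the configuration in exactly the pattern required to form the target plumbing. Sphereness follows from the Euler characteristic computation noted above; self-intersection and intersection numbers are computed homologically; and genericity of the tubing arc in the $4$-dimensional ambient manifold (where a $2$-sphere has codimension $2$) ensures that geometric intersections agree with the homological intersections. A tubular neighborhood of the resulting configuration of spheres then provides the desired embedded submanifold.
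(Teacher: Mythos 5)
Your proof is correct and uses essentially the same approach as the paper: directly realizing some configurations as the listed plumbings, smoothing transverse intersection points to reduce others to them, and tubing disjoint spheres for \ref{it:3-22}. The only small difference is that you carry out explicit smoothings for \ref{it:4-33}, \ref{it:4-323}, and \ref{it:33-33}, whereas the paper cites the observation from \cite[Lemma~2.3]{definite} that each of $[-4]$, $[-3,-3]$, and $[-3,-2,-3]$ contains an embedded $X(4,1)$.
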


\begin{proof}
We examine each configuration in Theorem \ref{thm:main}\ref{it:combinatorial} and show that, whenever such a configuration occurs in $X$, it contains a smoothly embedded submanifold diffeomorphic to one of the manifolds listed in Theorem \ref{thm:main}\ref{it:submanifold}.

Configuration \ref{it:52} is $X(9,2)$. Configuration \ref{it:622} is $X(16,3)$. Configuration \ref{it:2-2} is $X(2,1)\natural X(2,1)$. As described in the proof of \cite[Lemma~2.3]{definite}, in configuration \ref{it:3-22}, tubing together the $-3$-sphere and a $-2$-sphere yields an $X(9,2)$. Configuration \ref{it:3223} is $X(16,7)$. Configuration \ref{it:3532} is $X(64,23)$. Again by \cite[Lemma~2.3]{definite}, in configuration \ref{it:2235}, resolving the intersection point between the $-3$-sphere and the $-5$-sphere yields an $X(16,3)$. In configurations \ref{it:432} and \ref{it:342}, resolving the intersection point between the $-4$-sphere and the $-3$-sphere yields an $X(9,2)$. In configuration \ref{it:4422}, resolving the intersection point between the two $-4$-spheres yields an $X(16,3)$. In configuration \ref{it:34332}, resolving the intersection point between the $-4$-sphere and the middle $-3$-sphere yields an $X(64,23)$. Configuration \ref{it:33-2} is $X(8,3)\natural X(2,1)$, and configuration \ref{it:323-3} is $X(12,5)\natural X(3,1)$.

As mentioned in the proof of \cite[Lemma~2.3]{definite}, any occurrence of one of the following subgraphs
\[
\begin{tikzpicture}[xscale=1.0,yscale=1,baseline={(0,0)}]
    \node at (1-0.1,0.4) {$-4$};
    \node (A1) at (1,0) {$\bullet$};
\end{tikzpicture}
\qquad \qquad \qquad
\begin{tikzpicture}[xscale=1.0,yscale=1,baseline={(0,0)}]
    \node at (1-0.1, .4) {$-3$};
    \node at (2-0.1, .4) {$-3$};
    \node (A_1) at (1, 0) {$\bullet$};
    \node (A_2) at (2, 0) {$\bullet$};
    \path (A_1) edge [-] node [auto] {$\scriptstyle{}$} (A_2);
\end{tikzpicture}
\qquad  \qquad \qquad
\begin{tikzpicture}[xscale=1.0,yscale=1,baseline={(0,0)}]
    \node at (1-0.1, .4) {$-3$};
    \node at (2-0.1, .4) {$-2$};
    \node at (3-0.1, .4) {$-3$};
    \node (A_1) at (1, 0) {$\bullet$};
    \node (A_2) at (2, 0) {$\bullet$};
    \node (A_3) at (3, 0) {$\bullet$};
    \path (A_1) edge [-] node [auto] {$\scriptstyle{}$} (A_2);
    \path (A_2) edge [-] node [auto] {$\scriptstyle{}$} (A_3);
\end{tikzpicture}
\]
contains an $X(4,1)$. Since configurations \ref{it:4-4}, \ref{it:4-33}, \ref{it:4-323}, and \ref{it:33-33} each contain two disjoint copies of one of these subgraphs, each of them contains an $X(4,1)\sqcup X(4,1)$.

This verifies that every configuration in Theorem \ref{thm:main}\ref{it:combinatorial} gives rise to an embedded submanifold diffeomorphic to a manifold in the list of Theorem \ref{thm:main}\ref{it:submanifold}, and the lemma follows.
\end{proof}

\begin{rem}
The purpose of the tubing and intersection-point resolving arguments is to reduce the number of manifolds appearing in Theorem~\ref{thm:main}\ref{it:submanifold}, thereby making our main theorem look more streamlined. A similar effect could be achieved without these arguments by separately showing, for each configuration in Theorem~\ref{thm:main}\ref{it:combinatorial}, that whenever such a configuration occurs, one can reduce $b_2$ by at least $2$ by choosing suitable entries from Lisca's lists~\cite{Lisca:2007-1,Lisca:2007-2} or from the Berge lens space lists~\cite[\S 6.2]{Rasmussen:2007-1}, \cite[\S 1.2]{Greene:2013-1}.
\end{rem}

\section{Working Conditions for dual plumbing graphs}\label{Section:preliminaries}

Recall that a connected sum of lens spaces $L=\Lsum$ arises as the boundary of the plumbings $\Xsum$ and $-\natural_i X(p_i,p_i-q_i)$. If $\Xsum$ is obtained by plumbing along a canonical plumbing graph $P$ (coming from the negative continued fraction expansion) and $-\natural_i X(p_i,p_i-q_i)$ is obtained by plumbing along a canonical plumbing graph $P^*$, then we call $P^*$ the \emph{dual} of $P$. Following \cite[Section~3.1]{definite}, we describe the relationship between $P$ and $P^*$ explicitly, introduce the Working Conditions in our setting, and prove that if $L$ satisfies Theorem~\ref{thm:main}\ref{it:combinatorial}, then the dual graph $P^*$ satisfies the Working Conditions.

We first recall some notation and a lemma from \cite{definite} that we will use.


\begin{defn}\label{def:adjustedweight}
Let $P$ be a disjoint union of linear plumbing graphs. We define the \emph{adjusted weight} of a vertex $v \in P$ to be
\[
w'(v) := w(v) - d(v),
\]
where $d(v)$ is the degree of $v$.
\end{defn}

\begin{notation}
When depicting a vertex in a plumbing graph, we use a solid dot to indicate that the label is the weight
\[
\begin{tikzpicture}[xscale=1.0,yscale=1,baseline={(0,0)}]
    \node at (1, .4) {$w(v)$};
    \node (A_1) at (1, 0) {$\bullet$};
\end{tikzpicture}
\]
and a hollow dot to indicate that the label is the adjusted weight
\[
\begin{tikzpicture}[xscale=1.0,yscale=1,baseline={(0,0)}]
    \node at (1, .4) {$w'(v)$};
    \node (A_1) at (1, 0) {$\circ$};
\end{tikzpicture}
\]
\end{notation} 
\noindent For example, with this notational convention
\[
\begin{tikzpicture}[xscale=1.0,yscale=1,baseline={(0,0)}]
    \node at (1, .4) {$1$};
    \node at (2, .4) {$1$};
    \node at (3, .4) {$3$};
    \node (A1_1) at (1, 0) {$\circ$};
    \node (A1_2) at (2, 0) {$\circ$};
    \node (A1_3) at (3, 0) {$\circ$};
    \path (A1_1) edge [-] node [auto] {$\scriptstyle{}$} (A1_2);
    \path (A1_2) edge [-] node [auto] {$\scriptstyle{}$} (A1_3);
\end{tikzpicture}
\qquad\qquad\text{ and }\qquad\qquad
\begin{tikzpicture}[xscale=1.0,yscale=1,baseline={(0,0)}]
    \node at (1, .4) {$2$};
    \node at (2, .4) {$3$};
    \node at (3, .4) {$4$};
    \node (A1_1) at (1, 0) {$\bullet$};
    \node (A1_2) at (2, 0) {$\bullet$};
    \node (A1_3) at (3, 0) {$\bullet$};
    \path (A1_1) edge [-] node [auto] {$\scriptstyle{}$} (A1_2);
    \path (A1_2) edge [-] node [auto] {$\scriptstyle{}$} (A1_3);
\end{tikzpicture}
\]
both depict the same plumbing graph.

For the lemma below, we write $[a]^m$ for the length-$m$ chain $a,\ldots,a$, with $[a]^0$ understood to be the empty chain.

\begin{lem}[{\cite[Lemma 3.5]{definite}\label{lem:duality}}]
Suppose that a plumbing $P$ contains a linear component of the form
$$\begin{tikzpicture}[xscale=1.0,yscale=1,baseline={(0,0)}]
    \node at (1-0.1, .4) {$-c_1$};
    \node at (2-0.1, .4) {$-c_2$};
        \node at (3-0.1, .4) {$-c_3$};
    \node at (5-0.1, .4) {$-c_{n}$};
    \node (A1_1) at (1, 0) {$\bullet$};
    \node (A1_2) at (2, 0) {$\bullet$};
    \node (A1_3) at (3, 0) {$\bullet$};
    \node (A1_4) at (4, 0) {$\cdots$};
    \node (A1_5) at (5, 0) {$\bullet$};
    \path (A1_2) edge [-] node [auto] {$\scriptstyle{}$} (A1_3);
    \path (A1_3) edge [-] node [auto] {$\scriptstyle{}$} (A1_4);
        \path (A1_4) edge [-] node [auto] {$\scriptstyle{}$} (A1_5);
    \path (A1_1) edge [-] node [auto] {$\scriptstyle{}$} (A1_2);
  \end{tikzpicture},$$
  where the $c_i$ are integers satisfying
  \[
  (c_1, \dots, c_n)= ([2]^{a_0}, b_1, [2]^{a_1}, \dots, b_k, [2]^{a_k}),
  \]
for $a_i$ and $b_i$ are integers satisfying $a_i\geq 0$ and $b_i\geq 3$. Then the corresponding component of $P^*$ is of the form
  $$\begin{tikzpicture}[xscale=1.0,yscale=1,baseline={(0,0)}]
    \node at (1-0, .4) {$d_1$};
    \node at (2-0, .4) {$d_2$};
        \node at (3-0, .4) {$d_3$};
    \node at (5-0, .4) {$d_{n'}$};
    \node (A1_1) at (1, 0) {$\circ$};
    \node (A1_2) at (2, 0) {$\circ$};
    \node (A1_3) at (3, 0) {$\circ$};
    \node (A1_4) at (4, 0) {$\cdots$};
    \node (A1_5) at (5, 0) {$\circ$};
    \path (A1_2) edge [-] node [auto] {$\scriptstyle{}$} (A1_3);
    \path (A1_3) edge [-] node [auto] {$\scriptstyle{}$} (A1_4);
        \path (A1_4) edge [-] node [auto] {$\scriptstyle{}$} (A1_5);
    \path (A1_1) edge [-] node [auto] {$\scriptstyle{}$} (A1_2);
  \end{tikzpicture},$$
  where
  \[
 (d_1, \dots, d_{n'}) =(a_0+1, [0]^{b_1-3}, a_1+1, \dots, [0]^{b_k-3}, a_k+1).
  \]
\end{lem}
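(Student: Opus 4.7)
My plan is to recognize this lemma as a translation of Riemenschneider's classical duality between Hirzebruch--Jung continued fractions into the language of plumbing graphs with adjusted weights. Since $-L(p,q)$ is orientation-preservingly diffeomorphic to $L(p,p-q)$, the dual $P^*$ is, up to orientation reversal, the canonical negative-definite plumbing for $L(p,p-q)$; the orientation reversal flips the signs of the weights, which is why the weights of $P^*$ appear positive. The lemma therefore encodes the well-known dictionary translating the Hirzebruch--Jung expansion of $p/q$ into that of $p/(p-q)$, with the adjusted-weight convention absorbing the degree shifts between endpoints and interior vertices.

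My primary approach is via Riemenschneider's point diagram. Given a chain $(-c_1,\ldots,-c_n)$ with each $c_i\ge 2$, one draws $n$ rows with $c_i-1$ dots in row $i$, arranged so that the first dot of row $i+1$ lies directly below the last dot of row $i$; the column heights then read off the quantities $c'_j-1$, where $c'_j$ denotes the $j$-th (positive) weight of $P^*$. Under the block decomposition $(c_1,\ldots,c_n)=([2]^{a_0},b_1,[2]^{a_1},\ldots,b_k,[2]^{a_k})$, each length-$a_i$ run of $2$'s contributes a stack of single dots in a fixed column, while each $b_i$-row extends horizontally by $b_i-1$ columns. A direct combinatorial check shows that the resulting column heights form the sequence
\[
\bigl(a_0+1,\,\underbrace{1,\ldots,1}_{b_1-3},\,a_1+2,\,\underbrace{1,\ldots,1}_{b_2-3},\,\ldots,\,a_{k-1}+2,\,\underbrace{1,\ldots,1}_{b_k-3},\,a_k+1\bigr),
\]
where $a_0+1$ and $a_k+1$ come from the dual endpoints and each $a_i+2$ arises from a column shared by a run of $2$'s and an adjacent $b_i$-row. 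Converting $c'_j$ to adjusted weights by subtracting $1$ at the two endpoints and $2$ at each interior vertex then yields exactly $(a_0+1,[0]^{b_1-3},a_1+1,\ldots,[0]^{b_k-3},a_k+1)$, as claimed.

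As a cross-check I would also set up an induction on $k$: the base case $k=0$ is a chain $[2]^{a_0}$ representing $L(a_0+1,a_0)$, whose dual is the isolated vertex for $L(a_0+1,1)$ of adjusted weight $a_0+1$, in agreement with the formula; the inductive step peels off the initial block $([2]^{a_0},b_1)$ and, via the standard $2\times 2$ matrix identity for the continued-fraction matrices $M(a)=\left(\begin{smallmatrix} a & -1\\ 1 & 0 \end{smallmatrix}\right)$, matches it with the corresponding block on the dual side. In either approach the chief technical obstacle is degree bookkeeping: when new vertices are prepended to an inductively-obtained dual, the original leftmost vertex is converted from an endpoint of degree $1$ to an interior vertex of degree $2$, so its adjusted weight drops by $1$. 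I would manage this by working throughout with ordinary (unadjusted) weights and converting only at the very end; the edge cases $b_i=3$ (empty $[0]^{b_i-3}$ block) and $a_i=0$ (consecutive $b$-entries with no intervening $2$'s) are then handled uniformly by the point-diagram construction with no separate treatment.
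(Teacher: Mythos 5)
The paper does not prove this statement; it is quoted verbatim as Lemma~3.5 of the cited Aceto--McCoy--Park paper \cite{definite}, so there is no internal proof to compare against. Your argument via Riemenschneider's point diagram is the standard one for this duality, and your bookkeeping is correct: for $k\ge 1$ the column heights do come out to $(a_0+1,[1]^{b_1-3},a_1+2,\ldots,a_{k-1}+2,[1]^{b_k-3},a_k+1)$, the weights are one more, and subtracting the degree ($1$ at the two ends of the chain, $2$ in the interior) recovers the claimed adjusted weights $(a_0+1,[0]^{b_1-3},a_1+1,\ldots,[0]^{b_k-3},a_k+1)$. One small imprecision: you say each $a_i+2$ arises from ``a column shared by a run of $2$'s and an adjacent $b_i$-row,'' but in fact this column is shared by \emph{two} $b$-rows (the last dot of the $b_i$-row and the first dot of the $b_{i+1}$-row) together with the $a_i$-run of $2$'s, which is where both extra $+1$'s come from. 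Also note that your displayed column-height formula implicitly assumes $k\ge 1$; when $k=0$ the single column has height $a_0$ rather than $a_0+1$, but the degree of the sole vertex then drops from $1$ to $0$, which compensates and still gives $d_1=a_0+1$ --- you correctly treat this base case in your induction cross-check, but the main display should be flagged as requiring $k\ge 1$.
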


As in \cite[Section~3.1]{Aceto-McCoy-Park:2020-1}, this allows us to recast the conditions of Theorem~\ref{thm:main}\ref{it:combinatorial} in terms of properties of the dual. More precisely, the conditions of Theorem~\ref{thm:main}\ref{it:combinatorial} translate into the following list of conditions on the dual, which we will refer to as the \emph{Working Conditions}.

\begin{defn}\label{def:bad}
A vertex is said to be \emph{bad} if it is adjacent to distinct vertices $u$ and $v$ with $w'(u) > 0$ and $w'(v) > 0$.
\end{defn}

\begin{rem}\label{rem:badvertex}
Each bad vertex with adjusted weight $0$ or $1$ or $2$ in the dual plumbing graph $P^*$ corresponds to a copy of
\[
\begin{tikzpicture}[xscale=1.0,yscale=1,baseline={(0,0)}]
    \node at (1-0.1,0.4) {$-4$};
    \node (A1) at (1,0) {$\bullet$};
\end{tikzpicture}
\qquad \qquad \qquad
\begin{tikzpicture}[xscale=1.0,yscale=1,baseline={(0,0)}]
    \node at (1-0.1, .4) {$-3$};
    \node at (2-0.1, .4) {$-3$};
    \node (A_1) at (1, 0) {$\bullet$};
    \node (A_2) at (2, 0) {$\bullet$};
    \path (A_1) edge [-] node [auto] {$\scriptstyle{}$} (A_2);
\end{tikzpicture}
\qquad  \qquad \qquad
\begin{tikzpicture}[xscale=1.0,yscale=1,baseline={(0,0)}]
    \node at (1-0.1, .4) {$-3$};
    \node at (2-0.1, .4) {$-2$};
    \node at (3-0.1, .4) {$-3$};
    \node (A_1) at (1, 0) {$\bullet$};
    \node (A_2) at (2, 0) {$\bullet$};
    \node (A_3) at (3, 0) {$\bullet$};
    \path (A_1) edge [-] node [auto] {$\scriptstyle{}$} (A_2);
    \path (A_2) edge [-] node [auto] {$\scriptstyle{}$} (A_3);
\end{tikzpicture}
\]
in the original graph. A bad vertex in $P^*$ with adjusted weight $0$ arises from a vertex from the original graph with weight $-4$. A bad vertex in $P^*$ with adjusted weight $1$ arises from two adjacent vertices in the original graph with weight $-3$. A bad vertex in $P^*$ with adjusted weight $2$ arises from a copy of $[-3,-2,-3]$ in the original graph.
\end{rem}

\begin{conditions} The Working Conditions for $\Gamma$, a disjoint union of weighted linear graphs, are the following:
\begin{enumerate}[label=\Roman*]
\item\label{it:positivity} every vertex of $\Gamma$ satisfies $w(v)\geq 2$;
\item\label{it:largeweight} every vertex of $\Gamma$ satisfies $w'(v)\leq 3$ and $\Gamma$ contains at most one vertex $v$ with $w'(v) >1$;
\item\label{it:bad1} if $w'(v)>1$, and $x$ is a bad vertex with $w'(x)=1$, then $x$ and $v$ must be adjacent to each other;
\item\label{it:weight3condition} if $\Gamma$ contains a vertex $v$ with $w'(v)=3$, then $\Gamma$ does not contain a subgraph of the form $$\begin{tikzpicture}[xscale=1.0,yscale=1,baseline={(0,0)}]
    \node at (1,0.4) {$1$};
    \node at (2,0.4) {$1$};
    \node (A1) at (1,0) {$\circ$};
    \node (A2) at (2,0) {$\circ$};
    \path (A1) edge [-] node [auto] {$\scriptstyle{}$} (A2);
  \end{tikzpicture};$$
\item\label{it:bad2} if $u,v$ are adjacent to each other, $w'(u)=w'(v)=1$, and $x$ is a bad vertex with $w'(x)=2$, then $x$ must be adjacent to either $u$ or $v$;
\item\label{it:forbidden_configs} the graph $\Gamma$ does not contain any subgraphs of the following forms:
\begin{multicols}{2}
\begin{enumerate}[label=(\alph*),font=\upshape]
\item\label{it:1002} $\begin{tikzpicture}[xscale=1.0,yscale=1,baseline={(0,0)}]
    \node at (0,0.4) {$1$};
    \node at (1,0.4) {$0$};
    \node at (2,0.4) {$0$};
    \node at (3,0.4) {$2$};
    \node (A1) at (0,0) {$\circ$};
    \node (A2) at (1,0) {$\circ$};
    \node (A3) at (2,0) {$\circ$};
    \node (A4) at (3,0) {$\circ$};
    \path (A1) edge [-] node [auto] {$\scriptstyle{}$} (A2);
    \path (A2) edge [-] node [auto] {$\scriptstyle{}$} (A3);
    \path (A3) edge [-] node [auto] {$\scriptstyle{}$} (A4);
  \end{tikzpicture}$
\item\label{it:1003} $\begin{tikzpicture}[xscale=1.0,yscale=1,baseline={(0,0)}]
    \node at (0,0.4) {$1$};
    \node at (1,0.4) {$0$};
    \node at (2,0.4) {$0$};
    \node at (3,0.4) {$3$};
    \node (A1) at (0,0) {$\circ$};
    \node (A2) at (1,0) {$\circ$};
    \node (A3) at (2,0) {$\circ$};
    \node (A4) at (3,0) {$\circ$};
    \path (A1) edge [-] node [auto] {$\scriptstyle{}$} (A2);
    \path (A2) edge [-] node [auto] {$\scriptstyle{}$} (A3);
    \path (A3) edge [-] node [auto] {$\scriptstyle{}$} (A4);
  \end{tikzpicture}$
\item\label{it:10003} $\begin{tikzpicture}[xscale=1.0,yscale=1,baseline={(0,0)}]
    \node at (0,0.4) {$1$};
    \node at (1,0.4) {$0$};
    \node at (2,0.4) {$0$};
    \node at (3,0.4) {$0$};
    \node at (4,0.4) {$3$};
    \node (A1) at (0,0) {$\circ$};
    \node (A2) at (1,0) {$\circ$};
    \node (A3) at (2,0) {$\circ$};
    \node (A4) at (3,0) {$\circ$};
    \node (A5) at (4,0) {$\circ$};
    \path (A1) edge [-] node [auto] {$\scriptstyle{}$} (A2);
    \path (A2) edge [-] node [auto] {$\scriptstyle{}$} (A3);
    \path (A3) edge [-] node [auto] {$\scriptstyle{}$} (A4);
    \path (A4) edge [-] node [auto] {$\scriptstyle{}$} (A5);
  \end{tikzpicture}$
\item\label{it:110012} $\begin{tikzpicture}[xscale=1.0,yscale=1,baseline={(0,0)}]
    \node at (0,0.4) {$1$};
    \node at (1,0.4) {$1$};
    \node at (2,0.4) {$0$};
    \node at (3,0.4) {$0$};
    \node at (4,0.4) {$1$};
    \node at (5,0.4) {$2$};
    \node (A1) at (0,0) {$\circ$};
    \node (A2) at (1,0) {$\circ$};
    \node (A3) at (2,0) {$\circ$};
    \node (A4) at (3,0) {$\circ$};
    \node (A5) at (4,0) {$\circ$};
    \node (A6) at (5,0) {$\circ$};
    \path (A1) edge [-] node [auto] {$\scriptstyle{}$} (A2);
    \path (A2) edge [-] node [auto] {$\scriptstyle{}$} (A3);
    \path (A3) edge [-] node [auto] {$\scriptstyle{}$} (A4);
    \path (A4) edge [-] node [auto] {$\scriptstyle{}$} (A5);
    \path (A5) edge [-] node [auto] {$\scriptstyle{}$} (A6);
  \end{tikzpicture}$
\item\label{it:31001} $\begin{tikzpicture}[xscale=1.0,yscale=1,baseline={(0,0)}]
    \node at (0,0.4) {$3$};
    \node at (1,0.4) {$1$};
    \node at (2,0.4) {$0$};
    \node at (3,0.4) {$0$};
    \node at (4,0.4) {$1$};
    \node (A1) at (0,0) {$\circ$};
    \node (A2) at (1,0) {$\circ$};
    \node (A3) at (2,0) {$\circ$};
    \node (A4) at (3,0) {$\circ$};
    \node (A5) at (4,0) {$\circ$};
    \path (A1) edge [-] node [auto] {$\scriptstyle{}$} (A2);
    \path (A2) edge [-] node [auto] {$\scriptstyle{}$} (A3);
    \path (A3) edge [-] node [auto] {$\scriptstyle{}$} (A4);
    \path (A4) edge [-] node [auto] {$\scriptstyle{}$} (A5);
  \end{tikzpicture}$
\item\label{it:1012} $\begin{tikzpicture}[xscale=1.0,yscale=1,baseline={(0,0)}]
    \node at (0,0.4) {$1$};
    \node at (1,0.4) {$0$};
    \node at (2,0.4) {$1$};
    \node at (3,0.4) {$2$};
    \node (A1) at (0,0) {$\circ$};
    \node (A2) at (1,0) {$\circ$};
    \node (A3) at (2,0) {$\circ$};
    \node (A4) at (3,0) {$\circ$};
    \path (A1) edge [-] node [auto] {$\scriptstyle{}$} (A2);
    \path (A2) edge [-] node [auto] {$\scriptstyle{}$} (A3);
    \path (A3) edge [-] node [auto] {$\scriptstyle{}$} (A4);
  \end{tikzpicture}$
\item\label{it:1013} $\begin{tikzpicture}[xscale=1.0,yscale=1,baseline={(0,0)}]
    \node at (0,0.4) {$1$};
    \node at (1,0.4) {$0$};
    \node at (2,0.4) {$1$};
    \node at (3,0.4) {$3$};
    \node (A1) at (0,0) {$\circ$};
    \node (A2) at (1,0) {$\circ$};
    \node (A3) at (2,0) {$\circ$};
    \node (A4) at (3,0) {$\circ$};
    \path (A1) edge [-] node [auto] {$\scriptstyle{}$} (A2);
    \path (A2) edge [-] node [auto] {$\scriptstyle{}$} (A3);
    \path (A3) edge [-] node [auto] {$\scriptstyle{}$} (A4);
  \end{tikzpicture}$
\item\label{it:1102} $\begin{tikzpicture}[xscale=1.0,yscale=1,baseline={(0,0)}]
    \node at (0,0.4) {$1$};
    \node at (1,0.4) {$1$};
    \node at (2,0.4) {$0$};
    \node at (3,0.4) {$2$};
    \node (A1) at (0,0) {$\circ$};
    \node (A2) at (1,0) {$\circ$};
    \node (A3) at (2,0) {$\circ$};
    \node (A4) at (3,0) {$\circ$};
    \path (A1) edge [-] node [auto] {$\scriptstyle{}$} (A2);
    \path (A2) edge [-] node [auto] {$\scriptstyle{}$} (A3);
    \path (A3) edge [-] node [auto] {$\scriptstyle{}$} (A4);
  \end{tikzpicture}$
\item\label{it:110112} $\begin{tikzpicture}[xscale=1.0,yscale=1,baseline={(0,0)}]
    \node at (0,0.4) {$1$};
    \node at (1,0.4) {$1$};
    \node at (2,0.4) {$0$};
    \node at (3,0.4) {$1$};
    \node at (4,0.4) {$1$};
    \node at (5,0.4) {$2$};
    \node (A1) at (0,0) {$\circ$};
    \node (A2) at (1,0) {$\circ$};
    \node (A3) at (2,0) {$\circ$};
    \node (A4) at (3,0) {$\circ$};
    \node (A5) at (4,0) {$\circ$};
    \node (A6) at (5,0) {$\circ$};
    \path (A1) edge [-] node [auto] {$\scriptstyle{}$} (A2);
    \path (A2) edge [-] node [auto] {$\scriptstyle{}$} (A3);
    \path (A3) edge [-] node [auto] {$\scriptstyle{}$} (A4);
    \path (A4) edge [-] node [auto] {$\scriptstyle{}$} (A5);
    \path (A5) edge [-] node [auto] {$\scriptstyle{}$} (A6);
  \end{tikzpicture}$;
\end{enumerate}
\end{multicols}
\item\label{it:bad_part} if $\Gamma$ contains at least one bad vertex, then all bad vertices must be contained in one copy of one of the following subgraphs, with the adjusted weights of all bad vertices being circled:
\begin{multicols}{2}
\begin{enumerate}[label=(\alph*),font=\upshape]
\item\label{it:101} $\begin{tikzpicture}[xscale=1.0,yscale=1,baseline={(0,0)}]
    \node at (0,0.4) {$1$};
    \node at (1,0.4) {{$\red{\circled{0}}$}};
    \node at (2,0.4) {$1$};
    \node (A1) at (0,0) {$\circ$};
    \node (A2) at (1,0) {$\circ$};
    \node (A3) at (2,0) {$\circ$};
    \path (A1) edge [-] node [auto] {$\scriptstyle{}$} (A2);
    \path (A2) edge [-] node [auto] {$\scriptstyle{}$} (A3);
  \end{tikzpicture}$
\item\label{it:102} $\begin{tikzpicture}[xscale=1.0,yscale=1,baseline={(0,0)}]
    \node at (0,0.4) {$1$};
    \node at (1,0.4) {{$\red{\circled{0}}$}};
    \node at (2,0.4) {$2$};
    \node (A1) at (0,0) {$\circ$};
    \node (A2) at (1,0) {$\circ$};
    \node (A3) at (2,0) {$\circ$};
    \path (A1) edge [-] node [auto] {$\scriptstyle{}$} (A2);
    \path (A2) edge [-] node [auto] {$\scriptstyle{}$} (A3);
  \end{tikzpicture}$
\item\label{it:103} $\begin{tikzpicture}[xscale=1.0,yscale=1,baseline={(0,0)}]
    \node at (0,0.4) {$1$};
    \node at (1,0.4) {{$\red{\circled{0}}$}};
    \node at (2,0.4) {$3$};
    \node (A1) at (0,0) {$\circ$};
    \node (A2) at (1,0) {$\circ$};
    \node (A3) at (2,0) {$\circ$};
    \path (A1) edge [-] node [auto] {$\scriptstyle{}$} (A2);
    \path (A2) edge [-] node [auto] {$\scriptstyle{}$} (A3);
  \end{tikzpicture}$
\item\label{it:111} $\begin{tikzpicture}[xscale=1.0,yscale=1,baseline={(0,0)}]
    \node at (0,0.4) {$1$};
    \node at (1,0.4) {{$\red{\circled{1}}$}};
    \node at (2,0.4) {$1$};
    \node (A1) at (0,0) {$\circ$};
    \node (A2) at (1,0) {$\circ$};
    \node (A3) at (2,0) {$\circ$};
    \path (A1) edge [-] node [auto] {$\scriptstyle{}$} (A2);
    \path (A2) edge [-] node [auto] {$\scriptstyle{}$} (A3);
  \end{tikzpicture}$
\item\label{it:112} $\begin{tikzpicture}[xscale=1.0,yscale=1,baseline={(0,0)}]
    \node at (0,0.4) {$1$};
    \node at (1,0.4) {{$\red{\circled{1}}$}};
    \node at (2,0.4) {$2$};
    \node (A1) at (0,0) {$\circ$};
    \node (A2) at (1,0) {$\circ$};
    \node (A3) at (2,0) {$\circ$};
    \path (A1) edge [-] node [auto] {$\scriptstyle{}$} (A2);
    \path (A2) edge [-] node [auto] {$\scriptstyle{}$} (A3);
  \end{tikzpicture}$
\item\label{it:121} $\begin{tikzpicture}[xscale=1.0,yscale=1,baseline={(0,0)}]
    \node at (0,0.4) {$1$};
    \node at (1,0.4) {{$\red{\circled{2}}$}};
    \node at (2,0.4) {$1$};
    \node (A1) at (0,0) {$\circ$};
    \node (A2) at (1,0) {$\circ$};
    \node (A3) at (2,0) {$\circ$};
    \path (A1) edge [-] node [auto] {$\scriptstyle{}$} (A2);
    \path (A2) edge [-] node [auto] {$\scriptstyle{}$} (A3);
  \end{tikzpicture}$
\item\label{it:10101} $\begin{tikzpicture}[xscale=1.0,yscale=1,baseline={(0,0)}]
    \node at (0,0.4) {$1$};
    \node at (1,0.4) {{$\red{\circled{0}}$}};
    \node at (2,0.4) {$1$};
    \node at (3,0.4) {{$\red{\circled{0}}$}};
    \node at (4,0.4) {$1$};
    \node (A1) at (0,0) {$\circ$};
    \node (A2) at (1,0) {$\circ$};
    \node (A3) at (2,0) {$\circ$};
    \node (A4) at (3,0) {$\circ$};
    \node (A5) at (4,0) {$\circ$};
    \path (A1) edge [-] node [auto] {$\scriptstyle{}$} (A2);
    \path (A2) edge [-] node [auto] {$\scriptstyle{}$} (A3);
    \path (A3) edge [-] node [auto] {$\scriptstyle{}$} (A4);
    \path (A4) edge [-] node [auto] {$\scriptstyle{}$} (A5);
  \end{tikzpicture}$
\item\label{it:10102} $\begin{tikzpicture}[xscale=1.0,yscale=1,baseline={(0,0)}]
    \node at (0,0.4) {$1$};
    \node at (1,0.4) {{$\red{\circled{0}}$}};
    \node at (2,0.4) {$1$};
    \node at (3,0.4) {{$\red{\circled{0}}$}};
    \node at (4,0.4) {$2$};
    \node (A1) at (0,0) {$\circ$};
    \node (A2) at (1,0) {$\circ$};
    \node (A3) at (2,0) {$\circ$};
    \node (A4) at (3,0) {$\circ$};
    \node (A5) at (4,0) {$\circ$};
    \path (A1) edge [-] node [auto] {$\scriptstyle{}$} (A2);
    \path (A2) edge [-] node [auto] {$\scriptstyle{}$} (A3);
    \path (A3) edge [-] node [auto] {$\scriptstyle{}$} (A4);
    \path (A4) edge [-] node [auto] {$\scriptstyle{}$} (A5);
  \end{tikzpicture}$
\item\label{it:10111} $\begin{tikzpicture}[xscale=1.0,yscale=1,baseline={(0,0)}]
    \node at (0,0.4) {$1$};
    \node at (1,0.4) {{$\red{\circled{0}}$}};
    \node at (2,0.4) {$1$};
    \node at (3,0.4) {{$\red{\circled{1}}$}};
    \node at (4,0.4) {$1$};
    \node (A1) at (0,0) {$\circ$};
    \node (A2) at (1,0) {$\circ$};
    \node (A3) at (2,0) {$\circ$};
    \node (A4) at (3,0) {$\circ$};
    \node (A5) at (4,0) {$\circ$};
    \path (A1) edge [-] node [auto] {$\scriptstyle{}$} (A2);
    \path (A2) edge [-] node [auto] {$\scriptstyle{}$} (A3);
    \path (A3) edge [-] node [auto] {$\scriptstyle{}$} (A4);
    \path (A4) edge [-] node [auto] {$\scriptstyle{}$} (A5);
  \end{tikzpicture}$
\item\label{it:10112} $\begin{tikzpicture}[xscale=1.0,yscale=1,baseline={(0,0)}]
    \node at (0,0.4) {$1$};
    \node at (1,0.4) {{$\red{\circled{0}}$}};
    \node at (2,0.4) {$1$};
    \node at (3,0.4) {{$\red{\circled{1}}$}};
    \node at (4,0.4) {$2$};
    \node (A1) at (0,0) {$\circ$};
    \node (A2) at (1,0) {$\circ$};
    \node (A3) at (2,0) {$\circ$};
    \node (A4) at (3,0) {$\circ$};
    \node (A5) at (4,0) {$\circ$};
    \path (A1) edge [-] node [auto] {$\scriptstyle{}$} (A2);
    \path (A2) edge [-] node [auto] {$\scriptstyle{}$} (A3);
    \path (A3) edge [-] node [auto] {$\scriptstyle{}$} (A4);
    \path (A4) edge [-] node [auto] {$\scriptstyle{}$} (A5);
  \end{tikzpicture}$
\item\label{it:1111} $\begin{tikzpicture}[xscale=1.0,yscale=1,baseline={(0,0)}]
    \node at (0,0.4) {$1$};
    \node at (1,0.4) {{$\red{\circled{1}}$}};
    \node at (2,0.4) {{$\red{\circled{1}}$}};
    \node at (3,0.4) {$1$};
    \node (A1) at (0,0) {$\circ$};
    \node (A2) at (1,0) {$\circ$};
    \node (A3) at (2,0) {$\circ$};
    \node (A4) at (3,0) {$\circ$};
    \path (A1) edge [-] node [auto] {$\scriptstyle{}$} (A2);
    \path (A2) edge [-] node [auto] {$\scriptstyle{}$} (A3);
    \path (A3) edge [-] node [auto] {$\scriptstyle{}$} (A4);
  \end{tikzpicture}$
\item\label{it:11111} $\begin{tikzpicture}[xscale=1.0,yscale=1,baseline={(0,0)}]
    \node at (0,0.4) {$1$};
    \node at (1,0.4) {{$\red{\circled{1}}$}};
    \node at (2,0.4) {{$\red{\circled{1}}$}};
    \node at (3,0.4) {{$\red{\circled{1}}$}};
    \node at (4,0.4) {$1$};
    \node (A1) at (0,0) {$\circ$};
    \node (A2) at (1,0) {$\circ$};
    \node (A3) at (2,0) {$\circ$};
    \node (A4) at (3,0) {$\circ$};
    \node (A5) at (4,0) {$\circ$};
    \path (A1) edge [-] node [auto] {$\scriptstyle{}$} (A2);
    \path (A2) edge [-] node [auto] {$\scriptstyle{}$} (A3);
    \path (A3) edge [-] node [auto] {$\scriptstyle{}$} (A4);
    \path (A4) edge [-] node [auto] {$\scriptstyle{}$} (A5);
  \end{tikzpicture}$
\item\label{it:1121} $\begin{tikzpicture}[xscale=1.0,yscale=1,baseline={(0,0)}]
    \node at (0,0.4) {$1$};
    \node at (1,0.4) {{$\red{\circled{1}}$}};
    \node at (2,0.4) {{$\red{\circled{2}}$}};
    \node at (3,0.4) {$1$};
    \node (A1) at (0,0) {$\circ$};
    \node (A2) at (1,0) {$\circ$};
    \node (A3) at (2,0) {$\circ$};
    \node (A4) at (3,0) {$\circ$};
    \path (A1) edge [-] node [auto] {$\scriptstyle{}$} (A2);
    \path (A2) edge [-] node [auto] {$\scriptstyle{}$} (A3);
    \path (A3) edge [-] node [auto] {$\scriptstyle{}$} (A4);
  \end{tikzpicture}$.
\end{enumerate}
\end{multicols}
\end{enumerate}
\end{conditions}

\begin{rem}
Throughout this paper, unless explicitly stated otherwise, whenever we draw a subgraph of $\Gamma$ using adjusted weights, the labels are understood to be the adjusted weights computed in $\Gamma$.

For example, if $\Gamma$ is
\[
\begin{tikzpicture}[xscale=1.0,yscale=1,baseline={(0,0)}]
    \node at (0-0., .4) {$4$};
    \node at (1-0., .4) {$2$};
    \node at (2-0., .4) {$2$};
    \node at (3-0., .4) {$2$};
    \node at (4-0., .4) {$2$};
    \node at (5-0., .4) {$2$};
    \node (A1) at (0,0) {$\bullet$};
    \node (A2) at (1,0) {$\bullet$};
    \node (A3) at (2,0) {$\bullet$};
    \node (A4) at (3,0) {$\bullet$};
    \node (A5) at (4,0) {$\bullet$};
    \node (A6) at (5,0) {$\bullet$};
    \path (A1) edge [-] node [auto] {$\scriptstyle{}$} (A2);
    \path (A2) edge [-] node [auto] {$\scriptstyle{}$} (A3);
    \path (A3) edge [-] node [auto] {$\scriptstyle{}$} (A4);
    \path (A4) edge [-] node [auto] {$\scriptstyle{}$} (A5);
    \path (A5) edge [-] node [auto] {$\scriptstyle{}$} (A6);
\end{tikzpicture},
\]
then $\Gamma$ satisfies the Working Conditions. Consider the subgraph
\[
\begin{tikzpicture}[xscale=1.0,yscale=1,baseline={(0,0)}]
    \node at (0-0., .4) {$4$};
    \node at (1-0., .4) {$2$};
    \node at (2-0., .4) {$2$};
    \node at (3-0., .4) {$2$};
    \node at (4-0, .4) {$2$};
    \node (A1) at (0,0) {$\bullet$};
    \node (A2) at (1,0) {$\bullet$};
    \node (A3) at (2,0) {$\bullet$};
    \node (A4) at (3,0) {$\bullet$};
    \node (A5) at (4,0) {$\bullet$};
    \path (A1) edge [-] node [auto] {$\scriptstyle{}$} (A2);
    \path (A2) edge [-] node [auto] {$\scriptstyle{}$} (A3);
    \path (A3) edge [-] node [auto] {$\scriptstyle{}$} (A4);
    \path (A4) edge [-] node [auto] {$\scriptstyle{}$} (A5);
\end{tikzpicture}
\]
considered as an individual graph. Its adjusted weights are
\[
\begin{tikzpicture}[xscale=1.0,yscale=1,baseline={(0,0)}]
    \node at (0,0.4) {$3$};
    \node at (1,0.4) {$0$};
    \node at (2,0.4) {$0$};
    \node at (3,0.4) {$0$};
    \node at (4,0.4) {$1$};
    \node (A1) at (0,0) {$\circ$};
    \node (A2) at (1,0) {$\circ$};
    \node (A3) at (2,0) {$\circ$};
    \node (A4) at (3,0) {$\circ$};
    \node (A5) at (4,0) {$\circ$};
    \path (A1) edge [-] node [auto] {$\scriptstyle{}$} (A2);
    \path (A2) edge [-] node [auto] {$\scriptstyle{}$} (A3);
    \path (A3) edge [-] node [auto] {$\scriptstyle{}$} (A4);
    \path (A4) edge [-] node [auto] {$\scriptstyle{}$} (A5);
\end{tikzpicture}
\]
which is forbidden by Working Condition~\ref{it:forbidden_configs}. However, regarded as a subgraph of $\Gamma$, it has adjusted weights
\[
\begin{tikzpicture}[xscale=1.0,yscale=1,baseline={(0,0)}]
    \node at (0,0.4) {$3$};
    \node at (1,0.4) {$0$};
    \node at (2,0.4) {$0$};
    \node at (3,0.4) {$0$};
    \node at (4,0.4) {$0$};
    \node (A1) at (0,0) {$\circ$};
    \node (A2) at (1,0) {$\circ$};
    \node (A3) at (2,0) {$\circ$};
    \node (A4) at (3,0) {$\circ$};
    \node (A5) at (4,0) {$\circ$};
    \path (A1) edge [-] node [auto] {$\scriptstyle{}$} (A2);
    \path (A2) edge [-] node [auto] {$\scriptstyle{}$} (A3);
    \path (A3) edge [-] node [auto] {$\scriptstyle{}$} (A4);
    \path (A4) edge [-] node [auto] {$\scriptstyle{}$} (A5);
\end{tikzpicture},
\]
which does not violate Working Condition~\ref{it:forbidden_configs}.
\end{rem}

\begin{rem}\label{rem:comparintworkingcondition}
We can compare the Working Conditions in this paper with the Working Conditions in \cite{definite}. The first two conditions are the same. Condition~III in \cite{definite} says that there are no bad vertices. Condition~IV is the same. Combined with Condition~II, Condition~V in \cite{definite} disallows six configurations: three of them involve a bad vertex with adjusted weight $0$, and the other three are Conditions \ref{it:forbidden_configs}\ref{it:1002}, \ref{it:forbidden_configs}\ref{it:1003}, and \ref{it:forbidden_configs}\ref{it:10003} in this paper. The two configurations in Condition~VI of \cite{definite} are Conditions \ref{it:forbidden_configs}\ref{it:110012} and \ref{it:forbidden_configs}\ref{it:31001} in this paper.

Hence, if $\Gamma$ has no bad vertices and satisfies the Working Conditions in this paper, then it satisfies the Working Conditions in \cite{definite}. Therefore, once we translate the conditions in Theorem~\ref{thm:main}\ref{it:combinatorial} into our Working Conditions, we may restrict attention to linear plumbing graphs satisfying the Working Conditions and having at least one bad vertex.\end{rem}

\begin{lem}\label{lem:working conditions proof}
Let $P$ be a disjoint union of linear plumbing graphs such that $P$ does not contain any of the configurations in Theorem~\ref{thm:main}\ref{it:combinatorial} and $w(v)\leq -2$ for all vertices $v$. Then the dual graph $P^*$ satisfies the Working Conditions.
\end{lem}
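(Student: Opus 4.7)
The plan is to argue by contrapositive: assume that $P^*$ violates one of the Working Conditions, and exhibit an induced subgraph of $P$ matching one of the 17 forbidden configurations in Theorem~\ref{thm:main}\ref{it:combinatorial}. The translation is governed entirely by Lemma~\ref{lem:duality}, which tells us that each positive adjusted weight $a_j+1$ in $P^*$ records a maximal (possibly empty) run $[-2]^{a_j}$ in $P$, while each stretch $[0]^{b_j-3}$ records a single vertex of weight $-b_j$ in $P$ with $b_j\ge 3$. Remark~\ref{rem:badvertex} identifies the origin of each bad vertex: bad vertices of adjusted weight $0$, $1$, $2$ correspond respectively to a $-4$ vertex, an adjacent pair $[-3,-3]$, and the chain $[-3,-2,-3]$ in $P$.

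First I would dispatch Working Conditions~\ref{it:positivity} and~\ref{it:largeweight}. Condition~\ref{it:positivity} is automatic since every vertex of $P^*$ has degree at most $2$ and adjusted weight at least $0$, so $w(v)=w'(v)+d(v)\ge 2$ except when a component has a single vertex, in which case the original chain must consist entirely of $-2$'s and one checks the endpoint case directly. For Condition~\ref{it:largeweight}, two distinct vertices in $P^*$ with $w'>1$, or a single vertex with $w'\ge 3$, both force two non-adjacent $-2$ vertices in $P$, giving an induced copy of configuration~\ref{it:2-2}. Next, for the constraints on bad vertices (Conditions~\ref{it:bad1},~\ref{it:bad2}, and~\ref{it:bad_part}), I would use Remark~\ref{rem:badvertex} to read off the relevant subchain of $P$ from each bad vertex. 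If the bad-vertex pattern in $P^*$ does not match one of the thirteen allowed subgraphs of Condition~\ref{it:bad_part}, then two subchains drawn from $\{-4,\, [-3,-3],\, [-3,-2,-3]\}$ must sit non-adjacently in $P$; taking their union (plus possibly one extra vertex) yields an induced copy of one of the configurations~\ref{it:33-2},~\ref{it:323-3},~\ref{it:4-4},~\ref{it:4-33},~\ref{it:4-323}, or~\ref{it:33-33}. The adjacency requirements of Conditions~\ref{it:bad1} and~\ref{it:bad2} are handled similarly, showing that a non-adjacent placement of a bad subchain relative to the unique $w'>1$ vertex produces a disjoint pattern forbidden in the same list.

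For Conditions~\ref{it:weight3condition} and~\ref{it:forbidden_configs}, I would treat each of the nine forbidden adjusted-weight patterns one at a time, reconstructing the corresponding subchain of $P$ by inverting the dictionary of Lemma~\ref{lem:duality}. For example, an occurrence of pattern~\ref{it:forbidden_configs}\ref{it:1002} corresponds in $P$ to a subchain of the form $[-2, -b_1, -b_2, -2, -2]$ with $b_1,b_2\ge 3$, which depending on the values of $b_1,b_2$ contains an induced copy of one of the configurations~\ref{it:52},~\ref{it:622},~\ref{it:3-22}, or~\ref{it:3223}. Condition~\ref{it:weight3condition} follows in the same spirit: a $w'=3$ vertex together with an adjacent $[1,1]$ pair forces the subchain $[-3,-2,-2,-3]$ in $P$, matching configuration~\ref{it:3223}. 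The remaining patterns of Condition~\ref{it:forbidden_configs} are checked against the longer chains~\ref{it:3532},~\ref{it:2235},~\ref{it:432},~\ref{it:342},~\ref{it:4422}, and~\ref{it:34332} by the same mechanical translation.

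The main obstacle is purely bookkeeping: the argument requires going through roughly two dozen subcases, and in each one producing the correct induced copy of one of the 17 forbidden configurations. There is no conceptual difficulty beyond this systematic casework, since Lemma~\ref{lem:duality} and Remark~\ref{rem:badvertex} together furnish an explicit dictionary between local patterns in $P^*$ and local subchains of $P$. Once the dictionary is tabulated, each Working Condition emerges as the direct counterpart of forbidding the corresponding subfamily of configurations from Theorem~\ref{thm:main}\ref{it:combinatorial}, and assembling the implications yields the lemma.
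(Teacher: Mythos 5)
Your overall strategy matches the paper's: argue by contrapositive using Lemma~\ref{lem:duality} as a dictionary and Remark~\ref{rem:badvertex} to identify the origin of bad vertices. The paper in fact only works out the conditions that are new relative to~\cite{definite} and defers the rest, whereas you re-derive everything, but this is a cosmetic difference.

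However, your worked example for Working Condition~\ref{it:forbidden_configs}\ref{it:1002} contains a real error that suggests the duality has been misread. You assert that the adjusted-weight subchain $(1,0,0,2)$ in $P^*$ corresponds to a subchain $[-2,-b_1,-b_2,-2,-2]$ in $P$ with $b_1,b_2\ge 3$. This is wrong. By Lemma~\ref{lem:duality}, the adjusted weights of $P^*$ are interleaved as $(a_0+1,[0]^{b_1-3},a_1+1,\dots)$, and since each entry $a_j+1$ is strictly positive, two consecutive $0$'s must belong to the same block $[0]^{b_j-3}$ and therefore encode a \emph{single} vertex of weight $-b_j$ with $b_j\ge 5$. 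Reading $(1,0,0,2)$ correctly: the leading $1$ says $a_j=0$, the pair $0,0$ forces $b_{j+1}=5$, and the trailing $2$ says $a_{j+1}=1$, so the corresponding subchain of $P$ is $[-5,-2]$, which is exactly configuration~\ref{it:52}. Your translation treats each $0$ as a separate vertex of $P$, which is not how the duality works; if carried through, this mistake would systematically misidentify which forbidden configurations each pattern in $P^*$ produces. The casework you outline is otherwise the right mechanism, but it needs to be redone with the correct reading that a maximal run of $0$'s in $P^*$ yields one vertex of weight $\le -4$ in $P$, and each vertex of $P^*$ with positive adjusted weight $m$ yields a run of $m-1$ vertices of weight $-2$ in $P$.
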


\begin{proof}
As in \cite[Lemma~3.6]{definite}, Working Condition~\ref{it:positivity} arises from the definition of dual plumbing. Working Condition~\ref{it:largeweight} comes from the requirement to avoid Theorem~\ref{thm:main}\ref{it:combinatorial}\ref{it:2-2}. Working Condition~\ref{it:weight3condition} comes from the requirement to avoid Theorem~\ref{thm:main}\ref{it:combinatorial}\ref{it:3-22}. Working Conditions~\ref{it:forbidden_configs}\ref{it:1002}, \ref{it:forbidden_configs}\ref{it:1003}, \ref{it:forbidden_configs}\ref{it:10003}, \ref{it:forbidden_configs}\ref{it:110012}, and \ref{it:forbidden_configs}\ref{it:31001} come from the requirement to avoid Theorem~\ref{thm:main}\ref{it:combinatorial}\ref{it:52}, \ref{it:combinatorial}\ref{it:622}, \ref{it:combinatorial}\ref{it:3532}, and \ref{it:combinatorial}\ref{it:2235}.

The remaining conditions come from configurations that appear in this paper but not in \cite{definite}. We go through each of these conditions in turn and use Lemma~\ref{lem:duality} to show that, if any of them fails, then $P$ would contain one of the subgraphs listed in Theorem~\ref{thm:main}\ref{it:combinatorial}.

If we have a bad vertex $x$ with $w'(x)=1$ in $P^*$, this corresponds to a subgraph of $P$ of the form
\[
\begin{tikzpicture}[xscale=1.0,yscale=1,baseline={(0,0)}]
    \node at (0.9,0.4) {$-3$};
    \node at (1.9,0.4) {$-3$};
    \node (A1) at (1,0) {$\bullet$};
    \node (A2) at (2,0) {$\bullet$};
    \path (A1) edge [-] node [auto] {$\scriptstyle{}$} (A2);
\end{tikzpicture}.
\]
A vertex $v$ in $P^*$ with $w'(v)>1$ gives rise to a vertex of weight $-2$ in $P$. If $x$ and $v$ are not adjacent to each other, then $P$ contains the configuration in Theorem~\ref{thm:main}\ref{it:combinatorial}\ref{it:33-2}. This gives Working Condition~\ref{it:bad1}.

If we have a bad vertex $x$ with $w'(x)=2$ in $P^*$, this corresponds to a subgraph of $P$ of the form
\[
\begin{tikzpicture}[xscale=1.0,yscale=1,baseline={(0,0)}]
    \node at (0.9,0.4) {$-3$};
    \node at (1.9,0.4) {$-2$};
    \node at (2.9,0.4) {$-3$};
    \node (A1) at (1,0) {$\bullet$};
    \node (A2) at (2,0) {$\bullet$};
    \node (A3) at (3,0) {$\bullet$};
    \path (A1) edge [-] node [auto] {$\scriptstyle{}$} (A2);
    \path (A2) edge [-] node [auto] {$\scriptstyle{}$} (A3);
\end{tikzpicture}.
\]
The configuration
\[
\begin{tikzpicture}[xscale=1.0,yscale=1,baseline={(0,0)}]
    \node at (1,0.4) {$1$};
    \node at (2,0.4) {$1$};
    \node (A1) at (1,0) {$\circ$};
    \node (A2) at (2,0) {$\circ$};
    \path (A1) edge [-] node [auto] {$\scriptstyle{}$} (A2);
\end{tikzpicture}
\]
in $P^*$ gives rise to a vertex of weight $-3$ in $P$. If $x$ and a copy of
\[
\begin{tikzpicture}[xscale=1.0,yscale=1,baseline={(0,0)}]
    \node at (1,0.4) {$1$};
    \node at (2,0.4) {$1$};
    \node (A1) at (1,0) {$\circ$};
    \node (A2) at (2,0) {$\circ$};
    \path (A1) edge [-] node [auto] {$\scriptstyle{}$} (A2);
\end{tikzpicture}
\]
are not adjacent to each other, then $P$ contains the configuration in Theorem~\ref{thm:main}\ref{it:combinatorial}\ref{it:323-3}. This gives Working Condition~\ref{it:bad2}.

Working Conditions~\ref{it:forbidden_configs}\ref{it:1012} and \ref{it:forbidden_configs}\ref{it:1013} correspond to Theorem~\ref{thm:main}\ref{it:combinatorial}\ref{it:432}. Working Condition~\ref{it:forbidden_configs}\ref{it:1102} corresponds to Theorem~\ref{thm:main}\ref{it:combinatorial}\ref{it:342}. Working Condition~\ref{it:forbidden_configs}\ref{it:110112} corresponds to Theorem~\ref{thm:main}\ref{it:combinatorial}\ref{it:34332}.

If there is a bad vertex $x$ with $w'(x)=3$ in $P^*$, this gives rise to the configuration in Theorem~\ref{thm:main}\ref{it:combinatorial}\ref{it:3223} in $P$. Therefore, by Working Condition~\ref{it:largeweight}, for every bad vertex $x$ in $P^*$ we must have $w'(x)\leq 2$. If there is exactly one bad vertex, then, in order to comply with Working Conditions~\ref{it:largeweight} and \ref{it:weight3condition}, the bad vertex must arise from one of Working Conditions~\ref{it:bad_part}\ref{it:101}, \ref{it:bad_part}\ref{it:102}, \ref{it:bad_part}\ref{it:103}, \ref{it:bad_part}\ref{it:111}, \ref{it:bad_part}\ref{it:112}, or \ref{it:bad_part}\ref{it:121}.

If there is more than one bad vertex, first observe that when $x$ is a bad vertex in $P^*$, the case $w'(x)=0$ gives rise to
\[
\begin{tikzpicture}[xscale=1,yscale=1,baseline={(0,0)}]
    \node at (0.9,0.4) {$-4$};
    \node (A1) at (1,0) {$\bullet$};
\end{tikzpicture}
\]
in $P$, the case $w'(x)=1$ gives rise to
\[
\begin{tikzpicture}[xscale=1,yscale=1,baseline={(0,0)}]
    \node at (0.9,0.4) {$-3$};
    \node at (1.9,0.4) {$-3$};
    \node (A1) at (1,0) {$\bullet$};
    \node (A2) at (2,0) {$\bullet$};
    \path (A1) edge [-] node [auto] {$\scriptstyle{}$} (A2);
\end{tikzpicture}
\]
in $P$, and the case $w'(x)=2$ gives rise to
\[
\begin{tikzpicture}[xscale=1,yscale=1,baseline={(0,0)}]
    \node at (0.9,0.4) {$-3$};
    \node at (1.9,0.4) {$-2$};
    \node at (2.9,0.4) {$-3$};
    \node (A1) at (1,0) {$\bullet$};
    \node (A2) at (2,0) {$\bullet$};
    \node (A3) at (3,0) {$\bullet$};
    \path (A1) edge [-] node [auto] {$\scriptstyle{}$} (A2);
    \path (A2) edge [-] node [auto] {$\scriptstyle{}$} (A3);
\end{tikzpicture}
\]
in $P$.
To avoid the configurations in Theorem~\ref{thm:main}\ref{it:combinatorial}\ref{it:2-2}\ref{it:33-2}\ref{it:4-4}\ref{it:4-33}\ref{it:4-323}\ref{it:33-33}, the graph $P$ cannot have an induced subgraph with two components, each of which is one of the three subgraphs displayed above. Therefore, when there is more than one bad vertex in $P^*$, the corresponding configuration in $P$ must be obtained by connecting copies of these three subgraphs together. By also taking into account the need to avoid the configurations in Theorem~\ref{thm:main}\ref{it:combinatorial}\ref{it:2-2}\ref{it:33-2}\ref{it:432}, we see that there are only five possibilities:
\[
\begin{tikzpicture}[xscale=1,yscale=1,baseline={(0,0)}]
    \node at (0.9,0.4) {$-4$};
    \node at (1.9,0.4) {$-4$};
    \node (A1) at (1,0) {$\bullet$};
    \node (A2) at (2,0) {$\bullet$};
    \path (A1) edge [-] node [auto] {$\scriptstyle{}$} (A2);
\end{tikzpicture},\quad
\begin{tikzpicture}[xscale=1,yscale=1,baseline={(0,0)}]
    \node at (0.9,0.4) {$-4$};
    \node at (1.9,0.4) {$-3$};
    \node at (2.9,0.4) {$-3$};
    \node (A1) at (1,0) {$\bullet$};
    \node (A2) at (2,0) {$\bullet$};
    \node (A3) at (3,0) {$\bullet$};
    \path (A1) edge [-] node [auto] {$\scriptstyle{}$} (A2);
    \path (A2) edge [-] node [auto] {$\scriptstyle{}$} (A3);
\end{tikzpicture},\quad
\begin{tikzpicture}[xscale=1,yscale=1,baseline={(0,0)}]
    \node at (0.9,0.4) {$-3$};
    \node at (1.9,0.4) {$-3$};
    \node at (2.9,0.4) {$-3$};
    \node (A1) at (1,0) {$\bullet$};
    \node (A2) at (2,0) {$\bullet$};
    \node (A3) at (3,0) {$\bullet$};
    \path (A1) edge [-] node [auto] {$\scriptstyle{}$} (A2);
    \path (A2) edge [-] node [auto] {$\scriptstyle{}$} (A3);
\end{tikzpicture},\]

\[
\begin{tikzpicture}[xscale=1,yscale=1,baseline={(0,0)}]
    \node at (0.9,0.4) {$-3$};
    \node at (1.9,0.4) {$-3$};
    \node at (2.9,0.4) {$-3$};
    \node at (3.9,0.4) {$-3$};
    \node (A1) at (1,0) {$\bullet$};
    \node (A2) at (2,0) {$\bullet$};
    \node (A3) at (3,0) {$\bullet$};
    \node (A4) at (4,0) {$\bullet$};
    \path (A1) edge [-] node [auto] {$\scriptstyle{}$} (A2);
    \path (A2) edge [-] node [auto] {$\scriptstyle{}$} (A3);
    \path (A3) edge [-] node [auto] {$\scriptstyle{}$} (A4);
\end{tikzpicture},\quad
\begin{tikzpicture}[xscale=1,yscale=1,baseline={(0,0)}]
    \node at (0.9,0.4) {$-3$};
    \node at (1.9,0.4) {$-3$};
    \node at (2.9,0.4) {$-2$};
    \node at (3.9,0.4) {$-3$};
    \node (A1) at (1,0) {$\bullet$};
    \node (A2) at (2,0) {$\bullet$};
    \node (A3) at (3,0) {$\bullet$};
    \node (A4) at (4,0) {$\bullet$};
    \path (A1) edge [-] node [auto] {$\scriptstyle{}$} (A2);
    \path (A2) edge [-] node [auto] {$\scriptstyle{}$} (A3);
    \path (A3) edge [-] node [auto] {$\scriptstyle{}$} (A4);
\end{tikzpicture}.
\]

To avoid the configuration in Theorem~\ref{thm:main}\ref{it:combinatorial}\ref{it:4422}, we cannot have
\[
\begin{tikzpicture}[xscale=1.0,yscale=1,baseline={(0,0)}]
    \node at (0,0.4) {$1$};
    \node at (1,0.4) {{$\red{\circled{0}}$}};
    \node at (2,0.4) {$1$};
    \node at (3,0.4) {{$\red{\circled{0}}$}};
    \node at (4,0.4) {$3$};
    \node (A1) at (0,0) {$\circ$};
    \node (A2) at (1,0) {$\circ$};
    \node (A3) at (2,0) {$\circ$};
    \node (A4) at (3,0) {$\circ$};
    \node (A5) at (4,0) {$\circ$};
    \path (A1) edge [-] node [auto] {$\scriptstyle{}$} (A2);
    \path (A2) edge [-] node [auto] {$\scriptstyle{}$} (A3);
    \path (A3) edge [-] node [auto] {$\scriptstyle{}$} (A4);
    \path (A4) edge [-] node [auto] {$\scriptstyle{}$} (A5);
\end{tikzpicture}
\]
in $P^*$. Therefore, by also considering Working Condition~\ref{it:largeweight}, the configuration
\[
\begin{tikzpicture}[xscale=1,yscale=1,baseline={(0,0)}]
    \node at (0.9,0.4) {$-4$};
    \node at (1.9,0.4) {$-4$};
    \node (A1) at (1,0) {$\bullet$};
    \node (A2) at (2,0) {$\bullet$};
    \path (A1) edge [-] node [auto] {$\scriptstyle{}$} (A2);
\end{tikzpicture}
\]
in $P$ corresponds to Working Conditions~\ref{it:bad_part}\ref{it:10101} and \ref{it:bad_part}\ref{it:10102} in $P^*$. By considering Working Conditions~\ref{it:largeweight}, \ref{it:weight3condition}, and \ref{it:forbidden_configs}\ref{it:1102}, the configuration
\[
\begin{tikzpicture}[xscale=1,yscale=1,baseline={(0,0)}]
    \node at (0.9,0.4) {$-4$};
    \node at (1.9,0.4) {$-3$};
    \node at (2.9,0.4) {$-3$};
    \node (A1) at (1,0) {$\bullet$};
    \node (A2) at (2,0) {$\bullet$};
    \node (A3) at (3,0) {$\bullet$};
    \path (A1) edge [-] node [auto] {$\scriptstyle{}$} (A2);
    \path (A2) edge [-] node [auto] {$\scriptstyle{}$} (A3);
\end{tikzpicture}
\]
in $P$ corresponds to Working Conditions~\ref{it:bad_part}\ref{it:10111} and \ref{it:bad_part}\ref{it:10112} in $P^*$. By considering Working Conditions~\ref{it:largeweight} and \ref{it:bad1}, the configuration
\[
\begin{tikzpicture}[xscale=1,yscale=1,baseline={(0,0)}]
    \node at (0.9,0.4) {$-3$};
    \node at (1.9,0.4) {$-3$};
    \node at (2.9,0.4) {$-3$};
    \node (A1) at (1,0) {$\bullet$};
    \node (A2) at (2,0) {$\bullet$};
    \node (A3) at (3,0) {$\bullet$};
    \path (A1) edge [-] node [auto] {$\scriptstyle{}$} (A2);
    \path (A2) edge [-] node [auto] {$\scriptstyle{}$} (A3);
\end{tikzpicture}
\]
in $P$ corresponds to Working Condition~\ref{it:bad_part}\ref{it:1111} in $P^*$. Again by considering Working Conditions~\ref{it:largeweight} and \ref{it:bad1}, the configuration
\[
\begin{tikzpicture}[xscale=1,yscale=1,baseline={(0,0)}]
    \node at (0.9,0.4) {$-3$};
    \node at (1.9,0.4) {$-3$};
    \node at (2.9,0.4) {$-3$};
    \node at (3.9,0.4) {$-3$};
    \node (A1) at (1,0) {$\bullet$};
    \node (A2) at (2,0) {$\bullet$};
    \node (A3) at (3,0) {$\bullet$};
    \node (A4) at (4,0) {$\bullet$};
    \path (A1) edge [-] node [auto] {$\scriptstyle{}$} (A2);
    \path (A2) edge [-] node [auto] {$\scriptstyle{}$} (A3);
    \path (A3) edge [-] node [auto] {$\scriptstyle{}$} (A4);
\end{tikzpicture}
\]
in $P$ corresponds to Working Condition~\ref{it:bad_part}\ref{it:11111} in $P^*$. Finally, by considering Working Condition~\ref{it:largeweight}, the configuration
\[
\begin{tikzpicture}[xscale=1,yscale=1,baseline={(0,0)}]
    \node at (0.9,0.4) {$-3$};
    \node at (1.9,0.4) {$-3$};
    \node at (2.9,0.4) {$-2$};
    \node at (3.9,0.4) {$-3$};
    \node (A1) at (1,0) {$\bullet$};
    \node (A2) at (2,0) {$\bullet$};
    \node (A3) at (3,0) {$\bullet$};
    \node (A4) at (4,0) {$\bullet$};
    \path (A1) edge [-] node [auto] {$\scriptstyle{}$} (A2);
    \path (A2) edge [-] node [auto] {$\scriptstyle{}$} (A3);
    \path (A3) edge [-] node [auto] {$\scriptstyle{}$} (A4);
\end{tikzpicture}
\]
in $P$ corresponds to Working Condition~\ref{it:bad_part}\ref{it:1121} in $P^*$.
\end{proof}

\section{Standard and semi-standard embeddings}\label{sec:definitions}

In this section, we make some definitions to help with the lattice embedding analysis in Section~\ref{sec:lattice_analysis}.

Throughout this section, we let $\Gamma$ be a disjoint union of linear plumbing graphs and let $V(\Gamma)$ denote its set of vertices. We first introduce some terminology specific to the problem of mapping linear lattices into diagonal lattices. Throughout this section, for simplicity we refer to the standard positive-definite lattice $(\Z^n,\langle 1 \rangle^n)$ simply as $\Z^n$.

\begin{defn}
An \emph{embedding} of $\Gamma$ into $\Z^n$ is a function
\[
\phi \colon V(\Gamma) \longrightarrow \Z^n
\]
such that for all $u,v \in V(\Gamma)$ we have
\[
\phi(u) \cdot \phi(v)= 
\begin{cases}
w(u) & \text{if $u=v$},\\
-1 & \text{if $u$ and $v$ are adjacent},\\
0 & \text{otherwise.}
\end{cases}
\]
\end{defn}


Given an orthonormal basis $E$ of $\Z^n$, we define the \textit{support} of a vector $v\in\Z^n$ with respect to $E$ to be
\[
\supp(v):=\{e\in E \mid \langle v,e\rangle\neq 0\}.
\]

When we mention the support of a vector without mentioning an orthonormal basis, it is implicitly understood that we have chosen an orthonormal basis.

\begin{defn}\cite[Def.~4.4]{definite}\label{def:standard}
An embedding $\phi \colon V(\Gamma) \rightarrow \Z^n$ is \emph{standard} if 
\[
|\supp(\phi(u))\cap \supp(\phi(v))| = \begin{cases}
w(v) &\text{if $u=v$},\\
1 & \text{if $u$ and $v$ are adjacent},\\
0 &\text{otherwise}
\end{cases}
\] 
for all vertices $u$ and $v$ in $V(\Gamma)$.
\end{defn}

For our purposes, we need the following new definitions.

\begin{defn}\label{def:semi-standard1}
An embedding $\phi \colon V(\Gamma) \rightarrow \Z^n$ is \emph{semi-standard} at a bad vertex $x$ if 
\[
|\supp(\phi(u))\cap \supp(\phi(v))| = \begin{cases}
w(v) &\text{if $u=v$},\\
1 & \text{if $u$ and $v$ are adjacent},\\
2 & \text{if $u$ and $v$ are both adjacent to $x$ and $u\neq v$},\\
0 &\text{otherwise}
\end{cases}
\] 
for all vertices $u$ and $v$ in $V(\Gamma)$. In addition, if $u$ and $v$ are both adjacent to $x$ and $u\neq v$, then
\[
|\supp(\phi(u))\cap\supp(\phi(x))\cap\supp(\phi(v))|=1.
\]
\end{defn}

\begin{defn}\label{def:semi-standard2}
An embedding $\phi \colon V(\Gamma) \rightarrow \Z^n$ is \emph{semi-standard} if it is semi-standard at one bad vertex.

Whenever we say that an induced subgraph of $\Gamma$ is embedded standardly or semi-standardly, we mean that $\phi$ restricted to the vertices of that subgraph is standard or semi-standard, respectively.
\end{defn}

\begin{defn}\label{def:bad-part}
If $\Gamma$ satisfies the Working Conditions and has at least one bad vertex, the \emph{bad part} of $\Gamma$ is the subgraph appearing in Working Condition~\ref{it:bad_part}.
\end{defn}

\begin{defn}\label{def:extended-bad-part}
Suppose $\Gamma$ satisfies the Working Conditions and has at least one bad vertex. Consider the set of subgraphs of $\Gamma$ that
\begin{itemize}
\item are of the form of a chain of adjacent vertices $v_0,\ldots,v_k$,
\item contain the bad part, and
\item satisfy both of the following:
\begin{enumerate}
\item either the vertex $v_0$ is a leaf of $\Gamma$, or $w'(v_1)=w'(v_2)=0$;
\item either the vertex $v_k$ is a leaf of $\Gamma$, or $w'(v_{k-1})=w'(v_{k-2})=0$.
\end{enumerate}
\end{itemize}
Among all these subgraphs, the one with the smallest number of vertices is called the \emph{extended bad part} of $\Gamma$.
\end{defn}

The extended bad part can be obtained by starting with the bad part and then traveling in both directions along the chain. We stop when we either reach a leaf, or have just passed two consecutive vertices with vanishing adjusted weights.

The intuitive idea for why the extended bad part is defined this way is that whenever we have two consecutive vertices with vanishing adjusted weights, we use them to ``glue'' the embeddings together. We will eventually show that the embedding is as desired within the extended bad part, and also outside of it, and then ``glue'' them along these consecutive vertices with vanishing adjusted weights.

\begin{defn}
Suppose $\Gamma$ satisfies the Working Conditions and has at least one bad vertex. Let $v_0,\ldots,v_k$ be the extended bad part of $\Gamma$. Let $V_c$ be the set of vertices of $\Gamma$ that are not in the extended bad part. Define
\[
V_- :=
\begin{cases}
\{v_0,v_1,v_2\} & \text{if $v_0$ is not a leaf of $\Gamma$},\\
\varnothing & \text{otherwise,}
\end{cases}
\qquad
V_+ :=
\begin{cases}
\{v_k,v_{k-1},v_{k-2}\} & \text{if $v_k$ is a not leaf of $\Gamma$},\\
\varnothing & \text{otherwise.}
\end{cases}
\]
The \emph{outside part} of $\Gamma$ is the subgraph of $\Gamma$ induced by the set of vertices $V_c \cup V_- \cup V_+$.
\end{defn}

Intuitively, the outside part is whatever outside of the extended bad part, plus the ``gluing part'' mentioned above.

\begin{defn}
Suppose $\Gamma$ satisfies the Working Conditions and has at least one bad vertex. Let $v_0,\ldots,v_k$ be the extended bad part of $\Gamma$, and let $\phi \colon V(\Gamma) \rightarrow \Z^n$ be an embedding of $\Gamma$.

If $v_0$ is not a leaf of $\Gamma$, then by the definition of the extended bad part we have $w'(v_2)=0$, and hence $w(v_2)=2$. Since $v_2 \cdot v_3 = -1$, we have
\[
\supp(\phi(v_2)) \cap \supp(\phi(v_3)) = \{e\}
\]
for some basis vector $e$. We call $e$ a \emph{screw} of $\Gamma$. Furthermore, if for all vertices $v \neq v_2,v_3$ in $\Gamma$ we have $\phi(v)\cdot e = 0$, we say that $e$ is a \emph{clean screw}.

Similarly, if $v_k$ is not a leaf of $\Gamma$, we call the unique basis vector in
\[
\supp(\phi(v_{k-2})) \cap \supp(\phi(v_{k-3}))= \{e\}
\]
a \emph{screw}, and we say that $e$ is \emph{clean} if $\phi(v)\cdot e = 0$ for all vertices $v \neq v_{k-2},v_{k-3}$ in $\Gamma$.
\end{defn}

By definition, $\Gamma$ has at most two screws. Intuitively, screws connect the outside part to the remaining portion of the extended bad part. If all screws are clean, we can “unscrew’’ them to detach the outside part and then argue that the outside part must be embedded standardly. We need the screws to be clean to locally modify the embedding there without messing up the intersection pairing and weights of vertices elsewhere.

\begin{lem}\label{lem:unscrew}
Suppose $\Gamma$ satisfies the Working Conditions and has at least one bad vertex. Let $\phi \colon V(\Gamma) \rightarrow \Z^n$ be an embedding of $\Gamma$. If all screws of $\Gamma$ are clean, then the outside part of $\Gamma$ is embedded standardly.
\end{lem}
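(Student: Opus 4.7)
My plan is to combine two ingredients: first, the outside part has no bad vertices of $\Gamma$, so its structure falls within the scope of the standardness analysis carried out in \cite{definite}; second, the cleanness of the screws ensures that restricting $\phi$ to the outside part gives an embedding whose standardness can be analyzed without interference from the remaining vertices of the extended bad part.

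For the first ingredient I would argue that no vertex of the outside part is a bad vertex of $\Gamma$. Whenever $V_- = \{v_0, v_1, v_2\}$ is nonempty, Definition~\ref{def:extended-bad-part} forces $w'(v_1) = w'(v_2) = 0$, so each of $v_0, v_1, v_2$ has a $\Gamma$-neighbor (namely $v_1$ or $v_2$) of vanishing adjusted weight; being bad requires two neighbors of positive adjusted weight (Definition~\ref{def:bad}), so none of these three vertices is bad. A symmetric argument handles $V_+$, and $V_c$ contains no bad vertices of $\Gamma$ because the bad part is contained in the extended bad part. By Remark~\ref{rem:comparintworkingcondition}, the outside part (with its inherited weights) therefore satisfies the Working Conditions of \cite{definite}.

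For the second ingredient I would invoke cleanness. If $e$ is a clean left screw, so that $e \in \supp(\phi(v_2)) \cap \supp(\phi(v_3))$ and $e \notin \supp(\phi(v))$ for any $v \neq v_2, v_3$, then since $v_3$ is not in the outside part when $V_- \neq \varnothing$, the basis vector $e$ appears in $\supp(\phi(v))$ for $v$ in the outside part only when $v = v_2$. Hence $e$ cannot create a support intersection between $v_2$ and any other outside-part vertex. The same applies at the right-hand screw. Applying the standardness analysis of \cite{definite} to the restricted embedding $\phi|_{\text{outside}}$ then yields the standardness conclusion. The main obstacle is making the invocation of \cite{definite} precise for a graph sitting inside a larger embedding rather than in isolation; cleanness is exactly the property that licenses this transfer, by ensuring that the boundary vertices $v_2$ and $v_{k-2}$ do not inherit unwanted support overlaps from $v_3$ and $v_{k-3}$ that the in-isolation analysis would not anticipate.
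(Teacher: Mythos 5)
Your first ingredient fails, and the paper explicitly flags this trap in the remark immediately following the statement of the lemma. You argue that no vertex of the outside part is a bad vertex of $\Gamma$ (which is correct) and then pass via Remark~\ref{rem:comparintworkingcondition} to conclude the outside part satisfies the Working Conditions of \cite{definite}. But bad-ness is not inherited from $\Gamma$: it must be re-evaluated in the outside part viewed as an individual graph, and the adjusted weights change there. Concretely, when $V_-\neq\varnothing$, the vertex $v_2$ has $d(v_2)=2$ in $\Gamma$ (adjacent to $v_1$ and $v_3$) but $d(v_2)=1$ in the outside part, so $w'(v_2)$ jumps from $0$ to $1$. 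If moreover $w'(v_0)>0$ in $\Gamma$ (which Definition~\ref{def:extended-bad-part} permits, since it constrains $w'(v_1)$ and $w'(v_2)$ but not $w'(v_0)$), then in the outside part $v_1$ becomes adjacent to two vertices of positive adjusted weight and is therefore a bad vertex. The outside part then violates Working Condition~III of \cite{definite}, and you cannot invoke the uniqueness-of-standard-embeddings result. Your cleanness discussion does not address this: it only ensures that $\phi(v_2)$ has no unexpected support overlaps, which is a property of the embedding, not of the graph whose Working Conditions must be checked.

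The paper's proof avoids this precisely by not restricting to the outside part. Instead it builds an auxiliary graph $\Gamma'$ by appending to the outside part a new leaf $v_+$ adjacent to $v_2$ with weight $w(v_3)-1$; this keeps $d(v_2)=2$, so $w'(v_2)$ remains $0$, and $w'(v_+)=w'(v_3)$, so all adjusted weights in $\Gamma'$ match those in $\Gamma$ and $\Gamma'$ has no bad vertices. Cleanness of the screw is then used for a different purpose than you intend: it lets one extend $\phi|_{\text{outside}}$ to a genuine embedding $\phi'$ of $\Gamma'$ by sending $v_+$ to $e_1 + e_{n+1} + \cdots + e_{n+w(v_+)-1}$ (with $e_1$ the screw and the $e_{n+i}$ fresh basis vectors) without disturbing any pairings, because no other vertex's image touches $e_1$. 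Only then does the uniqueness theorem of \cite{definite} apply — to $\Gamma'$ — and its standardness is inherited by the restriction to the outside part. Your proposal is missing the construction of $\Gamma'$ and the correct role of cleanness, so as written it does not establish the lemma.
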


It is tempting to argue that, since the outside part does not contain any bad vertices, it satisfies the Working Conditions of \cite{definite} and therefore must be embedded standardly. However, a problem with this argument is that the adjusted weights depend on the number of neighbours, not just the weights themselves. For a vertex in the outside part that is attached to the rest of $\Gamma$, the adjusted weight computed in $\Gamma$ differs from the adjusted weight of the same vertex when we view the outside part as an individual graph. To resolve this, we use the assumption that the screws are clean to manipulate the weights.

\begin{proof}[Proof of Lemma \ref{lem:unscrew}]
Let $\{e_1,\ldots,e_n\}$ be an orthonormal basis of $\Z^n$.

If there are no screws, then the extended bad part is a separate component of $\Gamma$. Hence, all vertices in the outside part have the same number of neighbours regardless of whether we view them as vertices in $\Gamma$ or as vertices in the outside part considered as an individual graph. Thus, as an individual graph, the outside part satisfies the Working Conditions in \cite{definite}, and therefore it must be embedded standardly.

If there is exactly one screw, let the extended bad part be $v_0,\ldots,v_k$, where $v_0$ is not a leaf and $v_k$ is a leaf. Let $\Gamma'$ be the graph obtained by adding an extra vertex $v_+$ to the outside part of $\Gamma$ such that $v_+$ is adjacent to $v_2$, is not adjacent to any other vertices, and $w(v_+)=w(v_3)-1$.

By construction, the adjusted weight of $v_+$ in $\Gamma'$ is the same as the adjusted weight of $v_3$ in $\Gamma$. Hence $\Gamma'$ satisfies the Working Conditions. Since $\Gamma'$ has no bad vertices, $\Gamma'$ also satisfies the Working Conditions in \cite{definite}, and therefore any embedding of $\Gamma'$ must be standard. Consider the embedding $$\phi' \colon V(\Gamma')\rightarrow\Z^{n+w(v_+)-1}\cong\Z^n\oplus\Z^{w(v_+)-1}$$ defined as follows.  For $v\neq v_+$, we embed $v$ into $\Z^n\subset\Z^{n+w(v_+)-1}$ in the same way as $\phi$. Let $e_{n+1},\ldots,e_{n+w(v_+)-1}$ be new basis elements. Up to relabelling the basis vectors in $\Z^n$, we may assume that $e_1$ is the screw, so that $e_1\in \supp(\phi(v_2))\cap\supp(\phi(v_3))$ and $\phi(v_2)\cdot e_1=-1$. We then set
\[
\phi'(v_+) = e_1 + e_{n+1} + \cdots + e_{n+w(v_+)-1}.
\]
Since the screw is clean, $\phi'$ gives the correct intersection pairing between $v_+$ and all other vertices, and hence $\phi'$ is indeed an embedding of $\Gamma'$. As this embedding must be standard, the embedding $\phi$ restricted to the outside part must also be standard.

If there are two screws, then when forming $\Gamma'$, instead of only attaching a new vertex $v_+$ to $v_2$, we also attach a new vertex $v_{++}$ to $v_{k-2}$, and construct an embedding
\[
\phi' \colon V(\Gamma')\rightarrow\Z^{n+w(v_+)+w(v_{++})-2}
\]
in the same way as above. The same argument then shows that the outside part is embedded standardly.
\end{proof}

\section{Theorem \ref{thm:main} from the Working Conditions}\label{sec:lattice_analysis}
Throughout Section~\ref{sec:lattice_analysis}, we assume that $\Gamma$ satisfies the Working Conditions and has at least one bad vertex, and we fix an embedding $\phi \colon V(\Gamma) \to \Z^n$. The aim of this section is to show that $\Gamma$ is embedded either standardly or semi-standardly. A key step will be to show that the supports of vertices in the bad part are disjoint from the supports of vertices outside the extended bad part.

\begin{lem}\label{lem:GAP check}
The extended bad part is embedded either standardly or semi-standardly.
\end{lem}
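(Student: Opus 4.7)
The plan is to reduce the statement to a finite case analysis and then resolve each case by direct enumeration of embeddings. By Working Condition~\ref{it:bad_part}, the bad part of $\Gamma$ is isomorphic to one of the $13$ explicitly listed subgraphs, each having at most $5$ vertices and all weights bounded above by a small constant. The extended bad part is obtained from the bad part by walking out in at most two directions until we either reach a leaf of $\Gamma$ or have just passed two consecutive vertices of adjusted weight $0$. Using Working Condition~\ref{it:largeweight} (which forces $w'(v)\le 3$ for every vertex, and $w'(v)>1$ for at most one vertex), together with Working Conditions~\ref{it:weight3condition} and~\ref{it:forbidden_configs}, one verifies that only finitely many chains of bounded length can arise as extensions on either side of the bad part. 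In particular, the total number of basis elements appearing in the supports of the vertices of the extended bad part under any embedding is uniformly bounded.

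For each resulting candidate shape $\Gamma_0$ of the extended bad part, I would enumerate all embeddings $\phi_0 \colon V(\Gamma_0) \to \Z^N$ up to the natural action of permuting and negating basis vectors. Since weights are small and the graph is short, one may place vertices one at a time along the chain: each new vertex $v$ must be assigned a vector of squared norm $w(v)$ with prescribed inner products against its previously placed neighbours, and the possibilities for its support can be enumerated by distributing the remaining weight among new basis vectors and the basis vectors already in play. This backtracking search terminates in finitely many steps and produces the complete list of embeddings of $\Gamma_0$.

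For every embedding produced by the enumeration, I would then verify directly from Definitions~\ref{def:standard},~\ref{def:semi-standard1}, and~\ref{def:semi-standard2} whether it is standard or semi-standard at some bad vertex, and confirm that no other possibilities occur. The main obstacle I expect is the length of the case list rather than any single hard deduction: there are $13$ bad-part shapes, each extendable in several ways, and each shape a priori admits many embeddings once one allows two adjacent vertex supports to overlap in more than one basis element. The hardest configurations should be the four- and five-vertex bad parts appearing in Working Conditions~\ref{it:bad_part}\ref{it:10101}--\ref{it:bad_part}\ref{it:1121}, where the semi-standard phenomenon can manifest in several non-obvious ways and must be systematically accounted for or ruled out. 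For this reason I anticipate carrying out the enumeration by computer (consistent with the label \texttt{GAP check} attached to the lemma) rather than entirely by hand, and then presenting the outcome case by case.
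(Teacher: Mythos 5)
Your proposal matches the paper's approach: reduce to the $13$ bad-part shapes from Working Condition~\ref{it:bad_part}, use Working Conditions~\ref{it:largeweight}, \ref{it:weight3condition}, and~\ref{it:forbidden_configs} to bound the possible extensions on each side, and then enumerate all embeddings by computer (the paper uses a modified \texttt{ListEmbeddings} routine built on GAP's \texttt{OrthogonalEmbeddings}) and check each one against the definitions of standard and semi-standard. The paper's Appendix~\ref{computer data} carries out exactly this case-by-case analysis, determining the admissible left and right extensions by hand from the Working Conditions before feeding them into a function \texttt{AllConfig} that tallies standard, semi-standard, and ``neither'' outcomes.
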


\begin{proof}
While it is possible to analyze each possibility by hand, this paper would be more than 100 pages long if we do that. Instead, we use some GAP~\cite{GAP} code to quickly analyze all possible embeddings. The details are in Appendix \ref{computer data}.
\end{proof}

\begin{lem}\label{lem:unclean criteria}
Let $v_0,\ldots,v_k$ be the extended bad part, and suppose that $v_0$ is not a leaf in $\Gamma$. If the screw connecting $v_2$ and $v_3$ is not clean, then there exists a vertex $v$ such that $w(v)\geq 4$ and
\[
|\supp(\phi(v))\cap\supp(\phi(v_1))|
=|\supp(\phi(v))\cap\supp(\phi(v_2))|
=|\supp(\phi(v))\cap\supp(\phi(v_3))|=2.
\]
\end{lem}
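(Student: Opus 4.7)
The plan is to let $v$ be any vertex of $\Gamma$ with $v \neq v_2, v_3$ and $\phi(v) \cdot e \neq 0$ (such a $v$ exists by the non-cleanness hypothesis), and then show that $v$ itself already satisfies all the asserted properties. The argument combines three ingredients: the (semi-)standardness of the embedding of the extended bad part given by Lemma~\ref{lem:GAP check}, the linearity of the plumbing graph, and the weight bound from Working Condition~\ref{it:largeweight}.

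First, I would show that $v$ must lie outside the extended bad part and is not adjacent to $v_1, v_2, v_3$. Neither $v_1$ nor $v_3$ is a bad vertex, since each has $v_2$ as a neighbor and $w'(v_2) = 0$. Consequently, in a (semi-)standard embedding, the screw $e \in \supp(\phi(v_2)) \cap \supp(\phi(v_3))$ cannot appear in the support of any other vertex of the extended bad part. Since $v_1, v_2, v_3$ are interior vertices of a linear chain whose neighbors lie entirely in the extended bad part, $v$ is adjacent to none of $v_1, v_2, v_3$, so $\phi(v) \cdot \phi(v_i) = 0$ for $i = 1, 2, 3$.

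Next, I would normalize coordinates. Because $w(v_1) = w(v_2) = 2$ (from $w' = 0$ and degree $2$), the (semi-)standard embedding gives each support size $2$ with coefficients $\pm 1$, so after sign changes of basis vectors we may write
\[
\phi(v_2) = e + f, \qquad \phi(v_1) = -f + g, \qquad \phi(v_3) = -e + \sigma_1 h_1 + \sigma_2 h_2 + \cdots,
\]
with $f, g, h_i$ distinct basis vectors and $\sigma_i \in \{\pm 1\}$. Setting $a := \phi(v) \cdot e \neq 0$, orthogonality with $\phi(v_2)$ and $\phi(v_1)$ forces the coefficients of $f, g$ in $\phi(v)$ to equal $-a$, so $\{e, f, g\} \subseteq \supp(\phi(v))$. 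Since these three basis vectors already contribute $3a^2$ to $w(v)$, Working Condition~\ref{it:largeweight} bound $w(v) \le w'(v) + 2 \le 5$ forces $|a| = 1$. Orthogonality with $\phi(v_3)$ then yields $\sum_i \sigma_i \,(\text{coeff}_v h_i) = a$, so at least one $h_i$ lies in $\supp(\phi(v))$, giving $|\supp(\phi(v))| \ge 4$ and hence $w(v) \ge 4$.

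The intersections with $v_1$ and $v_2$ are then automatically exactly $2$, because $\supp(\phi(v_1)) = \{f, g\}$ and $\supp(\phi(v_2)) = \{e, f\}$ are both contained in $\supp(\phi(v))$. The main obstacle is the equality $|\supp(\phi(v)) \cap \supp(\phi(v_3))| = 2$: if two distinct $h_i, h_j$ both lay in $\supp(\phi(v))$ with nonzero integer coefficients $c_i, c_j$, the orthogonality $\sigma_i c_i + \sigma_j c_j = \pm 1$ would force $c_i^2 + c_j^2 \ge 5$, the minimum being attained at $(\pm 2, \mp 1)$. Combined with the contribution of $3$ from $e, f, g$, we would obtain $w(v) \ge 8$, contradicting Working Condition~\ref{it:largeweight} once more. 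Hence the intersection is exactly $2$, completing the argument.
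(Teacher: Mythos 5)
Your overall strategy matches the paper's proof of this lemma: pick the vertex $v$ witnessing non-cleanness, show via Lemma~\ref{lem:GAP check} that $v$ lies outside the extended bad part (so $\phi(v)\cdot\phi(v_i)=0$ for $i=1,2,3$), and then use the orthogonality relations together with the weight bound from Working Condition~\ref{it:largeweight} to force $e,f,g$ into $\supp(\phi(v))$ and deduce $w(v)\ge 4$. This is the same chain of deductions as in the paper, just in different notation; your extra step of explicitly pinning down $|a|=1$ is a clean way of recording what the paper only uses implicitly.

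However, the final step establishing $|\supp(\phi(v))\cap\supp(\phi(v_3))|=2$ has a gap. You argue that if $h_i,h_j$ both lie in $\supp(\phi(v))$ then $\sigma_ic_i+\sigma_jc_j=\pm 1$, so $c_i^2+c_j^2\ge 5$. But the orthogonality relation with $\phi(v_3)$ is $\sum_i\sigma_i c_i = a$ over \emph{all} $h_i\in\supp(\phi(v))$, so the two-term equation you write down only holds when $h_i,h_j$ are the only $h$'s present. If three $h$'s appear, each with coefficient $\pm1$, their signed sum can already equal $\pm 1$ while contributing only $3$ to the norm (not $\ge 5$), so your stated inequality fails; and the case of four $h$'s is not addressed at all. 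The conclusion is still correct, but the justification as written is wrong. The quickest repair, consistent with what you have already shown: from $|a|=1$ and $w(v)\le 5$ one gets $\sum_i c_i^2 \le 2$, which already rules out three or more $h$'s by counting, and rules out exactly two $h$'s because then both $c_i,c_j$ would be $\pm 1$ and $\sigma_ic_i+\sigma_jc_j$ would be even, contradicting $\sigma_ic_i+\sigma_jc_j=a=\pm 1$.
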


\begin{proof}
By the definition of the extended bad part, we have $w(v_1)=w(v_2)=2$. By Lemma~\ref{lem:GAP check}, we may write
\[
\phi(v_1)=e_1-e_2 \qquad\text{ and }\qquad \phi(v_2)=e_2-e_3.
\]
Then $e_3$ is the screw, so $e_3\in\supp(\phi(v_3))$ and $e_1,e_2\notin\supp(\phi(v_3))$.

By the definition of a clean screw, there exists a vertex $v\neq v_2,v_3$ such that $e_3\in\supp(\phi(v))$. By Lemma~\ref{lem:GAP check}, $v$ cannot lie in the extended bad part. Hence $\phi(v)\cdot\phi(v_i)=0$ for $i=1,2,3$, and therefore $|\supp(\phi(v))\cap\supp(\phi(v_i))|$ must be even for each $i=1,2,3$.

From $\phi(v)\cdot\phi(v_3)=0$ and the fact that $e_3\in\supp(\phi(v))\cap\supp(\phi(v_3))$, we deduce that there exists some $e_4\neq e_1,e_2,e_3$ such that
\[
e_4\in\supp(\phi(v))\cap\supp(\phi(v_3)).
\]
From $\phi(v)\cdot\phi(v_2)=0$ and the fact that $e_3\in\supp(\phi(v))\cap\supp(\phi(v_2))$, we conclude that $e_2\in\supp(\phi(v))$. Finally, from $\phi(v)\cdot\phi(v_1)=0$ and $e_2\in\supp(\phi(v))\cap\supp(\phi(v_1))$, we conclude that $e_1\in\supp(\phi(v))$.

Thus $e_1,e_2,e_3,e_4\in\supp(\phi(v))$, so $w(v)\geq 4$, and we have
\[
|\supp(\phi(v))\cap\supp(\phi(v_i))|=2 \quad\text{for } i=1,2,3,
\]
as claimed.
\end{proof}

\begin{cor}\label{cor:large bad}
If there exists a bad vertex $x$ with $w'(x)\geq 1$, then the outside part of $\Gamma$ is embedded standardly.
\end{cor}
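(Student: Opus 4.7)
The plan is to reduce the claim to Lemma~\ref{lem:unscrew} by showing that, under the hypothesis, every screw of $\Gamma$ is clean; the conclusion then follows immediately.

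Suppose for contradiction that some screw of $\Gamma$ is not clean. Lemma~\ref{lem:unclean criteria} then produces a vertex $v$ which, as shown inside the proof of that lemma via Lemma~\ref{lem:GAP check}, lies outside the extended bad part and satisfies $w(v)\geq 4$. Since $\Gamma$ is a disjoint union of linear plumbing graphs, $d(v)\leq 2$, so $w'(v)=w(v)-d(v)\geq 2 > 1$. By Working Condition~\ref{it:largeweight}, $v$ is therefore the unique vertex of $\Gamma$ whose adjusted weight exceeds $1$.

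I then split on $w'(x)$. If $w'(x)\geq 2$, uniqueness forces $x=v$, which is impossible because $x$ lies in the bad part (hence in the extended bad part) while $v$ does not. If $w'(x)=1$, Working Condition~\ref{it:bad1} forces $x$ and $v$ to be adjacent in $\Gamma$. But $x$, being bad and sitting in a linear graph, has exactly two neighbors in $\Gamma$, both of positive adjusted weight; an inspection of the subgraphs in Working Condition~\ref{it:bad_part} shows that in every displayed option the two neighbors of each bad vertex already appear among the listed vertices of the bad part, hence of the extended bad part. Thus $v$ would be forced inside the extended bad part, again a contradiction. Either way, no screw can be unclean, and Lemma~\ref{lem:unscrew} yields the desired conclusion.

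The only genuinely delicate point I expect is the tail of the $w'(x)=1$ case, where one has to verify against the enumerated list in Working Condition~\ref{it:bad_part} that both neighbors of any bad vertex are always visible as vertices of the displayed bad-part diagram. This is a short finite check, but it is the real pivot of the argument.
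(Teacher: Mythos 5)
Your argument is correct and is essentially the paper's own proof, just with the intermediate reasoning spelled out: the paper merely remarks that Working Conditions~\ref{it:largeweight} and~\ref{it:bad1} together with Lemma~\ref{lem:GAP check} preclude the vertex $v$ of Lemma~\ref{lem:unclean criteria}, whereas you make the case split on $w'(x)$ explicit (uniqueness from Condition~\ref{it:largeweight} when $w'(x)\ge 2$; forced adjacency via Condition~\ref{it:bad1} when $w'(x)=1$, contradicted by $v$ lying outside the extended bad part) before concluding via Lemma~\ref{lem:unscrew}. The ``delicate'' finite check you flag — that every bad vertex's two neighbors appear in the displayed bad-part diagrams of Working Condition~\ref{it:bad_part} — does hold, so the proof is complete.
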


\begin{proof}
By Working Conditions~\ref{it:largeweight} and~\ref{it:bad1}, together with Lemma~\ref{lem:GAP check}, the vertex $v$ appearing in the statement of Lemma~\ref{lem:unclean criteria} cannot exist. Hence all screws are clean, and therefore, by Lemma~\ref{lem:unscrew}, the outside part of $\Gamma$ is embedded standardly.
\end{proof}

\begin{lem}\label{lem:semi}
Let $x$ be a bad vertex and suppose the extended bad part is embedded semi-standardly at $x$. If $v\neq x$ and $|\supp(\phi(v))\cap\supp(\phi(x))|\neq 0$, then $v$ is adjacent to $x$.
\end{lem}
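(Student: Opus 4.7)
The plan is to divide the argument based on whether $v$ lies in the extended bad part.

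First, suppose $v$ is a vertex of the extended bad part. Applying Definition~\ref{def:semi-standard1} with $u=x$, the cases $u=v$ and ``$u$ and $v$ both adjacent to $x$ with $u\ne v$'' do not apply (since $v\ne x$ and a vertex is never adjacent to itself). So we are either in the ``$u,v$ adjacent'' case, giving $v$ adjacent to $x$, or in the ``otherwise'' case, giving $|\supp(\phi(v))\cap\supp(\phi(x))|=0$. The latter is excluded by hypothesis, so $v$ is adjacent to $x$ as required.

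Next, suppose $v$ lies outside the extended bad part. The plan is to derive a contradiction. In every configuration in Working Condition~\ref{it:bad_part}, the vertex $x$ lies strictly in the interior of the bad part, and its neighbors in $\Gamma$ are precisely two vertices $u_1,u_2$ of the bad part; in particular $v$ is not adjacent to $x$, and the pairing $\phi(v)\cdot\phi(x)=0$ forces $|\supp(\phi(v))\cap\supp(\phi(x))|\ge 2$ because the $\pm 1$ entries of $\phi(x)$ on its support cannot sum to zero with fewer than two cancellations.

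From the semi-standard conditions one computes that $\supp(\phi(u_1))\cap\supp(\phi(x))=\supp(\phi(u_2))\cap\supp(\phi(x))=\{e\}$ for a unique basis vector $e$, and the remaining basis vectors of $\supp(\phi(x))$ lie in no other vertex of the extended bad part. Using Lemma~\ref{lem:GAP check}, which furnishes an explicit description of the semi-standard embedding in each case of Working Condition~\ref{it:bad_part}, I plan to impose the orthogonality relations $\phi(v)\cdot\phi(u_i)=0$ for $i=1,2$ together with $\phi(v)\cdot\phi(x)=0$. These force sign relations on the coefficients of $\phi(v)$ against the basis vectors in $\supp(\phi(x))$ that conflict with either the parity constraint above or the $\pm 1$ sign structure of $\phi(v)$, yielding the desired contradiction.

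The main obstacle is the case analysis in the second step; the bad-part configurations (g)--(m) contain multiple bad vertices, so $\supp(\phi(x))$ interacts with neighboring bad vertices and their supports in richer ways. However, the explicit semi-standard embeddings produced by Lemma~\ref{lem:GAP check} are rigid enough that in each case only finitely many orthogonality relations are needed to rule out the existence of such a $v$.
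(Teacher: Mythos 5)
Your first paragraph (the case $v$ in the extended bad part) is correct and immediate from Definition~\ref{def:semi-standard1}, and matches what the paper does via Lemma~\ref{lem:GAP check}. Your setup of the second case is also in line with the paper: you correctly identify that $v$ outside the extended bad part is not adjacent to $x$, that $\phi(v)\cdot\phi(x)=0$ forces $|\supp(\phi(v))\cap\supp(\phi(x))|\ge 2$, and that the two neighbors $u_1,u_2$ of $x$ each share exactly one basis vector $e$ with $\phi(x)$ (which is the same $e$ for both).

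The gap is that you do not actually carry out the second case — you state that imposing the relations $\phi(v)\cdot\phi(u_i)=0$ and $\phi(v)\cdot\phi(x)=0$ ``force sign relations\ldots yielding the desired contradiction,'' but this is asserted, not proved, and you explicitly flag the case analysis as the ``main obstacle.'' Moreover, the mechanism you propose (a direct sign/parity conflict from the three orthogonality relations) is not, by itself, enough. The paper's argument is a considerably longer combinatorial chase: one splits on whether the shared basis vector $e_1\in\supp(\phi(x))\cap\supp(\phi(u_1))\cap\supp(\phi(u_2))$ lies in $\supp(\phi(v))$, and in each subcase one must bring in further vertices of $\Gamma$ ($u_0,u_3$ in Case~1; the neighbor $u$ of $v$ and further vertices $y$ in Case~2) and repeatedly invoke the Working Conditions (notably~\ref{it:largeweight}, \ref{it:bad1}, \ref{it:bad2}, \ref{it:bad_part}) to bound $w(v)$ and the weights of adjacent vertices. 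The contradiction ultimately comes from $w(v)$ or $w(u_i)$ being forced too large, or from the standardness of the embedding in a neighborhood of $v$ (established separately via Corollary~\ref{cor:large bad}), not from a self-contained sign conflict among $\phi(v),\phi(x),\phi(u_1),\phi(u_2)$ alone. Until you supply that chain of deductions, the proof is incomplete.
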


\begin{proof}
Suppose $v\neq x$, $|\supp(\phi(v))\cap\supp(\phi(x))|\neq 0$, and $v$ is not adjacent to $x$. We will arrive at a contradiction.

Let $u_1$ and $u_2$ be vertices adjacent to $x$, and choose basis vectors $e_1,e_2$ such that
\[
e_1\cdot\phi(x)=-1,\quad e_1\cdot\phi(u_1)=1,\quad e_1\cdot\phi(u_2)=1,\quad
e_2\cdot\phi(u_1)=1,\quad e_2\cdot\phi(u_2)=-1.
\] 
By Lemma~\ref{lem:GAP check}, $v$ is not in the extended bad part.

\medskip\noindent\textbf{Case 1:}{ $e_1\in\supp(\phi(v))$.}
Since $\phi(v)\cdot\phi(x)=0$, there exists another basis vector
$e_0\in\supp(\phi(x))\cap\supp(\phi(v))$.  Write
\[
  \phi(v) = a e_0 + b e_1 + c e_2 + z,
\]
where $z\cdot e_i = 0$ for $i=0,1,2$.

If $c\neq 0$, by Working Condition \ref{it:largeweight}, $b,c\in\{1,-1\}$, and hence one of $|b+c|$ and $|b-c|$ must be 2. Therefore, to achieve $\phi(v)\cdot\phi(u_1)=\phi(v)\cdot\phi(u_2)$, we need two more basis vectors that are in $\supp(\phi(v))\cap\supp(\phi(u_i))$ for some $i=1,2$. Therefore, we have $w(v)\geq 5$ and $w(u_i)\geq 4$ for some $i=1,2$, contradicting with Working Condition \ref{it:largeweight}.

Hence, $c$ must be 0. To achieve $\phi(x)\cdot\phi(u_1)=\phi(x)\cdot\phi(u_2)=0$, there are some new basis vectors $e_3\in\supp(\phi(u_1))\cap\supp(\phi(v))$ and $e_4\in\supp(\phi(u_2))\cap\supp(\phi(v))$. Now we have $w(v)\geq 4$, and therefore by Working Condition \ref{it:largeweight}, $w'(u_1),w'(u_2)\leq 1$. Hence, $\supp(u_1)=\{e_1,e_2,e_3\}$, $\supp(u_2)=\{e_1,e_2,e_4\}$, and $u_1,u_2$ are not leaves. Let $u_0$ be adjacent to $u_1$, $u_3$ be adjacent to $u_2$, and $x\neq u_0,u_3$. In that case, $e_3\in\supp(u_0)$, and $e_4\in\supp(u_3)$.

By the definition of extended bad part, $u_0,u_3$ and all their neighbors are in the extended bad part. Since $v$ is not in the extended bad part, we have $\phi(v)\cdot\phi(u_0)=\phi(v)\cdot\phi(u_3)=0$. Hence, there is some basis vectors $e_5\in\supp(\phi(u_0))\cap\supp(\phi(v))$ and $e_6\in\supp(\phi(u_3))\cap\supp(\phi(v))$, such that $e_5\neq e_3$ and $e_6\neq e_4$. From the assumption that the extended bad part is embedded semi-standardly at $x$, we know that $e_0,\ldots,e_6$ are all different. Therefore, $w(v)\geq 6$, contradicting with Working Condition \ref{it:largeweight}.

\medskip\noindent\textbf{Case 2:}{ $e_1\notin\supp(\phi(v))$.}
Since $\phi(x)\cdot\phi(v)=0$, there are some new basis vectors $e_{-1},e_0\in\supp(\phi(x))\cap\supp(\phi(v))$. Hence, $w(x)\geq 3$.

By Working Conditions \ref{it:largeweight} and \ref{it:bad1}, apart from potentially $x$, all vertices in $\Gamma$ have adjusted weight 0 or 1. Hence, $w'(v)\leq 1$. Also, by Corollary \ref{cor:large bad}, the subgraph induced by $v$ and its neighbor(s) is embedded standardly.

Since $w'(v)\leq 1$, either $w(v)=2$ and has at least one neighbor, or $w(v)=3$ and has two neighbors. Since the subgraph induced by $v$ and its neighbor(s) is embedded standardly, there is at most one basis vector in the support of $\phi(v)$ that is not in the support of any of $v$'s neighbor(s). Hence, we can let $u$ be adjacent to $v$ and $e_0\in\supp(u)$. If $e_1\in\supp(\phi(u))$, we can apply Case 1 on $u$ instead of $v$ to arrive at a contradiction. Hence, $e_1\notin\supp(\phi(u))$. Also, because the subgraph induced by $v$ and its neighbor(s) is embedded standardly, given that we already have $e_0$ being in $\supp(\phi(u))\cap\supp(\phi(v))$, $e_{-1}$ cannot be in $\supp(\phi(u))$.

Since $v$ is not in the extended bad part, $u$ cannot be adjacent to $x$. To achieve $\phi(u)\cdot\phi(x)=0$, since $e_0$ is in $\supp(\phi(u))$ but not $e_{-1},e_1$, we must have some new basis vector $e_{-2}\in\supp(\phi(u))\cap\supp(\phi(x))$. By Working Condition \ref{it:bad_part}, $w(x)\leq 4$. Hence, $\supp(\phi(x))=\{e_{-2},e_{-1},e_0,e_1\}$.

By Working Condition \ref{it:bad2}, $w'(v)$ and $w'(u)$ cannot be both 1. Therefore, at least one of $u,v$ has weight 2 and has another neighbor.

If $v$ has weight 2 and has another neighbor, let the neighbor be $y$. Since $\supp(\phi(v))=\{e_{-1},e_0\}$, exactly one of $e_{-1},e_0$ is in $\supp(\phi(y))$. Applying the argument of Case 1 on $y$ instead of $v$, we can conclude that $e_1\notin\supp(\phi(y))$. Hence, to achieve $\phi(y)\cdot\phi(x)=0$, we must have $e_{-2}\in\supp(\phi(y))$. However, since $e_{-2}\in\supp(\phi(u))$, that contradicts with the subgraph induced by $v$ and its neighbors being embedded standardly.

If $v$ has weight 2 and has another neighbor, we apply all the arguments we did with $v$ but on $u$ instead, and arrive at a contradiction in the same way.
\end{proof}

\begin{lem}\label{lem:weight0}
Suppose $x$ is a bad vertex with adjusted weight 0, and the extended bad part is embedded either standardly or semi-standardly at a bad vertex that is not $x$. If $v\neq x$ and $|\supp(\phi(v))\cap\supp(\phi(x))|\neq 0$, then $v$ is adjacent to $x$.
\end{lem}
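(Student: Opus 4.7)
The plan is to argue by contradiction, adapting the proof of Lemma~\ref{lem:semi} to the situation where $x$ has adjusted weight $0$ rather than being the vertex at which the embedding is semi-standard.

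\textbf{Setup near $x$.} Since $x$ is bad and $\Gamma$ is linear, $d(x)=2$ and $w(x)=2$. Let $u_1,u_2$ be the two neighbors of $x$. Under either hypothesis of the lemma, the local intersection pattern at $x$ is the standard one, so after choosing an orthonormal basis we may write
\[
\phi(x)=e_1+e_2,\quad \phi(u_1)=-e_1+\cdots,\quad \phi(u_2)=-e_2+\cdots,
\]
with $e_2\notin\supp(\phi(u_1))$ and $e_1\notin\supp(\phi(u_2))$. Suppose for contradiction that $v\neq x$ is not adjacent to $x$ and $\supp(\phi(v))\cap\supp(\phi(x))\neq\varnothing$. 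From $\phi(v)\cdot\phi(x)=0$ and $\supp(\phi(x))=\{e_1,e_2\}$, both $e_1$ and $e_2$ must appear in $\supp(\phi(v))$ with opposite signs, so after normalisation
\[
\phi(v)=e_1-e_2+z,\qquad z\perp e_1,e_2.
\]
In particular $|\supp(\phi(v))\cap\supp(\phi(x))|=2$.

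\textbf{Eliminating the case $v$ inside the extended bad part.} By Lemma~\ref{lem:GAP check} the extended bad part is embedded standardly or semi-standardly at some $x'\neq x$. In the standard case, Definition~\ref{def:standard} forces $|\supp(\phi(v))\cap\supp(\phi(x))|\leq 1$, contradicting the size $2$ above. In the semi-standard case, Definition~\ref{def:semi-standard1} permits size $2$ only when $v$ and $x$ are both adjacent to $x'$; but inspecting the configurations \ref{it:bad_part}\ref{it:10101}, \ref{it:bad_part}\ref{it:10102}, \ref{it:bad_part}\ref{it:10111}, \ref{it:bad_part}\ref{it:10112}, which are the only ones of Working Condition~\ref{it:bad_part} containing a $w'=0$ bad vertex together with a second bad vertex, $x$ is never adjacent to $x'$, ruling this out.

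\textbf{Handling $v$ outside the extended bad part.} Since $\Gamma$ is linear and $v$ is not adjacent to $x$, $v$ is adjacent to at most one of $u_1,u_2$. Pairing $\phi(v)$ against $\phi(u_i)$ using the coefficients $\pm 1$ of $e_1,e_2$ in $\phi(v)$ shows that, whenever $v$ is not adjacent to $u_j$, the tail $z$ must contribute $\pm 1$ to $\phi(v)\cdot\phi(u_j)$, forcing additional basis vectors in $\supp(\phi(v))\cap\supp(\phi(u_j))$. Continuing this coefficient analysis along the chain past $u_1$ and $u_2$, in the style of Cases~1 and~2 of the proof of Lemma~\ref{lem:semi}, and using the bound $w(v)\leq w'(v)+d(v)\leq 5$ from Working Condition~\ref{it:largeweight}, restricts the local shape around $v$ tightly enough that an induced subgraph of $\Gamma$ must match one of the forbidden patterns in Working Condition~\ref{it:forbidden_configs} or contradict Working Condition~\ref{it:bad_part}. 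In sub-cases where some bad vertex has adjusted weight at least $1$, Corollary~\ref{cor:large bad} together with Lemma~\ref{lem:unscrew} ensures that all screws are clean, so the outside part is embedded standardly and the control of $\supp(\phi(v))$ is immediate. In the remaining sub-cases, essentially the configurations \ref{it:bad_part}\ref{it:10101} and \ref{it:bad_part}\ref{it:10102} where every bad vertex has adjusted weight $0$, a direct local argument plays the same role.

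\textbf{Main obstacle.} The principal difficulty is the bookkeeping in the second case: tracking how the support of $v$ forced by the pairings with $u_1,u_2$ and their subsequent neighbours interacts with the chain structure, and separating the sub-configurations in which Corollary~\ref{cor:large bad} applies from those requiring the ad hoc argument. Once this is carried out, each branch of the analysis terminates by exhibiting a forbidden configuration in $\Gamma$, completing the contradiction.
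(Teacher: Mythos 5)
Your setup and the elimination of the case $v$ inside the extended bad part are essentially correct and match the paper, though the paper skips the case-analysis of Working Condition~\ref{it:bad_part} you carry out (it simply notes that $v$ in the extended bad part is incompatible with standardness or semi-standardness at $x'\neq x$). The genuine gap is in your third paragraph, which is only a gesture at the argument; the contradiction is not actually produced.

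Two specific problems. First, your case split is mis-structured. After observing that $\phi(v)\cdot\phi(u_1)=\phi(v)\cdot\phi(u_2)=0$ forces two further basis vectors into $\supp(\phi(v))$, one should record $w'(v)\geq 2$ explicitly. Working Condition~\ref{it:largeweight} then guarantees that $v$ is the \emph{unique} vertex of $\Gamma$ with adjusted weight exceeding $1$, so every bad vertex has $w'=0$. Your branch ``sub-cases where some bad vertex has adjusted weight at least $1$'' therefore never occurs; the only possibilities for the bad part are Working Conditions~\ref{it:bad_part}\ref{it:101} and \ref{it:bad_part}\ref{it:10101}. Second, even granting that the outside part is embedded standardly (which does follow from $w'(v)\ge 2$ via Working Condition~\ref{it:largeweight}, Lemma~\ref{lem:unscrew}, and Lemma~\ref{lem:unclean criteria}), this does not give ``immediate control of $\supp(\phi(v))$'': standardness of the outside part governs only pairings among outside-part vertices, whereas $v$ is interacting with $x,u_1,u_2$, which lie in the extended bad part. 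The actual contradiction requires building a chain outward from $v$ (a neighbor $z_1$, its neighbor $z_2$, and the vertices $u_3,u_4$ continuing past $u_2$) and tracking supports at each step until one is forced into an adjacency that contradicts the assumed form of the extended bad part and Working Condition~\ref{it:bad1}. This chain-following argument is the substantive content of the lemma and is not supplied in your proposal.
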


\begin{proof}
Suppose $v\neq x$, $|\supp(\phi(v))\cap\supp(\phi(x))|\neq 0$, and $v$ is not adjacent to $x$. We want to arrive at a contradiction.

Since the extended bad part is embedded either standardly or semi-standardly at a bad vertex that is not $x$, we know that $v$ cannot come from the extended bad part.

Let $u_1,u_2$ be adjacent to $x$. Let $\phi(x)=e_1-e_2$, $e_1\in\supp(\phi(u_1))$, and $e_2\in\supp(\phi(u_2))$. Since $\phi(v)\cdot\phi(x)=0$, we have both $e_1,e_2$ being in $\supp(\phi(v))$. From $\phi(v)\cdot\phi(u_1)=\phi(v)\cdot\phi(u_2)=0$, and also the way the extended bad part is embedded, we conclude that there are distinct basis vectors $e_0,e_1,e_2,e_3\in\supp(\phi(v))$ such that $e_0,e_1\in\supp(\phi(u_1))$ and $e_2,e_3\in\supp(\phi(u_2))$.

Therefore, $w'(v)\geq 2$. By Working Conditions \ref{it:largeweight}, \ref{it:bad1}, and \ref{it:bad_part},the bad part must be either

$$\begin{tikzpicture}[xscale=1.0,yscale=1,baseline={(0,0)}]
    \node at (0,0.4) {$1$};
    \node at (1,0.4) {{$\red{\circled{0}}$}};
    \node at (2,0.4) {$1$};
    \node (A1) at (0,0) {$\circ$};
    \node (A2) at (1,0) {$\circ$};
    \node (A3) at (2,0) {$\circ$};
    \path (A1) edge [-] node [auto] {$\scriptstyle{}$} (A2);
    \path (A2) edge [-] node [auto] {$\scriptstyle{}$} (A3);
  \end{tikzpicture}
  \qquad\text{ or }\qquad
  \begin{tikzpicture}[xscale=1.0,yscale=1,baseline={(0,0)}]
    \node at (0,0.4) {$1$};
    \node at (1,0.4) {{$\red{\circled{0}}$}};
    \node at (2,0.4) {$1$};
    \node at (3,0.4) {{$\red{\circled{0}}$}};
    \node at (4,0.4) {$1$};
    \node (A1) at (0,0) {$\circ$};
    \node (A2) at (1,0) {$\circ$};
    \node (A3) at (2,0) {$\circ$};
    \node (A4) at (3,0) {$\circ$};
    \node (A5) at (4,0) {$\circ$};
    \path (A1) edge [-] node [auto] {$\scriptstyle{}$} (A2);
    \path (A2) edge [-] node [auto] {$\scriptstyle{}$} (A3);
    \path (A3) edge [-] node [auto] {$\scriptstyle{}$} (A4);
    \path (A4) edge [-] node [auto] {$\scriptstyle{}$} (A5);
  \end{tikzpicture}.$$
Moreover, since $w'(v)\ge 2$, Working Condition~\ref{it:largeweight} together with
Lemma~\ref{lem:unscrew} and Lemma~\ref{lem:unclean criteria} implies that the outside part is embedded
standardly. In particular, the
embedding is standard in a neighborhood of $v$.

By Working Condition \ref{it:largeweight}, either $w(v)=4$ and $v$ has at least one neighbor, or $w(v)=5$ and $v$ has two neighbors. In either case, since the embedding is standard locally around $v$, at least one of $e_0,e_1,e_2,e_3$ is in the support of a neighbor of $v$. Let $z_1$ be a neighbor of $v$. Since $\phi(z_1)\cdot\phi(x)=0$, either $e_1,e_2$ are both in $\supp(\phi(z_1))$ or none of $e_1,e_2$ are in $\supp(\phi(z_1))$. Since the embedding is standard locally around $v$, none of $e_1,e_2$ are in $\supp(\phi(z_1))$. Without loss of generality, let $e_3\in\supp(\phi(z_1))$. Since $\phi(z_1)\cdot\phi(u_2)=0$, there is some new basis vector $e_4\in\supp(\phi(z_1))\cap\supp(\phi(u_2))$.

Now, we have $e_2,e_3,e_4\in\supp(\phi(u_2))$. By Working Condition \ref{it:largeweight}, $w(u_2)=3$ and $u_2$ is not a leaf. Let $u_3$ be adjacent to $u_2$ and $u_3\neq x$.

Given our assumption on how the extended bad part is embedded, we know that exactly one of $e_3,e_4$ is in $\supp(\phi(u_3))$. Since $\phi(z_1)\cdot\phi(u_3)=0$ and both $e_3,e_4$ are in $\supp(\phi(z_1))$, there is some new basis vector $e_5\in\supp(\phi(z_1))\cap\supp(\phi(u_3))$. By Working Condition \ref{it:largeweight}, we have $\supp(\phi(z_1))=\{e_3,e_4,e_5\}$ and $z_1$ is not a leaf. Let $z_2$ be a neighbor of $z_1$ and $z_2\neq v$. Then, exactly one of $e_3,e_4,e_5$ is in $\supp(\phi(z_2))$. Since $\phi(u_2)\cdot\phi(z_2)=0$ and the outside part is embedded standardly, $e_3,e_4$ cannot be in $\supp(\phi(z_2))$. Hence, $e_5\in\supp(\phi(z_2))$.

Since $z_1$ is not in the extended bad part, $z_2$ and $u_3$ are not adjacent to each other. From $\phi(z_2)\cdot\phi(u_3)=0$, since $e_5\in\supp(\phi(u_3))\cap\supp(\phi(z_2))$, we can conclude that there is some new basis vector $e_6$ such that $e_6\in\supp(\phi(u_3))\cap\supp(\phi(z_2))$

Since $z_1$ is not in the bad part, it is not bad. Hence, $w'(z_2)=0$, and therefore $\supp(\phi(z_2))=\{e_5,e_6\}$.

Now, $e_4,e_5,e_6\in\supp(\phi(u_3))$. By Working Condition \ref{it:largeweight}, we know that $\supp(\phi(u_3))=\{e_4,e_5,e_6\}$ and $u_3$ is not a leaf. Let $u_4$ be a neighbor of $u_3$ and $u_4\neq u_2$. Given the assumption on how the extended bad part is embedded, we know that exactly one of $e_4,e_5,e_6$ is in $\supp(\phi(u_4))$.

Since $z_1$ is not in the bad part, $u_4$ and $z_2$ are not adjacent to each other. Hence, $e_5,e_6$ are not in $\supp(\phi(u_4))$. Therefore, $e_4\in\supp(\phi(u_4))$. Since $e_4\in\in\supp(\phi(u_2))$, given our assumption on how the extended bad part is embedded, $u_3$ has to be a bad vertex and the extended bad part is embedded semi-standardly at $u_3$. However, that violates Working Condition \ref{it:bad1}. Therefore, we arrived at a contradiction.

\begin{center}
\begin{tabular}{ c|c c c c c c c} 
\ & $e_0$ & $e_1$ & $e_2$ & $e_3$ & $e_4$ & $e_5$ & $e_6$ \\
\hline
$u_1$ & \checkmark & \checkmark & \ & \ & \ & \ & \ \\ 
$x$ & \ & \checkmark & \checkmark & \ & \ & \ & \ \\ 
$u_2$ & \ & \ & \checkmark & \checkmark & \checkmark & \ & \ \\ 
$u_3$ & \ & \ & \ & \ & \checkmark & \checkmark & \checkmark \\ 
$v$ & \checkmark & \checkmark & \checkmark & \checkmark & \ & \ & \ \\ 
$z_1$ & \ & \ & \ & \checkmark & \checkmark & \checkmark & \ \\ 
$z_2$ & \ & \ & \ & \ & \ & \checkmark & \checkmark \\ 
\end{tabular}
\end{center}

\end{proof}

\begin{lem}\label{lem:weight1}
Suppose $x$ is a bad vertex with adjusted weight 1, and the extended bad part is embedded either standardly or semi-standardly at a bad vertex that is not $x$. If $v\neq x$ and $|\supp(\phi(v))\cap\supp(\phi(x))|\neq 0$, then $v$ is adjacent to $x$.
\end{lem}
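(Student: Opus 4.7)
The plan is to mirror the proof of Lemma~\ref{lem:weight0}, adjusting for the fact that now $w(x)=3$ rather than $2$. First, since $x$ is bad and lies in a disjoint union of linear graphs, it has exactly two neighbors $u_1,u_2$, so $w(x)=w'(x)+d(x)=3$. By Lemma~\ref{lem:GAP check} and the hypothesis that the semi-standard bad vertex is some $x'\neq x$, the embedding is still locally standard around $x$: I may fix basis vectors with $\phi(x)=e_1+e_2-e_3$, $\phi(u_1)\cdot e_1=-1$ and $\phi(u_2)\cdot e_2=-1$, and neither $u_i$ sharing any further basis vector with $x$. Since $w'(x)=1\geq 1$, Corollary~\ref{cor:large bad} gives that the outside part is embedded standardly, so the embedding is standard in a neighborhood of $v$ (as $v\notin$ extended bad part by Lemma~\ref{lem:GAP check}).

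Next, I do case analysis on $\supp(\phi(v))\cap\supp(\phi(x))$. Writing $\phi(v)=a_1e_1+a_2e_2+a_3e_3+z$ with $z$ orthogonal to $e_1,e_2,e_3$, the constraint $\phi(v)\cdot\phi(x)=0$ becomes $a_1+a_2-a_3=0$. If all three coefficients are nonzero, then $|a_i|\ge 1$ forces $\sum a_i^2\ge 6$ as soon as the equation admits an integer solution with all $a_i\neq 0$, giving $w(v)\ge 6$; since $d(v)\le 2$ on a linear chain this contradicts $w'(v)\le 3$. Hence exactly two of $a_1,a_2,a_3$ are nonzero and each is $\pm 1$. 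Up to the $u_1\leftrightarrow u_2$ symmetry, I consider two subcases: (A) $a_1,a_2\neq 0$, $a_3=0$, forcing $\{e_1,e_2\}\subset\supp(\phi(v))$; and (B) $a_1,a_3\neq 0$, $a_2=0$, forcing $\{e_1,e_3\}\subset\supp(\phi(v))$.

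In subcase (A), $\phi(v)\cdot\phi(u_1)=0$ and $\phi(v)\cdot\phi(u_2)=0$ each demand an additional shared basis vector with $u_1$ and $u_2$, so $w(v)\ge 4$ and $w'(v)\ge 2$. In subcase (B) the same pairing arguments again force $w'(v)\ge 2$. In either case, Working Conditions~\ref{it:largeweight}, \ref{it:bad1}, \ref{it:weight3condition}, \ref{it:bad2}, and \ref{it:bad_part} sharply restrict which bad parts contain a $\circled{1}$ (namely configurations \ref{it:bad_part}\ref{it:111}, \ref{it:bad_part}\ref{it:112}, \ref{it:bad_part}\ref{it:10111}, \ref{it:bad_part}\ref{it:10112}, \ref{it:bad_part}\ref{it:1111}, \ref{it:bad_part}\ref{it:11111}, \ref{it:bad_part}\ref{it:1121}) and also pin down the weight and degree of $v$. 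I then propagate the constraint along neighbors $z_1,z_2,\ldots$ of $v$: each $z_i$ has weight $2$ or $3$, is embedded standardly by Corollary~\ref{cor:large bad}, and satisfies $\phi(z_i)\cdot\phi(x)=\phi(z_i)\cdot\phi(u_j)=0$, which pushes new basis vectors into $\supp(\phi(u_2))$ or $\supp(\phi(u_3))$ (the next vertex along the chain from $u_2$). As in the proof of Lemma~\ref{lem:weight0}, this chain of forced overlaps eventually either violates the bound $w'\le 3$, creates a new bad vertex that is not adjacent to $x$ in violation of Working Condition~\ref{it:bad1} or \ref{it:bad2}, or produces one of the forbidden subgraphs of \ref{it:forbidden_configs}.

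The hardest step will be the final propagation argument, because there are more subcases than in Lemma~\ref{lem:weight0}: subcases (A) and (B) multiply with the several admissible shapes of the $\circled{1}$-containing bad part, and subcase (B) is slightly more delicate since the canceling basis vector $e_3$ is only shared with $x$, so the immediate constraints on neighbors come from only one $u_i$ rather than both. I expect to resolve this by maintaining an explicit support table of the form at the end of the proof of Lemma~\ref{lem:weight0}, tracking how each newly introduced basis vector $e_4,e_5,\ldots$ gets distributed among $u_1,u_2$, their next neighbors, and the chain $v,z_1,z_2,\ldots$, and then reading off the contradiction from whichever Working Condition is first violated.
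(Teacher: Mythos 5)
Your general plan — mirror Lemma~\ref{lem:weight0}, split on which of $e_1,e_2,e_3$ lie in $\supp(\phi(v))$, then propagate constraints along the chain of neighbors of $v$ — is the same route the paper takes. However, there is a concrete error in the key intermediate step, and the core propagation is only sketched, not proved.

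The claim ``In subcase (B) the same pairing arguments again force $w'(v)\ge 2$'' is false. In your labeling, subcase (B) has $\{e_1,e_3\}\subset\supp(\phi(v))$ where $e_3$ is the coordinate shared only with $x$ and $e_1$ is shared with $u_1$. Then $\phi(v)\cdot\phi(u_1)=0$ does force one extra coordinate shared with $u_1$, giving $w(v)\ge 3$, but $v$ shares nothing from $\{e_1,e_2,e_3\}$ with $u_2$, so $\phi(v)\cdot\phi(u_2)=0$ costs nothing. Combined with Working Condition~\ref{it:bad1} (which, since $v$ is not adjacent to $x$, forces $w'(v)\le 1$), the correct conclusion is that $w(v)=3$ with $d(v)=2$, i.e.\ $w'(v)=1$ exactly — this is precisely the case the paper identifies, and it is why the propagation argument is needed at all. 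If $w'(v)\ge 2$ really held, Working Condition~\ref{it:bad1} would end the proof on the spot in both subcases (as it in fact does for your subcase (A), where $w(v)\ge 4$), and none of the chain-chasing you go on to describe would be necessary. So your sketch is both wrong on (B) and internally inconsistent: you derive a bound that would finish the argument, then proceed as if it doesn't.

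Beyond that, the hard part of the lemma — the propagation — is left as an outline. The paper's actual chain is: with $w(v)=3$, $d(v)=2$, the two neighbors $z_1,z_2$ of $v$ each pick up exactly one of $\{e_2,e_3,e_4\}$ (standardness from Corollary~\ref{cor:large bad}); one shows $e_3$ can lie in neither $\supp(\phi(z_i))$, so WLOG $e_2\in\supp(\phi(z_1))$ and $e_4\in\supp(\phi(z_2))$; then $\phi(z_1)\cdot\phi(x)=0$ forces $e_1\in\supp(\phi(z_1))$ and $\phi(z_1)\cdot\phi(u_1)=0$ forces a new $e_0$, so $w'(z_1)>0$; since $v$ is not bad, $w'(z_2)=0$, which pushes the analysis through $z_3$ and $z_4$ and pins $\supp(\phi(u_2))=\{e_3,e_4,e_5,e_6\}$, whence $z_4$ is forced to be adjacent to $u_2$ — contradicting that $v$ and its chain lie outside the extended bad part. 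Note the contradiction is a forced adjacency against the embedding assumption, not a violation of $w'\le 3$, a new bad vertex, or one of the forbidden subgraphs of Working Condition~\ref{it:forbidden_configs}, as your list of expected outcomes suggests. Those details would need to be worked out before the proof is complete.
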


\begin{proof}
Suppose $v\neq x$, $|\supp(\phi(v))\cap\supp(\phi(x))|\neq 0$, and $v$ is not adjacent to $x$. We want to arrive at a contradiction.

Since the extended bad part is embedded either standardly or semi-standardly at a bad vertex that is not $x$, we know that $v$ cannot come from the extended bad part.

By Working Condition \ref{it:bad1}, we know that $w'(v)\leq 1$.

Let $u_1,u_2$ be adjacent to $x$. Let $\phi(x)=e_1+e_2-e_3$, $e_1\in\supp(\phi(u_1))$, and $e_3\in\supp(\phi(u_2))$.

Since $\phi(v)\cdot\phi(x)=0$, we know that exactly two of $e_1,e_2,e_3$ are in $\supp(\phi(v))$. If $e_1\in\supp(\phi(v))$, then since $\phi(v)\cdot\phi(u_1)=0$, there is some other basis vector in $\supp(\phi(v))\cap\supp(\phi(u_1))$. Similarly, if $e_3\in\supp(\phi(v))$, then there is some other basis vector in $\supp(\phi(v))\cap\supp(\phi(u_2))$. If both $e_1,e_3$ are in $\supp(\phi(v))$, given our assumption on how the extended bad part is embedded, those two other basis vectors we just mentioned are distinct, and hence $w(v)\geq 4$, violating Working Condition \ref{it:bad1}.

Hence, $e_2$ and exactly one of $e_1,e_3$ are in $\supp(\phi(v))$. Without loss of generality, let $e_2,e_3\in\supp(\phi(v))$. Since $\phi(v)\cdot\phi(u_2)=0$, there is some new basis vector $e_4$ in $\supp(\phi(v))\cap\supp(\phi(u_2))$. Therefore, $w(v)\geq 3$. By Working Condition \ref{it:bad1}, we know that $w(v)=3$ and $v$ has two neighbors.

Let $z_1,z_2$ be the neighbors of $v$. By Corollary \ref{cor:large bad}, exactly one of $e_2,e_3,e_4$ is in $\supp(\phi(z_1))$, exactly one of $e_2,e_3,e_4$ is in $\supp(\phi(z_2))$, and $|\supp(\phi(z_1))\cap\supp(\phi(z_2))|=0$.

First, we show that $e_3$ cannot be in $\supp(\phi(z_1))$. If $e_3\in\supp(\phi(z_1))$, then $e_2\notin\supp(\phi(z_1))$, and hence $e_1\in\supp(\phi(z_1))$. From $\phi(z_1)\cdot\phi(u_1)=0$ and $\phi(z_1)\cdot\phi(u_2)=0$, we know that there are at least two other basis vectors in $\supp(\phi(z_1))$, and hence $w(z_1)\geq 4$, violating Working Condition \ref{it:bad1}.

Hence, $e_3$ cannot be in $\supp(\phi(z_1))$. Similarly, $e_3$ cannot be in $\supp(\phi(z_2))$. Up to relabeling, let $e_2\in\supp(\phi(z_1))$ and $e_4\in\supp(\phi(z_2))$. Since $\phi(z_1)\cdot\phi(x)=0$, we must have $e_1\in\supp(\phi(z_1))$. Then, from $\phi(z_1)\cdot\phi(u_1)=0$, we know that there is some new basis vector $e_0$ in $\supp(\phi(z_1))\cap\supp(\phi(u_1))$. Therefore, $w(z_1)\geq 3$, and hence $w'(z_1)>0$.

Since $v$ is not in the bad part, it is not bad. Hence, $w'(z_2)=0$. Therefore, $w(z_2)=2$, and $z_2$ is not a leaf. Let $z_3$ be adjacent to $z_2$ and $z_3\neq v$. Since $z_2$ is also not in the bad part, $w'(z_3)=0$ as well, and therefore $w(z_3)=2$ and $z_3$ is not a leaf. Let $z_4$ be adjacent to $z_2$ and $z_4\neq z_2$.

Since $\phi(z_2)\cdot\phi(u_2)=0$, apart from $e_4$, there is some other basis vector $e_5\in\supp(\phi(z_2))\cap\supp(\phi(u_2))$. Since $w(z_2)=2$, we have $\supp(\phi(z_2))=\{e_4,e_5\}$. By Corollary \ref{cor:large bad}, $e_5$ is in $\supp(\phi(z_3))$, while $e_3,e_4$ are not in $\supp(\phi(z_3))$.

Given our assumption on how the extended bad part is embedded, we know that $z_3$ is not adjacent to $u_2$. Hence, apart from $e_5$, there is some other basis vector $e_6\in\supp(\phi(z_2))\cap\supp(\phi(u_2))$.

By Working Condition \ref{it:bad_part} (or \ref{it:weight3condition}), we know that $w(u_2)\leq 4$. Hence, $\supp(\phi(u_2))=\{e_3,e_4,e_5,e_6\}$. By Corollary \ref{cor:large bad}, $e_6\in\supp(\phi(z_4))$, while $e_3,e_4,e_5\notin\supp(\phi(z_4))$. Hence, $\phi(u_2)\cdot\phi(z_4)\neq 0$. Therefore, $z_4$ is adjacent to $u_2$, contradicting with our assumption on how the extended bad part is embedded.

\begin{center}
\begin{tabular}{ c|c c c c c c c} 
\ & $e_0$ & $e_1$ & $e_2$ & $e_3$ & $e_4$ & $e_5$ & $e_6$ \\
\hline
$u_1$ & \checkmark & \checkmark & \ & \ & \ & \ & \ \\ 
$x$ & \ & \checkmark & \checkmark & \checkmark & \ & \ & \ \\ 
$u_2$ & \ & \ & \ & \checkmark & \checkmark & \checkmark & \checkmark \\ 
$z_1$ & \checkmark & \checkmark & \checkmark & \ & \ & \ & \ \\ 
$v$ & \ & \ & \checkmark & \checkmark & \checkmark & \ & \ \\ 
$z_2$ & \ & \ & \ & \ & \checkmark & \checkmark & \ \\ 
$z_3$ & \ & \ & \ & \ & \ & \checkmark & \checkmark \\ 
$z_4$ & \ & \ & \ & \ & \ & \ & \checkmark \\ 
\end{tabular}
\end{center}

\end{proof}

\begin{lem}\label{lem:weight2}
Suppose $x$ is a bad vertex with adjusted weight 2, and the extended bad part is embedded either standardly or semi-standardly at a bad vertex that is not $x$. If $v\neq x$ and $|\supp(\phi(v))\cap\supp(\phi(x))|\neq 0$, then $v$ is adjacent to $x$.
\end{lem}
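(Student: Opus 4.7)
The plan is to mirror the strategy of Lemma~\ref{lem:weight0} and Lemma~\ref{lem:weight1}. Assume, for contradiction, that $v \neq x$ is not adjacent to $x$ but $|\supp(\phi(v)) \cap \supp(\phi(x))| \neq 0$. Since the extended bad part is embedded standardly or semi-standardly at a bad vertex different from $x$, the vertex $v$ lies outside the extended bad part, and by Working Condition~\ref{it:largeweight} (combined with $w'(x)=2$) we have $w'(v) \leq 1$, so $w(v) \leq 3$ and $v$ has at most two neighbors. Since $w'(x) \geq 1$, Corollary~\ref{cor:large bad} applies, so the outside part---in particular a neighborhood of $v$---is embedded standardly.

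I would first set up coordinates. Since $x$ is bad with $w'(x)=2$ and degree $2$, we have $w(x)=4$ and $|\supp(\phi(x))|=4$ with all nonzero coefficients equal to $\pm 1$. Write
\[
\phi(x) = \epsilon_1 e_1 + \epsilon_2 e_2 + \epsilon_3 e_3 + \epsilon_4 e_4, \qquad \epsilon_i \in \{\pm 1\},
\]
and label the two neighbors $u_1, u_2$ of $x$ so that $e_1 \in \supp(\phi(u_1))$ and $e_4 \in \supp(\phi(u_2))$. Since the neighborhood of $v$ is embedded standardly, the nonzero coefficients of $\phi(v)$ are also $\pm 1$; the equation $\phi(v) \cdot \phi(x)=0$ therefore forces $|\supp(\phi(v)) \cap \supp(\phi(x))|$ to be even, and combined with the assumption that it is nonzero and the bound $|\supp(\phi(v))| \leq 3$, we conclude $|\supp(\phi(v)) \cap \supp(\phi(x))| = 2$.

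Next I would run a case analysis on which two basis vectors of $\supp(\phi(x))$ lie in $\supp(\phi(v))$. If both $e_1$ and $e_4$ are in $\supp(\phi(v))$, then $\phi(v) \cdot \phi(u_1)=0$ and $\phi(v) \cdot \phi(u_2)=0$ each produce an additional basis vector in $\supp(\phi(v))$, giving $w(v) \geq 4$, a contradiction. In the remaining cases, after possibly relabeling, either exactly one of $e_1, e_4$ lies in $\supp(\phi(v))$, or $\supp(\phi(v)) \cap \supp(\phi(x)) = \{e_2, e_3\}$. In the former cases, pairing with the relevant $u_i$ forces a third basis vector in $\supp(\phi(v))$, pinning down $w(v)=3$ so that $v$ has two neighbors; I then propagate along a chain of neighbors of $v$ analogous to the $z_1, z_2, z_3, z_4$ chain in the proof of Lemma~\ref{lem:weight1}, repeatedly using $\phi(z)\cdot\phi(x)=0$, the bound $w(u_2) \leq 4$ from Working Conditions~\ref{it:largeweight} and~\ref{it:weight3condition}, and the structure of the bad part from Working Condition~\ref{it:bad_part}, until a forced adjacency contradicts the fact that $v$ lies outside the extended bad part. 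The case $\supp(\phi(v)) \cap \supp(\phi(x)) = \{e_2, e_3\}$ is handled similarly by tracking the supports of $v$'s neighbors and invoking $\phi(z)\cdot\phi(x)=0$ at each step.

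The main obstacle is the case analysis of the previous paragraph: compared with Lemma~\ref{lem:weight1}, where $\supp(\phi(x))$ has three elements and essentially one reduction suffices, here $\supp(\phi(x))$ has four elements, producing more subcases and a slightly longer support-propagation chain. Some additional care is needed when the extended bad part is semi-standard at a second bad vertex $y \neq x$ (as in configuration~\ref{it:bad_part}\ref{it:1121}), but since $v$ lies outside the extended bad part and Corollary~\ref{cor:large bad} still ensures local standardness around $v$, the propagation argument is essentially unaffected.
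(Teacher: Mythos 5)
Your proposal follows the same overall approach as the paper: the same case split based on which basis vectors of $\supp(\phi(x))$ lie in $\supp(\phi(v))$, the same immediate contradiction when both $e_1,e_4 \in \supp(\phi(v))$, and then a support-propagation argument. Two comments on the sketched portion. First, the Working Condition that actually drives the propagation here is \ref{it:bad2} (not \ref{it:largeweight}/\ref{it:weight3condition}/\ref{it:bad_part} as you list): once you pin down $w(v)=3$ and hence $w'(v)=1$, \ref{it:bad2} forces the neighbor $z_1$ of $v$ to satisfy $w'(z_1)=0$, so $w(z_1)=w(z_2)=2$ and the chain terminates in only two steps ($z_1,z_2$), not a longer $z_1,\dots,z_4$ chain as in Lemma~\ref{lem:weight1}: $\supp(\phi(z_1))\subseteq\{e_1,e_2,e_3,e_4\}$, pairing with $u_2$ rules out $e_4$, so $e_3\in\supp(\phi(z_1))$, and then $\supp(\phi(z_2))=\{e_3,e_4\}$ forces $z_2$ adjacent to $u_2$, a contradiction. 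Second, the case $\supp(\phi(v))\cap\supp(\phi(x))=\{e_2,e_3\}$ does not need its own propagation: by standardness near $v$ (Corollary~\ref{cor:large bad}) some neighbor $z$ of $v$ contains exactly one of $e_2,e_3$, and then $\phi(z)\cdot\phi(x)=0$ forces $e_1$ or $e_4$ into $\supp(\phi(z))$, reducing it directly to the previous case with $z$ in place of $v$. Your plan would work once these details are supplied; as written it slightly overestimates the length of the chain and omits the one Working Condition specific to the $w'(x)=2$ situation.
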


\begin{proof}
Suppose $v\neq x$, $|\supp(\phi(v))\cap\supp(\phi(x))|\neq 0$, and $v$ is not adjacent to $x$. We want to arrive at a contradiction.

Since the extended bad part is embedded either standardly or semi-standardly at a bad vertex that is not $x$, we know that $v$ cannot come from the extended bad part.

Let $u_1,u_2$ be adjacent to $x$. Let $\phi(x)=e_1+e_2+e_3-e_4$, $e_1\in\supp(\phi(u_1))$, and $e_4\in\supp(\phi(u_2))$. By Working Condition \ref{it:bad1}, we know that $w'(v)\leq 1$, and hence $w(v)\leq 3$. Since $\phi(v)\cdot\phi(x)=0$, we know that exactly two of $e_1,e_2,e_3,e_4$ are in $\supp(\phi(v))$.

\medskip\noindent\textbf{Case 1: }{$e_1\in\supp(\phi(v))$ or $e_4\in\supp(\phi(v))$.}
Without loss of generality, let $e_1\in\supp(\phi(v))$. Since $\phi(v)\cdot\phi(u_1)=0$, there is some new basis vector $e_0$ in $\supp(\phi(v))\cap\supp(\phi(u_1))$. If $e_4$ is also in $\supp(\phi(v))$, since $\phi(v)\cdot\phi(u_2)=0$, there is another basis vector in $\supp(\phi(v))\cap\supp(\phi(u_2))$, and hence $w(v)\geq 4$, contradicting Working Condition \ref{it:largeweight}. Hence, $e_4$ cannot be in $\supp(\phi(v))$, and exactly one of $e_2,e_3$ is in $\supp(\phi(v))$. Without loss of generality, let $e_2\in\supp(\phi(v))$.

By Working Condition \ref{it:bad1}, we must have $\supp(\phi(v))=\{e_0,e_1,e_2\}$ and $v$ has two neighbors. By Corollary \ref{cor:large bad}, at least one of $v$'s neighbor has a support that contains exactly one of $e_1,e_2$. Let $z_1$ be adjacent to $v$ and exactly one of $e_1,e_2$ is in $\supp(\phi(z_1))$. Since $\phi(z_1)\cdot\phi(x)=0$, exactly one of $e_3,e_4$ is in $\supp(\phi(z_1))$.

By Working Condition \ref{it:bad2}, we know that $w'(z_1)=0$, and hence $w(z_1)=2$ and is not a leaf. Let $z_2$ be adjacent to $z_1$ and $z_2\neq v$. Since $z_1$ is not in the bad part, it is not bad, and hence $w'(z_2)=0$, and therefore $w(z_2)=2$.

Since $w(z_1)=2$, $\supp(\phi(z_1))$ does not contain any elements outside of $e_1,e_2,e_3,e_4$. Since $\phi(z_1)\cdot\phi(u_2)=0$, $e_4$ cannot be in $\supp(\phi(z_1))$. Therefore, $e_3\in\supp(\phi(z_1))$.

By Corollary \ref{cor:large bad}, $e_3\in\supp(\phi(z_2))$, while $e_1,e_2\notin\supp(\phi(z_2))$. Since $\phi(z_2)\cdot\phi(x)=0$, we have $e_4\in\supp(\phi(z_2))$. However, since $w(z_2)=2$, $e_3,e_4$ are the only elements of $\supp(\phi(z_2))$, and hence $\phi(z_2)\cdot\phi(u_2)\neq 0$. So, $z_2$ is adjacent to $u_2$, contradicting with our assumption on how the extended bad part is embedded.

\begin{center}
\begin{tabular}{ c|c c c c c} 
\ & $e_0$ & $e_1$ & $e_2$ & $e_3$ & $e_4$ \\
\hline
$u_1$ & \checkmark & \checkmark & \ & \ & \ \\ 
$x$ & \ & \checkmark & \checkmark & \checkmark & \checkmark \\ 
$u_2$ & \ & \ & \ & \ & \checkmark \\  
$v$ & \checkmark & \checkmark & \checkmark & \ & \ \\ 
$z_1$ & \ & \ & \checkmark & \checkmark & \ \\
$z_2$ & \ & \ & \ & \checkmark & \checkmark \\ 
\end{tabular}
\end{center}

\medskip\noindent\textbf{Case 2: }{$e_2,e_3\in\supp(\phi(v))$.}
By Working Condition \ref{it:largeweight}, either $w(v)=2$ and $v$ has at least one neighbor, or $w(v)=3$ and $v$ has two neighbors. Either way, by Corollary \ref{cor:large bad}, $\supp(\phi(v))$ has at most one element that is not in the support of any of its neighbor(s). Let $z$ be a neighbor of $x$ such that exactly one of $e_2,e_3$ is in $\supp(\phi(z))$. Since $\phi(z)\cdot\phi(x)=0$, $e_1$ or $e_4$ is also in $\supp(\phi(z))$. Hence, we arrive at Case 1 with $z$ replacing $v$. Applying arguments we did with $v$ but on $z$ instead, we arrive at a contradiction.
\end{proof}

\begin{cor}\label{cor:bad clean}
Suppose $x$ is a bad vertex. If $v\neq x$ and $|\supp(\phi(v))\cap\supp(\phi(x))|\neq 0$, then $v$ is adjacent to $x$.
\end{cor}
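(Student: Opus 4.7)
The plan is to deduce this corollary by combining Lemma~\ref{lem:GAP check} with the four preceding lemmas (Lemmas~\ref{lem:semi}, \ref{lem:weight0}, \ref{lem:weight1}, and \ref{lem:weight2}) through a case split on how the extended bad part is embedded and on the adjusted weight of $x$. First, I would apply Lemma~\ref{lem:GAP check} to conclude that the extended bad part is embedded either standardly or semi-standardly (in the latter case, at some uniquely specified bad vertex). Next, by Working Condition~\ref{it:bad_part}, every bad vertex has adjusted weight $0$, $1$, or $2$, so I would proceed by cases on these two parameters.

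The case analysis splits naturally into two branches. In the first branch, the extended bad part is embedded semi-standardly at $x$ itself; here Lemma~\ref{lem:semi} applies directly and yields the conclusion. In the second branch, the extended bad part is embedded either standardly or semi-standardly at some bad vertex distinct from $x$. Then the hypotheses of Lemma~\ref{lem:weight0}, Lemma~\ref{lem:weight1}, or Lemma~\ref{lem:weight2} are met, depending on whether $w'(x)=0$, $w'(x)=1$, or $w'(x)=2$; in each subcase the corresponding lemma gives the desired conclusion.

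Since these cases are exhaustive, the corollary follows. I do not expect any real obstacle here: all the substantive work has already been done in the four lemmas, and the only task remaining is to observe that together they cover every possible configuration of a bad vertex $x$ against the allowed embedding types. If anything requires care, it is verifying that when the extended bad part is embedded semi-standardly at a bad vertex $y\neq x$, the vertex $x$ still satisfies the hypotheses of the relevant weight-specific lemma; but this is immediate from the statements of Lemmas~\ref{lem:weight0}--\ref{lem:weight2}, which are formulated precisely to accommodate that situation.
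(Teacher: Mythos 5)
Your proof is correct and matches the paper's one-sentence argument exactly, just spelled out in more detail: Lemma~\ref{lem:GAP check} narrows the embedding to standard or semi-standard, after which Lemma~\ref{lem:semi} handles the case where the embedding is semi-standard at $x$ itself, and Lemmas~\ref{lem:weight0}--\ref{lem:weight2} handle the remaining cases according to $w'(x)\in\{0,1,2\}$ (the only possibilities by Working Condition~\ref{it:bad_part}). No gaps.
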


\begin{proof}
Lemmas \ref{lem:semi}, \ref{lem:weight0}, \ref{lem:weight1}, and \ref{lem:weight2} covered all cases that can arise from Lemma \ref{lem:GAP check}.
\end{proof}

\begin{lem}\label{lem:sandwiched}
Suppose $x$ is a vertex that is not bad and is adjacent to two bad vertices. If $v\neq x$ and $|\supp(\phi(v))\cap\supp(\phi(x))|\neq 0$, then $v$ is in the bad part of $\Gamma$.
\end{lem}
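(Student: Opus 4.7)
The plan is to use the combinatorial rigidity of the Working Conditions together with Corollary~\ref{cor:bad clean}. First I would determine which bad-part configurations in Working Condition~\ref{it:bad_part} can contain a non-bad vertex $x$ adjacent to two bad vertices. Configurations \ref{it:bad_part}\ref{it:101}--\ref{it:bad_part}\ref{it:121} each contain only a single bad vertex, and configurations \ref{it:bad_part}\ref{it:1111}, \ref{it:bad_part}\ref{it:11111}, and \ref{it:bad_part}\ref{it:1121} have all their bad vertices pairwise adjacent, so the only remaining candidates are \ref{it:bad_part}\ref{it:10101}, \ref{it:bad_part}\ref{it:10102}, \ref{it:bad_part}\ref{it:10111}, and \ref{it:bad_part}\ref{it:10112}. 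In each of these, $x$ is forced to be the middle vertex (adjusted weight $1$), and its two neighbors in the bad part are the two bad vertices, which I label $x_1$ and $x_2$. Since $\Gamma$ is a disjoint union of linear plumbing graphs, $x$ has degree at most $2$ in $\Gamma$, so $\{x_1, x_2\}$ is exactly its neighbor set and $w(x) = 3$. I may therefore write $\phi(x) = \epsilon_1 e_1 + \epsilon_2 e_2 + \epsilon_3 e_3$ for distinct basis vectors $e_1, e_2, e_3$ and signs $\epsilon_i \in \{\pm 1\}$.

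Given $v \neq x$ with $\supp(\phi(v)) \cap \supp(\phi(x)) \neq \emptyset$, the easy subcases are handled directly: if $v \in \{x_1, x_2\}$, then $v$ already lies in the bad part; and if $v$ is adjacent in $\Gamma$ to $x_1$ (or $x_2$) with $v \neq x$, then $v$ is the unique other neighbor of that bad vertex in $\Gamma$, which is shown inside the bad-part subgraph of the relevant configuration and therefore belongs to the bad part. The remaining case is that $v \neq x_1, x_2$ and $v$ is adjacent to none of $x, x_1, x_2$; here the goal is to derive a contradiction.

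In this remaining case, Corollary~\ref{cor:bad clean} applied to $x_1$ and to $x_2$ gives $\supp(\phi(v)) \cap \supp(\phi(x_i)) = \emptyset$ for $i = 1, 2$. By Lemma~\ref{lem:GAP check} the extended bad part is embedded either standardly or semi-standardly. Since $x_1$ and $x_2$ are mutually non-adjacent and neither is self-adjacent, none of the pairs $(x, x_1)$, $(x, x_2)$, $(x_1, x_2)$ falls into the ``both adjacent to the semi-standard bad vertex'' case of Definition~\ref{def:semi-standard1}. Unwinding that definition therefore yields $|\supp(\phi(x)) \cap \supp(\phi(x_i))| = 1$ for $i = 1, 2$ and $\supp(\phi(x_1)) \cap \supp(\phi(x_2)) = \emptyset$. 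After relabeling, $e_1 \in \supp(\phi(x_1))$, $e_2 \in \supp(\phi(x_2))$, and $e_3 \notin \supp(\phi(x_1)) \cup \supp(\phi(x_2))$. The hypothesis $\supp(\phi(v)) \cap \supp(\phi(x)) \neq \emptyset$, together with the disjointness of $\supp(\phi(v))$ from $\supp(\phi(x_1))$ and $\supp(\phi(x_2))$, then forces $e_3 \in \supp(\phi(v))$, and hence $\phi(v) \cdot \phi(x) = \epsilon_3 (\phi(v) \cdot e_3) \neq 0$, contradicting $\phi(v) \cdot \phi(x) = 0$.

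The main (and only non-mechanical) obstacle is the intersection-pattern step in the last paragraph, which rests on carefully interpreting Definition~\ref{def:semi-standard1} at the two candidate semi-standard vertices $x_1$ and $x_2$; everything else is bookkeeping from Working Condition~\ref{it:bad_part} together with a single invocation of Corollary~\ref{cor:bad clean}.
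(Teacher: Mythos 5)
Your argument follows essentially the same route as the paper's: deduce $w(x)=3$ from Working Condition~\ref{it:bad_part}, use Lemma~\ref{lem:GAP check} to pin down the intersection pattern of $\phi(x)$ with its two bad neighbors, and invoke Corollary~\ref{cor:bad clean} to exclude those two shared basis vectors from $\supp(\phi(v))$, forcing the contradiction $\phi(v)\cdot\phi(x)\neq 0$. One minor slip in bookkeeping: in configuration~\ref{it:bad_part}\ref{it:11111} the three bad vertices are \emph{not} pairwise adjacent (the two outer ones are separated by the middle one), but your disposal of that case is still valid because the middle vertex there is itself bad, so no non-bad vertex sits between two bad ones.
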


\begin{proof}
Suppose $v\neq x$, $|\supp(\phi(v))\cap\supp(\phi(x))|\neq 0$, and $v$ is not in the bad part. We want to arrive at a contradiction.

By Working Condition \ref{it:bad_part}, we know that $w(x)$ must be 3. Hence, $|\supp(\phi(x))|=3$. By Lemma \ref{lem:GAP check}, two elements of $\supp(\phi(x))$ are also present in the supports of $x$'s neighbors. Since $x$'s neighbors are bad, by Corollary \ref{cor:bad clean}, elements of $\supp(\phi(x))$ that are also present in the supports of $x$'s neighbors cannot be in $\supp(\phi(v))$. Therefore, $|\supp(\phi(v))\cap\supp(\phi(x))|=1$, and hence $v$ and $x$ are adjacent to each other, which is impossible.
\end{proof}

\begin{lem}\label{lem:folded basis}
Let $x$ be a bad vertex, and $u_1,u_2$ be its neighbors. Suppose the extended bad part is embedded semi-standardly at $x$. Let $e_0$ be the basis vector that is in $\supp(\phi(u_1))\cap\supp(\phi(u_2))$ but not in $\supp(\phi(x))$. If $v$ is a vertex such that $v\neq u_1,u_2$, then $e_0\notin\supp(\phi(v))$.
\end{lem}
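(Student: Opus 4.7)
The plan is to argue by contradiction, tracing support overlaps until they either violate Working Condition~\ref{it:largeweight} or the semi-standard form of the extended bad part. Assume for contradiction that some $v \neq u_1, u_2$ satisfies $e_0 \in \supp(\phi(v))$. Since $e_0 \notin \supp(\phi(x))$ we have $v \neq x$, and since the only neighbors of $x$ in the linear plumbing are $u_1$ and $u_2$, the vertex $v$ is not adjacent to $x$; Corollary~\ref{cor:bad clean} then gives $|\supp(\phi(v)) \cap \supp(\phi(x))| = 0$. Normalizing the basis as in the proof of Lemma~\ref{lem:semi}, the $e_1$-coefficients of $\phi(u_1)$ and $\phi(u_2)$ are both $+1$, and the orthogonality $\phi(u_1) \cdot \phi(u_2) = 0$ forces the $e_0$-coefficients to be $+1$ and $-1$ (in some order).

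If $v$ lies in the extended bad part, then by Definition~\ref{def:semi-standard1} applied to the semi-standard embedding at $x$ we have $|\supp(\phi(v)) \cap \supp(\phi(u_i))| \leq 1$ for $i = 1, 2$, with equality only if $v$ is adjacent to $u_i$. Since $e_0$ makes both intersections nonempty, $v$ would have to be adjacent to both $u_1$ and $u_2$, impossible in a linear plumbing where the unique common neighbor of $u_1$ and $u_2$ is $x$.

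If instead $v$ lies in the outside part, a quick inspection of the bad part configurations in Working Condition~\ref{it:bad_part} shows that every neighbor of $u_i$ other than $x$ already lies in the extended bad part, so $v$ is adjacent to neither $u_1$ nor $u_2$. The equations $\phi(v) \cdot \phi(u_1) = \phi(v) \cdot \phi(u_2) = 0$ then force additional shared basis vectors $e_a \in \supp(\phi(v)) \cap \supp(\phi(u_1))$ and $e_b \in \supp(\phi(v)) \cap \supp(\phi(u_2))$ with $e_a, e_b \notin \{e_0, e_1\}$; they must be distinct, else the common vector would lie in $\supp(\phi(u_1)) \cap \supp(\phi(u_2)) = \{e_0, e_1\}$, already excluded. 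I would then propagate $e_a$ into the extended bad part: the semi-standard form forces $e_a$ to be the unique basis vector shared by $u_1$ and its other neighbor $u_1'$ in the extended bad part, since $e_0$ and $e_1$ are ruled out (respectively by applying the previous case to $u_1'$, and by the disjointness of $\supp(\phi(u_1'))$ from $\supp(\phi(x))$). Chasing $e_a$ along the chain of adjusted-weight-zero vertices guaranteed by the definition of the extended bad part yields further forced overlaps between $\supp(\phi(v))$ and successive chain vertices, eventually forcing $|\supp(\phi(v))|$ to exceed the size bound $w(v) \leq w'(v) + d(v) \leq 5$ imposed by Working Condition~\ref{it:largeweight}.

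The main obstacle is this propagation step, especially when the bad vertex $x$ has $w'(x) = 0$, so that Corollary~\ref{cor:large bad} does not directly force the outside part to be standardly embedded. In that situation the cleanness of the screws must be extracted by hand from the very restricted bad part configurations with $w'(x) = 0$, invoking Lemma~\ref{lem:unclean criteria} and Lemma~\ref{lem:unscrew}, in the same spirit as the case-by-case tracing used in the proofs of Lemmas~\ref{lem:weight0}, \ref{lem:weight1}, and \ref{lem:weight2}.
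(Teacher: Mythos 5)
Your setup and Case~1 (ruling out $v$ inside the extended bad part) are correct and match the paper. But the ``propagation'' in Case~2 is where the real work lies, and your sketch of it does not carry through as stated. Two concrete problems.

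First, a one-sided chase along the $u_1$ direction is not enough. The paper's proof alternates between the two neighbors $u_1,u_2$ of $x$: from $|\supp(\phi(u_i))|\ge 3$ and Working Condition~\ref{it:largeweight} one deduces that at least one of $u_1,u_2$ has weight exactly $3$ and a further neighbor $u_3$; chasing there gives a fourth element of $\supp(\phi(v))$, so $w(v)\ge 4$ and hence $w'(v)\ge 2$. Only now does Working Condition~\ref{it:largeweight} force the \emph{other} $u_i$ to have weight $3$ with a further neighbor $u_0$, giving a fifth element $e_4$ and $w(v)=5$, $w'(v)=3$. At this point Working Condition~\ref{it:weight3condition} (the $w'=3$ condition forbidding adjacent $[1,1]$, which your sketch never invokes) pins down $w'(u_0)=0$, so $u_0$ has a further neighbor $u_{-1}$. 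This alternation and the use of Working Condition~\ref{it:weight3condition} are essential; without them you cannot even get past $w(v)\ge 4$.

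Second, the terminal contradiction is not simply ``$|\supp(\phi(v))|$ exceeds $5$.'' After the chase one finds $\supp(\phi(v))\cap\supp(\phi(u_{-1}))=\{e_4\}$, so $\phi(v)\cdot\phi(u_{-1})\ne 0$. If $v$ were \emph{not} adjacent to $u_{-1}$ this would indeed force a sixth basis vector and break the bound $w(v)\le 5$; but the other branch, $v$ adjacent to $u_{-1}$, is just as live and must be killed by a different mechanism: since $w'(u_0)=0$, the chain $v,u_{-1},u_0,u_1,x,\ldots$ places $v$ inside the extended bad part by Definition~\ref{def:extended-bad-part}, contradicting the standing assumption. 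Your write-up covers only the first branch. Finally, your last paragraph proposes escaping the hard case $w'(x)=0$ via Corollary~\ref{cor:large bad}, Lemma~\ref{lem:unclean criteria}, and Lemma~\ref{lem:unscrew}, in the spirit of Lemmas~\ref{lem:weight0}--\ref{lem:weight2}; but the paper's proof of this lemma deliberately avoids those tools and stays entirely inside the semi-standard hypothesis on the extended bad part, which is why it works uniformly for all values of $w'(x)$. That is a cleaner route than the one you were contemplating.
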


\begin{proof}
Suppose $v\neq u_1,u_2$ and $e_0\in\supp(\phi(v))$. We want to arrive at a contradiction.

Let $e_{-1}$ be the basis vector in $\supp(\phi(u_1))\cap\supp(\phi(x))\cap\supp(\phi(u_2))$.

From our assumption on how the extended bad part is embedded, $v$ cannot be in the extended bad part. Hence, $v$ is not adjacent to $u_1,u_2$. So, there exists some other basis vectors $e_1\in\supp(\phi(u_1))\cap\supp(\phi(v))$ and $e_2\in\supp(\phi(u_2))\cap\supp(\phi(v))$.

Now we have $w(u_1),w(u_2)\geq 3$. By Working Condition \ref{it:largeweight}, at least one of $u_1,u_2$ has weight 3 and not being a leaf. Without loss of generality, let $w(u_2)=3$, $u_3$ be a neighbor of $u_2$, and $u_3\neq x$. From our assumption on how the extended bad part is embedded, $\supp(\phi(u_3))\cap\supp(\phi(u_2))=\{e_2\}$. Since $\phi(v)\cdot\phi(u_3)=0$, there is some new $e_3\in\supp(\phi(v))\cap\supp(\phi(u_3))$. Therefore, $w(v)\geq 4$. By Working Condition \ref{it:largeweight} again, there is some $u_0$ such that $\supp(\phi(u_0))\cap\supp(\phi(u_1))=\{e_1\}$. Since $\phi(v)\cdot\phi(u_0)=0$, there is some new $e_4\in\supp(\phi(v))\cap\supp(\phi(u_0))$. Hence, $w(v)\geq 5$. By Working Condition \ref{it:largeweight}, $w'(v)=3$ and $w(v)=5$. By Working Condition \ref{it:weight3condition}, $w'(u_0)=0$. Hence, $u_0$ is not a leaf. Let $u_{-1}$ be adjacent to $u_1$ and $u_{-1}\neq u_1$. Given our assumption on how the extended bad part is embedded, we have $\supp(\phi(u_{-1}))\cap\supp(\phi(u_0))=\{e_4\}$, and $e_0,e_1,e_2,e_3\notin\supp(\phi(u_{-1}))$. This implies that $u_{-1}$ is adjacent to $v$, and therefore $v$ lies in the extended bad part, contradicting with how the extended bad part is embedded.

\begin{center}
\begin{tabular}{ c|c c c c c c} 
\ & $e_{-1}$ & $e_0$ & $e_1$ & $e_2$ & $e_3$ & $e_4$\\
\hline
$u_{-1}$ & \ & \ & \ & \ & \ & \checkmark \\ 
$u_0$ & \ & \ & \checkmark & \ & \ & \checkmark \\ 
$u_1$ & \checkmark & \checkmark & \checkmark & \ & \ & \ \\ 
$x$ & \checkmark & \ & \ & \ & \ \\ 
$u_2$ & \checkmark & \checkmark & \ & \checkmark & \ & \ \\  
$u_3$ & \ & \ & \ & \checkmark & \checkmark & \ \\  
$v$ & \ & \checkmark & \checkmark & \checkmark & \checkmark & \checkmark \\ 
\end{tabular}
\end{center}

\end{proof}

Recall that throughout this section we assume that $\Gamma$ satisfies the Working Conditions and has at least one bad vertex, and that we have fixed an embedding $\phi \colon V(\Gamma) \to \Z^n$. We are finally ready to prove that this embedding is either standard or semi-standard. For the sake of simplicity, we introduce two technical definitions that will be used in the proof.

\begin{defn}
The \emph{inner bad part} of $\Gamma$ is the subgraph induced by the set of all vertices of $\Gamma$ that are either bad or have two bad vertices as neighbors. Equivalently, the \emph{inner bad part} is the bad part with the two ends removed.
\end{defn}

\begin{defn}
A vertex of $\Gamma$ is a \emph{neck vertex} if it not in the inner bad part and is adjacent to a vertex in the inner bad part. Equivalently, \emph{neck vertices} are the two ends of the bad part.
\end{defn}

Recall that we are assuming that $\Gamma$ satisfies the Working Conditions and contains at least one bad vertex:

\begin{lem}\label{lem:final lemma}
$\Gamma$ is embedded either standardly or semi-standardly.
\end{lem}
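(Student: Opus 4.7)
The plan is to combine the structural results of this section to verify the intersection pairing conditions defining a standard (or semi-standard) embedding on all of $V(\Gamma)$.

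First, I would apply Lemma~\ref{lem:GAP check} to obtain that the restriction of $\phi$ to the extended bad part $v_0,\ldots,v_k$ is either standard or semi-standard. In the semi-standard case, fix the distinguished bad vertex $x$ with neighbors $u_1,u_2$; Lemma~\ref{lem:folded basis} then guarantees that the ``folded'' basis vector in $\supp(\phi(u_1))\cap\supp(\phi(u_2))\setminus\supp(\phi(x))$ does not appear in any other vertex's support.

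Second, I would show that the outside part of $\Gamma$ is embedded standardly. If some bad vertex has $w'\geq 1$, this is precisely Corollary~\ref{cor:large bad}. Otherwise every bad vertex has $w'=0$, so the bad part is one of configurations \ref{it:bad_part}\ref{it:101}, \ref{it:bad_part}\ref{it:102}, \ref{it:bad_part}\ref{it:103}, \ref{it:bad_part}\ref{it:10101}, or \ref{it:bad_part}\ref{it:10102} of Working Condition~\ref{it:bad_part}, and I would verify that every screw is clean by ruling out the hypothetical vertex $v$ produced by Lemma~\ref{lem:unclean criteria}. Such a $v$ would satisfy $w(v)\geq 4$ and, since every non-leaf vertex has degree $2$ in the linear graph $\Gamma$, $w'(v)\geq 2$, so Working Condition~\ref{it:largeweight} forces $v$ to be the unique vertex of $\Gamma$ with $w'>1$. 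Tracking how $\phi(v)$ interacts with the rigid (standard or semi-standard) embedding of the extended bad part supplied by Lemma~\ref{lem:GAP check}, and invoking Corollary~\ref{cor:bad clean} to prevent $\phi(v)$ from sharing a basis vector with any bad vertex, would force $v$ into the extended bad part, contradicting Lemma~\ref{lem:GAP check}. Once all screws are known to be clean, Lemma~\ref{lem:unscrew} concludes that the outside part is embedded standardly.

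Third, I would glue the two pieces. The extended bad part and the outside part overlap exactly on $V_-\cup V_+$, and on these vertices the two restrictions of $\phi$ tautologically coincide. The only remaining intersection conditions to check are of the form $\supp(\phi(u))\cap\supp(\phi(v))=\varnothing$ with $u$ in the inner bad part and $v$ outside the extended bad part: Corollary~\ref{cor:bad clean} handles the case in which $u$ is bad, Lemma~\ref{lem:sandwiched} handles the case in which $u$ is a non-bad vertex with two bad neighbors, and Lemma~\ref{lem:folded basis} accounts for the folded basis vector in the semi-standard situation. Assembling Lemma~\ref{lem:GAP check} for pairs inside the extended bad part, the standardness of the outside part for pairs on that side, and the vanishing just described for the mixed pairs, one concludes that $\phi$ is standard (resp.\ semi-standard at $x$) on all of $V(\Gamma)$.

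The main obstacle is the second step in the sub-case where every bad vertex has $w'=0$: Corollary~\ref{cor:large bad} does not apply directly, so one must redo the clean-screw argument by hand, exploiting the rigidity of the five possible $w'=0$ bad-part configurations together with the explicit support data produced by Lemma~\ref{lem:GAP check}.
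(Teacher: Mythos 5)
Your plan is structured quite differently from the paper's actual proof, which never passes through the ``outside part'' at all. Instead the paper deletes the \emph{inner} bad part from $\Gamma$, reduces each neck vertex's weight by $1$ to form a graph $\Gamma'$ with no bad vertices, builds a modified embedding $\phi'$ of $\Gamma'$ out of $\phi$ (using Corollary~\ref{cor:bad clean} and Lemma~\ref{lem:folded basis} to justify truncating the neck vertices' images), invokes the main result of~\cite{definite} to conclude $\phi'$ is standard, and then reads off the conclusion for $\phi$. That route never needs the screws to be clean and never invokes Corollary~\ref{cor:large bad}, which is precisely why it does not split into a $w'\ge 1$ case and a $w'=0$ case.

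There are two concrete gaps in your sketch. First, your second step is incomplete in the sub-case where every bad vertex has $w'=0$ with bad part \ref{it:bad_part}\ref{it:101} or \ref{it:bad_part}\ref{it:10101}: here no vertex of the bad part has $w'>1$, so the unique vertex with $w'>1$ — which would be the hypothetical $v$ of Lemma~\ref{lem:unclean criteria} — can sit anywhere in $\Gamma$. Your claim that Corollary~\ref{cor:bad clean} ``forces $v$ into the extended bad part'' does not follow, because the four basis vectors $e_1,\ldots,e_4$ that Lemma~\ref{lem:unclean criteria} places in $\supp(\phi(v))$ come from $\supp(\phi(v_1)),\supp(\phi(v_2)),\supp(\phi(v_3))$, and $v_1,v_2$ are the two adjusted-weight-zero ``padding'' vertices of $V_-$ — they are not bad, so Corollary~\ref{cor:bad clean} says nothing about them, and it is entirely consistent with that corollary for $\supp(\phi(v))\cap\supp(\phi(x))=\varnothing$ for every bad $x$. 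Second, your gluing step misidentifies the remaining mixed pairs: the extended bad part minus $V_-\cup V_+$ is generally strictly larger than the inner bad part — it always contains the neck vertices, and when $w'(v_{i-1})>0$ for $v_i$ a neck it contains further vertices with nonzero adjusted weight that lie between $V_-$ and the bad part. For such a vertex $u$ and $v\in V_c$, the pair $(u,v)$ is covered by neither Lemma~\ref{lem:GAP check} nor the standardness of the outside part, and $u$ is neither bad nor a vertex with two bad neighbors, so Corollary~\ref{cor:bad clean}, Lemma~\ref{lem:sandwiched}, and Lemma~\ref{lem:folded basis} do not apply. Disjointness there can sometimes be deduced by a parity argument on $\phi(u)\cdot\phi(v)=0$ once you know all other elements of $\supp(\phi(u))$ are unavailable, but that requires a separate argument you have not given. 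The paper's $\Gamma'$-construction settles all these pairs simultaneously by the standardness of $\phi'$, which is the reason they chose that route.
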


\begin{proof}
Let $S=\{e_1,\ldots,e_n\}$ be the set of basis vectors. Let $S'$ be the set of basis vectors that are in the support of at least one vertex in the inner bad part. Let $\Gamma'$ be the graph obtained from $\Gamma$ by deleting the vertices in the inner bad part and reducing the weight of each neck vertex by 1.

By construction, the adjusted weights of vertices of $\Gamma'$ as an individual graph is the same as the adjusted weights of vertices in $\Gamma'$ as a subgraph of $\Gamma$. Hence, as an individual graph, $\Gamma'$ satisfies the Working Conditions. Since $\Gamma'$ has no bad vertices, it also satisfies the Working Conditions in \cite{definite}. Therefore, all embeddings of $\Gamma'$ are standard.

From Lemma \ref{lem:GAP check}, we know that the extended bad part is embedded either standardly or semi-standardly. Let $u,v$ be the two neck vertices in $\Gamma$. To avoid confusion, we write $u',v'$ to refer to $u,v$ being viewed as vertices in $\Gamma'$. So, $w(u')=w(u)-1$ and $w(v')=w(v)-1$.

\medskip\noindent\textbf{Case 1:}{ There is only one bad vertex and the extended bad part is embedded semi-standardly.}
Then $u$ and $v$ are both adjacent to the bad vertex, and $\supp(\phi(u))\cap\supp(\phi(v))=\{e_i,e_o\}$ for some $e_i\in S'$ and $e_o\in S\smallsetminus S'$. Up to a sign change automorphism on the basis vectors, we let $u=z_u+e_i+e_o$ and $v=z_v+e_i-e_o$ for some $z_u,z_v$ satisfying
$$z_u\cdot e_i=z_u\cdot e_o=z_v\cdot e_i=z_v\cdot e_o=0.$$
Consider the embedding $\phi'\colon V(\Gamma')\rightarrow\Z^n$ as follows. For any vertex $t\neq u',v'$, we set $\phi'(t)=\phi(t)$. We also set $\phi'(u')=z_u+e_o$ and $\phi'(v')=z_v+e_i$.

By Lemmas \ref{lem:semi} and \ref{lem:folded basis}, no vertices in $\Gamma'$ apart from $u',v'$ has a support that contains $e_i$ or $e_o$. Hence, changing how the images of $u$ and $v$ in $\Z^n$ interact with $e_i$ and $e_o$ does not affect their pairing with vertices not in the bad part. Therefore, $\phi'$ is indeed an embedding of $\Gamma'$. Since any embedding of $\Gamma'$ must be standard, $\phi'$ is standard.
Therefore, by Lemmas \ref{lem:semi} and \ref{lem:folded basis}, we see that $\phi$ embeds $\Gamma$ semi-standardly by checking the pairing requirements in the definition of semi-standard.

\medskip\noindent\textbf{Case 2:}{ There are at least two bad vertices, or the extended bad part is embedded standardly.}
Then $|\supp(\phi(u))\cap\supp(\phi(v))|=0$. Up to a sign change automorphism on the basis vectors, let $u=z_u+e_u$ and $v=z_v+e_v$ for some $e_u\in S'$ and $e_v\in S'$. Since $u$ and $v$ are neck vertices, $e_u$ and $e_v$ are in the supports of bad vertices. Consider the embedding $\phi'\colon V(\Gamma')\rightarrow\Z^n$ as follows. For any vertex $t\neq u',v'$, we set $\phi'(t)=\phi(t)$. We also set $\phi'(u')=z_u$ and $\phi'(v')=z_v$. By Corollary \ref{cor:bad clean}, changing how the images of $u$ and $v$ in $\Z^n$ interact with $e_u$ and $e_v$ does not affect their pairing with vertices not in the bad part. Therefore, $\phi'$ is indeed an embedding of $\Gamma'$. Since any embedding of $\Gamma'$ must be standard, $\phi'$ is standard.

Therefore, using Corollary \ref{cor:bad clean} and Lemmas \ref{lem:sandwiched} and \ref{lem:folded basis}, by checking the pairing requirements in the definition of standard and semi-standard, we see that $\phi$ embeds $\Gamma$ standardly if the extended bad part is embedded standardly, and $\phi$ embeds $\Gamma$ semi-standardly if the extended bad part is embedded semi-standardly.
\end{proof}

Finally, we prove that Theorem \ref{thm:main}:


\begin{proof}[Proof of Theorem~\ref{thm:main}]
The implication \ref{it:min_filling} $\Rightarrow$ \ref{it:submanifold} was established in Lemma~\ref{thm:1implies4}, and \ref{it:submanifold} $\Rightarrow$ \ref{it:combinatorial} in Lemma~\ref{lem:4implies3}, while \ref{it:intersectionform} $\Rightarrow$ \ref{it:min_filling} follows directly from the definition. It therefore remains to show the implication \ref{it:combinatorial} $\Rightarrow$ \ref{it:intersectionform}.

Assume that a connected sum of lens spaces $L$ satisfies \ref{it:combinatorial}. By Lemma~\ref{lem:working conditions proof}, the dual plumbing graph $\Gamma$ satisfies the Working Conditions. Let $$\phi \colon V(\Gamma) \rightarrow \mathbb{Z}^n$$ be an embedding of $\Gamma$. If $\Gamma$ has no bad vertex, then by Remark~\ref{rem:comparintworkingcondition} and \cite[Lemma~3.6]{definite}, the embedding $\phi$ is standard. On the other hand, if $\Gamma$ has at least one bad vertex, then by Lemma~\ref{lem:working conditions proof}, the embedding $\phi$ is either standard or semi-standard.




As a standard embedding is unique up to automorphisms of $\mathbb{Z}^n$~\cite[Remark~4.5]{definite}, and the embedding obtained by gluing $\natural_i X(p_i,q_i)$ and $\natural_i X(p_i,p_i-q_i)$ is standard (see, for instance, \cite{Lisca:2008-1,SSZ:2008,Starkston:2015}), we conclude that, in this case, the orthogonal complement of the embedding coincides with the intersection form $Q_{X(p_i,q_i)} \oplus \langle -1 \rangle^m$ for some nonnegative integer $m$. Therefore, to complete the proof it remains to consider the case when $\Gamma$ has at least one bad vertex and the embedding $\phi$ is semi-standard.

Recall from Remark~\ref{rem:badvertex} that each bad vertex in the dual plumbing graph $\Gamma$ corresponds to a copy of
\[
\begin{tikzpicture}[xscale=1.0,yscale=1,baseline={(0,0)}]
    \node at (1-0.1,0.4) {$-4$};
    \node (A1) at (1,0) {$\bullet$};
\end{tikzpicture}
\qquad \qquad \qquad
\begin{tikzpicture}[xscale=1.0,yscale=1,baseline={(0,0)}]
    \node at (1-0.1, .4) {$-3$};
    \node at (2-0.1, .4) {$-3$};
    \node (A_1) at (1, 0) {$\bullet$};
    \node (A_2) at (2, 0) {$\bullet$};
    \path (A_1) edge [-] node [auto] {$\scriptstyle{}$} (A_2);
\end{tikzpicture}
\qquad  \qquad \qquad
\begin{tikzpicture}[xscale=1.0,yscale=1,baseline={(0,0)}]
    \node at (1-0.1, .4) {$-3$};
    \node at (2-0.1, .4) {$-2$};
    \node at (3-0.1, .4) {$-3$};
    \node (A_1) at (1, 0) {$\bullet$};
    \node (A_2) at (2, 0) {$\bullet$};
    \node (A_3) at (3, 0) {$\bullet$};
    \path (A_1) edge [-] node [auto] {$\scriptstyle{}$} (A_2);
    \path (A_2) edge [-] node [auto] {$\scriptstyle{}$} (A_3);
\end{tikzpicture}
\]
in the original graph. For each case, we examine the orthogonal complement of the embedding $\phi$ and compare it with the result of blowing down the $-4$-sphere in the canonical negative-definite plumbing $\natural_i X(p_i,q_i)$. In each of the three configurations in the graph above, this $-4$-sphere is represented by the union of the spheres corresponding to the vertices of that configuration. Moreover, for the remainder of the proof we assume that $\phi$ uses all $n$ basis vectors, that is,
\[
\bigcup_{v \in V(\Gamma)} \supp(\phi(v)) = \{e_1, \dots, e_n\},
\]
where $\{e_1, \dots, e_n\}$ is an orthonormal basis of $\mathbb{Z}^n$. If this is not the case, the orthogonal complement is obtained from the ones computed below by adding a summand $\langle -1 \rangle^m$ for some nonnegative integer $m$. Recall that a Kirby diagram for a rational blowdown along a $-4$-sphere is obtained by replacing the $-4$–framed unknot with a 1–handle and a 2–handle, as shown in Figure~\ref{fig:c1} (see, e.g., \cite{Gompf-Stipsicz:1999}).

\vspace{.2cm}

\begin{figure}[htbp]
    \centering
    \includegraphics[width=0.4\linewidth]{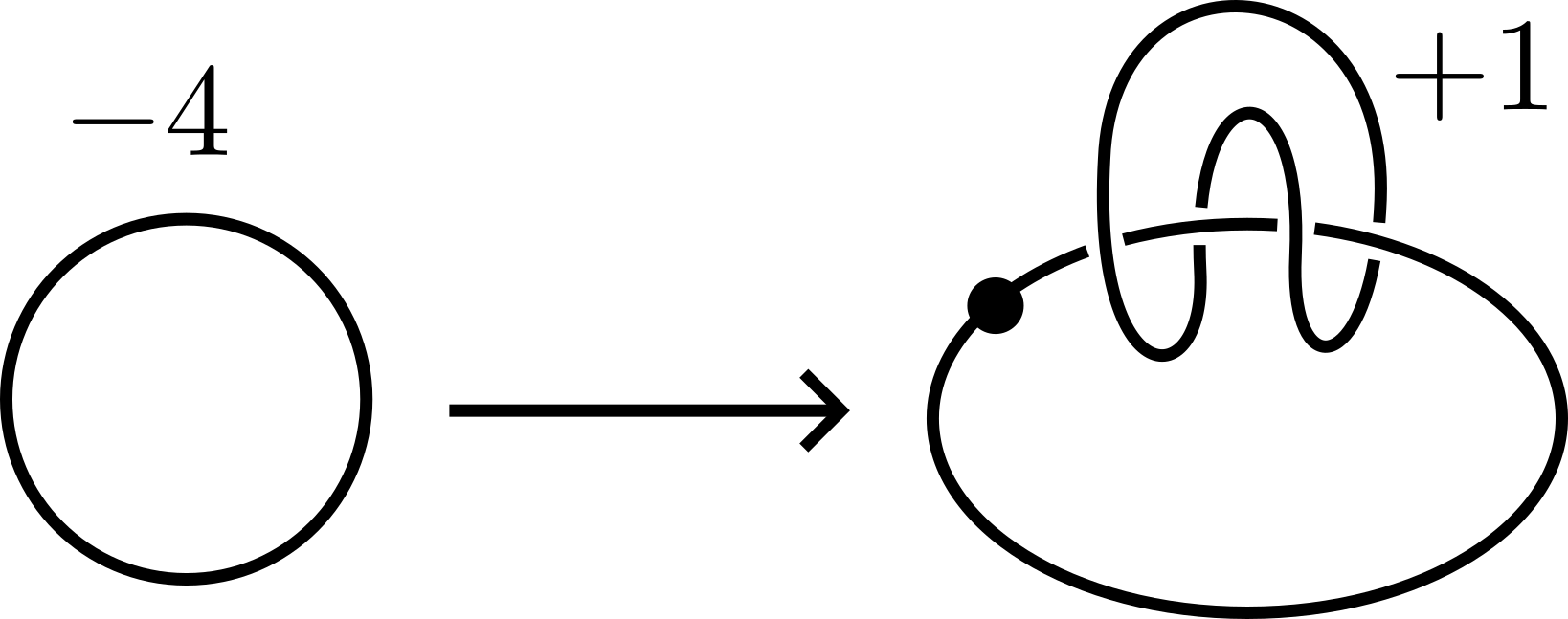}
    \caption{Rational blowdown.}
    \label{fig:c1}
\end{figure}

First, we consider what happens to the intersection form when we perform rational blowdown to the $-4$-sphere in 
$$\begin{tikzpicture}[xscale=1.0,yscale=1,baseline={(0,0)}]
    \node at (1,0.4) {$a_2$};
    \node at (2,0.4) {$a_1$};
    \node at (3-0.1,0.4) {$-4$};
    \node at (4,0.4) {$b_1$};
    \node at (5,0.4) {$b_2$};
    \node (A1) at (0,0) {$\cdots$};
    \node (A2) at (1,0) {$\bullet$};
    \node (A3) at (2,0) {$\bullet$};
    \node (A4) at (3,0) {$\bullet$};
    \node (A5) at (4,0) {$\bullet$};
    \node (A6) at (5,0) {$\bullet$};
    \node (A7) at (6,0) {$\cdots$};
    \path (A1) edge [-] node [auto] {$\scriptstyle{}$} (A2);
    \path (A2) edge [-] node [auto] {$\scriptstyle{}$} (A3);
    \path (A3) edge [-] node [auto] {$\scriptstyle{}$} (A4);
    \path (A4) edge [-] node [auto] {$\scriptstyle{}$} (A5);
    \path (A5) edge [-] node [auto] {$\scriptstyle{}$} (A6);
    \path (A6) edge [-] node [auto] {$\scriptstyle{}$} (A7);
  \end{tikzpicture}.$$
Following Figure~\ref{fig:c2}, we see that, locally in the matrix of the intersection form, we replace
\[
\begin{pmatrix}
a_2 & 1   & 0   & 0 \\
1   & a_1 & 1   & 0 \\
0   & 1   & -4  & 1 \\
0   & 0   & 1   & b_1
\end{pmatrix}
\qquad\text{ with }\qquad
\begin{pmatrix}
a_2      & 2        & 1 \\
2        & 1+4a_1   & 2a_1 \\
1        & 2a_1     & a_1 + b_1
\end{pmatrix}.
\]

\begin{figure}
    \centering
    \includegraphics[width=0.9\linewidth]{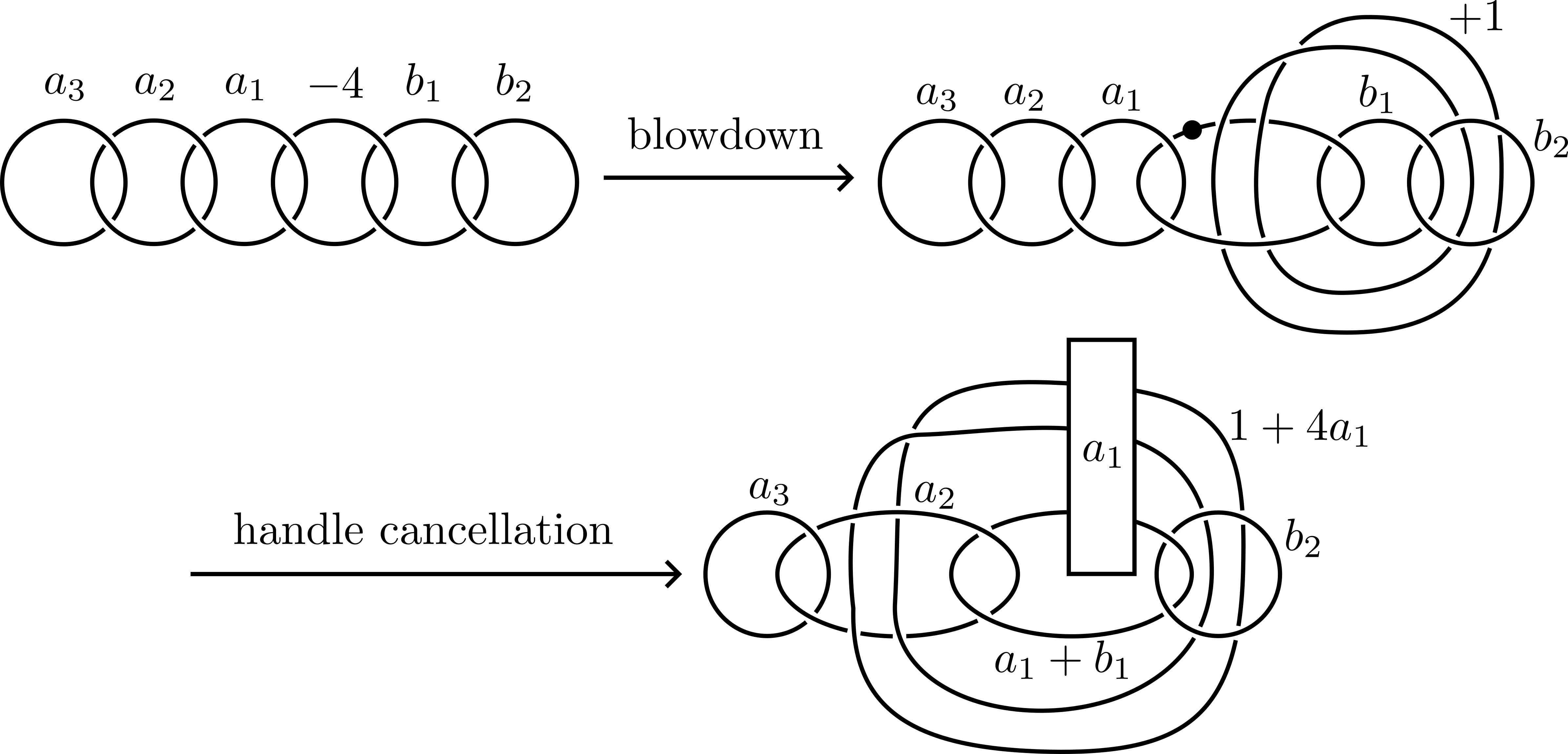}
    \caption{Blowdown for $[-4]$.}
    \label{fig:c2}
\end{figure}

Now, consider the following basis for the orthogonal complement of the semi-standard embedding associated to a bad vertex corresponding to the vertex with weight $-4$ in
$$\begin{tikzpicture}[xscale=1.0,yscale=1,baseline={(0,0)}]
    \node at (1,0.4) {$a_2$};
    \node at (2,0.4) {$a_1$};
    \node at (3-0.1,0.4) {$-4$};
    \node at (4,0.4) {$b_1$};
    \node at (5,0.4) {$b_2$};
    \node (A1) at (0,0) {$\cdots$};
    \node (A2) at (1,0) {$\bullet$};
    \node (A3) at (2,0) {$\bullet$};
    \node (A4) at (3,0) {$\bullet$};
    \node (A5) at (4,0) {$\bullet$};
    \node (A6) at (5,0) {$\bullet$};
    \node (A7) at (6,0) {$\cdots$};
    \path (A1) edge [-] node [auto] {$\scriptstyle{}$} (A2);
    \path (A2) edge [-] node [auto] {$\scriptstyle{}$} (A3);
    \path (A3) edge [-] node [auto] {$\scriptstyle{}$} (A4);
    \path (A4) edge [-] node [auto] {$\scriptstyle{}$} (A5);
    \path (A5) edge [-] node [auto] {$\scriptstyle{}$} (A6);
    \path (A6) edge [-] node [auto] {$\scriptstyle{}$} (A7);
  \end{tikzpicture}.$$
The embedding above the double horizontal line is the semi-standard embedding. Below the double horizontal line, it describes a basis of its orthogonal complement. If $a_1=-2$, the top row becomes the row above the bad vertex. Similar for the $b_1=-2$ case.
\begin{center}
\begin{tabular}{ccc|cccccc|ccc|cccccc|ccc} 
 & & & & & & & & & & & & & & & & & & & & \\ 
 \hline
 & $\cdots$ & 1 & 1 & -1 & & & & & & & & & & & & & & & & \tikzmark{ee} \\ 
 & & & & 1 & -1 & & & & & & & & & & & & & & & \\ 
 & & & & & & $\ddots$ & & & & & & & & & & & & & & \\ 
 & & & & & & & 1 & -1 & & & & & & & & & & & & \tikzmark{ff} \\ 
 \hline
 & & & & & & & & 1 & 1 & -1 & & & & & & & & & & \\ 
 & & & & & & & & & & 1 & -1 & & & & & & & & & \\
 & & & & & & & & & -1 & -1 & & 1 & & & & & & & & \\
 \hline
 & & & & & & & & & & & & -1 & 1 & & & & & & & \tikzmark{gg} \\
 & & & & & & & & & & & & & & $\ddots$ & & & & & & \\
 & & & & & & & & & & & & & & & -1 & 1 & & & & \\
 & & & & & & & & & & & & & & & & -1 & 1 & 1 & $\cdots$ & \tikzmark{hh} \\
 & & & \tikzmark{aa} & & & & & \tikzmark{bb} & & & & \tikzmark{cc} & & & & & \tikzmark{dd} & & & \\
 \hline
 \hline
 & $\cdots$ & 1 & -1 & & & & & & & & & & & & & & & & & \\
$v_1:$ & & & 2 & 2 & 2 & $\cdots$ & 2 & 2 & -1 & 1 & 1 & & & & & & & & & \\
$v_2:$ & & & 1 & 1 & 1 & $\cdots$ & 1 & 1 & & 1 & 1 & 1 & 1 & $\cdots$ & 1 & 1 & 1 & & & \\
 & & & & & & & & & & & & & & & & & -1 & 1 & $\cdots$ & \\
\begin{tikzpicture}[overlay,remember picture]
\draw[<->,black,thick] (pic cs:aa) -- (pic cs:bb) node[midway,above]{$-a_1-1$};
\draw[<->,black,thick] (pic cs:cc) -- (pic cs:dd) node[midway,above]{$-b_1-1$};
\draw[<->,black,thick] (pic cs:ee) -- (pic cs:ff) node[midway,right]{$-a_1-2$};
\draw[<->,black,thick] (pic cs:gg) -- (pic cs:hh) node[midway,right]{$-b_1-2$};
\end{tikzpicture}
\end{tabular}
\end{center}
We compute the intersection pairings of the elements labeled as $v_1,v_2$ in the table: 
\begin{align*}
\langle v_1, v_1 \rangle &= -4(-a_1 - 1) - 3 = 4a_1 + 1,\\
\langle v_1, v_2 \rangle &= -2(-a_1 - 1) - 2 = 2a_1,\\
\langle v_2, v_2 \rangle &= -(-a_1 - 1) - 2 - (-b_1 - 1) = a_1 + b_1.
\end{align*}
We see that the intersection pairing coincides with the one obtained from rational blowdown.

Next, we consider what happens to the intersection form when we perform rational blowdown with 
$$\begin{tikzpicture}[xscale=1.0,yscale=1,baseline={(0,0)}]
    \node at (1,0.4) {$a_2$};
    \node at (2,0.4) {$a_1$};
    \node at (3-0.1,0.4) {$-3$};
    \node at (4-0.1,0.4) {$-3$};
    \node at (5,0.4) {$b_1$};
    \node at (6,0.4) {$b_2$};
    \node (A1) at (0,0) {$\cdots$};
    \node (A2) at (1,0) {$\bullet$};
    \node (A3) at (2,0) {$\bullet$};
    \node (A4) at (3,0) {$\bullet$};
    \node (A5) at (4,0) {$\bullet$};
    \node (A6) at (5,0) {$\bullet$};
    \node (A7) at (6,0) {$\bullet$};
    \node (A8) at (7,0) {$\cdots$};
    \path (A1) edge [-] node [auto] {$\scriptstyle{}$} (A2);
    \path (A2) edge [-] node [auto] {$\scriptstyle{}$} (A3);
    \path (A3) edge [-] node [auto] {$\scriptstyle{}$} (A4);
    \path (A4) edge [-] node [auto] {$\scriptstyle{}$} (A5);
    \path (A5) edge [-] node [auto] {$\scriptstyle{}$} (A6);
    \path (A6) edge [-] node [auto] {$\scriptstyle{}$} (A7);
    \path (A7) edge [-] node [auto] {$\scriptstyle{}$} (A8);
  \end{tikzpicture}.$$
Following Figure~\ref{fig:c3}, we see that locally in the matrix representing the intersection form, we replace $$\begin{pmatrix}
a_2 & 1 & 0 & 0 & 0 \\
1 & a_1 & 1 & 0 & 0 \\
0 & 1 & -3 & 1 & 0 \\
0 & 0 & 1 & -3 & 1 \\
0 & 0 & 0 & 1 & b_1
\end{pmatrix} \qquad\text{ with }\qquad \begin{pmatrix}
a_2 & 2 & 2 & -1 \\
2 & 1+4a_1 & 4a_1 & -2a_1 \\
2 & 4a_1 & -3+4a_1 & 1-2a_1 \\
-1 & -2a_1 & 1-2a_1 & a_1+b_1 
\end{pmatrix}.$$

\begin{figure}[htbp]
    \centering
    \includegraphics[width=0.8\linewidth]{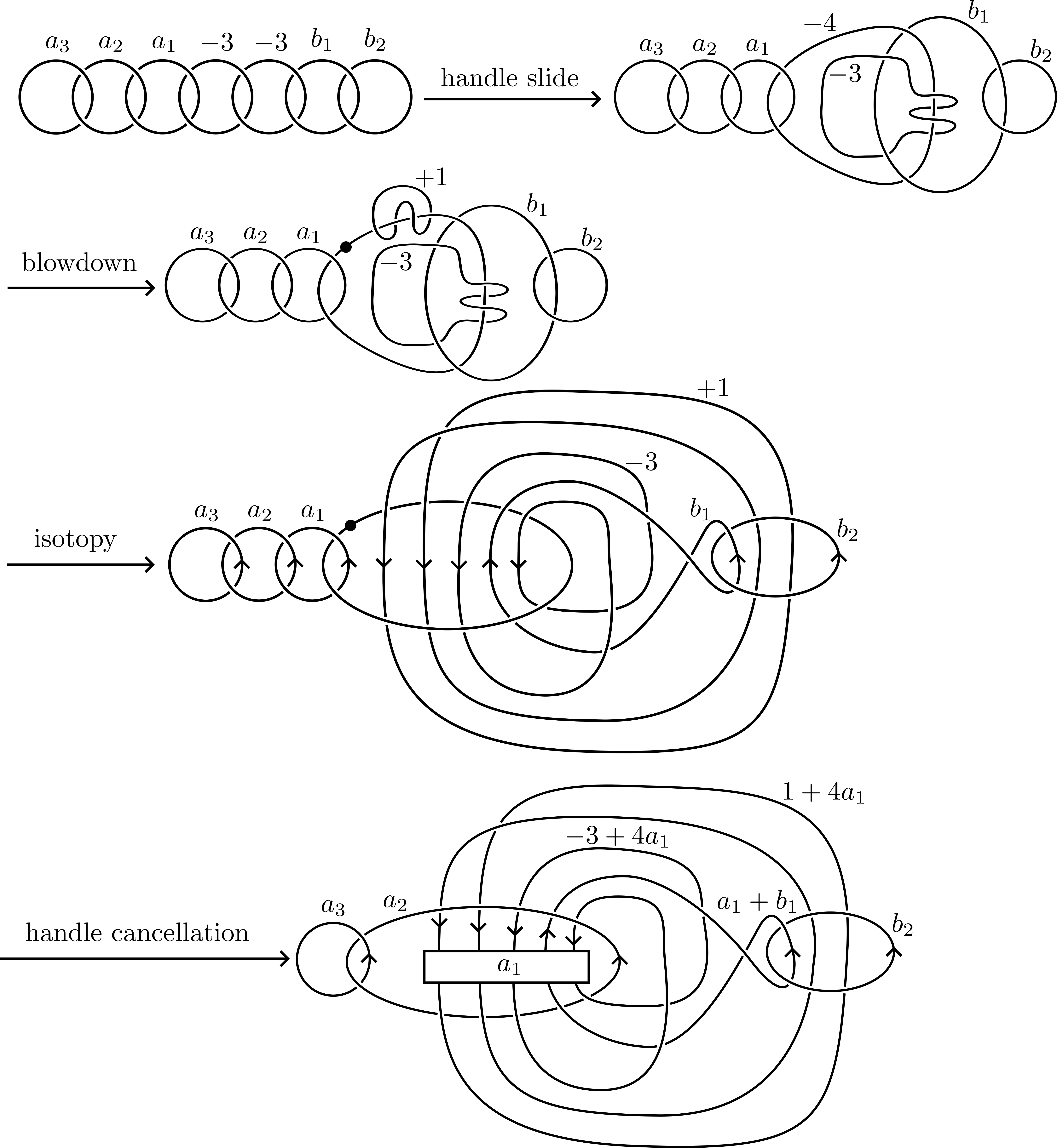}
    \vspace{.2cm}
    \caption{Blowdown for $[-3,-3]$.}
    \label{fig:c3}
\end{figure}

We then compare it with the orthogonal complement of the semi-standard embedding.
\begin{center}
\begin{tabular}{ccc|cccccc|cccc|cccccc|ccc} 
 & & & & & & & & & & & & & & & & & & & & & \\ 
 \hline
 & $\cdots$ & 1 & 1 & -1 & & & & & & & & & & & & & & & & & \tikzmark{e} \\ 
 & & & & 1 & -1 & & & & & & & & & & & & & & & & \\ 
 & & & & & & $\ddots$ & & & & & & & & & & & & & & & \\ 
 & & & & & & & 1 & -1 & & & & & & & & & & & & & \tikzmark{f} \\ 
 \hline
 & & & & & & & & 1 & 1 & -1 & & & & & & & & & & & \\ 
 & & & & & & & & & & 1 & 1 & -1 & & & & & & & & & \\
 & & & & & & & & & -1 & -1 & & & 1 & & & & & & & & \\
 \hline
 & & & & & & & & & & & & & -1 & 1 & & & & & & & \tikzmark{g} \\
 & & & & & & & & & & & & & & & $\ddots$ & & & & & & \\
 & & & & & & & & & & & & & & & & -1 & 1 & & & & \\
 & & & & & & & & & & & & & & & & & -1 & 1 & 1 & $\cdots$ & \tikzmark{h} \\
 & & & \tikzmark{a} & & & & & \tikzmark{b} & & & & & \tikzmark{c} & & & & & \tikzmark{d} & & & \\
 \hline
 \hline
 & $\cdots$ & 1 & -1 & & & & & & & & & & & & & & & & & & \\
$v_1:$ & & & 2 & 2 & 2 & $\cdots$ & 2 & 2 & -1 & 1 & & 1 & & & & & & & & & \\
$v_2:$ & & & 2 & 2 & 2 & $\cdots$ & 2 & 2 & -1 & 1 & 1 & 2 & & & & & & & & & \\
$v_3:$ & & & -1 & -1 & -1 & $\cdots$ & -1 & -1 & & -1 & & -1 & -1 & -1 & $\cdots$ & -1 & -1 & -1 & & & \\
 & & & & & & & & & & & & & & & & & & 1 & -1 & $\cdots$ & \\
\begin{tikzpicture}[overlay,remember picture]
\draw[<->,black,thick] (pic cs:a) -- (pic cs:b) node[midway,above]{$-a_1-1$};
\draw[<->,black,thick] (pic cs:c) -- (pic cs:d) node[midway,above]{$-b_1-1$};
\draw[<->,black,thick] (pic cs:e) -- (pic cs:f) node[midway,right]{$-a_1-2$};
\draw[<->,black,thick] (pic cs:g) -- (pic cs:h) node[midway,right]{$-b_1-2$};
\end{tikzpicture}
\end{tabular}
\end{center}
We compute the intersection pairings of the elements labeled as $v_1,v_2$ in the table: 
\begin{align*}
\langle v_1, v_1 \rangle &= -4(-a_1 - 1) - 3 = 1 + 4a_1,\\
\langle v_1, v_2 \rangle &= -4(-a_1 - 1) - 4 = 4a_1,\\
\langle v_1, v_3 \rangle &= 2(-a_1 - 1) + 2 = -2a_1,\\
\langle v_2, v_2 \rangle &= -4(-a_1 - 1) - 7 = -3 + 4a_1,\\
\langle v_2, v_3 \rangle &= 2(-a_1 - 1) + 3 = 1 - 2a_1,\\
\langle v_3, v_3 \rangle &= -(-a_1 - 1) - 2 - (-b_1 - 1) = a_1 + b_1.
\end{align*}
We see that the intersection pairing coincides with the one obtained from rational blowdown.

Finally, we work on the case of

$$\begin{tikzpicture}[xscale=1.0,yscale=1,baseline={(0,0)}]
    \node at (1-.1,0.4) {$-3$};
    \node at (2-.1,0.4) {$-2$};
    \node at (3-.1,0.4) {$-3$};
    \node (A1) at (0,0) {$\cdots$};
    \node (A2) at (1,0) {$\bullet$};
    \node (A3) at (2,0) {$\bullet$};
    \node (A4) at (3,0) {$\bullet$};
    \node (A5) at (4,0) {$\cdots$};
    \path (A1) edge [-] node [auto] {$\scriptstyle{}$} (A2);
    \path (A2) edge [-] node [auto] {$\scriptstyle{}$} (A3);
    \path (A3) edge [-] node [auto] {$\scriptstyle{}$} (A4);
    \path (A4) edge [-] node [auto] {$\scriptstyle{}$} (A5);
  \end{tikzpicture}.$$

After the handle slide as shown in Figure~\ref{fig:c4}, we proceed as in the previous case. Notice that the $-2$-sphere is not involved in the rest of the operations. Hence, the relevant matrix part is
$$\begin{pmatrix}
a_2 & 2 & 2 & 0 & -1 \\
2 & 1+4a_1 & 4a_1 & 0 & -2a_1 \\
2 & 4a_1 & -3+4a_1 & 1 & 1-2a_1 \\
0 & 0 & 1 & -2 & 0 \\
-1 & -2a_1 & 1-2a_1 & 0 & a_1+b_1
\end{pmatrix}.$$

\begin{figure}
    \centering
    \includegraphics[width=0.5\linewidth]{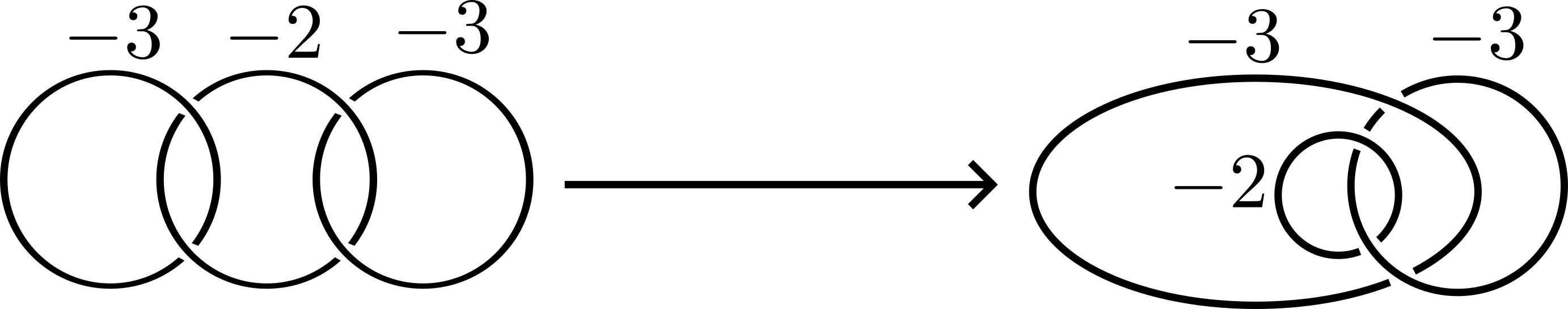}
    \caption{Handle slide for the $[-3,-2,-3]$ case.}
    \label{fig:c4}
\end{figure}

Regarding the orthogonal complement of the semi-standard embedding, we slightly modify the middle part of the previous case into the following:

\begin{center}
\begin{tabular}{c|c|ccccc|c|c} 
 & & & & & & & & \\ 
 \hline
& & & & & & & & \\
 \hline
& & 1 & -1 & & & & & \\
& & & 1 & 1 & 1 & -1 & & \\
& & -1 & -1 & & & & & \\
 \hline
& & & & & & & & \\
 \hline
 \hline
$v_1:$ & & -1 & 1 & & & 1 & & \\
$v_2:$ & & -1 & 1 & 1 & & 2 & & \\
$v_3:$ & & & -1 & & & -1 & & \\
$v_4:$ & & & & -1& 1 & & & \\
\end{tabular}.
\end{center}
Once again, the two intersection forms coincide.
\end{proof}

\section{Forbidden configurations}\label{sec:forbidden_config}

In this section, we prove Theorem~\ref{thm:all k}. Statement (\ref{mainbodyA1}) of Theorem~\ref{thm:all k} corresponds to Lemma~\ref{thm:induced property xk}. Statement (\ref{mainbodyA2}) corresponds to Lemma~\ref{lem:1.3(2)}. To prove statement (\ref{mainbodyA3}), we label the vertices as \textit{shallow}, \textit{deep}, and \textit{neither}, and put an upper bound on the number of vertices in each category. The hard part is producing an upper bound for the number of deep vertices, which is done in Lemma~\ref{lem:deep}. Producing an upper bound for non-deep vertices follows easily.

We first recall the statement:
\setcounter{maintheorem}{0}
\renewcommand{\thethm}{A}
\begin{maintheorem}\label{thm:all kbody}
Let $k$ be a positive integer. There exists a finite set of minimal configurations $S_k$ satisfying the following conditions:
\begin{enumerate}[font=\upshape]
    \item\label{mainbodyA1} A connected sum of lens spaces $L$ satisfies Property $X_k$ if and only if the plumbing graph associated to $L$ does not contain a configuration from $S_k$ as an induced subgraph;
    \item\label{mainbodyA2} the absolute value of the weight of any vertex in any configuration in $S_k$ is at most $2k+7$;
    \item\label{mainbodyA3} the number of vertices in any configuration in $S_k$ is at most $13k$.
\end{enumerate}
\end{maintheorem}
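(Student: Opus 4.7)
My plan is to define $S_k$ by taking the collection $\mathcal{F}_k$ of all disjoint unions of linear plumbing graphs with vertex weights $\leq -2$ that fail Property $X_k$, and setting $S_k$ to be the minimal elements of $\mathcal{F}_k$ under the induced-subgraph relation. Since every element of $\mathcal{F}_k$ has finitely many vertices, every such element contains at least one minimal element as an induced subgraph, so statement~(1) reduces to the assertion that satisfying Property $X_k$ is preserved under passing to induced subgraphs. Combining statement~(1) with statements~(2) and~(3), which bound the absolute weights and the vertex counts of elements of $S_k$, then yields finiteness. I will tackle the three items in turn.

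For statement~(1), I would prove the hereditary lemma: if $P$ satisfies Property $X_k$, so does every induced subgraph $P_0 \subseteq P$. Contrapositively, any embedding of $Q_{P_0^\ast}$ into the diagonal lattice of rank $n_k(P_0)$ extends to an embedding of $Q_{P^\ast}$ into $(\mathbb{Z}^{n_k(P)}, -\mathrm{Id})$. Via Lemma~\ref{lem:duality}, removing a single vertex $v$ from $P$ affects $P^\ast$ either by shortening one of the $[0]$-chains (when $w(v)=-2$) or by splitting the chain and truncating an adjacent $[0]$-block (when $w(v)\leq -3$). In each case the given embedding of $P_0^\ast$ can be augmented by a standard block on fresh orthonormal basis vectors, with the rank accounting matching the change in $n_k$.

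For statement~(2), I would prove that if $P \in \mathcal{F}_k$ contains a vertex $v$ with $|w(v)| > 2k+7$, then $P \setminus \{v\} \in \mathcal{F}_k$. The vertex $v$ corresponds in $P^\ast$ to a chain of $|w(v)|-3$ vertices of adjusted weight $0$, each of which in any embedding is sent to a difference of two consecutive basis vectors. A pigeonhole argument, using that only $O(k)$ basis vectors can be shared with vertices outside this block, shows that when $|w(v)|$ exceeds the threshold $2k+7$ there must be a sub-block of such zero-adjusted-weight vertices whose basis vectors are untouched elsewhere; collapsing one unit of this sub-block produces an embedding of $(P \setminus \{v\})^\ast$ into a diagonal lattice of the correct smaller rank.

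The main obstacle is statement~(3). I classify each vertex $v$ of $\Gamma \in S_k$ as \emph{shallow} if $w(v) \in \{-2,-3,-4\}$ or if $v$ is adjacent to such a vertex, and as \emph{deep} otherwise. Minimality of $\Gamma$ in $\mathcal{F}_k$ excludes as induced subgraphs certain short forbidden chains built from weights $-2$, $-3$, and $-4$ (generalizing the configurations from \cite{definite} for $k=1$), and these exclusions force an $O(k)$ upper bound on the number of vertices of weights $-2$, $-3$, or $-4$, and hence on the number of shallow vertices. The decisive and hardest step is the bound on deep vertices: I would establish a deep-vertex removal lemma stating that if $\Gamma$ contains more than a bounded (in fact, linear in $k$) number of deep vertices, then a local modification of any embedding of $Q_{\Gamma^\ast}$, using a long subchain of deep vertices to produce a ``free slot'', exhibits a deep vertex $v$ with $\Gamma \setminus \{v\} \in \mathcal{F}_k$, contradicting minimality. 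Carrying out this local modification carefully, and tracking constants so that the combined bound on shallow and deep vertex counts sharpens to $13k$, is where I expect the main work to lie.
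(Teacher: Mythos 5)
Your approach to statements~(1) and~(2) matches the paper's. For~(1), the paper proves Lemma~\ref{thm:induced property xk} exactly as you describe: adding back a vertex and extending the embedding on fresh orthonormal directions, case-split by the degree of the added vertex. For~(2), your ``pigeonhole'' framing is vaguer than the paper's argument (the paper shows that if \emph{any} outside vertex's support meets one interior basis vector of the chain, it must contain \emph{all} of them, yielding adjusted weight $\ge 2k'$ and hence a copy of $C_{k'}$, a contradiction), but the key mechanism --- that exclusion of $C_{k'}$ bounds adjusted weights and forces the middle of the $[0]$-chain to be untouched --- is the same.

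The gap is in statement~(3). Your partition is two-way: \emph{shallow} means $w(v)\in\{-2,-3,-4\}$ \emph{or} $v$ is adjacent to such a vertex, and \emph{deep} is the complement. You then claim that excluding short forbidden chains built from weights $-2,-3,-4$ directly bounds the number of $\{-2,-3,-4\}$-weighted vertices by $O(k)$. This is false: consider a chain $[-3,-5,-3,-5,\dots,-3]$. Its $-3$ vertices are pairwise non-adjacent (separated by $-5$'s), so they never form a $[-3,-3]$ pair, and disjoint unions of isolated $[-3]$'s satisfy Property $X_k$ for every $k$ (each $[2,2]$ dual component is forced to use three orthogonal basis vectors with complement $\langle 3\rangle$); hence no forbidden configuration in weights $\{-2,-3,-4\}$ alone ever excludes them. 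In your partition every vertex of this chain is shallow --- the $-3$'s by weight, the $-5$'s by adjacency --- so your deep-removal lemma never sees them, and your direct bound on shallow vertices fails. The paper avoids this by using a \emph{three-way} classification: its notion of shallow is a strict subset of $\{-2,-3,-4\}$-weighted vertices (isolated $-3$'s and $-3$'s flanked only by $|w|\ge 4$ vertices are \emph{not} shallow), which makes the $-5$'s in the chain above paper-deep and hence bounded by the removal lemma; the remaining $-3$'s then fall into a ``neither'' class that is bounded afterward by counting neighbors of the already-bounded shallow and deep sets. Without that subtler definition, the deep bound and the shallow bound cannot be decoupled as your proposal assumes.
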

\renewcommand{\thecor}{\thesection.\arabic{thm}}

Fix a positive integer $k$. Let $S'_k$ be the set of all canonical plumbing graphs that do not satisfy Property $X_k$. Let $S_k$ be the subset of $S'_k$ consisting of those graphs that do not contain any other element of $S'_k$ as an induced subgraph. We first show that $S_k$ satisfies \eqref{mainbodyA1} of Theorem~\ref{thm:all kbody}. For this, it suffices to prove the following lemma.

\setcounter{thm}{0}
\begin{lem}\label{thm:induced property xk}
Suppose the canonical plumbing graph of $L'=\#_i L(p'_i,q'_i)$ contains the canonical plumbing graph of $L=\#_i L(p_i,q_i)$ as an induced subgraph. If $L$ does not satisfy Property $X_k$, then $L'$ does not satisfy Property $X_k$.
\end{lem}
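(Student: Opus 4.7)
The plan is to induct on $d = |V(P')| - |V(P)|$ and reduce to the case $d=1$. For the inductive step, pick any $v \in V(P') \setminus V(P)$ and let $P^{(1)}$ be the induced subgraph of $P'$ on $V(P) \cup \{v\}$. Then $P$ is an induced subgraph of $P^{(1)}$ and $P^{(1)}$ is an induced subgraph of $P'$, so applying the inductive hypothesis to the pair $P^{(1)} \subseteq P'$ reduces the problem to the single-vertex case.

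Suppose then that $P^{(1)}$ is obtained from $P$ by adding a single vertex $v$. Because $P^{(1)}$ is a disjoint union of linear chains, $v$ has degree $0$, $1$, or $2$ in $P^{(1)}$, yielding three cases: (i) $v$ is isolated, so $P^{(1)} = P \sqcup \{v\}$; (ii) $v$ is a leaf, extending some chain of $P$ by one vertex; (iii) $v$ is an interior vertex, joining two distinct chain components of $P$ into one. Given a witnessing embedding $\phi \colon Q_{X(P^*)} \hookrightarrow (\Z^{n_k(L)}, -\mathrm{Id})$, the plan is to construct an embedding $\phi' \colon Q_{X(P^{(1)*})} \hookrightarrow (\Z^{n_k(L^{(1)})}, -\mathrm{Id})$ using Lemma~\ref{lem:duality} to describe how $P^{(1)*}$ arises from $P^*$.

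In case (i) with $v$ of weight $-a$, the graph $P^{(1)*}$ acquires a new component (the dual of $L(a,1)$, a chain of $a-1$ vertices of weight $-2$), and $n_k$ grows by $a$; we take $\phi'$ to extend $\phi$ by the standard embedding of this chain into an orthogonal $\Z^a$. In cases (ii) and (iii), Lemma~\ref{lem:duality} shows that when $v$ has weight $-2$ in case (iii), the two endpoints of $P^*$ nearest $v$ are fused into a single vertex of $P^{(1)*}$ whose weight equals their sum; here $n_k$ is unchanged, and sending the fused vertex to the sum of the two originals gives an isometric lattice embedding $Q_{X(P^{(1)*})} \hookrightarrow Q_{X(P^*)}$, which we compose with $\phi$ to obtain $\phi'$. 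In all remaining subcases, $P^{(1)*}$ is obtained from $P^*$ by making the self-pairings of one or two affected endpoints one more negative and, when $v$ has weight $-a$ with $a \ge 3$, appending or interposing a chain of $a-2$ (resp. $a-3$) new vertices of weight $-2$. We absorb these changes by introducing fresh basis vectors $f_1, \ldots, f_s$ of norm $-1$, modifying $\phi$ at the affected endpoints via $\phi(u^*) \mapsto \phi(u^*) \pm f_i$, and placing successive new vertices $y_j$ at $\phi'(y_j) = -f_j + f_{j+1}$. The count $s$ of new basis vectors is at most $n_k(L^{(1)}) - n_k(L)$, so $\phi'$ lands in $\Z^{n_k(L^{(1)})}$.

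The main obstacle is that cases (ii) and (iii) each split into subcases depending on the weight of $v$ and on the $[2]$-block structure of $P$ adjacent to $v$, so careful bookkeeping is required to apply Lemma~\ref{lem:duality} correctly. Nevertheless, in every subcase $\phi'$ is given by an explicit short formula built from $\phi$, and verifying the intersection pairings reduces to a direct computation.
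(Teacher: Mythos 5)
Your proof follows the paper's argument closely: both reduce to the single-vertex case by induction, then split into the same cases (isolated vertex, leaf, interior vertex, with the $w(v)=-2$ interior case handled separately by fusing two leaves of the dual), invoke Lemma~\ref{lem:duality} to describe how $P^{(1)*}$ differs from $P^*$, and extend the given embedding $\phi$ by adjoining exactly $n_k(L^{(1)})-n_k(L)$ fresh orthonormal basis vectors. The explicit formulas you sketch --- the standard embedding of the new $[2]$-chain for the isolated case, shifts of the form $\phi(u)\mapsto\phi(u)\pm f_1$ combined with $\phi'(y_j)=-f_j+f_{j+1}$ for the leaf and interior cases with $|w(v)|\ge 3$, and $\phi'(u)=\phi(u_1)+\phi(u_2)$ for the $w(v)=-2$ fusion case --- match the paper's constructions up to the overall sign convention.
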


\begin{proof}
For notational convenience, define
$$n_L := b_2(\natural_i X(p_i,q_i)) + b_2(\natural_i X(p_i,p_i-q_i))$$
and
$$n_{L'} := b_2(\natural_i X(p'_i,q'_i)) + b_2(\natural_i X(p'_i,p'_i-q'_i)).$$

By an inductive argument on the number of vertices, it suffices to prove Lemma \ref{thm:induced property xk} for the situation where the canonical plumbing graph of $L'=\#_i L(p'_i,q'_i)$ contains exactly one more vertex than that of $L=\#_i L(p_i,q_i)$. In this case, we have
$$b_2(\natural_i X(p'_i,q'_i)) = b_2(\natural_i X(p_i,q_i)) + 1.$$

Let $\Gamma$ and $\Gamma'$ denote the dual graphs for $L$ and $L'$, respectively. Let $v$ be the vertex in the canonical plumbing graph of $L'$ that is not in $L$. By assumption, there exists an embedding $\phi\colon V(\Gamma) \to \Z^{n_L-k}$. We distinguish three cases according to the connectivity of $v$:
\begin{enumerate}
    \item $v$ has no neighbors;
    \item $v$ has exactly one neighbor;
    \item $v$ has exactly two neighbors.
\end{enumerate}

\medskip\noindent\textbf{Case 1:}{ $v$ has no neighbors.} In the dual graph, $v$ corresponds to a component with $|w(v)|-1$ connected vertices with weight $2$. Let those corresponding vertices in $\Gamma$ be $x_1,\dots ,x_{|w(v)|-1}.$

Now we have
$$b_2(\natural_i X(p'_i,p'_i-q'_i))=b_2(\natural_i X(p_i,q_i))+|w(v)|-1$$
Therefore, we have
$$n_{L'}-k=n_L-k+|w(v)|$$
Therefore, $L'$ does not satisfy Property $X_k$ because we can define an embedding $\phi'$ by adjoining standard basis vectors $e'_1,\dots,e'_{|w(v)|}$ to $\Z^{n_L-k}$ and mapping $x_i$ to $e'_i-e'_{i+1}$ for each $1\le i\le |w(v)|-1$.

\medskip\noindent\textbf{Case 2:}{ $v$ has one neighbor.} In the dual graph, it corresponds to increasing the weight of some leaf $u$ by 1, and then attach a chain of $|w(v)|-2$ vertices with weight 2 on $u$ (so, $u$ is no longer a leaf unless $w(v)=-2$). Let the new chain of $|w(v)|-2$ vertices with weight 2 in the dual graph be $x_1,\dots ,x_{|w(v)|-2}$.

We have
$$b_2(\natural_i X(p'_i,p'_i-q'_i))=b_2(\natural_i X(p_i,q_i))+|w(v)|-2$$
and therefore
$$n_{L'}-k=n_L-k+|w(v)|-1.$$
Hence, $L'$ does not satisfy Property $X_k$ because we have an embedding $\phi'$ defined by adding basis vectors $e'_1,e'_2,\dots ,e'_{|w(v)|-1}$ to $\Z^{n_L-k}$, and setting $\phi'(u)=\phi(u)-e'_1$ and $\phi'(x_i)=e'_i-e'_{i+1}$ for each $i$.

\medskip\noindent\textbf{Case 3.1:}{ $v$ has two neighbors and $w(v)\neq -2$.} In the dual graph, it corresponds to increasing the weight of a leaf $u_1$ by 1, increasing the weight of another leaf $u_2$ by 1, create a chain of $|w(v)|-3$ vertices with weight 2 and then glue it to $u_1$ and $u_2$ at the two ends. Let $x_1,\dots ,x_{|w(v)|-3}$ be those new vertices in the dual graph with weight 2.

Similar to the previous cases, we have 
$$n_{L'}-k=n_L-k+|w(v)|-2$$
and we can define $\phi'\colon V(\Gamma')\rightarrow\Z^{n_{L'}-k}$ by adding basis vectors $e'_1,\dots ,e'_{|w(v)|-2}$ to $\Z^{n_L-k}$, and setting $\phi'(u_1)=\phi(u_1)-e'_1$, $\phi'(u_2)=\phi(u_2)+e'_{|w(v)|-2}$, and $\phi'(x_i)=e'_i-e'_{i+1}$ for each $i$.

\medskip\noindent\textbf{Case 3.2:}{ $v$ has two neighbors and $w(v)=-2$.} 
In terms of the dual graph, this operation corresponds to identifying two leaves $u_1, u_2 \in V(\Gamma)$ from distinct connected components to form a single vertex $u \in V(\Gamma')$, with the weight defined by the sum $w(u) = w(u_1) + w(u_2)$.

Similar to previous cases, we have 
$$n_{L'}-k=n_L-k.$$ For any other vertex $z$, to have the correct intersection pairing, we need $$\phi'(z)\cdot\phi'(u)=\phi(z)\cdot\phi(u_1)+\phi(z)\cdot\phi(u_2)$$ This can be achieved by setting $\phi'(u)=\phi(u_1)+\phi(u_2)$. Now, we check that $\phi'(u)\cdot\phi'(u)$ is indeed $w(u)$:
\begin{align*} 
\phi'(u)\cdot\phi'(u)&=(\phi(u_1)+\phi(u_2))\cdot(\phi(u_1)+\phi(u_2)) \\  &=\phi(u_1)\cdot\phi(u_1)+\phi(u_2)\cdot\phi(u_2)+2\phi(u_1)\cdot\phi(u_2) \\
&=w(u_1)+w(u_2)+2\phi(u_1)\cdot\phi(u_2) \\
&=w(u)+2\phi(u_1)\cdot\phi(u_2)
\end{align*}
Since $u_1$ and $u_2$ come from two different components in $\Gamma$, we have $\phi(u_1)\cdot\phi(u_2)=0$. This concludes the proof.
\end{proof}

Next, we show that $S_k$ satisfies conditions \eqref{mainbodyA2} and \eqref{mainbodyA3} of Theorem~\ref{thm:all kbody}. Let $G\in S_k$ be the canonical plumbing graph of a connected sum of lens spaces. Let $\Gamma$ be the dual graph of $G$. Similar to the proof of Theorem \ref{thm:induced property xk}, we let $n_G=|V(G)|+|V(\Gamma)|$. Let $\phi\colon V(\Gamma)\rightarrow\Z^{n_G-k}$ be an embedding. We define
$$k':=\begin{cases}k\text{ if }k\text{ if even;}\\k+1\text{ if }k\text{ is odd.}\end{cases}$$ Consider the following configuration.
$$\begin{tikzpicture}[xscale=1.0,yscale=1,baseline={(0,0)}]
    \node at (0.9,0.4) {$-2$};
    \node at (1.9,0.4) {$-2$};
	\node at (3.9,0.4) {$-2$};
	\node at (4.9,0.4) {$-2$};
    \node (A1) at (1,0) {$\bullet$};
    \node (A2) at (2,0) {$\bullet$};
	\node (A3) at (3,0) {$\cdots$};
	\node (A4) at (4,0) {$\bullet$};
	\node (A5) at (5,0) {$\bullet$};
	\draw [decorate,decoration={brace,amplitude=5pt,mirror,raise=2ex},line width=1pt] (1,0) -- (5,0) node[midway,yshift=-2em]{$k'$};
  \end{tikzpicture}$$
Let this configuration be $C_{k'}$. The configuration $C_{k'}$ is in $S'_k$ because we can embed the vertices of its dual (which has $k'$ disconnected vertices with weight $2$) by mapping them to $e_1+e_2, e_1-e_2, e_3+e_4, e_3-e_4$, etc. Hence, due to the construction of $S_k$, the graph $G$ cannot contain $C_{k'}$ as an induced subgraph unless $G=C_{k'}$.

In particular, $G$ cannot contain a chain of $2k'-1$ vertices of weight $-2$, as such a chain contains $C_{k'}$ as an induced subgraph. This translates to the condition that $\Gamma$ cannot contain a vertex $v$ with adjusted weight $w'(v)\geq 2k'$. Since $C_{k'}$ satisfies conditions \eqref{mainbodyA2} and \eqref{mainbodyA3} of Theorem \ref{thm:all kbody}, we assume $G\neq C_{k'}$.

By the construction of $S_k$, to show that $G\in S_k$ satisfies condition \eqref{mainbodyA2} of Theorem \ref{thm:all kbody}, it suffices to prove the following lemma:

\begin{lem}\label{lem:1.3(2)}
Suppose $G \in S_k$ has a vertex $g$ with $|w(g)| \ge 2k'+6$ and that $G$ does not contain $C_{k'}$ as an induced subgraph. Let $G'$ be the subgraph of $G$ induced by $V(G)\smallsetminus\{g\}$. Then $G'$ does not satisfy Property $X_k$. In particular, the absolute value of the weight of any vertex in $G$ is at most $2k+7$.
\end{lem}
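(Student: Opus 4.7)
The plan is to construct an embedding $\phi' \colon V(\Gamma') \to \Z^{n_{G'}-k}$ from any given embedding $\phi \colon V(\Gamma) \to \Z^{n_G-k}$, where $\Gamma,\Gamma'$ are the dual plumbing graphs of $G, G'$. Since $|w(g)| = m \ge 2k'+6 > 2$, the vertex $g$ must appear as a structural entry $b_j$ in the negative continued fraction expansion of its component, and by Lemma~\ref{lem:duality} corresponds in $\Gamma$ to a block $B$ of $m-3$ consecutive weight-$2$ vertices (when $g$ is interior to its chain; the leaf and isolated cases only shift the length of $B$ by one). The graph $\Gamma'$ is obtained from $\Gamma$ by deleting $B$ and decreasing by one the weights of the (at most two) vertices $u_L, u_R$ of $\Gamma$ adjacent to $B$. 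A vertex count gives $n_G - n_{G'} = m-2$, so $\phi'$ must use $m-2$ fewer basis vectors than $\phi$.

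The hypothesis that $G$ omits $C_{k'}$ as an induced subgraph forces each $a_i \le k'-1$, so every vertex of $\Gamma$ has adjusted weight at most $k'$ and hence weight at most $k'+2$. The first key step is to normalize $\phi$ restricted to $B$. A direct case analysis of the images of three consecutive weight-$2$ vertices in $B$ shows that any nontrivial ``fold'' forces a fractional coefficient two steps later, so after a signed-permutation automorphism of $\Z^{n_G-k}$ we may arrange that $\phi(v_i) = e_i - e_{i+1}$ for all $v_i \in B$ (up to small modifications near the ends that do not affect the argument), using precisely the basis vectors $e_1,\dots,e_{m-2}$.

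The second key step is to show that $\supp(\phi(w)) \cap \{e_1,\dots,e_{m-2}\} = \emptyset$ for every $w \in V(\Gamma) \setminus (B \cup \{u_L, u_R\})$. Writing $c_j$ for the coefficient of $e_j$ in $\phi(w)$, the identities $\phi(w) \cdot \phi(v_i) = 0$ propagate along the chain and force all $c_j$ to equal (up to sign) a common value $c$. Then
\[
(m-2)\,c^2 \;\le\; |\phi(w)|^2 \;=\; w(w) \;\le\; k'+2 \;<\; m-2,
\]
so $c = 0$. A parallel computation, using the same weight bound on $u_L$ and $u_R$, shows that each of $\phi(u_L)$ and $\phi(u_R)$ has support meeting $\{e_1,\dots,e_{m-2}\}$ in exactly one basis vector with coefficient $\pm 1$; call these contributions $\epsilon_L$ and $\epsilon_R$.

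Set $\phi'(u_L) := \phi(u_L) - \epsilon_L$, $\phi'(u_R) := \phi(u_R) - \epsilon_R$, and $\phi'(w) := \phi(w)$ for all other $w \in V(\Gamma')$. A routine check of intersection pairings confirms that $\phi'$ realizes the intersection form of $\Gamma'$: the weights of $u_L, u_R$ each drop by exactly $1$, and no other pairing changes. Since the image of $\phi'$ lies in the sublattice spanned by basis vectors outside $\{e_1,\dots,e_{m-2}\}$, which has rank $n_{G'}-k$, we conclude that $G'$ fails Property $X_k$. The \emph{in particular} statement then follows: if such a $g$ existed in a graph $G \in S_k$, then $G'$ would be a proper induced subgraph of $G$ lying in $S'_k$, contradicting the minimality of $G \in S_k$. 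Hence $|w(g)| \le 2k'+5 \le 2k+7$. The main obstacle will be the case analysis underlying the normalization step, which must handle all potential foldings at the ends of $B$ while still yielding the clean basis-vector count used throughout the argument.
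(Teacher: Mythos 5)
Your proposal follows essentially the same route as the paper: normalize the weight-$2$ chain $B$ in $\Gamma$ corresponding to $g$ to the standard form $\phi(v_i)=e_i-e_{i+1}$, use the weight bound coming from the absence of $C_{k'}$ to show that all other vertices avoid the interior basis vectors of $B$ and that the two attaching vertices meet $B$'s support in exactly one basis vector with coefficient $\pm 1$, then delete those basis vectors and peel off the corresponding coordinates of $\phi(u_L)$ and $\phi(u_R)$ to get an embedding of $\Gamma'$ into a standard lattice of the right rank. This is the paper's argument.

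A few inaccuracies are worth flagging. First, you misread $C_{k'}$: in the paper it is the \emph{disjoint union} of $k'$ vertices of weight $-2$ (there are no edges in its picture), not a connected chain. Omitting it as an induced subgraph therefore forbids $k'$ pairwise non-adjacent $(-2)$-vertices, which bounds each run of consecutive $(-2)$-vertices by $2k'-2$, giving $w'(v)\le 2k'-1$ and $w(v)\le 2k'+1$ in $\Gamma$ — not $a_i\le k'-1$ and $w(v)\le k'+2$ as you claim. Your key inequality $(m-2)c^2\le w(w)<m-2$ is unaffected because $m-2\ge 2k'+4>2k'+1$, but the intermediate statement as written is false. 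Second, the normalization of $\phi|_B$ is the real technical input, and your justification (``a direct case analysis \dots any nontrivial fold forces a fractional coefficient two steps later'') is too vague to count as a proof: the obstruction to a fold at position $3$ (the $D_3\hookrightarrow\Z^3$ phenomenon) is a failed integer intersection pairing at position $4$, not a fractional coefficient, and for a genuine argument you would either need to carry this out carefully or cite~\cite[Lemma~4.7]{definite} and verify its hypotheses, which is what the paper does by checking $|w'(g)|-1\ge 5$. Third, your accounting $n_G-n_{G'}=m-2$ is correct only when $g$ is interior; the leaf and isolated cases give $m-1$ and $m$ respectively, and while the construction adapts in the obvious way, the proposal glosses over this.
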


\begin{proof}
Let $\Gamma'$ be the dual graph of $G'$. Let $n_{G'} = |V(G')| + |V(\Gamma')|$.

In $\Gamma$, the vertex $g$ corresponds to a chain of $|w'(g)| - 1$ vertices of weight $2$, where $w'(g)$ is the adjusted weight. Let these vertices be $x_1,\dots,x_{|w'(g)|-1}$. If $x_1$ is not a leaf, let $y_0 \neq x_2$ be a vertex adjacent to $x_1$. If $x_{|w'(g)|-1}$ is not a leaf, let $y_1 \neq x_{|w'(g)|-2}$ be a vertex adjacent to $x_{|w'(g)|-1}$. By the definition of the dual graph, removing $g$ from $G$ corresponds to removing $x_1,\dots,x_{|w'(g)|-1}$ from $\Gamma$ and decreasing the weights of $y_0$ and $y_1$ (if they exist) by $1$.
Hence,
\[
n_{G'} = n_G - 1 - (|w'(g)| - 1) = n_G - |w'(g)|.
\]

Since $|w(g)| \geq 2k' + 6$, we have $|w'(g)| - 1 \geq 2k' + 3 \geq 5$, so it satisfies the Working Conditions of \cite{definite}. Then, by \cite[Lemma 4.7]{definite}, up to isomorphism we have $\phi(x_i) = e_i - e_{i+1}$ for $1 \leq i \leq |w'(g)| - 1$ for some basis vectors $e_1,\dots,e_{|w'(g)|}$. We now show that for any vertex $v \neq x_1,\dots,x_{|w'(g)|-1}$, we must have $e_2,\dots,e_{|w'(g)|-1} \notin \supp(\phi(v))$. Suppose not, and assume there exists a vertex $v \neq x_1,\dots,x_{|w'(g)|-1}$ such that $\supp(\phi(v))$ contains at least one of $e_2,\dots,e_{|w'(g)|-1}$. Since $\phi(v) \cdot \phi(x_i)$ must be $0$ for all $2 \leq i \leq |w'(g)| - 2$, it follows that $e_2,\dots,e_{|w'(g)|-1}$ are all contained in $\supp(\phi(v))$. Hence $w(v) \geq |w'(g)| - 2 \geq 2k' + 2$. This implies $w'(v) \geq 2k'$, which is impossible as discussed above when $G$ does not contain $C_{k'}$ as an induced subgraph.

Similarly, by considering the intersection pairing with $x_1$, we see that if $y_0$ exists then it is the unique vertex other than $x_1$ to have $e_1$ in its support, and if $y_0$ does not exist then there is no vertex other than $x_1$ having $e_1$ in its support. Likewise, we know that if $y_1$ exists then it is the unique vertex other than $x_{|w'(g)|-1}$ to have $e_{|w'(g)|}$ in its support, and if $y_1$ does not exist then there is no vertex other than $x_{|w'(g)|-1}$ having $e_{|w'(g)|}$ in its support.

Thus we obtain an embedding $\phi'\colon V(G')\rightarrow \Z^{n_{G'}-k}$ by removing $e_1,\dots,e_{|w'(g)|}$ from $\Z^{n_G-k}$, removing $x_1,\dots,x_{|w'(g)|-1}$ from $G$, removing the $e_1$-component of $\phi(y_0)$ (if $y_0$ exists), and removing the $e_{|w'(g)|}$-component of $\phi(y_1)$ (if $y_1$ exists).
\end{proof}

It remains to show that $G\in S_K$ satisfies condition \eqref{mainbodyA3} of Theorem~\ref{thm:all kbody}. To this end, we first introduce the following definitions.

\begin{defn}
A vertex $v$ of $G$ is called \textit{shallow} if it satisfies at least one of the following:
\begin{enumerate}
    \item $w(v) = -2$;
    \item $w(v) = -4$;
    \item $w(v) = -3$ and $v$ is adjacent to a vertex of weight $-2$;
    \item $w(v) = -3$ and $v$ is adjacent to another vertex of weight $-3$.
\end{enumerate}
\end{defn}

\begin{defn}
A vertex $v$ of $G$ is called \textit{deep} if it satisfies one of the following:
\begin{enumerate}
    \item $|w(v)| \geq 5$ and $v$ is not adjacent to any shallow vertex;
    \item $w(v) = -3$ and $v$ has no neighbors.
\end{enumerate}
\end{defn}

Our strategy is to obtain upper bounds for the number of shallow vertices, the number of deep vertices, and the number of vertices that are neither shallow nor deep.

To obtain an upper bound on the number of deep vertices of $G$, we prove the following lemma:

\begin{lem}\label{lem:deep}
If $G \in S_k$ has at least $k'$ deep vertices and $G$ does not contain $C_{k'}$ as an induced subgraph, then there exists a deep vertex $g \in V(G)$ such that, letting $G'$ be the subgraph of $G$ induced by $V(G)\smallsetminus\{g\}$, the graph $G'$ does not satisfy Property $X_k$. In particular, $G$ has at most $k'-1$ deep vertices.
\end{lem}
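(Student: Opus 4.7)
The plan is to leverage the abundance of deep vertices against the limited rank slack of an embedding $\phi : V(\Gamma) \to \Z^{n_G - k}$ (which exists because $G$ does not satisfy Property $X_k$), with the goal of locating a deep vertex $g$ whose dual subchain $C_g \subset \Gamma$ uses a set $E_g$ of basis vectors disjoint from $\supp \phi(v)$ for every $v \in V(\Gamma) \setminus C_g$. For such a $g$, restricting $\phi$ to $V(\Gamma) \setminus C_g$ and deleting the basis vectors in $E_g$ will produce an embedding of $\Gamma_{G-g}$ into $\Z^{n_G - k - |E_g|} = \Z^{n_{G-g} - k}$, showing that $G - g$ does not satisfy Property $X_k$. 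The \emph{in particular} statement will then follow by minimality of $G$ in $S_k$: if $G$ had at least $k'$ deep vertices, the main conclusion would provide $g$ exhibiting a proper induced subgraph $G - g \in S'_k$, contradicting the defining property of $S_k$.

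The first step is to describe, for each deep vertex $g$, the subchain $C_g \subset \Gamma$ and the rigidity of $\phi|_{C_g}$. An isolated $-3$ vertex contributes an isolated component $C_g = [2,2]$ of $\Gamma$; the $A_2$-lattice rigidity forces $\phi(x_1) = e + f$ and $\phi(x_2) = -f + h$ (up to signed permutation of basis vectors) for distinct basis vectors $e, f, h$, so that $|E_g| = 3$, and any external vertex $v$ with $\supp \phi(v) \cap \{e, f, h\} \neq \varnothing$ must have $\phi(v)|_{\mathrm{span}(e, f, h)} = a(e - f - h)$ for some nonzero integer $a$. A deep vertex with $|w(g)| \geq 5$ and no shallow neighbors contributes a chain $C_g$ of $|w(g)| - 3 \geq 2$ consecutive adjusted-weight-$0$ interior vertices; applying \cite[Lemma~4.7]{definite} (whose hypothesis is supplied here by the adjacent adjusted-weight-$1$ vertices that the condition ``no shallow neighbor'' guarantees) forces $\phi|_{C_g}$ into the standard form on that block of $|w(g)| - 2$ basis vectors, with strongly restricted external overlap. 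A direct bookkeeping against Lemma~\ref{lem:duality} verifies $|E_g| = n_G - n_{G-g}$ in both cases.

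The final step is a pigeonhole argument. With at least $k'$ deep vertices, each demanding $|E_g| \geq 3$ basis vectors and, by the rigidity from the previous step, only admitting a strictly constrained form of external overlap, at least one $g$ should be ``private.'' The hypothesis that $G$ contains no induced $C_{k'}$ translates into $w'(v) \leq 2k' - 1$ for every $v \in V(\Gamma)$, which bounds how many deep chains a single external vertex can meet at once and is essential for the pigeonhole to bite. The main obstacle is making this count sufficiently tight: one must show that each ``non-private'' deep chain consumes a genuine unit of the available slack, combining the $A_2$-rigidity (in the isolated case) and the standard-chain rigidity (in the $|w(g)| \geq 5$ case) with the adjusted-weight bound to rule out an external vertex simultaneously spoiling many deep chains cheaply. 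Once this bookkeeping is complete, the existence of a private deep vertex follows, and Steps~1--3 produce the required embedding of $\Gamma_{G-g}$ into $\Z^{n_{G-g} - k}$.
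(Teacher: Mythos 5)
Your scaffolding mirrors the paper's: for each deep vertex $g$, pass to the dual chain of weight-$2$ vertices, invoke the rigidity of \cite[Lemma~4.7]{definite} to pin down $\phi$ there with pairwise-disjoint supports, and then pigeonhole to locate one deep vertex whose chain is ``private,'' which can be excised to produce an embedding for $G-g$. The gap is exactly where you flag it and then stop: you observe that the pigeonhole requires a quantitative bound forcing external vertices to ``pay'' for every deep chain they meet, call this ``the main obstacle,'' and do not supply the estimate. The paper's resolution is to define $\mathrm{obs}(v)$ as the number of deep chains $O_i$ whose support $\phi(v)$ meets (with $v\notin I_i$), to show via pairing with the interior weight-$0$ vertices of the chain that meeting $O_i$ already forces $\supp(O_i)\subseteq\supp\phi(v)$, hence $w(v)\ge 3\,\mathrm{obs}(v)$, and then to sharpen this to $w'(v)\ge 2\,\mathrm{obs}(v)$ (ruling out the borderline $w(v)=3$ cases by an explicit divisibility argument in the isolated-$(-3)$ case). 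Translating $w'(v)\ge 2m$ back to $G$ yields a $[-2]$-chain of length at least $2m-1$, hence $m$ pairwise non-adjacent $-2$'s, so the no-$C_{k'}$ hypothesis gives $\sum_v\mathrm{obs}(v)<k'$, and some deep index is unobstructed. Without this (or an equivalent) inequality, the pigeonhole does not close, so the proposal is not yet a proof.

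A secondary imprecision: you ask for $E_g$ to be disjoint from $\supp\phi(v)$ for \emph{every} $v\notin C_g$, but the flanking adjusted-weight-$\ge 1$ vertices $I_i$ necessarily share one basis vector with $\supp(O_i)$, and removing $g$ from $G$ correspondingly reduces the weight of each vertex in $I_i$ by one. The correct disjointness to aim for excludes $I_i$, and one must then check that deleting $\supp(O_i)$ from $\Z^{n_G-k}$ reduces the $I_i$-vertices' self-pairings by exactly one (which it does, precisely because the chain $O_i$ is embedded standardly). Your ``direct bookkeeping against Lemma~\ref{lem:duality}'' gestures at this but skips the verification; the paper carries it out separately in the three cases according to the degree of $g$ in $G$.
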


\begin{proof}
Let $x_1,\dots,x_{k'}$ be deep vertices. For every $1\leq i\leq k'$, if $w(x_i)\neq -3$, then since $|w(x_i)|\geq 5$ and $x_i$ is not adjacent to any vertex of weight $-2$, the vertex $x_i$ in $G$ corresponds to a configuration of the form
\[
\begin{tikzpicture}[xscale=1.0,yscale=1,baseline={(0,0)}]
    \node at (0,0.4) {$1$};
    \node at (1,0.4) {$0$};
    \node at (3,0.4) {$0$};
    \node at (4,0.4) {$1$};
    \node (A1) at (0,0) {$\circ$};
    \node (A2) at (1,0) {$\circ$};
    \node (A3) at (2,0) {$\cdots$};
    \node (A4) at (3,0) {$\circ$};
    \node (A5) at (4,0) {$\circ$};
    \path (A1) edge [-] (A2);
    \path (A2) edge [-] (A3);
    \path (A3) edge [-] (A4);
    \path (A4) edge [-] (A5);
    \draw [decorate,decoration={brace,amplitude=5pt,mirror,raise=2ex},line width=1pt]
        (1,0) -- (3,0)
        node[midway,yshift=-2.4em]{$|w(x_i)|-3$};
\end{tikzpicture}
\]
in $\Gamma$. Note that $|w(x_i)| - 3 \geq 2$. In this case, we define $O_i$ to be the set of vertices with adjusted weight $0$ in the diagram above, and $I_i$ to be the set of vertices with adjusted weight $1$ in the diagram above.

If $w(x_i) = -3$ and $x_i$ has no neighbors, then it corresponds to a component of $\Gamma$ of the form
\[
\begin{tikzpicture}[xscale=1.0,yscale=1,baseline={(0,0)}]
    \node at (0,0.4) {$2$};
    \node at (1,0.4) {$2$};
    \node (A1) at (0,0) {$\bullet$};
    \node (A2) at (1,0) {$\bullet$};
    \path (A1) edge [-] node [auto] {$\scriptstyle{}$} (A2);
\end{tikzpicture}.
\]
In this case, we define $O_i$ to be the set consisting of these two vertices, and $I_i$ to be the empty set.

Let
\[
\supp(O_i) := \bigcup_{v\in O_i} \supp(\phi(v)).
\]
Since $|w(x_i)| \neq 4$, it satisfies the Working Conditions of \cite{definite}, and we can apply \cite[Lemma 4.7]{definite} to conclude that the subgraph of $\Gamma$ induced by $\bigcup_i O_i$ embeds standardly under $\phi$. Note that the condition $|w(x_i)| \neq 4$ is needed, since the proof of \cite[Lemma 4.7]{definite} requires ruling out the case of having a component with exactly three vertices, each of weight $2$. Hence, whenever $w(x_i)\neq -3$, we have
\[
|\supp(O_i)| = |w(x_i)| - 2.
\]
When $w(x_i) = -3$, we have $|\supp(O_i)| = 3$. Moreover, whenever $i \neq j$, we have
\[
|\supp(O_i)\cap\supp(O_j)| = 0.
\]

We now prove that the remaining vertices embed standardly, so that the subgraph of $\Gamma$ induced by $\bigcup_i (O_i \cup I_i)$ embeds standardly under $\phi$. We use \cite[Lemma~4.8]{definite} together with its proof. In Cases~(1)--(3) of that proof, the Working Conditions are only used to rule out the
\[
\begin{tikzpicture}[xscale=1.0,yscale=1,baseline={(0,0)}]
    \node at (0,0.4) {$1$};
    \node at (1,0.4) {$0$};
    \node at (2,0.4) {$1$};
    \node (A1) at (0,0) {$\circ$};
    \node (A2) at (1,0) {$\circ$};
    \node (A3) at (2,0) {$\circ$};
    \path (A1) edge [-] node [auto] {$\scriptstyle{}$} (A2);
    \path (A2) edge [-] node [auto] {$\scriptstyle{}$} (A3);
\end{tikzpicture}
\]
configuration. Hence, the conclusion of \cite[Lemma~4.8]{definite} applies when $x_i$ is a leaf or when $|w(x_i)| \geq 6$. The case when $x_i$ is not a leaf and $|w(x_i)| = 5$ corresponds to Case~4 in the proof of \cite[Lemma 4.8]{definite}. In this case, let $u_1,u_2$ be the neighbors of $x_i$. Without loss of generality, assume that $|w(u_1)| \geq |w(u_2)|$. If $|w(u_2)| \geq 4$, then we obtain configuration~(1) in Case~4 of the proof of \cite[Lemma 4.8]{definite}. If $|w(u_1)| \geq 4$ and $w(u_2) = -3$, then, since $u_2$ is not shallow, it is either a leaf or is adjacent to some vertex $u_3 \neq x_i$ with $|w(u_3)| \geq 4$. If $u_2$ is a leaf, we obtain configuration~(1) in the proof of \cite[Lemma 4.8]{definite}. If $u_2$ is adjacent to some vertex $u_3 \neq x_i$ with $|w(u_3)| \geq 4$, we obtain configuration~(2) in the proof of \cite[Lemma 4.8]{definite}. If $w(u_1) = w(u_2) = -3$, then, similarly to the case above, either $u_1$ is a leaf or there is some $u_0 \neq x_i$ adjacent to $u_2$ satisfying $|w(u_0)| \geq 4$, and either $u_2$ is a leaf or there is some $u_3 \neq x_i$ adjacent to $u_2$ satisfying $|w(u_3)| \geq 4$. In all cases, we obtain configuration~(1), (2), or (5) in the proof of \cite[Lemma 4.8]{definite}. By the definition of shallow, these are all the possible cases.
Hence, the subgraph of $\Gamma$
\[
\begin{tikzpicture}[xscale=1.0,yscale=1,baseline={(0,0)}]
    \node at (0,0.4) {$1$};
    \node at (1,0.4) {$0$};
    \node at (3,0.4) {$0$};
    \node at (4,0.4) {$1$};
    \node (A1) at (0,0) {$\circ$};
    \node (A2) at (1,0) {$\circ$};
    \node (A3) at (2,0) {$\cdots$};
    \node (A4) at (3,0) {$\circ$};
    \node (A5) at (4,0) {$\circ$};
    \path (A1) edge [-] node [auto] {$\scriptstyle{}$} (A2);
    \path (A2) edge [-] node [auto] {$\scriptstyle{}$} (A3);
    \path (A3) edge [-] node [auto] {$\scriptstyle{}$} (A4);
    \path (A4) edge [-] node [auto] {$\scriptstyle{}$} (A5);
    \draw [decorate,decoration={brace,amplitude=5pt,mirror,raise=2ex},line width=1pt] (1,0) -- (3,0) node[midway,yshift=-2.2em]{$|w(x_i)|-3$};
\end{tikzpicture}
\]
that corresponds to $x_i$ must embed standardly under $\phi$.

For every $v \in V(\Gamma)$, we say that $v$ \textit{obstructs} $i$ if $v \notin I_i$ and 
$\bigl|\supp(O_i)\cap\supp(\phi(v))\bigr| \neq 0$.
Set
\[
\mathrm{obs}(v) := \bigl|\{\, i \mid v \text{ obstructs } i \}\bigr|.
\]
By considering the intersection pairing between $v$ and the vertices in $O_i$, we see that whenever $v$ obstructs $i$ we must have
\[
\supp(O_i)\subseteq\supp(\phi(v)).
\]
Since $\bigl|\supp(O_i)\cap \supp(O_j)\bigr| = 0$ whenever $i \neq j$, we obtain
\[
w(v) \geq \sum_{\substack{i \\ v\ \text{obstructs } i}} \bigl|\supp(O_i)\bigr|
\geq 3\,\mathrm{obs}(v).
\]

Now we argue that whenever $\mathrm{obs}(v)=1$, we must have $w'(v)\geq 2$. This is immediate when $w(v)\geq 4$, so it suffices to consider the case $w(v)=3$. Since $\mathrm{obs}(v)=1$, the vertex $v$ obstructs $i$ for some $i$. Suppose that $w(v)=3$. Then, since $|\supp(O_i)|\geq 3$ for all possible values of $w(x_i)$, we must have $\supp(v)=\supp(O_i)$. If $w(x_i)\neq -3$, this is impossible: the condition $\supp(v)=\supp(O_i)$ forces $v$ to have non-zero intersection pairing with both vertices in $I_i$, which would imply that there is a cycle in $\Gamma$. Hence $w(x_i)=-3$, and therefore $x_i$ has no neighbors.
We now show that $v$ also has no neighbors, which implies $w'(v)=3$. Up to isomorphism and relabeling of basis vectors, the two vertices in $O_i$ are mapped under $\phi$ to $e_1-e_2$ and $e_2-e_3$, respectively. By considering intersection pairing, we have $\phi(v)=\pm(e_1+e_2+e_3)$. To see that $v$ has no neighbors, let $u\in V(\Gamma)\smallsetminus O_i$. By considering the intersection pairing with the vertices in $O_i$, we obtain
\[
\phi(u)\cdot e_1=\phi(u)\cdot e_2=\phi(u)\cdot e_3.
\]
It follows that $\phi(v)\cdot\phi(u)$ is a multiple of $3$, which cannot be $-1$. Therefore, $u$ cannot be a neighbor of $v$. Consequently, in all cases, whenever $\mathrm{obs}(v)=1$, we must have $w'(v)\geq 2$.

Together with the inequality $w(v)\geq 3\mathrm{obs}(v)$, we conclude that in all cases
\[
w'(v)\geq 2\mathrm{obs}(v).
\]
Hence, if $\mathrm{obs}(v)\neq 0$, the vertex $v$ corresponds to a chain of at least $2\mathrm{obs}(v)-1$ vertices of weight $-2$ in $G$. Such a chain contains $\mathrm{obs}(v)$ pairwise disconnected vertices of weight $-2$. Since $G$ does not contain $C_{k'}$ as an induced subgraph, we conclude that
\[
k' > \sum_{v\in V(\Gamma)} \mathrm{obs}(v).
\]
Therefore, there exists some $1\leq i\leq k'$ such that $i$ is not obstructed by any $v\in V(\Gamma)$. Fix such an index $i$ and set $g:=x_i$. Let $\Gamma'$ be the dual graph of $G'$, where $G'$ is the subgraph of $G$ induced by $V(G)\smallsetminus\{g\}$. Finally, define
\[
n_{G'} = |V(G')| + |V(\Gamma')|.
\]
Since $G'$ is obtained by removing one vertex from $G$, we have $|V(G')|=|V(G)|-1$.

Remark: In the case $w(x_i)\neq -3$, for any $z\in I_i$, if $z$ is a leaf, then $w(z)=2$. Among the two elements of $\supp(z)$, one lies in $\supp(O_i)$. Moreover, for any $v$ with $v\neq z$ and $v\notin O_i$, considering the intersection pairing with $z$ shows that, since $v$ does not obstruct $i$, we must have $|\supp(v)\cap\supp(z)|=0$.

\medskip\noindent\textbf{Case 1:}{ $g$ has no neighbors.} In this case, removing $g$ from $G$ removes all vertices in $O_i$ and $I_i$ from $\Gamma$. Hence,
\[
|V(\Gamma')| = |V(\Gamma)| - (|w(g)|-1),
\]
and therefore
\[
n_{G'}-k = n_G-k-|w(g)|.
\]
However, since $i$ is not obstructed by any $v$, and also by the remark above, the embedding no longer needs to use the $|w(g)|$ basis vectors used by the vertices in $O_i\cup I_i$. Hence, $G'$ does not satisfy Property $X_k$.

\medskip\noindent\textbf{Case 2:}{ $g$ has one neighbor.} In this case, removing $g$ from $G$ removes all vertices in $O_i$ and one vertex in $I_i$, and decreases the weight of the other vertex in $I_i$ by $1$. Hence,
\[
|V(\Gamma')| = |V(\Gamma)|-(|w(g)|-2),
\]
and therefore
\[
n_{G'}-k = n_G-k-(|w(g)|-1).
\]
Now we remove the $|w(g)|-1$ basis vectors used by the vertices in $O_i$ and by the vertex in $I_i$ that is removed. Since $i$ is not obstructed by any $v$, and also by the remark above, the only effect on the remainder of the dual graph is that the weight of the remaining vertex in $I_i$ decreases by $1$, due to having a component on a basis vector in $\supp(O_i)$. This produces the desired graph $\Gamma'$. Hence, $G'$ does not satisfy Property $X_k$.

\medskip\noindent\textbf{Case 3:}{ $g$ has two neighbors.} In this case, removing $g$ from $G$ removes all vertices in $O_i$, and decreases the weights of the vertices in $I_i$ by $1$. Hence,
\[
|V(\Gamma')| = |V(\Gamma)|-(|w(g)|-3),
\]
and therefore
\[
n_{G'}-k = n_G-k-(|w(g)|-2).
\]
Now we remove the $|w(g)|-2$ basis vectors in $\supp(O_i)$. Since $i$ is not obstructed by any $v$, the only effect on the remainder of the dual graph is that the weights of the vertices in $I_i$ decrease by $1$, due to having a component on a basis vector in $\supp(O_i)$. This produces the desired graph $\Gamma'$. Hence, $G'$ does not satisfy Property $X_k$.
\end{proof}


To obtain an upper bound on the number of shallow vertices, we first generalize the construction of $C_{k'}$. Let $D_{k'}$ denote the set of configurations that have $k'$ components, each component being one of the following three:
\[
\begin{tikzpicture}[xscale=1,yscale=1,baseline={(0,0)}]
    \node at (0.9,0.4) {$-2$};
    \node (A1) at (1,0) {$\bullet$};
\end{tikzpicture},\quad
\begin{tikzpicture}[xscale=1,yscale=1,baseline={(0,0)}]
    \node at (0.9,0.4) {$-3$};
    \node at (1.9,0.4) {$-3$};
    \node (A1) at (1,0) {$\bullet$};
    \node (A2) at (2,0) {$\bullet$};
    \path (A1) edge [-] node [auto] {$\scriptstyle{}$} (A2);
\end{tikzpicture},\quad
\begin{tikzpicture}[xscale=1,yscale=1,baseline={(0,0)}]
    \node at (0.9,0.4) {$-4$};
    \node (A1) at (1,0) {$\bullet$};
\end{tikzpicture}.
\]
The two configurations
\[
\begin{tikzpicture}[xscale=1,yscale=1,baseline={(0,0)}]
    \node at (0.9,0.4) {$-3$};
    \node at (1.9,0.4) {$-3$};
    \node (A1) at (1,0) {$\bullet$};
    \node (A2) at (2,0) {$\bullet$};
    \path (A1) edge [-] node [auto] {$\scriptstyle{}$} (A2);
\end{tikzpicture},\quad
\begin{tikzpicture}[xscale=1,yscale=1,baseline={(0,0)}]
    \node at (0.9,0.4) {$-4$};
    \node (A1) at (1,0) {$\bullet$};
\end{tikzpicture}
\]
do not satisfy Property $X_1$, and
\[
\begin{tikzpicture}[xscale=1,yscale=1,baseline={(0,0)}]
    \node at (0.9,0.4) {$-2$};
    \node at (1.9,0.4) {$-2$};
    \node (A1) at (1,0) {$\bullet$};
    \node (A2) at (2,0) {$\bullet$};
\end{tikzpicture}
\]
does not satisfy Property $X_2$. By combining the corresponding embeddings, we see that any configuration in $D_{k'}$ does not satisfy Property $X_k$.

Since every configuration in $D_{k'}$ satisfies condition~(3) of Theorem~\ref{thm:all kbody}, we may assume that $G \notin D_{k'}$. In particular, by the definition of $S_k$, the graph $G$ cannot contain any configuration in $D_{k'}$ as an induced subgraph.

The following lemma gives an upper bound on the number of shallow vertices of $G$.

\begin{lem}\label{lem:shallow}
If $G \in S_k$ and $G$ does not contain $D_{k'}$ as an induced subgraph, then $G$ has at most $6k'-6$ shallow vertices.
\end{lem}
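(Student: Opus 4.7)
The plan is to take a maximal collection $\mathcal{C}$ of pairwise vertex-disjoint induced subgraphs of $G$, each isomorphic to $[-2]$, $[-3,-3]$, or $[-4]$, and arranged so that no edge of $G$ runs between distinct elements of $\mathcal{C}$; then $\mathcal{C}$ itself is a configuration from $D_{|\mathcal{C}|}$ sitting inside $G$. Since $G$ does not contain any configuration from $D_{k'}$ as an induced subgraph, one has $|\mathcal{C}|\le k'-1$. The goal is then to show that the total number of shallow vertices of $G$ is at most $6|\mathcal{C}|$.

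The key claim is that every shallow vertex of $G$ is either contained in some $C\in\mathcal{C}$ or lies at graph distance at most $2$ from some $C\in\mathcal{C}$. For a shallow vertex $v$ with $w(v)\in\{-2,-4\}$ and $v\notin\bigcup\mathcal{C}$, the singleton $\{v\}$ would be a legitimate new element of $\mathcal{C}$, so maximality forces a neighbor of $v$ to lie in $\bigcup\mathcal{C}$; thus $v$ is at distance $1$. For a shallow $-3$ vertex $v\notin\bigcup\mathcal{C}$, by definition $v$ has a neighbor $u$ with $w(u)\in\{-2,-3\}$; a short case analysis separates the possibility $u\in\bigcup\mathcal{C}$ (giving distance $1$), the possibility $w(u)=-3$ with $u\notin\bigcup\mathcal{C}$ (so that $\{u,v\}$ is a candidate $[-3,-3]$ component and maximality forces an outside neighbor of $u$ or $v$ into $\bigcup\mathcal{C}$), and the possibility $w(u)=-2$ with $u\notin\bigcup\mathcal{C}$ (so that the $\{-2,-4\}$-case applied to $u$ pins $u$'s other neighbor inside $\bigcup\mathcal{C}$); in every situation $v$ is within distance $2$ of $\bigcup\mathcal{C}$.

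To count, observe that each $C\in\mathcal{C}$ has at most $2$ vertices, and since $G$ is a disjoint union of linear plumbing graphs the set of vertices of $G$ at distance $1$ or $2$ from $C$ but outside $C$ has cardinality at most $4$ (two immediate outside neighbors together with their two further outside neighbors along the chain). Assigning each shallow vertex not in $\bigcup\mathcal{C}$ to a nearest element of $\mathcal{C}$ and noting that double-counting is prevented by this assignment, one obtains
\[
\#\{\text{shallow vertices of }G\}\le 2|\mathcal{C}|+4|\mathcal{C}|=6|\mathcal{C}|\le 6(k'-1)=6k'-6,
\]
which is the desired bound.

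The main obstacle is the subcase in which a shallow $-3$ vertex $v$ has only a shallow $-2$ neighbor $w\notin\bigcup\mathcal{C}$: here one must first apply maximality to $\{w\}$ to rule out that $w$'s only neighbor is $v$, and then conclude that $w$'s other neighbor lies in $\bigcup\mathcal{C}$, before deducing that $v$ is within distance $2$. Once this bookkeeping is handled uniformly across all configurations, the counting step closes immediately.
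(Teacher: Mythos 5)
Your proposal is correct, and it takes essentially the same approach as the paper, though packaged more symmetrically. The paper constructs its family explicitly via a greedy labeling procedure (``want'' / ``discard'' / ``leftover''), first exhausting all $-2$ and $-4$ vertices and then unlabeled adjacent $-3$-pairs, and then argues that each ``discard'' is adjacent to a ``want'' and each ``leftover'' is adjacent to a ``discard''; you instead fix any maximal pairwise-non-adjacent family $\mathcal{C}$ of $[-2]$, $[-3,-3]$, $[-4]$ components and prove the uniform statement that every shallow vertex lies within graph-distance $2$ of $\bigcup\mathcal{C}$. The two arguments exploit the identical facts: $|\mathcal{C}|\le k'-1$ because $G$ avoids $D_{k'}$, each component contributes at most $2$ vertices of its own and (in a disjoint union of paths) at most $2$ vertices on each side at distances $1$ and $2$, so at most $6$ shallow vertices overall. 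The tripartite labeling and your ``nearest element'' assignment are the same bookkeeping; your formulation is cleaner in that it makes the distance-$2$ radius explicit, while the paper's greedy ordering (singletons before $-3$-pairs) sidesteps the need to assert maximality and gives a slightly more concrete description of the ``leftover'' vertices.
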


\begin{proof}
Consider the following procedure, in which we label each shallow vertex in $G$ with either ``want'', ``discard'', or ``leftover'' one by one. Whenever there is at least one vertex in $G$ with weight $-2$ or $-4$ that has not been labeled, we pick one such vertex and label it ``want'', and label all shallow vertices adjacent to it as ``discard''. After all vertices with weight $-2$ or $-4$ have been labeled, we turn our attention to the shallow vertices with weight $-3$. Whenever there is a pair of adjacent vertices with weight $-3$, both unlabeled, we label both of them ``want'', and label the vertices adjacent to them as ``discard''. We keep going until every pair of adjacent vertices with weight $-3$ has at least one of them labeled. Finally, we label all remaining unlabeled shallow vertices as ``leftover''.

Consider the subgraph induced by the vertices labeled ``want''. Since $G$ does not contain any configuration in $D_{k'}$ as an induced subgraph, we know that there are at most $k'-1$ vertices with weight $-2$ or $-4$, or pairs of vertices with weight $-3$, labeled ``want''. Hence, there are at most $2k'-2$ vertices labeled ``want''. Whenever we apply the label ``want'', we label at most two vertices ``discard''. Hence, there are at most $2k'-2$ vertices labeled ``discard''.

For a shallow vertex $v$ to receive the label ``leftover'', by the definition of shallow and by how the labeling procedure works, we know that $v$ must have weight $-3$ and be adjacent to some vertex $u$ of weight $-2$ or $-3$, where $u$ is labeled ``discard''.

Since there are at most $2k'-2$ vertices labeled ``discard'', and each vertex labeled ``discard'' must be adjacent to a shallow vertex labeled ``want'', we conclude that there are at most $2k'-2$ vertices labeled ``leftover''.

In total, there are at most
\[
(2k'-2)+(2k'-2)+(2k'-2)=6k'-6
\]
shallow vertices.
\end{proof}


We are now ready to complete the proof of Theorem~\ref{thm:all kbody} by accounting for the remaining vertices.

\begin{proof}[Proof of Theorem~\ref{thm:all kbody}]
By Lemma~\ref{thm:induced property xk}, Lemma~\ref{lem:1.3(2)}, Lemma~\ref{lem:deep}, and Lemma~\ref{lem:shallow}, it remains to consider the number of vertices that are neither shallow nor deep. By definition, a vertex $v$ that is neither shallow nor deep must satisfy one of the following:
\begin{enumerate}
    \item\label{non-shallow non-deep 1} $|w(v)|\geq 5$ and $v$ is adjacent to a shallow vertex;
    \item\label{non-shallow non-deep 2} $w(v)=-3$ and $v$ is adjacent to some vertex $u$ with $|w(u)|\geq 4$.
\end{enumerate}
If a vertex is neither shallow nor deep and satisfies~(\ref{non-shallow non-deep 1}), we call it a type~\ref{non-shallow non-deep 1} vertex. If a vertex is neither shallow nor deep and satisfies~(\ref{non-shallow non-deep 2}), we call it a type~\ref{non-shallow non-deep 2} vertex.

Consider the subgraph $\mathrm{Shallow}_G$ induced by the shallow vertices in $G$. Suppose that we have labeled each shallow vertex in $G\in S_k$ with either ``want'', ``discard'', or ``leftover'', as in Lemma~\ref{lem:shallow}. Since each shallow vertex labeled ``discard'' must be adjacent to at least one shallow vertex labeled ``want'', and each shallow vertex labeled ``leftover'' must be adjacent to at least one shallow vertex labeled ``discard'', we conclude that every connected component of $\mathrm{Shallow}_G$ contains at least one shallow vertex labeled ``want''. Hence, $\mathrm{Shallow}_G$ has at most $k'-1$ connected components. Therefore, there are at most $2k'-2$ type~\ref{non-shallow non-deep 1} vertices.

It remains to consider type~\ref{non-shallow non-deep 2} vertices. In case~(\ref{non-shallow non-deep 2}), since $|w(u)|\geq 4$, the vertex $u$ must be either shallow, deep, or of type~\ref{non-shallow non-deep 1}.
Consider the subgraph of $G$ induced by vertices that are either shallow or of type~\ref{non-shallow non-deep 1}. Using similar arguments regarding the ``want'' label as above, we conclude that this subgraph has at most $k'-1$ connected components. Hence, there are at most $2k'-2$ type~\ref{non-shallow non-deep 2} vertices coming from the case where $u$ is either shallow or of type~\ref{non-shallow non-deep 1}.
Since there are at most $k'-1$ deep vertices, there are at most $2k'-2$ type~\ref{non-shallow non-deep 2} vertices coming from the case where $u$ is deep.
Hence, there are at most $4k'-4$ type~\ref{non-shallow non-deep 2} vertices.

In total, there are at most $6k'-6$ vertices that are neither deep nor shallow. Combined with Lemma~\ref{lem:deep} and Lemma~\ref{lem:shallow}, we conclude that overall there are at most $k'-1$ deep vertices, $6k'-6$ shallow vertices, and $6k'-6$ vertices that are neither deep nor shallow. Hence, $|V(G)|\leq 13k'-13$. This concludes the proof of Theorem~\ref{thm:all kbody}.\end{proof}

\section{Symplectic fillings and Milnor fibers}\label{sec:symplectic}

In this section we prove Corollary~\ref{cor:symplectic} and Corollary~\ref{cor:symplecticunique}. We begin by recalling the first statement.

\setcounter{cor}{0}
\renewcommand{\thecor}{1.5}
\begin{cor}\label{cor:symplecticbody}
Let $L = L(p,q)$ be a lens space, and let $X(p,q)$ denote the corresponding canonical negative-definite plumbing. Suppose that every smooth negative-definite filling $X$ of $L$ satisfies $b_2(X) \ge b_2(X(p,q)) - 1$. If $\xi_{\mathrm{st}}$ is the standard contact structure on $L$, then there exists a minimal symplectic filling $W$ of $(L,\xi_{\mathrm{st}})$ such that
\[
b_2(W)=\min_X b_2(X),
\]
where the minimum is taken across all smooth negative-definite fillings $X$ of $L$.

Furthermore, every such $W$ is simply connected unless $L \in \{L(4,1), L(8,3), L(12,5)\}$, in which case it has fundamental group $\Z/2$.
\end{cor}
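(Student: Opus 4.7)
The plan is to use Theorem~\ref{thm:main} to split into two geometric cases, realize the $b_2$-minimal filling symplectically in each case, and then compute $\pi_1$ via Van Kampen on the rational-blowdown decomposition.

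First I would invoke Theorem~\ref{thm:main}. The hypothesis forces either (i) $L$ satisfies Property $X_1$, so that $\min_X b_2(X) = b_2(X(p,q))$ and is realized by $X(p,q)$ itself, or (ii) the canonical plumbing graph contains one of the three configurations $[-4]$, $[-3,-3]$, or $[-3,-2,-3]$ and $\min_X b_2(X) = b_2(X(p,q))-1$, realized by $\widetilde{X}(p,q)$ from Theorem~\ref{thm:main}\ref{it:intersectionform}. Case~(i) is immediate: the canonical plumbing $X(p,q)$ is a minimal Stein (hence symplectic) filling of $(L,\xi_{\mathrm{st}})$, and is simply connected because it is a plumbing of disk bundles over $S^2$ along a tree. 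For case~(ii) I would realize $\widetilde{X}(p,q)$ symplectically via a Symington-type symplectic rational blowdown: in each of the three configurations the involved spheres are $\omega$-symplectic with transverse positive intersections, so they can be smoothed to a single symplectically embedded $-4$-sphere whose neighborhood $X(4,1)\subset X(p,q)$ is cut out and replaced by the Fintushel--Stern rational homology ball $B_{2,1}$ carrying a matching symplectic structure. Minimality of the resulting filling $W$ follows because neither the pruned plumbing nor $B_{2,1}$ contains a symplectic $-1$-sphere.

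For the fundamental group, I would apply Van Kampen to the decomposition
\[
W = (X(p,q)\setminus \operatorname{int} X(4,1)) \cup_{L(4,1)} B_{2,1}.
\]
The relevant standing facts are $\pi_1(B_{2,1})=\Z/2$, $\pi_1(L(4,1))=\Z/4$, and the inclusion $\pi_1(L(4,1))\to \pi_1(B_{2,1})$ is the natural surjection $\Z/4\twoheadrightarrow\Z/2$. The three exceptional cases $L\in\{L(4,1),L(8,3),L(12,5)\}$ are exactly those in which the blowdown configuration exhausts the canonical plumbing graph, so that the complement $X(p,q)\setminus X(4,1)$ is a collar of $L(p,q)$ and $W\cong B_{2,1}$, giving $\pi_1(W)=\Z/2$. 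In all other cases the complement deformation retracts onto a smaller tree-shaped plumbing (obtained by pruning the configuration, with Euler numbers adjusted on the vertices adjacent to it); this retract is simply connected, and Van Kampen then yields $\pi_1(W)=\pi_1(B_{2,1})/\langle\langle\operatorname{im}\pi_1(L(4,1))\rangle\rangle$. Since the meridian of the removed $-4$-sphere is nullhomotopic in the complement (it bounds a disk inside the neighboring disk bundle still present in the complement), this meridian generates $\Z/4=\pi_1(L(4,1))$ and its image surjects onto $\Z/2=\pi_1(B_{2,1})$, killing the latter and leaving $\pi_1(W)=1$.

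The main obstacle I expect is the fundamental-group bookkeeping in the non-exceptional part of case~(ii). Specifically, I must confirm uniformly that (a)~the image of the meridian of the smoothed $-4$-sphere in the complement $C$ is trivial across all three configurations (for the $[-3,-2,-3]$ configuration this requires an explicit handle slide parallel to Figure~\ref{fig:c4} to exhibit the meridian as the boundary of a disk in $C$), and (b)~the pruned plumbing description of $C$ is really simply connected once the Euler numbers of the vertices adjacent to the blowdown locus have been adjusted by the retraction. Both reduce to standard Kirby-calculus arguments, but they must be checked in the subcases where the configuration sits at a leaf of the plumbing graph versus in the interior, and where multiple compatible configurations may coexist in $X(p,q)$.
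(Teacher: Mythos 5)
Your proposal takes a genuinely different route from the paper's: the paper constructs the filling and computes its fundamental group entirely via Lisca's admissible-tuple description (Lisca's classification of symplectic fillings of $(L,\xi_{\mathrm{st}})$), whereas you construct a candidate $W$ via a Symington-style symplectic rational blowdown and then run Van Kampen on the decomposition $W=C\cup_{L(4,1)}B_{2,1}$. Your geometric picture is attractive and, for the construction and for the non-exceptional $\pi_1$ computation, it can be made to work. But as written there are two concrete gaps, the second of which I do not see how to close without falling back on Lisca's classification anyway.

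First, the exceptional-case argument is incorrect for $L(8,3)$ and $L(12,5)$. You assert that when the blowdown configuration exhausts the plumbing graph, the complement $C=X(p,q)\setminus X(4,1)$ is a collar and $W\cong B_{2,1}$. That is true only for $L(4,1)$, where $X(4,1)$ \emph{is} the whole plumbing. For $L(8,3)$ the plumbing $[-3,-3]$ has $b_2=2$ while $\nu(S)\cong X(4,1)$ has $b_2=1$, so $b_2(C)=1$ and $C$ is not a collar; indeed $W$ cannot be $B_{2,1}$ since $\partial W=L(8,3)\neq L(4,1)=\partial B_{2,1}$ and $b_2(W)=1\neq 0$. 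The same failure occurs for $L(12,5)$ ($b_2(C)=2$). In these cases there is also no adjacent disk bundle supplying a nullhomotopy of the meridian of $S$, so the Van Kampen bookkeeping you use in the non-exceptional case does not apply, and you would need to compute $\pi_1(C)$ and the map $\pi_1(L(4,1))\to\pi_1(C)$ directly. By contrast the paper's Lisca-tuple computation handles these cases uniformly: for $k=3$ the only admissible tuple with interior blowup is $(2,1,2)$ and all $2$--handles go along the middle meridian, which is $2$ times the generator of $\pi_1(S^1\times S^2)$, immediately giving $\Z/2$.

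Second, the statement asserts something about \emph{every} minimal symplectic filling $W$ with $b_2(W)=\min_X b_2(X)$, not just the one you construct. In Case~(ii) there can be several such fillings (e.g.\ $L(55,21)$ has three, by Corollary~\ref{cor:symplecticunique}), and they need not visibly arise from the particular rational blowdown you performed. The paper addresses this by showing, via the blowup bookkeeping on Lisca's admissible tuples (blowups at the right end add $2$ to the sum, interior blowups add $3$), that every $b_2$-deficiency-one symplectic filling is realized by an admissible tuple with exactly one interior blowup, placed at a bad vertex; this is exactly the family your construction produces, but the enumeration argument is essential and is missing from your proposal. Two smaller remarks: minimality of $W$ is automatic from $b_2$-minimality (a symplectic $(-1)$-sphere could be blown down, contradicting minimality of $b_2$), so you need not argue it separately; and your non-exceptional $\pi_1(C)=1$ claim is cleanest not via a deformation retraction onto a pruned plumbing but via Van Kampen applied to $X(p,q)=\nu(S)\cup_{L(4,1)}C$ together with the fact that the meridian bounds a disk in the neighboring disk bundle, so that $\pi_1(C)$ is normally generated by a trivial element.
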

\renewcommand{\thecor}{\thesection.\arabic{cor}}

\begin{proof}
Let $L=L(p,q)$ be a lens space. Suppose that every smooth negative-definite filling $X$ of $L$ satisfies $b_2(X)\geq b_2(X(p,q))-1$.

If every smooth negative-definite filling $X$ of $L$ satisfies
$b_2(X)\geq b_2(X(p,q))$, the result follows from the fact that
the symplectic filling obtained from the canonical resolution is
diffeomorphic to $X(p,q)$~\cite[Corollary~1.7]{definite}. Thus we may assume that there exists some $X$ with $b_2(X)=b_2(X(p,q))-1$. In this case, by our discussion in Remark~\ref{rem:comparintworkingcondition}, the dual plumbing graph $\Gamma$ has at least one bad vertex.


In \cite[Thm.~1.1]{Lisca:2008-1}, Lisca provides a method to construct all symplectic fillings of lens spaces with the standard contact structure. We use the notation of \cite{Lisca:2008-1}. Given a lens space $L$, consider the continued
fraction expansion
\[
\frac{p}{p-q} = [b_1,\ldots,b_k]^-,
\]
where $b_i$ are integers with $b_i \ge 2$.\footnote{Here we follow Lisca’s notation and use $b_i$ for the weights of $\Gamma$ and use $n_i$ for the entries of the admissible tuple; note that $b_2$ still denotes the second Betti number and the $n_k$ here is unrelated to the $n_k$ in the previous section.}
Let
\[
\mathbf{n} = (n_1,\ldots,n_k)
\]
be an admissible tuple (see \cite{Lisca:2008-1} for the precise definition) with
\begin{equation}\label{eq:admissible}
[n_1,\ldots,n_k]^- = 0
\qquad\text{and}\qquad
b_i \ge n_i\quad \text{ for } i = 1,\ldots,k.
\end{equation}
Note that this condition implies that the surgery diagram given by a chain of $k$ components, where the $i$–th component is framed by $n_i$, describes $S^1 \times S^2$, which we may identify with $\partial (S^1 \times D^3)$. Lisca proves that every symplectic filling of $(L,\xi_\mathrm{st})$ is diffeomorphic to a $4$–manifold obtained from $S^1 \times D^3$ by attaching $-1$–framed $2$–handles along its boundary via the above identification, where for some admissible tuple $\mathbf{n} = (n_1,\ldots,n_k)$ satisfying \eqref{eq:admissible} the number of $2$–handles attached along the meridian of the $i$–th component of the surgery link is $b_i - n_i$.

Let $\Gamma$ be a connected weighted linear graph with vertices $v_1,\ldots,v_k$, and for each $i$, let $v_i$ have weight $b_i$. Let $v_j$ be the bad vertex. By definition of a bad vertex, we have $1<j<k$. As mentioned in \cite[\S 1]{Lisca:2008-1}, the canonical negative-definite plumbing corresponds to the admissible $k$-tuple $(n_1,\ldots,n_k):=(1,2,\ldots,2,1)$. Consider another admissible $k$-tuple $(n'_1,\ldots,n'_k)$ defined as follows:

$$n'_i=\begin{cases}1\text{ if }i=j;\\
n_i+1\text{ if }|i-j|=1;\\
n_i\text{ otherwise.}
\end{cases}$$
By the definition of bad vertex, if $|i-j|=1$, we have $w'(n_i)\geq 1$. Hence, we have $n'_i\leq b_i$ for all $i$. Moreover, blowing down on $n'_j$ produces the admissible $k-1$-tuple $(1,2,\ldots,2,1)$. Hence, we have $[n'_1,\ldots,n'_k]=0$, and hence the admissible $k$-tuple $(n'_1,\ldots,n'_k)$ represents a symplectic filling of $L$. Let $W$ be this symplectic filling.

The sum of the entries of $(n'_1,\ldots,n'_k)$ is one more than the sum of the entries of $(n_1,\ldots,n_k)$. Hence the corresponding symplectic filling uses one fewer $2$--handle than the canonical negative-definite plumbing, and therefore $b_2(W) = b_2(X(p,q)) - 1$. This proves the first part of the corollary.


For the fundamental group computation, first note that if $n'_1 < b_1$ or $n'_k < b_k$, then in the construction of $W$ we attach at least one $2$--handle along an unknot at one of the two ends of the chain of unknots. This kills the generator of $\pi_1(S^1 \times S^2)$ (see, e.g., \cite[Lemma~3.1]{Aceto-McCoy-Park:2020-1}), so the resulting manifold $W$ is simply connected.

Therefore, if $W$ is not simply connected, we must have $n'_1 = b_1$ and $n'_k = b_k$. Since $b_1,b_k \ge 2$ and $n_1 = n_k = 1$, it follows that $b_1 = b_k = 2$ and $k = 3$. Working Condition~\ref{it:bad_part} gives three possible values for $w'(v_2)$, and these three possibilities correspond to the cases $L(4,1)$, $L(8,3)$, and $L(12,5)$, respectively. In these three cases, $S^1 \times S^2$ is represented by surgery on a chain of unknots with surgery slopes $2,1,2$, respectively, and a $2$--handle is attached along the middle unknot. Since the framing of the first unknot is $2$, the meridian of the middle unknot represents twice a generator of the fundamental group of $S^1 \times S^2$. Consequently, we have $\pi_1(W) \cong \Z/2$.



To complete the proof, we show that every minimal symplectic filling $W$ of $(L,\xi_{\mathrm{st}})$ with $b_2(W) = b_2(X(p,q)) - 1$ arises from this construction. As mentioned earlier, \cite[Theorem~1.1(1)]{Lisca:2008-1} implies that every minimal symplectic filling is orientation-preserving diffeomorphic to a manifold constructed from an admissible $k$-tuple $(n_1,\ldots,n_k)$ satisfying \eqref{eq:admissible}. Moreover, \cite[Lemma~2.2]{Lisca:2008-1} shows that such a $k$-tuple is obtained from $(0)$ by a sequence of blowups, with no blowup performed at the left end.

Observe that each blowup at the right end increases the sum of the entries of the tuple by $2$, while each blowup away from the ends increases this sum by $3$. The standard $k$-tuple $(1,2,\ldots,2,1)$ is obtained by performing only blowups at the right end. Hence any symplectic filling $W$ with $b_2(W) = b_2(X(p,q)) - 1$ must correspond to a $k$-tuple obtained by performing exactly one blowup away from the ends, and all remaining blowups at the right end. Starting from $(0)$, performing a sequence of blowups at the right end gives $(1,2,\ldots,2,1)$. Then a single blowup not at an end yields one of
\[
(2,1,3,2,\ldots,2,1),\quad
(1,2,\ldots,2,3,1,3,2,\ldots,2,1),\quad\text{or}\quad
(1,2,\ldots,2,3,1,2).
\]
(Here any chain of $2$’s written as ``$2,\ldots,2$'' may be empty.) Performing the remaining blowups at the right end then produces a $k$-tuple $(n'_1,\ldots,n'_k)$ of the form defined above: the neighbors of the position of the interior blowup have nonzero adjusted weights, so the unique vertex corresponding to this interior blowup must be a bad vertex. This completes the proof.
\end{proof}

We now prove Corollary~\ref{cor:symplecticunique}, whose statement we recall below.  

\setcounter{cor}{0}
\renewcommand{\thecor}{1.6}
\begin{cor}\label{cor:symplecticuniquebody}
Let $L = L(p,q)$ be a lens space satisfying one of the items in Theorem~\ref{thm:main}, and let $\xi_{\mathrm{st}}$ denote the standard contact structure on $L$. Let $n(L)$ be the number of induced subgraphs in the canonical plumbing graph associated to $L$ that are isomorphic to one of the following configurations:
\[
\begin{tikzpicture}[xscale=1.0,yscale=1,baseline={(0,0)}]
    \node at (1-0.1,0.4) {$-4$};
    \node (A1) at (1,0) {$\bullet$};
\end{tikzpicture}
\qquad \qquad \qquad
\begin{tikzpicture}[xscale=1.0,yscale=1,baseline={(0,0)}]
    \node at (1-0.1, .4) {$-3$};
    \node at (2-0.1, .4) {$-3$};
    \node (A_1) at (1, 0) {$\bullet$};
    \node (A_2) at (2, 0) {$\bullet$};
    \path (A_1) edge [-] node [auto] {$\scriptstyle{}$} (A_2);
\end{tikzpicture}
\qquad  \qquad \qquad
\begin{tikzpicture}[xscale=1.0,yscale=1,baseline={(0,0)}]
    \node at (1-0.1, .4) {$-3$};
    \node at (2-0.1, .4) {$-2$};
    \node at (3-0.1, .4) {$-3$};
    \node (A_1) at (1, 0) {$\bullet$};
    \node (A_2) at (2, 0) {$\bullet$};
    \node (A_3) at (3, 0) {$\bullet$};
    \path (A_1) edge [-] node [auto] {$\scriptstyle{}$} (A_2);
    \path (A_2) edge [-] node [auto] {$\scriptstyle{}$} (A_3);
\end{tikzpicture}.
\]
Then the number of minimal symplectic fillings of $(L,\xi_{\mathrm{st}})$, up to orientation-preserving diffeomorphism, is either $n(L)+1$ or $n(L)$. Moreover, this number is $n(L)$ if and only if $q^2\equiv 1 \pmod p$ and the canonical plumbing graph associated to $L$ contains one of the following as an induced subgraph:
\[
\begin{tikzpicture}[xscale=1.0,yscale=1,baseline={(0,0)}]
    \node at (1-0.1, .4) {$-4$};
    \node at (2-0.1, .4) {$-4$};
    \node (A_1) at (1, 0) {$\bullet$};
    \node (A_2) at (2, 0) {$\bullet$};
    \path (A_1) edge [-] node [auto] {$\scriptstyle{}$} (A_2);
\end{tikzpicture}
\qquad  \qquad \qquad
\begin{tikzpicture}[xscale=1.0,yscale=1,baseline={(0,0)}]
    \node at (1-0.1, .4) {$-3$};
    \node at (2-0.1, .4) {$-3$};
    \node at (3-0.1, .4) {$-3$};
    \node (A_1) at (1, 0) {$\bullet$};
    \node (A_2) at (2, 0) {$\bullet$};
    \node (A_3) at (3, 0) {$\bullet$};
    \path (A_1) edge [-] node [auto] {$\scriptstyle{}$} (A_2);
    \path (A_2) edge [-] node [auto] {$\scriptstyle{}$} (A_3);
\end{tikzpicture}.
\]
Furthermore, an analogous statement holds for the number of Milnor fibers arising from the irreducible components of the reduced miniversal base space of the cyclic quotient singularity corresponding to $L$.
\end{cor}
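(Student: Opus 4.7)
The plan is to combine Theorem~\ref{thm:main} with Lisca's classification~\cite{Lisca:2008-1} of symplectic fillings of lens spaces, and then identify the equivalence classes of admissible tuples up to orientation-preserving diffeomorphism. Since every symplectic filling of a lens space is negative-definite \cite{Schonenberger,Etnyre:2004-1}, Theorem~\ref{thm:main}\ref{it:intersectionform} constrains any minimal symplectic filling $W$ of $(L,\xi_{\mathrm{st}})$ to have either $Q_W\cong Q_{X(p,q)}$ or $Q_W\cong Q_{\widetilde{X}(p,q)}$, with no $\langle -1\rangle$ summand by minimality; in particular, $b_2(W)\in\{b_2(X(p,q)),\,b_2(X(p,q))-1\}$.

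Next, I would invoke Lisca's classification together with the admissible-tuple analysis from the proof of Corollary~\ref{cor:symplecticbody}: the minimal symplectic fillings correspond to admissible tuples, and these are exactly the canonical tuple $(1,2,\ldots,2,1)$ together with one bad-vertex tuple for each of the $n(L)$ induced subgraphs of $P$ of the three listed types (equivalently, one for each bad vertex of the dual graph $\Gamma$). This produces $n(L)+1$ admissible tuples in total and hence an a priori upper bound of $n(L)+1$ fillings. Because the canonical filling has $b_2=b_2(X(p,q))$ while every bad-vertex filling has $b_2=b_2(X(p,q))-1$, the canonical tuple is never identified with a bad-vertex tuple, so any identification must be between two bad-vertex tuples.

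The final step is to determine when two bad-vertex tuples yield orientation-preserving diffeomorphic fillings. Two such tuples are reversals of one another precisely when they correspond to bad vertices $v_j$ and $v_{k+1-j}$ of $\Gamma$, and reversal corresponds to an orientation-preserving self-diffeomorphism of $L$ exactly when $q^2\equiv 1\pmod p$ (equivalently, when $\Gamma$ is palindromic). Under this hypothesis, I expect the presence of one of the configurations $[-4,-4]$ or $[-3,-3,-3]$ in $P$ to be precisely the combinatorial condition under which a single reversal-related pair of bad-vertex tuples produces orientation-preserving diffeomorphic $4$–manifolds, reducing the total count from $n(L)+1$ to $n(L)$. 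The hardest part will be verifying this claim carefully: I would do so through explicit intersection-form computations in the spirit of the blow-down calculations at the end of the proof of Theorem~\ref{thm:main}, building on Remark~\ref{rem:5521}, to check that the $[-4,-4]$ or $[-3,-3,-3]$ pattern accounts for exactly one collapse while all other reversal-related pairs give smoothly distinct $\widetilde{X}(p,q)$'s.

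For the final assertion, the Milnor fiber statement follows from the bijection of N\'emethi--Popescu-Pampu~\cite{NemethiPopescu} between minimal symplectic fillings of $(L,\xi_{\mathrm{st}})$ and Milnor fibers arising from irreducible components of the reduced miniversal base space of the cyclic quotient singularity corresponding to $L$; this bijection translates the count above directly.
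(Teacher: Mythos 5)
Your setup is sound: the counting of admissible tuples (canonical plus one per bad vertex, for a total of $n(L)+1$), the observation that the canonical tuple is never identified with a bad-vertex tuple because of $b_2$, and the idea that any collapse must come from the reversal symmetry on tuples when $q^2\equiv 1\pmod{p}$. However, your proposed final verification step --- ``explicit intersection-form computations in the spirit of the blow-down calculations'' --- is the wrong tool and leaves a genuine gap.

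The problem is twofold. First, intersection-form computations can at best distinguish non-diffeomorphic fillings; they cannot establish that a reversal-related pair of bad-vertex tuples produces diffeomorphic $4$--manifolds, which is precisely what you need for the ``only if'' direction of the drop to $n(L)$. Second, your phrase ``all other reversal-related pairs give smoothly distinct $\widetilde{X}(p,q)$'s'' signals a misconception: by \cite[Theorem~1.1(3)]{Lisca:2008-1}, two admissible tuples $\mathbf{n}_1$ and $\mathbf{n}_2$ produce orientation-preserving diffeomorphic fillings of $(L(p,q),\xi_{\mathrm{st}})$ precisely when $\mathbf{n}_1=\mathbf{n}_2$, or $q=\overline q$ and $\mathbf{n}_1=\overline{\mathbf{n}_2}$. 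Thus when $q^2\equiv 1 \pmod p$, \emph{every} reversal-related pair of tuples gives diffeomorphic fillings; there are no reversal-related pairs that remain distinct. This part of Lisca's classification is the crucial input you are missing. Once you invoke it, the remaining work is entirely combinatorial and requires no intersection-form analysis: when $\Gamma$ is palindromic, the bad part must be preserved by the reflection, so by Working Condition~\ref{it:bad_part} the bad part is one of the five symmetric configurations; the single-bad-vertex palindromic bad parts (\ref{it:bad_part}\ref{it:101}, \ref{it:bad_part}\ref{it:111}) have no swapped pair and give no collapse, while the multi-bad-vertex palindromic bad parts (\ref{it:bad_part}\ref{it:10101}, \ref{it:bad_part}\ref{it:1111}, \ref{it:bad_part}\ref{it:11111}) each have exactly one swapped pair and correspond in $P$ precisely to $[-4,-4]$, $[-3,-3,-3]$, $[-3,-3,-3,-3]$, each of which contains $[-4,-4]$ or $[-3,-3,-3]$ as an induced subgraph. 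This is exactly the route the paper takes.
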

\renewcommand{\thecor}{\thesection.\arabic{cor}}

\begin{proof}
Let $L=L(p,q)$ be a lens space as in Theorem~\ref{thm:main}, and let $X(p,q)$ denote its canonical negative-definite plumbing. Then every smooth negative-definite filling $X$ of $L$ satisfies $b_2(X) \ge b_2(X(p,q)) - 1$. Moreover, since each bad vertex corresponds to an induced subgraph isomorphic to one of
\[
\begin{tikzpicture}[xscale=1.0,yscale=1,baseline={(0,0)}]
    \node at (1-0.1,0.4) {$-4$};
    \node (A1) at (1,0) {$\bullet$};
\end{tikzpicture}
\qquad \qquad \qquad
\begin{tikzpicture}[xscale=1.0,yscale=1,baseline={(0,0)}]
    \node at (1-0.1, .4) {$-3$};
    \node at (2-0.1, .4) {$-3$};
    \node (A_1) at (1, 0) {$\bullet$};
    \node (A_2) at (2, 0) {$\bullet$};
    \path (A_1) edge [-] node [auto] {$\scriptstyle{}$} (A_2);
\end{tikzpicture}
\qquad  \qquad \qquad
\begin{tikzpicture}[xscale=1.0,yscale=1,baseline={(0,0)}]
    \node at (1-0.1, .4) {$-3$};
    \node at (2-0.1, .4) {$-2$};
    \node at (3-0.1, .4) {$-3$};
    \node (A_1) at (1, 0) {$\bullet$};
    \node (A_2) at (2, 0) {$\bullet$};
    \node (A_3) at (3, 0) {$\bullet$};
    \path (A_1) edge [-] node [auto] {$\scriptstyle{}$} (A_2);
    \path (A_2) edge [-] node [auto] {$\scriptstyle{}$} (A_3);
\end{tikzpicture},
\]
it follows that $n(L)$ is precisely the number of bad vertices in $\Gamma$.



As in the proof of Corollary~\ref{cor:symplecticbody}, if every smooth negative-definite filling $X$ of $L$ satisfies
$b_2(X) \ge b_2(X(p,q))$, then $L$, equipped with the standard contact structure, admits a unique minimal symplectic filling. In this case we have $n(L)+1=1$, and the canonical plumbing graph associated to $L$ contains neither
\[
\begin{tikzpicture}[xscale=1.0,yscale=1,baseline={(0,0)}]
    \node at (1-0.1, .4) {$-4$};
    \node at (2-0.1, .4) {$-4$};
    \node (A_1) at (1, 0) {$\bullet$};
    \node (A_2) at (2, 0) {$\bullet$};
    \path (A_1) edge [-] node [auto] {$\scriptstyle{}$} (A_2);
\end{tikzpicture}
\qquad  \qquad \qquad
\begin{tikzpicture}[xscale=1.0,yscale=1,baseline={(0,0)}]
    \node at (1-0.1, .4) {$-3$};
    \node at (2-0.1, .4) {$-3$};
    \node at (3-0.1, .4) {$-3$};
    \node (A_1) at (1, 0) {$\bullet$};
    \node (A_2) at (2, 0) {$\bullet$};
    \node (A_3) at (3, 0) {$\bullet$};
    \path (A_1) edge [-] node [auto] {$\scriptstyle{}$} (A_2);
    \path (A_2) edge [-] node [auto] {$\scriptstyle{}$} (A_3);
\end{tikzpicture}
\]
as an induced subgraph, by \cite[Theorem~1.1]{definite}. Therefore we may assume that there exists a filling $X$ with $b_2(X) = b_2(X(p,q)) - 1$, and the dual plumbing graph $\Gamma$ then has at least one bad vertex.


From the proof of Corollary~\ref{cor:symplecticbody}, we can conclude that up to orientation-diffeomorphism, there are at most $1+n(L)$ minimal symplectic fillings, with the extra 1 coming from the standard $(1,2,\dots ,2,1)$ tuple. From the classification of Lisca~\cite[Theorem~1.1(3)]{Lisca:2008-1}, the number of minimal symplectic fillings equals $1+n(L)$ unless we are in the $q=\overline{q}$ and $\textbf{n}=\overline{\textbf{n}}$, where $\overline{q}$ is defined to be the multiplicative inverse of $q$ modulo $p$, and $\overline{\textbf{n}}$ is defined to be $\textbf{n}$ with the order of the entries reversed.

The case $q=\overline{q}$ is equivalent to $q^2\equiv 1 \pmod p$. Since the canonical plumbing graph of $L(p,\overline{q})$ is obtained by reflecting the canonical plumbing graph of $L(p,q)$ (see, e.g., \cite{HNK:1971}), it follows that the canonical plumbing graph of $L(p,q)$ is symmetric, and hence $\Gamma$ is symmetric. The desired statement follows by inspecting which of the five configurations in the ``If there is more than one bad vertex'' case in the proof of Lemma~\ref{lem:working conditions proof} are symmetric. The conclusion about the Milnor fibers then follows from the one-to-one correspondence of~\cite{NemethiPopescu}.
\end{proof}

\bibliographystyle{alpha}
\def\MR#1{}
\bibliography{bib}
\appendix

\section{Computer data}\label{computer data}
Similar to \cite{definite}, we use the \texttt{OrthogonalEmbeddings} command in GAP to define a function \texttt{ListEmbeddings} that find all possible embeddings up to automorphisms of $\Z^n$ \cite{GAP}. However, different from \cite{definite}, there are a lot more cases we need to check. Not only working out all embeddings by hand would take an extreme amount of time, even manually checking whether the extended bad part in each embedding is standard or semi-standard after the GAP code lists out the possible embeddings is still very tedious. Therefore, we modify the \texttt{ListEmbeddings} function from \cite{definite} such that it automatically classifies whether the extended bad part is standard, semi-standard, or neither. To further simplify the task, we define a function \texttt{AllConfig} such that given a bad part configuration in Working Condition \ref{it:bad_part}, a description on how the graph can extend on the left side, and a description on how the graph can extend on the right side, it finds all possible configurations of the extended bad part and feed them all into \texttt{ListEmbeddings} all at once.

The code for defining those functions can be found in
\url{https://sites.google.com/view/antony-fung/codes}.

\medskip\noindent\textbf{Case 1:} When the bad part is
\[
\begin{tikzpicture}[xscale=1.0,yscale=1,baseline={(0,0)}]
    \node at (0,0.4) {$1$};
    \node at (1,0.4) {{$\red{\circled{0}}$}};
    \node at (2,0.4) {$1$};
    \node (A1) at (0,0) {$\circ$};
    \node (A2) at (1,0) {$\circ$};
    \node (A3) at (2,0) {$\circ$};
    \path (A1) edge [-] node [auto] {$\scriptstyle{}$} (A2);
    \path (A2) edge [-] node [auto] {$\scriptstyle{}$} (A3);
  \end{tikzpicture}.
\]
We investigate how can the graph extend on the right hand side. We denote the first vertex being extended to the right as $r_1$, and the second vertex $r_2$, etc. Throughout this appendix, we will keep implicitly using Working Condition \ref{it:largeweight}. So, keep in mind about Working Condition \ref{it:largeweight} when reading through the arguments below.

While extending the graph, we can stop as soon as it includes the end of the extended bad part, but we are also allowed to continue going to further restrict the embedding possibilities.

If $r_1$ does not exist, we have
\[
\begin{tikzpicture}[xscale=1.0,yscale=1,baseline={(0,0)}]
    \node at (0,0.4) {$2$};
    \node at (1,0.4) {$2$};
    \node at (2,0.4) {$2$};
    \node (A1) at (0,0) {$\bullet$};
    \node (A2) at (1,0) {$\bullet$};
    \node (A3) at (2,0) {$\bullet$};
    \path (A1) edge [-] node [auto] {$\scriptstyle{}$} (A2);
    \path (A2) edge [-] node [auto] {$\scriptstyle{}$} (A3);
  \end{tikzpicture}.
\]
If $r_1$ exists, by Working Condition \ref{it:forbidden_configs}\ref{it:1012}\ref{it:1013}, $w'(r_1)=0$ or $1$.

If $w'(r_1)=0$, since vertices with vanishing adjusted weight cannot be a leaf, $r_2$ exists. Since $r_1$ is not bad, $w'(r_2)=0$, so we have $r_3$. By Working Condition \ref{it:forbidden_configs}\ref{it:1002}\ref{it:1003}, $w'(r_3)=0$ or $1$. So, either $w(r_3)=2$ or $w(r_3)=3$, and if $w(r_3)=3$ then $w(r_4)$ must exists. By Working Condition \ref{it:forbidden_configs}\ref{it:31001}, $w'(r_4)\leq 2$. So, either $w(r_4)=2$, or $w(r_4)=3$, or $w(r_4)=4$ and $r_5$ exists. If $w(r_4)=4$ and $r_5$ exists, then since $r_4$ is not bad, $w(r_5)=2$.

Back to $w'(r_1)$. If $w'(r_1)=1$ and there is no $r_2$, then $w(r_1)=2$. If $w'(r_1)=1$ and $r_2$ exists, then $w(r_1)=3$, and since $r_1$ is not bad we must have $w'(r_2)=0$. So, similar as before, we have $w(r_2)=w(r_3)=2$ and $r_4$ exists. By Working Condition \ref{it:forbidden_configs}\ref{it:1002}\ref{it:1003}, $w'(r_4)=0$ or $1$. So, $w(r_4)=2$, or $w(r_4)=3$ and $r_5$ exists. If $w(r_4)=3$ and $r_5$ exists, by Working Condition \ref{it:weight3condition} and \ref{it:forbidden_configs}\ref{it:110012}, $w'(r_5)=0$ or $1$. So, $w(r_5)=2$, or $w(r_5)=3$ and $r_6$ exists. If $w(r_5)=3$ and $r_6$ exists, since $r_5$ is not bad, we must have $w'(r_6)=0$. Together, the weights of these extensions on the right are
\[
222,\ 2232,\ 2233,\ 22342,\ 2,\ 3222,\ 32232,\ 322332.
\]
Similarly, the weights of these extensions on the left are
\[
222,\ 2322,\ 3322,\ 24322,\ 2,\ 2223,\ 23223,\ 233223.
\]

We first run the code in \url{https://sites.google.com/view/antony-fung/codes} to define the function \texttt{AllConfig}. Then, we run the following command.

\texttt{AllConfig([2,2,2],[2],[[2,2,2],[2,3,2,2],[3,3,2,2],[2,4,3,2,2],[2],[2,2,2,3],}\\
\texttt{[2,3,2,2,3],[2,3,3,2,2,3]],[[2,2,2],[2,2,3,2],[2,2,3,3],[2,2,3,4,2],[2],[3,2,2,}\\
\texttt{2],[3,2,2,3,2],[3,2,2,3,3,2]],false);}

The first argument of the function is are weights of the bad part if there are no extensions. The second argument records the location of the bad vertices. In this case, it is in the second position (note that GAP array indices start at 1, not 0). The third and fourth arguments are the left and right extensions respectively. For the fifth entry, changing it to ``true'' will display the embeddings explicitly.\\

The output is
\begin{alltt}
Total number of embeddings:386
Total number of embeddings that are standard in the extended bad part:203
Total number of embeddings that are semi-standard in the extended bad part:183
Total number of embeddings that are neither in the extended bad part:0.
\end{alltt}

This confirms that in all these embeddings, the extended bad part is embedded either standardly or semi-standardly.

\medskip\noindent\textbf{Case 2:} When the bad part is
\[
\begin{tikzpicture}[xscale=1.0,yscale=1,baseline={(0,0)}]
    \node at (0,0.4) {$1$};
    \node at (1,0.4) {{$\red{\circled{0}}$}};
    \node at (2,0.4) {$2$};
    \node (A1) at (0,0) {$\circ$};
    \node (A2) at (1,0) {$\circ$};
    \node (A3) at (2,0) {$\circ$};
    \path (A1) edge [-] node [auto] {$\scriptstyle{}$} (A2);
    \path (A2) edge [-] node [auto] {$\scriptstyle{}$} (A3);
  \end{tikzpicture}.
\]
Similar to how we denote the right extension vertices as $r_1,r_2,\ldots$, we denote the left extension vertices as $l_1,l_2,\dots$.

By Working Conditions \ref{it:largeweight} and \ref{it:forbidden_configs}\ref{it:1102}, if $l_1$ exists, we must have $w'(l_1)=0$. Since $l_1$ is not bad and vertices with vanishing adjusted weight cannot be a leaf, that implies $l_2$ exists and $w'(l_2)$=0, and therefore $l_3$ exists. By Working Condition \ref{it:largeweight}, $w(l_3)=2$ or $3$. If $w(l_3)=3$, then $w'(l_3)=1$ and $l_4$ exists. By Working Condition \ref{it:largeweight}, $w(l_4)=2$ or $3$. If $w(l_4)=3$, then $w'(l_4)=1$ and $l_5$ exists. Since $l_4$ is not bad, we must have $w'(l_5)=0$, and hence $w(l_5)=2$. So, we input the left extensions 222,2322,23322.

By Working Conditions \ref{it:largeweight}, if $r_1$ exists, then $w'(r_1)=0$ or $1$.

If $w'(r_1)=0$, then similar as before, we have $w'(r_2)=0$ and $r_3$ exists. By Working Condition \ref{it:largeweight} and \ref{it:forbidden_configs}\ref{it:1002}, we have $w'(r_3)=0$. So, $r_4$ exists. By Working Condition \ref{it:largeweight}, $w(r_4)=2$ or $3$. So we get 2222, 2223.

If $w'(r_1)=1$, either $w(r_1)=2$ and $r_2$ does not exist, or $w(r_1)=3$ and $r_2$ exists. In the later case, since $r_1$ is not bad, we have $w'(r_2)=0$. Similar as before, then $w'(r_3)=0$. By Working Conditions \ref{it:largeweight}, $w(r_4)=2$ or $3$, and if $w(r_4)=3$ then $r_5$ exists. By Working Conditions \ref{it:forbidden_configs}\ref{it:110012}, $w'(r_5)=0$, and since $r_5$ is not bad we must have $w'(r_6)=0$. So, we get 2, 3222, 322322.

We run the following command.

\texttt{AllConfig([2,2,3],[2],[[2,2,2],[2,3,2,2],[2,3,3,2,2]],[[2,2,2,2],[2,2,2,3],}\\
\texttt{[2],[3,2,2,2],[3,2,2,3,2,2]],false);}\\

The output is
\begin{alltt}
Total number of embeddings:84
Total number of embeddings that are standard in the extended bad part:48
Total number of embeddings that are semi-standard in the extended bad part:36
Total number of embeddings that are neither in the extended bad part:0.
\end{alltt}

\medskip\noindent\textbf{Case 3:} When the bad part is
\[
\begin{tikzpicture}[xscale=1.0,yscale=1,baseline={(0,0)}]
    \node at (0,0.4) {$1$};
    \node at (1,0.4) {{$\red{\circled{0}}$}};
    \node at (2,0.4) {$3$};
    \node (A1) at (0,0) {$\circ$};
    \node (A2) at (1,0) {$\circ$};
    \node (A3) at (2,0) {$\circ$};
    \path (A1) edge [-] node [auto] {$\scriptstyle{}$} (A2);
    \path (A2) edge [-] node [auto] {$\scriptstyle{}$} (A3);
  \end{tikzpicture}.
\]
By Working Condition \ref{it:weight3condition}, if $l_1$ exists, we must have $w'(l_1)=0$. As before, in that case, $w'(l_2)=0$ and $l_3$ exists. By Working Condition \ref{it:largeweight}, $w'(l_3)=0$ or $1$. So, either $w(l_3)=2$, or $w(l_3)=3$ and $l_4$ exists. By Working Condition \ref{it:weight3condition}, if $l_4$ exists, then $w'(l_4)=0$. As $l_4$ is not bad, in that case we have $w'(l_5)=0$. So, we input 222 and 22322 as left extensions.

If $r_1$ exists and $w'(r_1)=0$, then since $r_1$ is not bad, and also by Working Condition \ref{it:forbidden_configs}\ref{it:1003}\ref{it:10003}, we have $w'(r_2)=w'(r_3)=w'(r_4)=0$. Then $r_5$ exists and $w(r_5)=2$ or $3$. Hence, we have 22222, 22223.

If $r_1$ exists and $w'(r_1)=1$, then either $w(r_1)=2$, or $w(r_1)=3$ and $r_2$ exists. In the later case, as $r_1$ is not bad, we have $w'(r_2)=0$. As $r_2$ is not bad, we have $w'(r_3)=0$. By Working Condition \ref{it:forbidden_configs}\ref{it:31001}, we have $w'(r_4)=0$. By considering $r_5$ as well, we get 2, 32222, 32223.

We run the following command.

\texttt{AllConfig([2,2,4],[2],[[2,2,2],[2,2,3,2,2]],[[2,2,2,2,2],[2,2,2,2,3],[2],[3,2,}\\
\texttt{2,2,2],[3,2,2,2,3]],false);}\\

The output is
\begin{alltt}
Total number of embeddings:64
Total number of embeddings that are standard in the extended bad part:32
Total number of embeddings that are semi-standard in the extended bad part:32
Total number of embeddings that are neither in the extended bad part:0.
\end{alltt}

\medskip\noindent\textbf{Case 4:} When the bad part is
\[
\begin{tikzpicture}[xscale=1.0,yscale=1,baseline={(0,0)}]
    \node at (0,0.4) {$1$};
    \node at (1,0.4) {{$\red{\circled{1}}$}};
    \node at (2,0.4) {$1$};
    \node (A1) at (0,0) {$\circ$};
    \node (A2) at (1,0) {$\circ$};
    \node (A3) at (2,0) {$\circ$};
    \path (A1) edge [-] node [auto] {$\scriptstyle{}$} (A2);
    \path (A2) edge [-] node [auto] {$\scriptstyle{}$} (A3);
  \end{tikzpicture}.
\]
Since only the middle ``1'' is bad, if $r_1$ exists, we must have $w'(r_1)=w'(r_2)=0$. By Working Condition \ref{it:bad1}, we have $w'(r_3)\leq 1$. So, either $w(r_3)=2$, or $w(r_3)=3$ and $r_4$ exists. Similarly, if $w(r_4)$ exists, then either $w(r_4)=2$, or $w(r_4)=3$ and $r_5$ exists. If $w(r_3)=w(r_4)=3$, then since $r_4,r_5$ are not bad, we have $w'(r_5)=w'(r_6)=0$. Hence, we input 222, 2232, 223322 as right extensions. Similarly, we input 222, 2322, 223322 as left extensions.

We run the following command.

\texttt{AllConfig([2,3,2],[2],[[2,2,2],[2,3,2,2],[2,2,3,3,2,2]],[[2,2,2],[2,2,3,2],[2,}\\
\texttt{2,3,3,2,2]],false);}\\

The output is
\begin{alltt}
Total number of embeddings:50
Total number of embeddings that are standard in the extended bad part:25
Total number of embeddings that are semi-standard in the extended bad part:25
Total number of embeddings that are neither in the extended bad part:0.
\end{alltt}

\medskip\noindent\textbf{Case 5:} When the bad part is
\[
\begin{tikzpicture}[xscale=1.0,yscale=1,baseline={(0,0)}]
    \node at (0,0.4) {$1$};
    \node at (1,0.4) {{$\red{\circled{1}}$}};
    \node at (2,0.4) {$2$};
    \node (A1) at (0,0) {$\circ$};
    \node (A2) at (1,0) {$\circ$};
    \node (A3) at (2,0) {$\circ$};
    \path (A1) edge [-] node [auto] {$\scriptstyle{}$} (A2);
    \path (A2) edge [-] node [auto] {$\scriptstyle{}$} (A3);
  \end{tikzpicture}.
\]

Since only the middle ``1'' is bad, if $r_1$ exists, we must have $w'(r_1)=w'(r_2)=0$. By Working Condition \ref{it:forbidden_configs}\ref{it:1002}, $w'(r_3)=0$. Hence, by considering $r_4,r_5,r_6$ and the fact that $r_5$ (if exists) cannot be bad, we input the right extensions 2222, 22232, 222332.

The left side is the same as Case 4. 

We run the following command.

\texttt{AllConfig([2,3,3],[2],[[2,2,2],[2,3,2,2],[2,2,3,3,2,2]],[[2,2,2,2],[2,2,2,3,}\\
\texttt{2],[2,2,2,3,3,2]],false);}\\

The output is
\begin{alltt}
Total number of embeddings:60
Total number of embeddings that are standard in the extended bad part:30
Total number of embeddings that are semi-standard in the extended bad part:30
Total number of embeddings that are neither in the extended bad part:0.
\end{alltt}

\medskip\noindent\textbf{Case 6:} When the bad part is
\[
\begin{tikzpicture}[xscale=1.0,yscale=1,baseline={(0,0)}]
    \node at (0,0.4) {$1$};
    \node at (1,0.4) {{$\red{\circled{2}}$}};
    \node at (2,0.4) {$1$};
    \node (A1) at (0,0) {$\circ$};
    \node (A2) at (1,0) {$\circ$};
    \node (A3) at (2,0) {$\circ$};
    \path (A1) edge [-] node [auto] {$\scriptstyle{}$} (A2);
    \path (A2) edge [-] node [auto] {$\scriptstyle{}$} (A3);
  \end{tikzpicture}.
\]
Since only the middle ``1'' is bad, if $r_1$ exists, we must have $w'(r_1)=w'(r_2)=0$. By Working Condition \ref{it:largeweight}, we have $w(r_3)=2$ or $3$, and if $w(r_3)=3$ then $r_4$ exists. In the later case, by Working Condition \ref{it:bad2}, we have $w'(r_4)=0$. Since $r_4$ is not bad, we have $w'(r_5)=0$. Hence, we input 222, 22322 as right extensions. Same for left extensions.

We run the following command.

\texttt{AllConfig([2,4,2],[2],[[2,2,2],[2,2,3,2,2]],[[2,2,2],[2,2,3,2,2]],false);}\\

The output is
\begin{alltt}
Total number of embeddings:32
Total number of embeddings that are standard in the extended bad part:16
Total number of embeddings that are semi-standard in the extended bad part:16
Total number of embeddings that are neither in the extended bad part:0.
\end{alltt}

\medskip\noindent\textbf{Case 7:} When the bad part is
\[
\begin{tikzpicture}[xscale=1.0,yscale=1,baseline={(0,0)}]
    \node at (0,0.4) {$1$};
    \node at (1,0.4) {{$\red{\circled{0}}$}};
    \node at (2,0.4) {$1$};
    \node at (3,0.4) {{$\red{\circled{0}}$}};
    \node at (4,0.4) {$1$};
    \node (A1) at (0,0) {$\circ$};
    \node (A2) at (1,0) {$\circ$};
    \node (A3) at (2,0) {$\circ$};
    \node (A4) at (3,0) {$\circ$};
    \node (A5) at (4,0) {$\circ$};
    \path (A1) edge [-] node [auto] {$\scriptstyle{}$} (A2);
    \path (A2) edge [-] node [auto] {$\scriptstyle{}$} (A3);
    \path (A3) edge [-] node [auto] {$\scriptstyle{}$} (A4);
    \path (A4) edge [-] node [auto] {$\scriptstyle{}$} (A5);
  \end{tikzpicture}.
\]
The left side and the right side are the same as Case 1. Therefore, we only change the first two arguments of the function. We run the following command.

\texttt{AllConfig([2,2,3,2,2],[2,4],[[2,2,2],[2,3,2,2],[3,3,2,2],[2,4,3,2,2],[2],[2,}\\
\texttt{2,2,3],[2,3,2,2,3],[2,3,3,2,2,3]],[[2,2,2],[2,2,3,2],[2,2,3,3],[2,2,3,4,2],[2]}\\
\texttt{,[3,2,2,2],[3,2,2,3,2],[3,2,2,3,3,2]],false);}\\

The output is
\begin{alltt}
Total number of embeddings:589
Total number of embeddings that are standard in the extended bad part:203
Total number of embeddings that are semi-standard in the extended bad part:386
Total number of embeddings that are neither in the extended bad part:0.
\end{alltt}

\medskip\noindent\textbf{Case 8:} When the bad part is
\[
\begin{tikzpicture}[xscale=1.0,yscale=1,baseline={(0,0)}]
    \node at (0,0.4) {$1$};
    \node at (1,0.4) {{$\red{\circled{0}}$}};
    \node at (2,0.4) {$1$};
    \node at (3,0.4) {{$\red{\circled{0}}$}};
    \node at (4,0.4) {$2$};
    \node (A1) at (0,0) {$\circ$};
    \node (A2) at (1,0) {$\circ$};
    \node (A3) at (2,0) {$\circ$};
    \node (A4) at (3,0) {$\circ$};
    \node (A5) at (4,0) {$\circ$};
    \path (A1) edge [-] node [auto] {$\scriptstyle{}$} (A2);
    \path (A2) edge [-] node [auto] {$\scriptstyle{}$} (A3);
    \path (A3) edge [-] node [auto] {$\scriptstyle{}$} (A4);
    \path (A4) edge [-] node [auto] {$\scriptstyle{}$} (A5);
  \end{tikzpicture}.
\]
For the left side, it is the same as Case 1, except that we can remove 24322 because of Working Condition \ref{it:largeweight}, and we can replace 3322 by 23322 because by Working Condition \ref{it:largeweight} if $w(l_4)=3$ then it cannot be a leaf, and since $l_4$ is not bad, we have $w'(l_5)=0$.

The right side is the same as Case 2.

We run the following command.

\texttt{AllConfig([2,2,3,2,3],[2,4],[[2,2,2],[2,3,2,2],[2,3,3,2,2],[2],[2,2,2,3],[2,3,}\\
\texttt{2,2,3],[2,3,3,2,2,3]],[[2,2,2,2],[2,2,2,3],[2],[3,2,2,2],[3,2,2,3,2,2]],false);}\\

The output is
\begin{alltt}
Total number of embeddings:264
Total number of embeddings that are standard in the extended bad part:96
Total number of embeddings that are semi-standard in the extended bad part:168
Total number of embeddings that are neither in the extended bad part:0.
\end{alltt}

\medskip\noindent\textbf{Case 9:} When the bad part is
\[
\begin{tikzpicture}[xscale=1.0,yscale=1,baseline={(0,0)}]
    \node at (0,0.4) {$1$};
    \node at (1,0.4) {{$\red{\circled{0}}$}};
    \node at (2,0.4) {$1$};
    \node at (3,0.4) {{$\red{\circled{1}}$}};
    \node at (4,0.4) {$1$};
    \node (A1) at (0,0) {$\circ$};
    \node (A2) at (1,0) {$\circ$};
    \node (A3) at (2,0) {$\circ$};
    \node (A4) at (3,0) {$\circ$};
    \node (A5) at (4,0) {$\circ$};
    \path (A1) edge [-] node [auto] {$\scriptstyle{}$} (A2);
    \path (A2) edge [-] node [auto] {$\scriptstyle{}$} (A3);
    \path (A3) edge [-] node [auto] {$\scriptstyle{}$} (A4);
    \path (A4) edge [-] node [auto] {$\scriptstyle{}$} (A5);
  \end{tikzpicture}.
\]
For the left side, we use the same argument as in Case 8, but using Working Condition \ref{it:bad1} instead of Working Condition \ref{it:largeweight}.

The right side is the same as Case 4.

We run the following command.

\texttt{AllConfig([2,2,3,3,2],[2,4],[[2,2,2],[2,3,2,2],[2,3,3,2,2],[2],[2,2,2,3],[2,3,}\\
\texttt{2,2,3],[2,3,3,2,2,3]],[[2,2,2],[2,2,3,2],[2,2,3,3,2,2]],false);}\\

The output is
\begin{alltt}
Total number of embeddings:160
Total number of embeddings that are standard in the extended bad part:50
Total number of embeddings that are semi-standard in the extended bad part:110
Total number of embeddings that are neither in the extended bad part:0.
\end{alltt}

\medskip\noindent\textbf{Case 10:} When the bad part is
\[
\begin{tikzpicture}[xscale=1.0,yscale=1,baseline={(0,0)}]
    \node at (0,0.4) {$1$};
    \node at (1,0.4) {{$\red{\circled{0}}$}};
    \node at (2,0.4) {$1$};
    \node at (3,0.4) {{$\red{\circled{1}}$}};
    \node at (4,0.4) {$2$};
    \node (A1) at (0,0) {$\circ$};
    \node (A2) at (1,0) {$\circ$};
    \node (A3) at (2,0) {$\circ$};
    \node (A4) at (3,0) {$\circ$};
    \node (A5) at (4,0) {$\circ$};
    \path (A1) edge [-] node [auto] {$\scriptstyle{}$} (A2);
    \path (A2) edge [-] node [auto] {$\scriptstyle{}$} (A3);
    \path (A3) edge [-] node [auto] {$\scriptstyle{}$} (A4);
    \path (A4) edge [-] node [auto] {$\scriptstyle{}$} (A5);
  \end{tikzpicture}.
\]
For the left side, if $l_1$ exists, by Working Condition \ref{it:forbidden_configs}\ref{it:110112}, we have $w'(l_1)=0$. Since $l_1$ is not bad, we have $w'(l_2)=0$. By considering $l_3,l_4,l_5$ and the fact that $l_4$ (if exists) cannot be bad, we input the left extensions 222, 2322, 23322.

The right side is the same as Case 5.
	
We run the following command.

\texttt{AllConfig([2,2,3,3,3],[2,4],[[2,2,2],[2,3,2,2],[2,3,3,2,2]],[[2,2,2,2],[2,2,2,}\\
\texttt{3,2],[2,2,2,3,3,2]],false);}\\

The output is
\begin{alltt}
Total number of embeddings:108
Total number of embeddings that are standard in the extended bad part:36
Total number of embeddings that are semi-standard in the extended bad part:72
Total number of embeddings that are neither in the extended bad part:0.
\end{alltt}

\medskip\noindent\textbf{Case 11:} When the bad part is
\[
\begin{tikzpicture}[xscale=1.0,yscale=1,baseline={(0,0)}]
    \node at (0,0.4) {$1$};
    \node at (1,0.4) {{$\red{\circled{1}}$}};
    \node at (2,0.4) {{$\red{\circled{1}}$}};
    \node at (3,0.4) {$1$};
    \node (A1) at (0,0) {$\circ$};
    \node (A2) at (1,0) {$\circ$};
    \node (A3) at (2,0) {$\circ$};
    \node (A4) at (3,0) {$\circ$};
    \path (A1) edge [-] node [auto] {$\scriptstyle{}$} (A2);
    \path (A2) edge [-] node [auto] {$\scriptstyle{}$} (A3);
    \path (A3) edge [-] node [auto] {$\scriptstyle{}$} (A4);
  \end{tikzpicture}.
\]
Both the left side and the right side are the same as Case 4.
	
We run the following command.

\texttt{AllConfig([2,3,3,2],[2,3],[[2,2,2],[2,3,2,2],[2,2,3,3,2,2]],[[2,2,2],[2,2,3,}\\
\texttt{2],[2,2,3,3,2,2]],false);}\\

The output is
\begin{alltt}
Total number of embeddings:75
Total number of embeddings that are standard in the extended bad part:25
Total number of embeddings that are semi-standard in the extended bad part:50
Total number of embeddings that are neither in the extended bad part:0.
\end{alltt}

\medskip\noindent\textbf{Case 12:} When the bad part is
\[
\begin{tikzpicture}[xscale=1.0,yscale=1,baseline={(0,0)}]
    \node at (0,0.4) {$1$};
    \node at (1,0.4) {{$\red{\circled{1}}$}};
    \node at (2,0.4) {$1$};
    \node at (3,0.4) {{$\red{\circled{1}}$}};
    \node at (4,0.4) {$1$};
    \node (A1) at (0,0) {$\circ$};
    \node (A2) at (1,0) {$\circ$};
    \node (A3) at (2,0) {$\circ$};
    \node (A4) at (3,0) {$\circ$};
    \node (A5) at (4,0) {$\circ$};
    \path (A1) edge [-] node [auto] {$\scriptstyle{}$} (A2);
    \path (A2) edge [-] node [auto] {$\scriptstyle{}$} (A3);
    \path (A3) edge [-] node [auto] {$\scriptstyle{}$} (A4);
    \path (A4) edge [-] node [auto] {$\scriptstyle{}$} (A5);
  \end{tikzpicture}.
\]
Both the left side and the right side are the same as Case 4.

We run the following command.

\texttt{AllConfig([2,3,3,3,2],[2,3,4],[[2,2,2],[2,3,2,2],[2,2,3,3,2,2]],[[2,2,2],}\\
\texttt{[2,2,3,2],[2,2,3,3,2,2]],false);}\\

The output is
\begin{alltt}
Total number of embeddings:100
Total number of embeddings that are standard in the extended bad part:25
Total number of embeddings that are semi-standard in the extended bad part:75
Total number of embeddings that are neither in the extended bad part:0.
\end{alltt}

\medskip\noindent\textbf{Case 13:} When the bad part is
\[
\begin{tikzpicture}[xscale=1.0,yscale=1,baseline={(0,0)}]
    \node at (0,0.4) {$1$};
    \node at (1,0.4) {{$\red{\circled{1}}$}};
    \node at (2,0.4) {{$\red{\circled{2}}$}};
    \node at (3,0.4) {$1$};
    \node (A1) at (0,0) {$\circ$};
    \node (A2) at (1,0) {$\circ$};
    \node (A3) at (2,0) {$\circ$};
    \node (A4) at (3,0) {$\circ$};
    \path (A1) edge [-] node [auto] {$\scriptstyle{}$} (A2);
    \path (A2) edge [-] node [auto] {$\scriptstyle{}$} (A3);
    \path (A3) edge [-] node [auto] {$\scriptstyle{}$} (A4);
  \end{tikzpicture}.
\]
For the left side, same as Case 4, we get 222, 2322, 223322. However, in the 2322 case, by using Working Condition \ref{it:bad2}, we know that $l_4$ cannot be a leaf. Since $l_4$ is not bad, we have $w'(l_5)=0$ as well. Also by Working Condition \ref{it:bad2}, we can rule out the 223322 case. Hence, we input 222, 22322 as left extensions.

The right side is the same as Case 6.

We run the following command.

\texttt{AllConfig([2,3,4,2],[2,3],[[2,2,2],[2,2,3,2,2]],[[2,2,2],[2,2,3,2,2]],false);}\\

The output is
\begin{alltt}
Total number of embeddings:48
Total number of embeddings that are standard in the extended bad part:16
Total number of embeddings that are semi-standard in the extended bad part:32
Total number of embeddings that are neither in the extended bad part:0.
\end{alltt}

\end{document}